\documentclass[opre,nonblindrev]{informs3} 

\usepackage{lmodern}
\usepackage{caption}
\usepackage{anyfontsize}
\usepackage{subfiles}
\usepackage{xr}
\usepackage{float}
\def\biblio{\bibliographystyle{informs2014}\bibliography{../DataDrivenRobustBibliography}}

\OneAndAHalfSpacedXI 

\usepackage{accents}

\usepackage{natbib}
 \bibpunct[, ]{(}{)}{,}{a}{}{,}%
 \def\bibfont{\small}%
\TheoremsNumberedThrough     
\ECRepeatTheorems

\EquationsNumberedThrough    

\RequirePackage[shortlabels]{enumitem}
\usepackage{mathtools} 
\usepackage{enumitem} 
\usepackage{dsfont} 
\usepackage{mathrsfs}  

\newtheorem{innercustomgeneric}{\customgenericname}
\providecommand{\customgenericname}{}
\newcommand{\newcustomtheorem}[2]{%
	\newenvironment{#1}[1]
	{%
		\renewcommand\customgenericname{#2}%
		\renewcommand\theinnercustomgeneric{##1}%
		\innercustomgeneric
	}
	{\endinnercustomgeneric}
}

\newcustomtheorem{customthm}{Theorem}
\newcustomtheorem{customlemma}{Lemma}

\allowdisplaybreaks

\usepackage{epsfig}

\let\oldnl\nl
\newcommand{\nonl}{\renewcommand{\nl}{\let\nl\oldnl}}

\usepackage{IEEEtrantools}
\usepackage{multicol}
\usepackage{caption}
\usepackage{subcaption}

\usepackage[margin=2.54cm]{geometry}
\usepackage{lipsum}
\usepackage{bigints}
\usepackage{framed}
\usepackage{colortbl}
\usepackage{calligra}
\usepackage[export]{adjustbox}
\usepackage{booktabs}
\usepackage{multirow}
\usepackage{makecell}

 \usepackage{wrapfig}

\usepackage{bbm}
\usepackage[normalem]{ulem}

\usepackage[colorlinks=true,linkcolor=black,citecolor=black]{hyperref}
\usepackage{cleveref}

\newcommand{\R}{{\mathcal R}}

\newcommand{\N}{{\mathbb N}}
\newcommand{\I}{{\mathbb I}}

\newcommand{\PP}{\mathbb P}
\newcommand{\Prob}{{\textup{Prob}}}

\newcommand{\ZZ}{\mathbb Z}

\newcommand{\PPhat}{{\hat{\mathbb P}_N}}

\newcommand{\mP}{\mathcal{P}}

\newcommand{\eps}{\epsilon}

\newcommand{\bm}{\boldsymbol}

\def\RR{ {\mathbb{R}}}

\newcommand{\X}{{\mathcal X}}

\newcommand{\HH}{{{\mathcal H}}}

\newcommand{\M}{{{\mathbb M}} }

\newcommand{\txi}{\tilde{\bm{\xi}}}
\newcommand{\tchi}{\tilde{\bm{\chi}}}
\newcommand{\tomega}{\tilde{\bm{\omega}}}
\newcommand{\bchi}{{\bm{\chi}}}
\newcommand{\bomega}{{\bm{\omega}}}

\newcommand{\x}{{\bm{x}}}
\newcommand{\y}{{\bm{y}}}

\newcommand{\bc}{{\bm{c}}}
\newcommand{\tc}{{\tilde{\bm{c}}}}

\newcommand{\z}{{\bm{z}}}
\newcommand{\tz}{{\tilde{\bm{z}}}}
\newcommand{\ts}{{\tilde{{s}}}}

\newcommand{\lbar}{{\overline{\ell}}}

\newcommand{\lu}{{\underline{\ell}}}

\newcommand{\indep}{\perp \!\!\! \perp}

\newcommand{\bxi}{{\bm\xi}}

\newcommand{\C}{{\mathcal C}}

\newcommand{\QQ}{{\mathbb{Q}}}

\newcommand{\EE}{{\mathbb{E}}}

\newcommand{\var}{\textrm{Var}}

\let\R\Real

\def\st{\textup{ s.t. }}

\def\M{\mathcal{M}}

\renewcommand{\qed}{{\hfill\halmos}}

\usepackage{graphicx}
\usepackage{tikz,ifthen}

   \usetikzlibrary{math}
   \usetikzlibrary{shapes.misc}
   \usetikzlibrary{shapes.multipart, positioning, patterns,backgrounds}
\usepackage{algorithm, algpseudocode}

\usepackage{multirow}
\usepackage{array}
\usepackage{cleveref} 

\usepackage{booktabs}
\usepackage{multirow}
\usepackage{makecell}

\usepackage{array}
\newcolumntype{x}[1]{>{\centering\arraybackslash}p{#1}}

\begin{document}

\def\biblio{}




\RUNAUTHOR{Fang et al.}
\RUNTITLE{Distributionally Robust Optimization via Targeted Integral Probability Metrics}

\ARTICLEAUTHORS{Lanran Fang\thanks{Department of Industrial and Enterprise Systems Engineering, University of Illinois Urbana-Champaign, Urbana, IL 61801, USA. Email: {\tt \{lanranf2,gah\}@illinois.edu}.}, Jianqiang Cheng\thanks{Department of Systems and Industrial Engineering, 
University of Arizona, Tucson, AZ 85721. Email: {\tt jqcheng@arizona.edu}.},  
Grani A. Hanasusanto\footnotemark[1], and Yijie Wang\thanks{School of Economics and Management, Tongji University, Shanghai, China. Email: {\tt yijiewang@tongji.edu.cn}.}
}


\TITLE{Distributionally Robust Optimization via Targeted Integral Probability Metrics for General Data Processes}

\ABSTRACT{%
Distributionally robust optimization (DRO) provides a principled framework for decision-making under distributional uncertainty. Classical data-driven DRO frameworks typically construct ambiguity sets from distributional information, such as moment constraints, divergence neighborhoods, or Wasserstein balls, specified before the downstream loss is considered. We propose a task-aware DRO framework based on targeted integral probability metrics. The ambiguity set is defined directly through the loss functions induced by feasible decisions, thereby controlling the loss discrepancy between an adversarial distribution and a data-driven reference distribution. This construction leads to an expected hinge-constrained formulation that is equivalent to an infinitely constrained loss-discrepancy formulation. It also yields finite-sample guarantees that bypass the ambient curse of dimensionality: whenever an appropriate scalar pointwise concentration inequality is available for the induced loss estimator, the ambiguity radius can be calibrated at the canonical $\widetilde{\mathcal O}(N^{-1/2})$ rate after uniformization over the decision class. As a result, the framework applies broadly to settings including heavier-tailed sub-Weibull losses, Markovian data, outlier-corrupted data, and incomplete data. We derive exact infinite-dimensional dual reformulations, establish out-of-sample and excess-risk guarantees, and develop a conservative Monte Carlo approximation scheme with convergence and suboptimality guarantees. For piecewise affine losses, the sampled problems admit tractable conic reformulations. Numerical experiments in inventory management under heavy-tailed demand and regression with outlier corruption demonstrate strong out-of-sample performance relative to existing approaches.
}

\KEYWORDS{distributionally robust optimization, integral probability metric, semi-infinite programming, heavy-tailed distributions, Markovian data, outlier-corrupted data, incomplete data}

\maketitle

\section{Introduction} \label{sec:intro}

Distributionally Robust Optimization (DRO) provides a principled mathematical framework for optimization under uncertainty.  Rather than placing blind faith in a single nominal distribution (e.g., the empirical distribution), DRO identifies a decision that minimizes the worst-case expected loss over an \emph{ambiguity set} $\mP$, defined as a family of probability measures that are statistically plausible given the available data:
\begin{equation}\label{eq:general_dro}
    \min_{\x\in\X} \sup_{\QQ\in\mP}  \EE_{\QQ}[\ell(\x,\txi)],
\end{equation}
where $\ell(\x,\bxi)$ represents the loss function parameterized by the decision $\x \in \X$ and the uncertain parameter $\bxi \in \Xi$. By optimizing over the most adverse distribution $\QQ$ within $\mP$, the DRO framework yields decisions that remain reliable even when deployed in unseen environments.  The central design question is therefore the choice of $\mP$: it must be rich enough to capture meaningful uncertainty, yet structured enough to admit tractable computation and valid statistical guarantees~\citep{rahimian2019distributionally,kuhn2025distributionally}.

Early DRO ambiguity sets were commonly built from moment constraints or statistical divergences. Moment ambiguity sets~\citep{delage2010distributionally,goh2010distributionally, wiesemann2014distributionally} impose generalized moment conditions, but they are not asymptotically consistent because low-order moments do not identify the true distribution. Discrepancy-based sets centered at the empirical distribution $\PPhat$, such as $\phi$-divergence balls~\citep{ben2013robust,bertsimas2018data,hu2013kullback}, restore consistency but have a restrictive support property: assigning positive mass outside the reference support leads to infinite divergence. Wasserstein ambiguity sets address this topological limitation by allowing probability mass to be transported from observed samples to nearby unobserved regions~\citep{mohajerin2018data,blanchet2019quantifying, gao2023distributionally}. A substantial literature has further established dual reformulations, finite-sample performance guarantees, and tractable data-driven models for Wasserstein DRO~\citep{mohajerin2018data,blanchet2019quantifying,gao2023distributionally}. However, empirical Wasserstein balls can inherit ambient-dimensional concentration rates, with standard calibrations deteriorating as $\mathcal O(N^{-1/\max\{2,d\}})$ in $d$ dimensions~\citep{fournier2015rate,mohajerin2018data}. Existing work mitigates this bottleneck through targeted coverage, regularization equivalences, and refined concentration analyses~\citep{blanchet2021sample,shafieezadeh2019regularization,duchi2019variance,gao2023finite,gao2024wasserstein,wu2025generalization}.

The common feature of these ambiguity sets is that they are specified through distributional information of the uncertainty $\txi$, such as moments, divergences, transport costs, or kernels, rather than through the expected-loss profile itself. The DRO problem, however, depends on a candidate distribution $\QQ$ only through its expected-loss profile $\x\mapsto \EE_\QQ[\ell(\x,\txi)]$ over feasible decisions, not through the full law of $\txi$ on $\Xi$. This distinction matters statistically: an ambient Wasserstein ball is typically calibrated in a geometry designed to cover the full law of $\txi$, while the DRO objective only requires control of the decision-induced expected losses. This creates a mismatch: the ambiguity set may protect against distributional changes that do not materially change the expected losses relevant to the decision maker.

An elegant and growing line of work has addressed this alignment issue within the Wasserstein framework by incorporating task knowledge into the ground transportation cost. Such approaches encode component scaling and heterogeneous perturbations, learn cost geometry from data, distinguish feature and label perturbations, accommodate mixed-feature spaces, or use predictors and loss evaluations to shape transport costs~\citep{mohajerin2018data,blanchet2019data,shafieezadeh2015distributionally,li2019first,selvi2022wasserstein,belbasi2026mix,wang2025knowledge,ohnemus2025loss}. These approaches are valuable when the relevant task geometry is available or can be reliably learned. More fundamentally, however, they still anchor the ambiguity set to a distributional distance on $\Xi$; statistical calibration of the radius therefore remains problem-specific, and their extension to non-standard data regimes such as heavy-tailed, Markovian, or corrupted data requires dedicated analysis.

We take a different approach based on loss discrepancies.  Rather than measuring how far $\QQ$ is from $\PPhat$ in the space of distributions, we ask directly: can $\QQ$ induce a loss discrepancy over the feasible decision set?  A distribution $\QQ$ is distant from $\PPhat$ when its loss discrepancy
\[
\sup_{\x\in\X}\bigl[\EE_\QQ[\ell(\x,\txi)]
- \EE_\PPhat[\ell(\x,\txi)]\bigr]
\]
is large.  Defined over the decision space $\X$ through the loss function alone, this quantity does not ask whether $\QQ$ is close to $\PPhat$ as a full distribution on $\Xi$; it asks whether $\QQ$ produces similar expected losses for the decisions we care about.

This loss-discrepancy view can be interpreted through the lens of Integral Probability Metrics (IPMs), which compare distributions through expectation gaps over a chosen function class~\citep{muller1997integral,kuhn2025distributionally}:
\[
D_{\mathcal F}(\PP,\hat\PP)
=\sup_{f\in\mathcal F}\left|\EE_{\PP}[f(\txi)]-\EE_{\hat\PP}[f(\txi)]\right|.
\]
The IPM framework recovers several classical discrepancies through different choices of $\mathcal F$~\citep{muller1997integral,sriperumbudur2016optimal,birrell2022f,kuhn2025distributionally}: indicator functions of Borel sets yield total variation, 1-Lipschitz functions yield the 1-Wasserstein distance via Kantorovich--Rubinstein duality~\citep{villani2008optimal}, and the unit ball of a Reproducing Kernel Hilbert Space (RKHS) yields the Maximum Mean Discrepancy (MMD)~\citep{zhu2021kernel}. In these classical cases, however, $\mathcal F$ is defined by the ambient geometric or analytic structure of $\Xi$, so the discrepancy is specified without reference to the loss function $\ell$ or the feasible decisions $\mathcal{X}$.
Our construction instead takes the test-function class to be the decision-induced loss class $\mathcal F_\ell=\{\ell(\x,\cdot):\x\in\X\}$ and constrains only the expectation increases that can affect the DRO objective. Thus the ambiguity set is targeted to the feasible decisions and loss values that matter for optimization, without requiring a separately designed ground metric on $\Xi$. We refer to this loss-driven discrepancy as a \textbf{Targeted Integral Probability Metric (TIPM)}.

The same loss-class viewpoint also connects naturally to statistical learning theory. Foundational uniform convergence theory~\citep{vapnik1998statistical,bartlett2002rademacher} establishes that generalization is controlled not by distributional distance in any ambient metric, but by the complexity and tail behavior of the induced loss class. These bounds depend on properties of the loss class rather than on concentration of the empirical distribution in the ambient space of $\txi$, and apply whether $\txi$ is continuous, discrete, or of mixed type. Our TIPM construction adopts this perspective: by defining the ambiguity set through $\mathcal{F}_\ell$, radius calibration reduces to a uniform bound over the induced loss class. This perspective explains both the dimension-free rate and the breadth of applicable data regimes.

Concretely, we propose an expected hinge-constrained ambiguity set:
\begin{equation}
\label{eq:ambiguity_set_intro}
\mP_\epsilon\coloneqq \left\{\QQ\in\mathscr P(\Xi):
\begin{array}{l}
\EE_\ZZ\!\left[\left[\EE_\QQ[\ell(\tz,\txi)]
    -\EE_\PPhat[\ell(\tz,\txi)]-\eps\right]_+\right] \leq 0,\\[4pt]
\EE_\QQ[\|\txi\|^2]\leq \Omega,
\end{array}
\right\}
\end{equation}
where $\eps$ is a tunable size parameter, $\ZZ$ is an auxiliary probability distribution with full support on the decision space $\X$, $\PPhat$ is the reference distribution constructed from historical data, and the second-moment constraint ensures basic statistical regularity.  The hinge constraint forces the inner term to be non-positive $\ZZ$-almost everywhere; under the continuity conditions used below, this is equivalent to enforcing infinitely many loss-discrepancy constraints over the decision space.

\medskip
\noindent\textbf{Informal Proposition.}
\emph{
Under mild conditions, the ambiguity set \eqref{eq:ambiguity_set_intro}
is equivalent to:
\[
\mP_\epsilon\coloneqq \left\{\QQ\in\mathscr P(\Xi):\begin{array}{l}
\EE_\QQ[\ell(\z,\txi)] - \EE_\PPhat[\ell(\z,\txi)]\leq \epsilon
\quad\forall\z\in\X,\\[4pt]
\EE_\QQ[\|\txi\|^2]\leq \Omega.
\end{array}\right\}
\]
}

This reformulation reveals the nature of the proposed set: $\mP_\epsilon$ is the family of distributions whose expected loss exceeds the empirical benchmark by at most $\epsilon$, uniformly across all feasible decisions.  A key departure from classical IPM constructions is that, by dropping the absolute value required for a symmetric distance, our TIPM is not a symmetric statistical distance, but an upper bound on the loss discrepancy relevant to DRO.  Since the supremum in the DRO problem
\eqref{eq:general_dro} seeks adversarial distributions that
\emph{increase} the expected loss, symmetric penalization is
redundant; the resulting upper bound accommodates optimistic distributions while constraining the loss discrepancies that matter for out-of-sample performance.

This paper makes the following contributions.
\begin{itemize}

\item \textbf{A task-aware ambiguity set with exact characterization.}  We introduce an expected hinge-constrained ambiguity set based on a Targeted Integral Probability Metric (TIPM), in which the discrepancy is defined directly through the loss functions induced by feasible decisions.  We establish that this ambiguity set is equivalent to an infinitely constrained loss-discrepancy set, aligning robustness with the downstream decision problem.  We further derive an exact infinite-dimensional dual reformulation of the resulting DRO problem.

\item \textbf{Dimension-free statistical and excess-risk guarantees under broad data regimes.}\footnote{We use ``dimension-free'' in the rate sense: the sample-size exponent does not deteriorate with the ambient dimension \(D_\bxi\), although constants may depend on structural problem parameters.} The TIPM construction yields finite-sample guarantees that bypass the curse of dimensionality. Whenever an appropriate scalar pointwise concentration inequality is available for the loss, radius calibration reduces to uniformizing this scalar bound over the decision class, yielding $\widetilde{\mathcal O}(1/\sqrt{N})$ out-of-sample guarantees. We further establish a dimension-free excess-risk bound: the gap between the robust solution and the oracle optimal decision under the true distribution is $\widetilde{\mathcal O}(1/\sqrt{N})$, quantifying the oracle optimality gap beyond the standard DRO out-of-sample certificate. These guarantees extend to a broad range of data-generating processes, including sub-Weibull losses, outlier-corrupted data, Markovian data, and incomplete data, and naturally accommodate mixed discrete and continuous uncertainty.

\item \textbf{A conservative Monte Carlo approximation with convergence, tractability, and unbounded-support guarantees.}  To overcome the intractability of the continuous auxiliary distribution $\ZZ$, we introduce a Monte Carlo approximation that draws $M$ samples from $\ZZ$ to replace the ambiguity set with a conservative sampled counterpart.  For piecewise affine losses, we establish convergence of optimal values and solutions at a $\widetilde{\mathcal O}(1/\sqrt{M})$ rate with explicit suboptimality guarantees, and show that the resulting sampled problems admit tractable conic reformulations, including second-order cone approximations for two-stage linear losses.  We further extend this framework to unbounded support under a single-threshold tail bound implied by sub-Weibull tails, showing that the Monte Carlo principle remains valid up to explicit tail-dependent terms while preserving tractability.

\end{itemize}

\subsection{Related Literature}

\noindent\textbf{IPM ambiguity sets and distributional robustness.}  Integral Probability Metrics (IPMs) provide a unifying language for measuring discrepancy between probability distributions through a prescribed class of test functions. Different choices of $\mathcal{F}$ recover classical discrepancies including total variation, Wasserstein distances, and kernel-based metrics~\citep{muller1997integral,sriperumbudur2016optimal,villani2008optimal}.  The formal theory and topological properties are developed in~\cite{muller1997integral,sriperumbudur2016optimal,birrell2022f}.  A comprehensive treatment of IPM-based DRO, including its relations to Wasserstein DRO and regularization, is given in~\cite{kuhn2025distributionally}.  DRO formulations based on specific IPM instances have been extensively studied:~\cite{husain2020distributional} establishes connections between IPM DRO, Tikhonov regularization, and generative adversarial networks;~\cite{zhu2021kernel} develops kernel DRO based on MMD with stochastic approximation algorithms;~\cite{zeng2022generalization} derives generalization bounds via DRO with minimal hypothesis-class dependency; and~\cite{iyengar2023hedging} develops computationally tractable IPM DRO via parametric approximation.  In each of these works, the test-function class is specified by an ambient smoothness, kernel, or norm constraint that is independent of the decision problem. The present work departs from this literature by taking $\mathcal{F} = \{\ell(x,\cdot) : x \in \mathcal{X}\}$, so the function class is generated by the downstream optimization problem itself and the resulting discrepancy is shaped by downstream losses rather than by a symmetric distributional distance.

\noindent\textbf{Structured and task-aware Wasserstein ambiguity sets.}  Wasserstein DRO can incorporate problem structure through the choice of transportation cost.  General norm-based costs encode component scaling and heterogeneous perturbations~\citep{mohajerin2018data}, and data-driven selection learns the ambiguity geometry from data~\citep{blanchet2019data}.  In supervised learning, label-aware transport costs distinguish feature perturbations from label perturbations~\citep{shafieezadeh2015distributionally,li2019first}, and recent work develops Wasserstein models for mixed numerical and categorical features~\citep{selvi2022wasserstein,belbasi2026mix}. More recent knowledge-guided and loss-aware formulations use external predictors or loss information to shape the optimal transport ambiguity set~\citep{wang2025knowledge,ohnemus2025loss}.  These approaches are complementary to ours: they encode task relevance through a designed or learned ground geometry, whereas we define discrepancy directly through the decision-induced loss class and the expected losses it generates.

\noindent\textbf{Generalization guarantees for DRO.}  A central question in data-driven DRO is how to calibrate ambiguity radii to obtain reliable out-of-sample guarantees.  Moment and divergence ambiguity sets admit dimension-independent calibration through scalar concentration~\citep{delage2010distributionally, ben2013robust,bertsimas2018data,duchi2021statistics,lam2019recovering}, whereas empirical Wasserstein balls suffer from ambient-dimensional concentration rates~\citep{fournier2015rate}.  A large literature has developed sharper Wasserstein guarantees through targeted coverage, regularization equivalences, and exact generalization analyses~\citep{blanchet2021sample,blanchet2022confidence, gao2023finite,gao2024wasserstein,wu2025generalization,lee2018minimax,sinha2017certifying}, and transportation-information inequalities have been used to obtain dimension-free bounds for general learning problems~\citep{xu2017information,wang2019information, xie2022distributionally,he2025information}.  Our contribution is different in mechanism: instead of improving concentration of an ambient distributional distance, we project the ambiguity set onto the decision-induced loss class, so radius calibration reduces to scalar pointwise concentration plus uniformization over the decision class, without Talagrand assumptions or problem-specific structural analysis.

\noindent\textbf{DRO under nonstandard data-generating processes.} In practice, decision-makers frequently operate in environments where available data violate the standard i.i.d.\ assumption. Time-series and sequential decision problems often involve Markovian data~\citep{fan2021hoeffding,li2021distributionally, nagaraj2020least}.  In selection problems such as hiring and college admissions, outcomes are observable only for accepted candidates, leading to incomplete data regimes~\citep{najafi2019robustness}. Modern decision-making increasingly relies on auxiliary side information, requiring optimization over conditional distributions in contextual settings~\citep{bertsimas2020predictive,sadana2025survey}.  Furthermore, data collected in real-world environments are often contaminated by noise or adversarial perturbations~\citep{nietert2023outlier, nietert2024robust,blanchet2024automatic,jiang2024distributionally}. Existing DRO approaches in these settings are often tailored to a specific type of data structure, with ambiguity set designs adapted to the particular anomaly at hand.  Because the TIPM radius can be calibrated whenever an appropriate pointwise concentration inequality is available for the induced loss estimator, the same ambiguity set design accommodates sub-Weibull losses, Markovian data, outlier-corrupted observations, and incomplete or contextual data without modification.

\subsection{Paper Structure and Notation}

\noindent\textbf{Paper structure.} The remainder of the paper is organized as follows. Section~\ref{sec:IPMDRO} formally introduces the expected hinge-constrained TIPM ambiguity set, establishes its equivalent infinitely constrained loss-discrepancy representation, and derives coverage, out-of-sample, and excess-risk guarantees.  It also shows how the same framework applies to several data-generating regimes through pointwise concentration inequalities, including sub-Weibull losses, Markovian data, outlier-corrupted observations, incomplete data, and contextual information, and develops exact dual reformulations.  Section~\ref{sec:mc_sampling} studies a Monte Carlo approximation of the ambiguity set, proves its conservative containment and convergence properties, and derives finite conic reformulations for piecewise affine losses. Section~\ref{sec:unbounded_support} extends these sampling and reformulation results to unbounded support under a single-threshold tail bound implied by sub-Weibull tails. Section~\ref{sec:experiments} evaluates the proposed framework in multi-item newsvendor and outlier-corrupted regression experiments. The appendices collect all proofs, a two-stage linear decision-rule approximation, and detailed experimental specifications.

\noindent\textbf{Notation.} For a positive integer $n$, we write $[n]\coloneqq\{1,\ldots,n\}$. Random variables are designated by tilde signs (e.g., $\txi$), while their realizations are denoted by the same symbols without tildes (e.g., $\bxi$). The true data-generating distribution is denoted by $\PP^\star$, the data-driven empirical distribution by $\PPhat$, a generic distribution in the ambiguity set by $\QQ$, and the sampling distribution over decisions by $\ZZ$. We use $\mathscr P(\Xi)$ for the set of probability distributions on $\Xi$, $\EE_\PP[\cdot]$ for expectation under $\PP$, and $\I_A$ or $\I[A]$ for the indicator of an event $A$.

The feasible decision set is $\X\subseteq\RR^{D_\x}$, the uncertainty support is $\Xi\subseteq\RR^{D_\bxi}$, and $R_\x$ and $R_\bxi$ denote corresponding norm bounds when they are finite. The ambiguity radius is denoted by $\epsilon$, while $\varepsilon(\cdot)$ denotes the nondecreasing function appearing in pointwise concentration bounds. The data and Monte Carlo sample sizes are $N$ and $M$, respectively. For a scalar $u$, $[u]_+\coloneqq\max\{u,0\}$. The Euclidean norm is denoted by $\|\cdot\|$ and the operator norm of a matrix by $\|\cdot\|_{\textup{op}}$. For a set $\Xi$, its support function is $\sigma_\Xi(\bm z)\coloneqq\sup_{\bxi\in\Xi}\bm z^\top\bxi$. The all-ones vector of appropriate dimension is denoted by $\mathbf{e}$. We use $\mathcal O(\cdot)$ for upper bounds up to absolute constants, and $\widetilde{\mathcal O}(\cdot)$ when polylogarithmic factors are additionally suppressed. For a point $\x$ and a set $\mathcal S$, $\textup{dist}(\x,\mathcal S)$ denotes the distance from $\x$ to $\mathcal S$. The limit $\overset{p}{\to}$ denotes convergence in probability. Additional notation is introduced where it is used.

\section{Distributionally Robust Optimization with Expected Hinge-Constrained Ambiguity Sets}\label{sec:IPMDRO}

We consider the distributionally robust optimization problem
\begin{equation}
\label{eq:DRO}
\min_{\x\in\X} \sup_{\QQ\in\mP_\epsilon}  \EE_{\QQ}[\ell(\x,\txi)],
\end{equation}
where the ambiguity set is given by
\begin{equation}
\label{eq:ambiguity_set}
\mP_\epsilon\coloneqq \left\{\QQ\in\mathscr P(\Xi):
\begin{array}{l}\EE_\ZZ\left[\left[\EE_\QQ[\ell(\tz,\txi)]-\EE_\PPhat[\ell(\tz,\txi)]-\eps\right]_+\right] \leq 0\\\;\EE_\QQ[\|\txi\|^2]\leq \Omega
\end{array}
\right\},
\end{equation}
and the feasible set $\X$ is compact and convex. 
Here, $\epsilon\geq 0$ is a size parameter and $\ZZ$ is a probability distribution with full support on a compact decision set $\bar\X$ satisfying $\X\subseteq\bar\X$. The auxiliary random vector $\tz\sim\ZZ$ is used only to define the ambiguity set; the implementable decision in \eqref{eq:DRO} remains constrained to lie in $\X$. The set $\bar\X$ may be chosen as $\X$ itself or as a simple compact outer approximation of $\X$ for computational convenience. The latter choice only enlarges the collection of test decisions and thus yields a more restrictive ambiguity set; the guarantees below continue to hold, with constants that depend on $\bar\X$. To keep the notation light, we state the subsequent theoretical results for the case $\bar\X=\X$. In this data-driven ambiguity set, the reference distribution $\PPhat$ is constructed from the available data, whose description depends on the specific applications. As we detail later, the hinge constraint ensures that the distribution $\QQ$ is close to $\PPhat$ in terms of the expected loss. The second-moment constraint ensures regularity of the distributions in the ambiguity set, enabling rigorous theoretical development. 

We consider a piecewise affine loss function of the form
\begin{equation}
\label{eq:pw_loss}
\ell(\x,\bxi)\coloneqq \max_{j\in[J]} \left(\bm a_j(\x)^\top\bxi+b_j(\x)\right),
\end{equation}
where \(\bm a_j(\x)\) and \(b_j(\x)\), \(j\in[J]\), are affine functions of \(\x\) given by
\begin{equation*}
\bm a_j(\x) \coloneqq \bm A_j\bm x+ \overline{\bm a}_j,
\qquad
b_j(\x)\coloneqq \bm b_j^\top\bm x + \overline b_j. 
\end{equation*}
This class is broad enough to capture many models arising in operations research and management science, and is standard in the distributionally robust optimization literature \citep{wiesemann2014distributionally,mohajerin2018data} because it often admits tractable conic reformulations.

It also includes two-stage linear loss functions as a special case:
\begin{equation}
\label{eq:two-stage_primal}
\begin{aligned}
\ell(\x,\bxi)\coloneqq\bc^\top\x + \inf_{\bm y\in\RR^{D_{\bm y}}}\;\;& \left\{\bm q^\top\bm y~:~\bm T(\bm x)\bm\xi + \bm h(\bm x)\leq \bm W\bm y\right\}
\end{aligned}
\end{equation}
where \(\bm T(\bm x)\) and \(\bm h(\bm x)\) are a matrix and a vector that depend affinely on \(\x\), and \(\bm W\) is a fixed recourse matrix. In this context, \(\x\) is a here-and-now decision made before the realization of uncertainty, while \(\bm y\) is a wait-and-see recourse decision that can be chosen after the uncertainty is realized.
The linear program in \eqref{eq:two-stage_primal} can be dualized, yielding the representation
\begin{equation}
\label{eq:two-stage_dual}
\begin{aligned}
\ell(\x,\bxi)=\bc^\top\x +\sup_{\bm p\in\RR_+^{L}}\;\;& \left\{(\bm T(\bm x)\bxi+\bm h(\bm x))^\top\bm p~:~\bm q=\bm W^\top\bm p\right\}
\end{aligned}
\end{equation}
Strong linear programming duality holds under the standard \emph{relatively complete recourse} assumption that the primal problem is feasible for any \(\x\in\X\) and \(\bxi\in\Xi\). Since the feasible set of the dual problem is a polyhedron with finitely many extreme points \(\{\bm p_1,\ldots,\bm p_Q\}\), the loss function can be rewritten in the piecewise affine form \eqref{eq:pw_loss}, where \(J=Q\), \(\bm a_j(\bm x)=\bm T(\bm x)^\top\bm p_j\), and \(b_j(\bm x)=\bm h(\bm x)^\top\bm p_j\).

We make the following assumptions:
\begin{enumerate}[(A)]
    \item \label{as:linear_growth}  \emph{Linear growth in $\bxi$}: There exist constants $c_1,c_2\in\RR_+$ such that 
    \begin{equation*}
    |\ell(\x,\bxi)| \leq c_1 + c_2 \|\bxi\|\qquad\forall \x\in\X \;\;\forall\bxi\in\Xi.
    \end{equation*}
    \item \label{as:second_moment} \emph{Second-moment budget}: The second-moment budget \(\Omega\) satisfies
\[
\EE_{\PPhat}[\|\txi\|^2]<\Omega
\qquad\text{and}\qquad
\EE_{\PP^\star}[\|\txi\|^2]\leq\Omega .
\]
\end{enumerate}
The first assumption is satisfied under the piecewise-affine loss function \eqref{eq:pw_loss}. The second assumption serves two purposes. The strict inequality for \(\PPhat\) ensures the Slater condition required for the generalized moment duality arguments; for empirical reference distributions, this part is not restrictive because \(\Omega\) can always be chosen strictly larger than the empirical second moment. The weak inequality for \(\PP^\star\) ensures that the true distribution satisfies the moment constraint in the ambiguity set, which is needed for coverage. If \(\Omega\) is calibrated from data, the same coverage guarantees hold on the corresponding high-confidence moment event, with an additional union-bound term. For clarity of exposition, we do not pursue this extension.

We first derive a fundamental property of this ambiguity set that will be useful in developing applications and solution schemes. 
\begin{proposition}\label{prop:semiinf_ambiguity_set}
The ambiguity set \eqref{eq:ambiguity_set} is equivalent to the infinitely-constrained ambiguity set: 
\begin{equation}
\label{eq:semiinf_ambiguity_set}
\mP_\epsilon\coloneqq \left\{\QQ\in\mathscr P(\Xi):\begin{array}{l}\EE_\QQ[\ell(\z,\txi)] - \EE_\PPhat[\ell(\z,\txi)]\leq \epsilon \quad\forall\z\in\X\\
\EE_\QQ[\|\txi\|^2]\leq \Omega
\end{array}\right\}.
\end{equation}
\end{proposition}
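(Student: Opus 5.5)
Both sets share the second-moment constraint, so it suffices to show that, for every $\QQ\in\mathscr P(\Xi)$ with $\EE_\QQ[\|\txi\|^2]\leq\Omega$, the hinge constraint in \eqref{eq:ambiguity_set} holds if and only if the infinite family of constraints in \eqref{eq:semiinf_ambiguity_set} holds. Fix such a $\QQ$ and define
\[
g_\QQ(\z)\coloneqq \EE_\QQ[\ell(\z,\txi)]-\EE_\PPhat[\ell(\z,\txi)]-\epsilon ,\qquad \z\in\X .
\]
Since $[u]_+\geq 0$, the hinge constraint $\EE_\ZZ[[g_\QQ(\tz)]_+]\leq 0$ is equivalent to $[g_\QQ(\tz)]_+=0$ $\ZZ$-almost surely, that is, to $g_\QQ(\z)\leq 0$ for $\ZZ$-almost every $\z$. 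The reverse implication is then immediate: if $g_\QQ(\z)\leq 0$ for all $\z\in\X$, the integrand vanishes identically and the hinge constraint holds.

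For the forward direction, I will first show that $g_\QQ$ is finite and continuous on $\X$. Finiteness follows from Assumption~\ref{as:linear_growth}. Indeed, $|\ell(\z,\bxi)|\leq c_1+c_2\|\bxi\|$, and both $\QQ$ (by the moment constraint and Jensen's inequality) and $\PPhat$ (by Assumption~\ref{as:second_moment}) have finite first moments. Continuity follows from the piecewise affine structure \eqref{eq:pw_loss}. Each map $\z\mapsto\bm a_j(\z)^\top\bxi+b_j(\z)$ is affine, and the pointwise maximum is continuous in $\z$. Moreover, $\X$ is compact, so the matrices $\bm A_j$, the vectors $\overline{\bm a}_j,\bm b_j$ and the scalars $\overline b_j$ yield a uniform dominating function of the form $C(1+\|\bxi\|)$ that is integrable under both $\QQ$ and $\PPhat$. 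Dominated convergence then gives continuity of $\z\mapsto\EE_\QQ[\ell(\z,\txi)]$ and of $\z\mapsto\EE_\PPhat[\ell(\z,\txi)]$. (In fact, $\ell$ is Lipschitz in $\z$ with constant $C'(1+\|\bxi\|)$, which gives Lipschitz continuity of $g_\QQ$ directly.)

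The key step, and the only one with real content, is to upgrade ``$g_\QQ\leq 0$ $\ZZ$-a.e.'' to ``$g_\QQ\leq 0$ everywhere on $\X$'' using the full support of $\ZZ$. Suppose, for contradiction, that $g_\QQ(\z_0)>0$ for some $\z_0\in\X$. By continuity there is an open ball $B$ around $\z_0$ such that $g_\QQ>g_\QQ(\z_0)/2>0$ on $B\cap\X$. Because $\ZZ$ has full support on $\X$, every relatively open neighbourhood of a point of $\X$ has positive $\ZZ$-measure, so $\ZZ(B\cap\X)>0$. Consequently $\EE_\ZZ[[g_\QQ(\tz)]_+]\geq \ZZ(B\cap\X)\,g_\QQ(\z_0)/2>0$, which contradicts the hinge constraint.

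The main obstacle is therefore to establish continuity of $g_\QQ$ uniformly over the admissible $\QQ$, which is what makes the full-support argument valid. This is where the second-moment constraint is needed: without it, $\EE_\QQ[\ell(\z,\txi)]$ could be infinite or discontinuous in $\z$.
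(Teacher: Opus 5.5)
Your proof is correct and follows essentially the same route as the paper. One direction follows by taking $\ZZ$-expectations. The converse uses continuity of $\z\mapsto\EE_\QQ[\ell(\z,\txi)]-\EE_\PPhat[\ell(\z,\txi)]$, obtained by dominated convergence with the linear-growth dominator, followed by the full-support argument on a neighbourhood where the hinge term is positive. Only continuity for each fixed admissible $\QQ$ is used, so the uniformity over $\QQ$ you mention at the end is not actually needed.
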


Proposition~\ref{prop:semiinf_ambiguity_set} clarifies the role of the expected hinge constraint in \eqref{eq:ambiguity_set}. The auxiliary distribution $\ZZ$ only serves to express the continuum of loss-discrepancy constraints over $\X$ as a single expected-hinge constraint. Under the standing continuity conditions, any full-support choice of $\ZZ$ induces the same exact ambiguity set. Thus, $\mP_\epsilon$ can be viewed as a feasibility region defined by an infinite family of loss-discrepancy inequalities together with a moment constraint. This semi-infinite perspective highlights a key advantage of our targeted formulation: rather than controlling a problem-agnostic statistical distance, it restricts discrepancies through the lens of the specific loss function. By enforcing $\EE_\QQ[\ell(\z,\txi)] - \EE_\PPhat[\ell(\z,\txi)]\leq \epsilon$ for every feasible decision $\z\in\X$, the formulation rules out adversarial distributions that could induce unexpectedly large losses for any feasible decision. Thus, the model does not require the adversarial distribution to remain close to the empirical distribution in all statistical directions, but only in those directions that are operationally relevant.

\begin{remark}[Task-aware discrepancy]
Compared with Wasserstein DRO, our targeted formulation natively captures task relevance. This advantage is reflected in two ways.
\begin{enumerate}[label=(\roman*)]
\item The structural benefit becomes particularly evident under heterogeneous uncertainty, such as when $\txi$ contains both continuous and discrete components. A Wasserstein formulation can also reflect such structure, but only after specifying a suitable representation and ground cost. Our formulation instead measures the induced loss discrepancies directly.
\item The issue is not the loss-dependent inner maximization itself, but the ambient Wasserstein geometry used to define and calibrate the ambiguity set before the inner maximization is performed. For instance, consider $\x=(x_1,x_2)\in\RR^2$ and $\bxi=(\xi_1,\xi_2)\in\RR^2$ with loss $
\ell(\x,\bxi)=|x_1-\xi_1|+\varrho |x_2-\xi_2|$, where $\varrho\gg1$. This example illustrates that the ambient Wasserstein geometry need not match the loss-induced profile. Since the loss is separable, any two distributions with identical marginals induce the same expected loss for every $\x$, even if their dependence structures are far apart in Wasserstein distance. Conversely, $\delta_{(0,0)}$ and $\delta_{(0,\varepsilon)}$ are only $\varepsilon$ apart under the usual Euclidean Wasserstein distance, but at $\x=(0,0)$ their expected losses differ by $\varrho\varepsilon$. Thus, the usual Wasserstein transport cost can overstate loss-irrelevant differences and understate loss-sensitive ones.
\end{enumerate}
\end{remark}

\subsection{Performance Guarantees}
In the next two subsections, we show that our DRO framework can be applied to a broad spectrum of problem classes, and establish that the radius $\epsilon$ can be systematically adjusted with the number of samples $N$ at the canonical $\widetilde{\mathcal O}(1/\sqrt{N})$ rate. The proofs of all subsequent coverage guarantees share the same two-step structure: (i) a \emph{pointwise} concentration inequality for a fixed decision $\z\in\X$, and (ii) an extension to a \emph{uniform} bound over all $\z\in\X$ via a covering number argument. We capture this common structure in the following proposition, so that each specific application needs only establish the pointwise bound in step~(i). 
\begin{proposition}
\label{prop:distribution_coverage}
Let $\X\subseteq\RR^{D_\x}$ be compact with $\sup_{\x\in \X}\|\x\|\leq R_\x$. Suppose for every fixed $\z\in\X$, we have the pointwise concentration inequality
\begin{equation}
\label{eq:pointwise_conc}
\EE_{\PP^\star}[\ell(\z,\txi)] - \EE_\PPhat[\ell(\z,\txi)]\leq \frac{1}{\sqrt{N}}\varepsilon\left(\log\left(\frac{1}{\delta}\right)\right)
\end{equation}
with probability at least $1-\delta$, where $\varepsilon:\RR_+\to\RR_+$ is nondecreasing and has at most polynomial growth. Let $L\coloneqq \max_{j\in[J]}\|\bm A_j\|_{\textup{op}}\sqrt{\Omega}+\max_{j\in[J]}\|\bm b_j\|$ be the Lipschitz constant from Lemma~\ref{lem:Lipschitz_condition}. Then, by setting the radius to
\begin{equation}
\label{eq:radius}
\epsilon\coloneqq \frac{1}{\sqrt{N}}\left(\varepsilon\left(D_\x\log\left(1+2{R_\x}L\sqrt{N}\right)+\log\left(\frac{1}{\delta}\right)\right)+2\right),
\end{equation}
one can ensure the distributional coverage  
\begin{equation*}
\PP^\star\in\mP_\epsilon
\end{equation*}
with probability at least $1-\delta$. 
\end{proposition}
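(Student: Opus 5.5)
By Proposition~\ref{prop:semiinf_ambiguity_set}, and since Assumption~\ref{as:second_moment} gives $\EE_{\PP^\star}[\|\txi\|^2]\leq\Omega$, the moment constraint holds automatically. It therefore suffices to show that, with probability at least $1-\delta$,
\[
\sup_{\z\in\X}\bigl(\EE_{\PP^\star}[\ell(\z,\txi)]-\EE_\PPhat[\ell(\z,\txi)]\bigr)\leq\epsilon .
\]
I will follow the two-step structure announced before the proposition: a finite net plus a union bound, then a Lipschitz extension from the net to all of $\X$.

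\textbf{Step 1 (net and union bound).} Set $\eta\coloneqq 1/(L\sqrt N)$. By the standard volumetric argument there is an $\eta$-net $\{\z_1,\dots,\z_K\}\subseteq\X$ of the ball of radius $R_\x$ with
\[
K\leq\left(1+\frac{2R_\x}{\eta}\right)^{D_\x}=\left(1+2R_\x L\sqrt N\right)^{D_\x}.
\]
Apply \eqref{eq:pointwise_conc} at each $\z_k$ with confidence level $\delta/K$. The union bound then gives, with probability at least $1-\delta$, simultaneously for all $k$,
\[
\EE_{\PP^\star}[\ell(\z_k,\txi)]-\EE_\PPhat[\ell(\z_k,\txi)]\leq \frac{1}{\sqrt N}\,\varepsilon\bigl(\log K+\log(1/\delta)\bigr).
\]
Since $\varepsilon$ is nondecreasing, this is at most $\frac{1}{\sqrt N}\varepsilon\bigl(D_\x\log(1+2R_\x L\sqrt N)+\log(1/\delta)\bigr)$.

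\textbf{Step 2 (extension to all of $\X$).} Take any $\z\in\X$ and its nearest net point $\z_k$, so $\|\z-\z_k\|\leq\eta$. I invoke Lemma~\ref{lem:Lipschitz_condition}: for every $\QQ$ with $\EE_\QQ[\|\txi\|^2]\leq\Omega$, the map $\z\mapsto\EE_\QQ[\ell(\z,\txi)]$ is $L$-Lipschitz. The argument behind it is as follows. For the max of affine pieces, $|\ell(\z,\bxi)-\ell(\z',\bxi)|\leq\max_j\bigl(\|\bm A_j\|_{\textup{op}}\|\bxi\|+\|\bm b_j\|\bigr)\|\z-\z'\|$, and then $\EE\|\txi\|\leq\sqrt{\EE\|\txi\|^2}\leq\sqrt\Omega$.

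This applies to $\PP^\star$ by Assumption~\ref{as:second_moment}, and to $\PPhat$ because $\EE_\PPhat\|\txi\|^2<\Omega$. Each expectation therefore moves by at most $L\eta=1/\sqrt N$ in passing from $\z$ to $\z_k$, so the discrepancy at $\z$ is at most the discrepancy at $\z_k$ plus $2/\sqrt N$. Combined with Step 1, this yields exactly the radius \eqref{eq:radius}.

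\textbf{Main obstacle.} The delicate point is the Lipschitz step for $\PPhat$. The constant $L$ is stated with $\sqrt\Omega$, so I rely on the second-moment budget holding for both $\PP^\star$ and $\PPhat$; this is guaranteed by Assumption~\ref{as:second_moment}, which is why that assumption covers both distributions. A secondary point is that the net centers should lie in $\X$ rather than merely in the enclosing ball. I will handle this by noting that an internal covering of a subset of the ball obeys the same cardinality bound up to the constant already absorbed. Alternatively, the bound $(1+2R_\x/\eta)^{D_\x}$ applies directly to maximal $\eta$-separated subsets of $\X$, since the $\eta/2$-balls around such points are disjoint and contained in the ball of radius $R_\x+\eta/2$.
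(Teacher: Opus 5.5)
Your proposal is correct and follows the paper's proof: an $\eta$-net with $\eta = 1/(L\sqrt N)$ and the same volumetric cardinality bound, a union bound at confidence level $\delta$ divided by the net size together with monotonicity of $\varepsilon$, and then Lemma~\ref{lem:Lipschitz_condition} applied under both $\PP^\star$ and $\PPhat$ to add $2/\sqrt N$, before concluding via Proposition~\ref{prop:semiinf_ambiguity_set}. Your extra justification is also valid: a maximal $\eta$-separated subset of $\X$ gives a net centered in $\X$ with the same cardinality bound, a point the paper leaves implicit.
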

The proof combines the Lipschitz estimate in Lemma~\ref{lem:Lipschitz_condition} with a covering argument; see Appendix~\ref{appendix:proofs_sec2}. The Lipschitz estimate controls nearby decisions, while monotonicity of $\varepsilon$ lets the pointwise bound be lifted over the cover. Polynomial growth then keeps the extra covering penalty at most polylogarithmic in $N$ and avoids a curse-of-dimensionality rate loss.

Proposition~\ref{prop:distribution_coverage} provides the link between pointwise concentration and the applications below. Once \eqref{eq:pointwise_conc} is established for any fixed decision $\z$, the covering argument in the proposition yields the uniform coverage condition $\PP^\star\in\mP_\epsilon$. Thus, in the canonical-rate cases, each application needs only to specify the reference estimator $\PPhat$ and the corresponding function $\varepsilon(\cdot)$ in \eqref{eq:pointwise_conc}; the radius is then determined by \eqref{eq:radius}. The same pointwise-to-uniform argument applies to estimators with different rates, with the rate reflected in the radius calibration.

In the following, we derive out-of-sample performance and suboptimality guarantees for the solutions to the DRO problem \eqref{eq:DRO}. 
To this end, we denote $\x^\star$ and $\hat\x$ as the optimal solutions of the true stochastic optimization problem and the DRO problem, respectively, i.e.,
\begin{equation}
\label{eq:minimizers}
\x^\star\in\arg\min_{\x\in\X}  \EE_{\PP^\star}[\ell(\x,\txi)]\quad\textup{and}\quad \hat\x\in\arg\min_{\x\in\X} \sup_{\QQ\in\mP_\epsilon}  \EE_{\QQ}[\ell(\x,\txi)].
\end{equation}
Note that the solution $\x^\star$ is attained since the mapping $\x\rightarrow \EE_{\PP^\star}[\ell(\x,\txi)]$ is continuous, and the feasible set $\X$ is compact. Furthermore, the objective function of the DRO problem constitutes a pointwise supremum of a family of lower semicontinuous (piecewise-affine) functions. Therefore, it is also lower semicontinuous, and the set of optimal solutions is nonempty \citep[Theorem~1.9]{Rockafellar2002}. That is, $\hat\x$ is attained. 

\begin{corollary}
\label{cor:OOS_guarantee}
Let $\hat J$ be the optimal value of the DRO problem \eqref{eq:DRO}. Then, setting the radius to \eqref{eq:radius}, we can ensure the out-of-sample performance guarantee
\begin{equation*}
\EE_{\PP^\star}[\ell(\hat\x,\txi)] \leq \hat J
\end{equation*}
with probability at least $1-\delta$. 
\end{corollary}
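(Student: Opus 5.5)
The plan is the standard coverage-implies-certificate argument, resting entirely on Proposition~\ref{prop:distribution_coverage}. Fix $\delta\in(0,1)$ and set $\epsilon$ as in \eqref{eq:radius}. Let $\mathcal E$ be the event $\PP^\star\in\mP_\epsilon$. By Proposition~\ref{prop:distribution_coverage}, $\mathcal E$ occurs with probability at least $1-\delta$ over the data used to build $\PPhat$. It therefore suffices to show that on $\mathcal E$ the inequality $\EE_{\PP^\star}[\ell(\hat\x,\txi)]\le\hat J$ holds deterministically.

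\textbf{Step 1 (attainment).} The paragraph preceding the corollary establishes that $\hat\x$ is attained and that $\hat J=\sup_{\QQ\in\mP_\epsilon}\EE_\QQ[\ell(\hat\x,\txi)]$.

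\textbf{Step 2 (bounding on $\mathcal E$).} On $\mathcal E$, $\PP^\star$ is one of the distributions over which the supremum is taken. Hence
\[
\EE_{\PP^\star}[\ell(\hat\x,\txi)]\le\sup_{\QQ\in\mP_\epsilon}\EE_\QQ[\ell(\hat\x,\txi)]=\hat J.
\]
The expectation on the left is finite by Assumption~\ref{as:linear_growth} together with the second-moment bound in Assumption~\ref{as:second_moment}, so the inequality is meaningful.

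\textbf{Measurability and the main subtlety.} The only delicate point is that $\hat\x$ depends on the data. This causes no difficulty because the argument never uses a pointwise bound at a fixed decision. Coverage $\PP^\star\in\mP_\epsilon$ is a \emph{uniform} statement: by Proposition~\ref{prop:semiinf_ambiguity_set}, it says the loss-discrepancy constraint holds simultaneously for all $\z\in\X$. The inequality in Step~2 therefore holds for every decision at once on $\mathcal E$, and in particular for the data-dependent $\hat\x$. If $\hat\x$ is not unique, the same holds for any measurable selection.

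I expect no real obstacle. All the difficulty sits in Proposition~\ref{prop:distribution_coverage}, which is given, so the corollary follows in a few lines.
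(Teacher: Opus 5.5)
Your proposal is correct and follows the same argument as the paper's proof: on the event $\PP^\star\in\mP_\epsilon$, which has probability at least $1-\delta$ by Proposition~\ref{prop:distribution_coverage}, you bound $\EE_{\PP^\star}[\ell(\hat\x,\txi)]\le\sup_{\QQ\in\mP_\epsilon}\EE_\QQ[\ell(\hat\x,\txi)]=\hat J$. Your remark on the data-dependence of $\hat\x$ is a correct elaboration. The coverage event does not refer to any particular decision, so the bound holds at the random $\hat\x$ without further argument.
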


Remarkably, our DRO framework enables us to establish a guarantee on the excess risk of the distributionally robust solution, as stated in the following theorem. 
\begin{theorem}\label{thm:excess_risk}
Suppose the pointwise concentration inequality in Proposition \ref{prop:distribution_coverage} holds in terms of absolute error, i.e., 
\begin{equation}
\label{eq:twosided_pointwise_conc}
\left|\EE_{\PP^\star}[\ell(\z,\txi)] - \EE_\PPhat[\ell(\z,\txi)]\right|\leq \frac{1}{\sqrt{N}}\varepsilon\left(\log\left(\frac{1}{\delta}\right)\right)
\end{equation}
with probability at least $1-\delta$ for any fixed $\z\in\X$. 
 Then, setting the radius $\epsilon$ to \eqref{eq:radius}, we get
\begin{equation*}
\EE_{\PP^\star}[\ell(\hat\x,\txi)]\leq \EE_{\PP^\star}[\ell(\x^\star,\txi)] +  \frac{1}{\sqrt{N}}\varepsilon\left(\log\left(\frac{1}{\delta}\right)\right)+ \epsilon, 
\end{equation*}
with probability at least $1-2\delta$. 
\end{theorem}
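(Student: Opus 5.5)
The plan is to sandwich $\EE_{\PP^\star}[\ell(\hat\x,\txi)]$ between the DRO optimal value $\hat J$ and the true-risk value at the oracle decision $\x^\star$. We use two high-probability events and combine them with a union bound, which accounts for the $1-2\delta$ confidence level.

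\emph{First event (upper bound via coverage).} The two-sided inequality \eqref{eq:twosided_pointwise_conc} implies the one-sided inequality \eqref{eq:pointwise_conc}. Hence Proposition~\ref{prop:distribution_coverage} applies with the radius \eqref{eq:radius}, and $\PP^\star\in\mP_\epsilon$ with probability at least $1-\delta$. On this event, Corollary~\ref{cor:OOS_guarantee} gives
\[
\EE_{\PP^\star}[\ell(\hat\x,\txi)]\leq \hat J .
\]
By optimality of $\hat\x$ in \eqref{eq:minimizers}, we also have
\[
\hat J\leq \sup_{\QQ\in\mP_\epsilon}\EE_\QQ[\ell(\x^\star,\txi)] .
\]

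\emph{Second step (bounding the worst case at $\x^\star$).} I will use the semi-infinite representation of Proposition~\ref{prop:semiinf_ambiguity_set}. Every $\QQ\in\mP_\epsilon$ satisfies $\EE_\QQ[\ell(\z,\txi)]\leq \EE_\PPhat[\ell(\z,\txi)]+\epsilon$ for all $\z\in\X$, and in particular for $\z=\x^\star$. Because $\x^\star$ depends only on $\PP^\star$ and not on the data, it is a fixed decision. We may therefore apply the lower-tail half of \eqref{eq:twosided_pointwise_conc} at $\z=\x^\star$ directly, without any uniformization. With probability at least $1-\delta$ this gives
\[
\EE_\PPhat[\ell(\x^\star,\txi)]\leq \EE_{\PP^\star}[\ell(\x^\star,\txi)]+\tfrac{1}{\sqrt N}\varepsilon(\log(1/\delta)).
\]
Chaining this with the previous step yields
\[
\sup_{\QQ\in\mP_\epsilon}\EE_\QQ[\ell(\x^\star,\txi)]\leq \EE_{\PP^\star}[\ell(\x^\star,\txi)]+\tfrac{1}{\sqrt N}\varepsilon(\log(1/\delta))+\epsilon .
\]

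\emph{Combining.} A union bound over the two events, each failing with probability at most $\delta$, gives the claimed inequality with probability at least $1-2\delta$.

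The only delicate point is the second step: it must use the fixed, data-independent nature of $\x^\star$. This is what lets the pointwise bound be used without a covering penalty, and it explains why the extra term is the plain $\varepsilon(\log(1/\delta))/\sqrt N$ rather than the inflated radius. A minor measurability point is that $\x^\star$ may not be unique; we fix one minimizer deterministically.
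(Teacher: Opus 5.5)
Your proposal is correct and matches the paper's proof essentially step for step. The paper intersects the coverage event $\{\PP^\star\in\mP_\epsilon\}$ from Proposition~\ref{prop:distribution_coverage} with the two-sided pointwise event at the data-independent decision $\x^\star$, applies the same chain $\EE_{\PP^\star}[\ell(\hat\x,\txi)]\le\sup_{\QQ\in\mP_\epsilon}\EE_\QQ[\ell(\hat\x,\txi)]\le\sup_{\QQ\in\mP_\epsilon}\EE_\QQ[\ell(\x^\star,\txi)]\le\EE_\PPhat[\ell(\x^\star,\txi)]+\epsilon$, then uses the lower-tail bound and a union bound to obtain $1-2\delta$; your remark about fixing a minimizer $\x^\star$ independently of the data is a clarification the paper leaves implicit.
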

This theorem shows that the excess risk is controlled by a single pointwise estimation term together with the ambiguity radius \(\epsilon\). Hence, whenever the radius in Proposition~\ref{prop:distribution_coverage} is calibrated at the order \(\widetilde{\mathcal O}(N^{-1/2})\), the robust solution attains the same \(\widetilde{\mathcal O}(N^{-1/2})\) excess-risk rate. Thus, the DRO solution enjoys the out-of-sample protection of the ambiguity set without incurring a slower excess-risk convergence rate relative to empirical risk minimization, up to logarithmic factors. In particular, this rate does not deteriorate with the ambient dimension \(D_\bxi\) of the uncertainty \(\txi\).

\begin{remark}[{Decision-dependent ambiguity sets}]
One could envisage an alternative decision-dependent ambiguity set
\begin{equation*}
\mP_\epsilon(\x)\coloneqq \left\{\QQ\in\mathscr P(\Xi):\begin{array}{l}\EE_\QQ[\ell(\x,\txi)] - \EE_\PPhat[\ell(\x,\txi)]\leq \epsilon\\
\EE_\QQ[\|\txi\|^2]\leq \Omega
\end{array}\right\}.
\end{equation*}
Then, for any fixed $\x\in\X$, \eqref{eq:pointwise_conc} implies the pointwise coverage guarantee
\begin{equation*}
\label{eq:pointwise_coverage}
\textup{Prob}\left(\PP^\star\in\mP_\epsilon(\x)\right)\geq 1-\delta
\end{equation*}
However, this pointwise coverage does not generally imply the uniform coverage property
\begin{equation*}
\label{eq:uniform_coverage}
\textup{Prob}\left(\PP^\star\in\mP_\epsilon(\x) \;\;\forall \x\in\X)\right)\geq 1-\delta,
\end{equation*}
which is typically required to deduce an out-of-sample guarantee for the data-dependent optimizer $\hat\x$. 
 Indeed, $\hat \x$ depends on the same sample used to form $\PPhat$, so a guarantee that holds for each fixed $\x$ need not hold at the random choice $\x=\hat \x$ without a uniform bound. 
\end{remark}

\subsection{Applications}
We present four applications of the framework: \emph{sub-Weibull losses}, \emph{Markovian data}, \emph{outlier-corrupted data} via median-of-means estimation, and \emph{incomplete data} via inverse probability weighting. In each case, we verify the pointwise concentration~\eqref{eq:pointwise_conc}, which calibrates the radius~\eqref{eq:radius} and yields the coverage and out-of-sample guarantees of Proposition~\ref{prop:distribution_coverage} and Corollary~\ref{cor:OOS_guarantee}. A further extension to contextual optimization is deferred to Appendix~\ref{sec:contextual_appendix}.

\subsection*{Sub-Weibull loss}
In many applications such as financial risk, insurance claims, and inventory problems, the random loss $\ell(\z,\txi)$ may exhibit substantial skewness and nontrivial tail behavior. At the same time, finite-sample DRO guarantees are often derived under light-tailed assumptions on the underlying uncertainty \citep{mohajerin2018data, gao2023finite}. It is therefore important to ask whether the same finite-sample guarantees can still be obtained under weaker tail assumptions.

Our framework extends naturally to losses with sub-Weibull tails, a broad family that unifies several tail regimes. It includes the sub-Gaussian and subexponential cases as special instances. The latter covers many familiar distributions, including the exponential, Gamma, $\chi^2$, Poisson, and geometric distributions, as well as the square of any sub-Gaussian random variable; see \cite[Example 2.8.7]{vershynin2025high}. It also extends to the heavier-than-exponential regime when $\vartheta\in(0,1)$. This allows us to treat light- and moderately heavy-tailed losses within a common framework.

For a real-valued random variable $X$ and $\vartheta>0$, we write
\[
\|X\|_{\psi_\vartheta}
\coloneqq
\inf\Bigl\{c>0:\EE\bigl[\exp\bigl((|X|/c)^\vartheta\bigr)-1\bigr]
\le 1\Bigr\}.
\]
Following \citet{kuchibhotla2022moving}, a centered random variable $X$ is called sub-Weibull of order $\vartheta\in(0,2]$ if $\|X\|_{\psi_\vartheta}<\infty$.
Under this parameterization, $\vartheta=2$ corresponds to the sub-Gaussian case, $\vartheta=1$ to the subexponential case, and $0<\vartheta<1$ to heavier-than-exponential tails. For $\vartheta<1$, the quantity $\|\cdot\|_{\psi_\vartheta}$ is only a quasi-norm, which makes sharp Bernstein-type concentration substantially more delicate.

The following proposition provides a pointwise concentration bound for the empirical loss under a sub-Weibull assumption.
\begin{proposition}\label{prop:subweibull_coverage}
Suppose that there exist $\vartheta\in(0,2]$ and $K>0$ such that
\[
\|\ell(\z,\txi)-\EE_{\PP^\star}[\ell(\z,\txi)]\|_{\psi_\vartheta}\le K
\qquad\forall \z\in\X.
\]
Then there exist constants $C_\vartheta,c_\vartheta>0$, depending only on $\vartheta$, such that the pointwise concentration~\eqref{eq:pointwise_conc} holds with $\varepsilon(y)=C_\vartheta K\sqrt{1+y}$. More precisely,
\begin{equation}\label{eq:eps_subweibull}
\EE_{\PP^\star}[\ell(\z,\txi)]-\EE_\PPhat[\ell(\z,\txi)]
\le
\frac{C_\vartheta K}{\sqrt N}\sqrt{1+\log\left(\frac{1}{\delta}\right)}
\end{equation}
with probability at least $1-\delta$, provided that
\[
\delta\in
\begin{cases}
\left[2\exp\!\left(-c_\vartheta N^{\vartheta/(2-\vartheta)}\right),\,1\right), & \vartheta\in(0,1),\\[2mm]
\left[2\exp\!\left(-c_\vartheta N\right),\,1\right), & \vartheta\in[1,2),\\[2mm]
(0,1), & \vartheta=2.
\end{cases}
\]
\end{proposition}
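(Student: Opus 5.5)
We plan to prove Proposition~\ref{prop:subweibull_coverage} by treating, for a fixed decision $\z\in\X$, the centered variables $X_i\coloneqq \EE_{\PP^\star}[\ell(\z,\txi)]-\ell(\z,\txi_i)$, $i\in[N]$. Here $\PPhat$ is the empirical distribution of i.i.d.\ samples $\txi_1,\dots,\txi_N\sim\PP^\star$. By assumption the $X_i$ are independent, mean zero, and satisfy $\|X_i\|_{\psi_\vartheta}\le K$. The quantity to bound is $\frac1N\sum_{i} X_i$. We then invoke a Bernstein-type tail inequality for sums of independent sub-Weibull variables, in the form given by \citet{kuchibhotla2022moving}. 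It yields a constant $C'_\vartheta$ such that, for all $t\ge 0$,
\begin{equation*}
\Prob\Bigl(\Bigl|\sum_{i=1}^N X_i\Bigr|\ge C'_\vartheta K\bigl(\sqrt{N t}+ r_\vartheta(N,t)\bigr)\Bigr)\le 2e^{-t},
\end{equation*}
where the second term depends on the regime:
\begin{itemize}
\item $r_\vartheta(N,t)=t^{1/\vartheta}$ when $\vartheta\in(0,1)$;
\item $r_\vartheta(N,t)=t^{1/\vartheta}N^{1/\beta}$ with $1/\vartheta+1/\beta=1$ when $\vartheta\in[1,2)$, which is at most $t$ up to constants for the subexponential endpoint;
\item $r_\vartheta(N,t)=0$ when $\vartheta=2$, the Hoeffding case.
\end{itemize}

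Next we set $t=\log(2/\delta)\le \log 2+\log(1/\delta)$. The second step is to show that, on the admissible range of $\delta$, the heavy-tail term is dominated by the Gaussian term $\sqrt{Nt}$.
\begin{itemize}
\item For $\vartheta\in(0,1)$, the inequality $t^{1/\vartheta}\le\sqrt{Nt}$ is equivalent to $t^{(2-\vartheta)/(2\vartheta)}\le \sqrt N$, that is, $t\le N^{\vartheta/(2-\vartheta)}$. This is exactly guaranteed by $\delta\ge 2\exp(-c_\vartheta N^{\vartheta/(2-\vartheta)})$, after choosing $c_\vartheta\le 1$.
\item For $\vartheta\in[1,2)$, the term $t^{1/\vartheta}N^{1-1/\vartheta}\le\sqrt{Nt}$ reduces to $t^{(2-\vartheta)/(2\vartheta)}\le N^{(2-\vartheta)/(2\vartheta)}$, i.e.\ $t\le N$. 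This is guaranteed by $\delta\ge 2e^{-c_\vartheta N}$.
\item For $\vartheta=2$, no restriction is needed.
\end{itemize}
In all cases we obtain, with probability at least $1-\delta$,
\[
\frac1N\sum_i X_i\le \frac{2C'_\vartheta K\sqrt{t}}{\sqrt N}\le \frac{C_\vartheta K}{\sqrt N}\sqrt{1+\log(1/\delta)},
\]
absorbing $\log 2\le 1$ into $C_\vartheta$. This is \eqref{eq:eps_subweibull}. The function $\varepsilon(y)=C_\vartheta K\sqrt{1+y}$ is nondecreasing with polynomial growth, as Proposition~\ref{prop:distribution_coverage} requires.

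The main obstacle is step one for $\vartheta<1$, where $\|\cdot\|_{\psi_\vartheta}$ is only a quasi-norm. There the standard Bernstein argument via the moment generating function fails, because the MGF may not exist. We would rely directly on Theorem~3.1 of \citet{kuchibhotla2022moving}, which handles this case via moment bounds $\|\sum X_i\|_p\lesssim \sqrt p\,\|(\|X_i\|_{\psi_\vartheta})\|_2+p^{1/\vartheta}\|(\|X_i\|_{\psi_\vartheta})\|_{\beta}$ followed by Markov's inequality with optimized $p$. If that citation's exact form differs, the fallback is to derive the moment bound via symmetrization and a truncation at level $K(\log N)^{1/\vartheta}$, at the cost of a logarithmic factor. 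The restriction on $\delta$ is precisely what lets us avoid that extra factor.
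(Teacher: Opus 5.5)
Your proposal is correct and follows essentially the same route as the paper: the same centered variables $X_i$, the generalized Bernstein inequality of \citet[Theorem~3.1]{kuchibhotla2022moving}, the choice $t=\log(2/\delta)$, and the same case-by-case absorption of the $t^{1/\vartheta}$ term into the $\sqrt{t/N}$ term on the admissible range of $\delta$. The differences are cosmetic. You work with unnormalized sums rather than the weights $a_i=1/N$, and at $\vartheta=2$ you invoke Hoeffding directly, whereas the paper notes that the two terms of Theorem~3.1 are then of the same order.
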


\begin{remark}[{Sub-Weibull confidence regime}]
Proposition~\ref{prop:subweibull_coverage} highlights several features of the sub-Weibull setting.
\begin{enumerate}[label=(\roman*)]
\item The restriction on $\delta$ comes from expressing the sub-Weibull concentration result in a pure $1/\sqrt{N}$ form. Over the confidence regime considered here, the generalized Bernstein inequality of \citet[Theorem~3.1]{kuchibhotla2022moving} reduces to the same pointwise concentration template used in Proposition~\ref{prop:distribution_coverage}. More generally, if the prescribed $\delta\in(0,1)$ lies outside this regime, an additional higher-order term of rate $\mathcal{O}(N^{-1/\vartheta})$ appears, which decays faster than the leading $\mathcal{O}(N^{-1/2})$ term for all $\vartheta\in(0,2)$. 
\item A key advantage of Proposition~\ref{prop:subweibull_coverage} is that it remains applicable under substantially heavier-tailed uncertainty distributions than those covered by existing finite-sample DRO guarantees. In particular, \citet{gao2023finite} relies on a transportation-information condition on the data-generating law, which implies standard sub-Gaussian-type tail assumptions,  while the classical Wasserstein guarantee of \citet{mohajerin2018data}, via \citet{fournier2015rate}, requires the stronger stretched-exponential moment condition $\EE[\exp(\|\txi\|^r)]<+\infty$ for some $r>1$ on the raw uncertainty. By contrast, Proposition~\ref{prop:subweibull_coverage} directly covers the broader sub-Weibull regime.
\end{enumerate}
\end{remark}

\subsection*{Markovian data} 
Our framework also accommodates settings in which the samples are not i.i.d. Consider the setting where ${\{\hat{\bxi}_i\}}_{i\in[N]}$ is a sample trajectory of a Markov chain with invariant distribution $\PP^\star$. In this case, our mean estimator $\EE_\PPhat[\ell(\z,\txi)]$ is no longer constructed using i.i.d. samples, which precludes the direct use of classical i.i.d.~concentration inequalities. Nevertheless, using Hoeffding's inequality for general Markov chains \citep{fan2021hoeffding}, we can obtain the desired pointwise concentration inequality. 

\begin{proposition}\label{prop:markov_coverage}
Assume the loss function is bounded: $\ell(\z,\bxi)\in[\lu,\lbar]$ for all $\z\in\X$ and $\bxi\in\Xi$. Suppose that the Markov chain is time-homogeneous with an absolute spectral gap of $1-\lambda>0$, where $\lambda\in[0,1)$ is the operator norm of the Markov transition kernel acting on the Hilbert space of square-integrable mean-zero functions under $\PP^\star$. Then, the pointwise concentration~\eqref{eq:pointwise_conc} holds with $\varepsilon(y)=\frac{\lbar-\lu}{2}\sqrt{\frac{2(1+\lambda)}{1-\lambda}y}$, i.e.,
\begin{equation}\label{eq:eps_markov}
\EE_{\PP^\star}[\ell(\z,\txi)] - \EE_\PPhat[\ell(\z,\txi)]\leq \frac{1}{\sqrt{N}}\frac{\lbar-\lu}{2}\sqrt{\frac{2(1+\lambda)}{1-\lambda}\log\left(\frac{1}{\delta}\right)}
\end{equation}
with probability at least $1-\delta$. The same result holds for time-inhomogeneous chains with $\lambda$ replaced by $\max_{i\in[N]}\lambda_i$, where $1-\lambda_i$ is the spectral gap of the Markov transition kernel for step $i$.
\end{proposition}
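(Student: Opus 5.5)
The proof is a direct application of the Hoeffding inequality for general Markov chains of \citet{fan2021hoeffding}. We take the reference distribution to be the empirical distribution of the trajectory, $\PPhat=\frac1N\sum_{i\in[N]}\delta_{\hat\bxi_i}$. Fix $\z\in\X$ and define $f(\bxi)\coloneqq\ell(\z,\bxi)$. This $f$ is measurable and takes values in $[\lu,\lbar]$. Since $\PP^\star$ is the invariant distribution, its stationary mean is $\EE_{\PP^\star}[f(\txi)]$. The quantity we must bound is
\[
\EE_{\PP^\star}[f(\txi)]-\frac1N\sum_{i\in[N]} f(\hat\bxi_i),
\]
which is the lower deviation of an additive functional of the chain. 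We handle it by applying the one-sided Hoeffding bound to $-f$, which is also bounded with the same range $\lbar-\lu$.

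The key step is to invoke \citet[Theorem~1]{fan2021hoeffding}. For a stationary chain with absolute spectral gap $1-\lambda$ and $f$ taking values in $[a,b]$, it gives
\[
\Prob\Bigl(\textstyle\sum_{i}(f(\hat\bxi_i)-\EE_{\PP^\star}f)\ge t\Bigr)\le \exp\Bigl(-\frac{1-\lambda}{1+\lambda}\frac{2t^2}{N(b-a)^2}\Bigr).
\]
We apply this to $-f$ with $t=N s$ and set the right-hand side equal to $\delta$. Solving for $s$ gives
\[
s=\frac{\lbar-\lu}{\sqrt N}\sqrt{\frac{(1+\lambda)\log(1/\delta)}{2(1-\lambda)}}=\frac{1}{\sqrt N}\,\frac{\lbar-\lu}{2}\sqrt{\frac{2(1+\lambda)}{1-\lambda}\log\Bigl(\frac1\delta\Bigr)},
\]
which is exactly \eqref{eq:eps_markov}. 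The resulting $\varepsilon(y)=\frac{\lbar-\lu}{2}\sqrt{\frac{2(1+\lambda)}{1-\lambda}y}$ is nondecreasing and grows like $\sqrt y$, so it is of polynomial growth. It therefore meets the hypotheses of Proposition~\ref{prop:distribution_coverage}, and the radius calibration \eqref{eq:radius} applies directly.

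For the time-inhomogeneous case we would use the corresponding result in \citet{fan2021hoeffding}, which bounds the moment generating function step by step with per-step kernels. Since the bound is monotone in each $\lambda_i$, replacing each $\lambda_i$ by $\max_i\lambda_i$ yields the same exponent.

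The main obstacle is checking that the exact hypotheses of Fan et al.\ match our setting. First, their theorem is stated for a chain started at stationarity, or with a correction factor depending on the initial distribution. I would first assume $\hat\bxi_1\sim\PP^\star$ as the implicit setting of the statement; otherwise a multiplicative factor $\|d\nu/d\PP^\star\|_{L^2}$ appears in front of the exponential and would have to be absorbed. Second, their spectral-gap definition must coincide with the operator norm on $L_0^2(\PP^\star)$ used here. Third, we must confirm their constant, the variance proxy $\frac{1+\lambda}{1-\lambda}\frac{(b-a)^2}{4}$, which produces exactly the displayed factor.
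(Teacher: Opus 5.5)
Your proposal is correct and follows the paper's proof essentially step for step. You invoke Theorem~1 of \citet{fan2021hoeffding} for the lower deviation of the trajectory average, substitute $s=Nt$, set the tail bound equal to $\delta$ and solve, and appeal to their time-inhomogeneous result (Theorem~5 in the paper) for the extension; your caveat that the chain is implicitly started at stationarity applies equally to the paper's argument.
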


\begin{remark}[Markovian data with general state spaces] Our proposed DRO model applies to (time-inhomogeneous) Markovian data with general state spaces. We note that 
DRO with time-homogeneous Markovian data has been studied in \citep{li2021distributionally} in a discrete-state-space setting. Unlike ours, their statistical guarantee is asymptotic as it relies on large deviation principles; hence, it does not provide a prescription for adjusting the radius with respect to the sample size $N$.  
\end{remark}

\subsection*{Outlier-Corrupted Data}\label{sec:outlier}

In many practical settings, the observed data may be contaminated by outliers, i.e., samples that deviate arbitrarily from the underlying distribution $\PP^\star$. Under such corruption, the standard empirical-mean reference that uses all data points is sensitive to these atypical observations and may no longer reliably approximate $\EE_{\PP^\star}[\ell(\z,\txi)]$, potentially invalidating the coverage guarantee. To address this, we replace the empirical-mean reference with a \emph{Median-of-Means} (MoM) reference functional~\citep{laforgue2021generalization}, which retains a $1/\sqrt{N}$ convergence rate while remaining robust to a constant fraction of outliers.

\paragraph{MoM estimator.}
Given $N$ observations $\hat{\bxi}_1,\ldots,\hat{\bxi}_N$, fix an integer $K\geq 1$ and partition the index set $[N]$ into $K$ disjoint blocks $\mathcal{B}_1,\ldots,\mathcal{B}_K$ of equal size $B=\lfloor N/K\rfloor$, independently of the data. For each block $k\in[K]$, define the block mean
\begin{equation*}
f_k(\z)\coloneqq \frac{1}{B}\sum_{i\in\mathcal{B}_k}\ell(\z,\hat{\bxi}_i).
\end{equation*}
Then the MoM reference functional is defined as
\begin{equation*}\label{eq:mom_ref}
\widehat\mu_{\mathrm{MoM}}(\z)\coloneqq \mathrm{median}\bigl(f_1(\z),\ldots,f_K(\z)\bigr).
\end{equation*}
In the outlier-corrupted setting, we use $\widehat\mu_{\mathrm{MoM}}(\z)$ in place of $\EE_\PPhat[\ell(\z,\txi)]$ as the reference functional in the loss constraints.

Suppose the sample points $\{\hat{\bxi}_1,\ldots,\hat{\bxi}_N\}$ contain $N-n_\mathrm{o}$ inliers drawn i.i.d.\ from $\PP^\star$, and $n_\mathrm{o}$ arbitrary outliers. We assume that the fraction of outliers is $\omega\coloneqq n_\mathrm{o}/N\in[0,1/2)$.
Following~\citep{laforgue2021generalization} with the arithmetic-mean calibration, we define the inflation constant
\[
\Gamma(\omega)\coloneqq\frac{\sqrt{2(1+2\omega)}}{(1-2\omega)^{3/2}},
\]
which satisfies $\Gamma(\omega)\to\infty$ as $\omega\to 1/2$.

\begin{proposition}[Proposition~2 (eq.~(3)) of~\citet{laforgue2021generalization}]\label{prop:mom_coverage}
Suppose the inlier loss has bounded variance: $\var_{\PP^\star}[\ell(\z,\txi)]\leq\sigma^2$ for all $\z\in\X$. Set $K=\Bigl\lceil\tfrac{4(1+2\omega)}{(1-2\omega)^2}\log\tfrac{1}{\delta}\Bigr\rceil$ and let $\delta\in\Bigl[e^{-(1-2\omega)^2N/(4(1+2\omega))},\,e^{-(1-2\omega)^2N/8}\Bigr]$.  Then, the MoM reference satisfies the pointwise concentration bound
\[
\EE_{\PP^\star}[\ell(\z,\txi)]-\widehat\mu_{\mathrm{MoM}}(\z)\leq 4\sqrt{e}\,\sigma\,\Gamma(\omega)\sqrt{\frac{1+\log(1/\delta)}{N}}.
\]
 with probability at least $1-\delta$.
\end{proposition}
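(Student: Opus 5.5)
The argument is the standard median-of-means analysis, run for a fixed decision $\z\in\X$ and with the arbitrary outliers handled by a counting argument. Write $\mu\coloneqq\EE_{\PP^\star}[\ell(\z,\txi)]$ and $t\coloneqq 4\sqrt{e}\,\sigma\,\Gamma(\omega)\sqrt{(1+\log(1/\delta))/N}$. If $\widehat\mu_{\mathrm{MoM}}(\z)<\mu-t$, then at least $\lceil K/2\rceil$ block means satisfy $f_k(\z)<\mu-t$. The first step is to bound the number of blocks that can be spoiled by outliers. At most $n_\mathrm{o}=\omega N$ blocks contain an outlier. Since $B=\lfloor N/K\rfloor$ is fixed, the number of corrupted blocks is at most $\min(K,\omega N)$. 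We need this to be at most a fraction $\omega'<1/2$ of $K$; the inflation $(1+2\omega)$ and $(1-2\omega)$ in $\Gamma$ and in $K$ suggest that $\omega'$ is of order $\omega$ after accounting for the block count. I would follow \citet{laforgue2021generalization} here, who treat the outlier fraction as a fraction of blocks in the regime where $K$ is large relative to $n_\mathrm{o}$. If that is not automatic, the cleanest route is to assume the number of corrupted blocks is at most $\omega K$.

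Second, for each clean block, which consists only of i.i.d.\ inliers, Chebyshev's inequality gives
\[
\Pr\bigl(f_k(\z)<\mu-t\bigr)\le \frac{\sigma^2}{Bt^2}\eqqcolon p .
\]
For the median to fall below $\mu-t$, at least $K/2-\omega K=(1-2\omega)K/2$ clean blocks must fail. These failures are independent Bernoulli$(p)$ indicators, because the blocks are disjoint and the partition is chosen independently of the data. Hoeffding's inequality then gives a failure probability of at most $\exp(-2K((1-2\omega)/2-p)^2)$. Choosing $t$ so that $p$ is a constant fraction of $(1-2\omega)$, for instance $p=(1-2\omega)/(2(1+2\omega))$ or the constant used by the cited work, makes this exponent at least $K(1-2\omega)^2/(4(1+2\omega))\ge\log(1/\delta)$ by the choice of $K$.

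Third, I translate $p$ back into $t$. This gives $t=\sigma\sqrt{1/(pB)}$ with $B\ge N/K-1\ge N/(2K)$. The upper limit on $\delta$ ensures $K\le N/2$, so that $B\ge1$ and the floor costs at most a factor 2. The lower limit on $\delta$ ensures $K\ge1$ is meaningful. Substituting $K\approx \tfrac{4(1+2\omega)}{(1-2\omega)^2}(1+\log(1/\delta))$ yields $t\lesssim \sigma\frac{\sqrt{(1+2\omega)}}{(1-2\omega)^{3/2}}\sqrt{(1+\log(1/\delta))/N}$, which is $\Gamma(\omega)$ up to the absolute constant $4\sqrt e$. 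The factor $\sqrt e$ comes from the ceiling and from using $1+\log(1/\delta)$ to absorb rounding.

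The main obstacle is the constant bookkeeping: making the outlier-block count, the Chebyshev level $p$, and the Hoeffding exponent match exactly the stated $K$, $\Gamma(\omega)$ and $4\sqrt e$. Since the statement is quoted verbatim from \citet[Proposition~2, eq.~(3)]{laforgue2021generalization}, I would first verify that their setting matches ours: i.i.d.\ inliers, bounded variance uniformly in $\z$, and outliers arbitrary but fixed per index. I would then invoke their result directly for the fixed $\z$, reproducing the three steps above only as far as needed to confirm the constants.
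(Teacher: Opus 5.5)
The paper gives no argument of its own here: the result is imported verbatim from \citet{laforgue2021generalization}. Your closing plan of invoking that result for the fixed $\z$ is therefore exactly what the paper does. The reconstruction you offer to confirm the constants, however, does not prove the statement as written.

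The first gap is the contaminated-block count. You leave it open and propose adding the hypothesis that at most $\omega K$ blocks are corrupted. That hypothesis is not in the statement, and it is not needed, because you have the roles of the two endpoints of the $\delta$-range reversed. The upper endpoint $\delta\le e^{-(1-2\omega)^2N/8}$ gives $\log(1/\delta)\ge(1-2\omega)^2N/8$, hence $K\ge(1+2\omega)N/2$. So at most $n_{\mathrm o}=\omega N\le\frac{2\omega}{1+2\omega}K<K/2$ blocks contain an outlier; this is precisely what makes $K$ large relative to $n_{\mathrm o}$. The lower endpoint gives $K\le N$, which is what yields $B=\lfloor N/K\rfloor\ge N/(2K)$. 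Together these force $B=1$ whenever $\omega>0$. In the stated range, $\widehat\mu_{\mathrm{MoM}}(\z)$ is therefore the sample median, and the right-hand side is at least $2\sqrt e\,\sigma\sqrt{(1+2\omega)/(1-2\omega)}$, i.e.\ of order $\sigma$ rather than $N^{-1/2}$. That is a feature of the statement, not something your proof must repair.

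The second step inherits the wrong threshold. With the correct contaminated fraction, $\widehat\mu_{\mathrm{MoM}}(\z)<\mu-t$ forces at least $r\coloneqq K/2-n_{\mathrm o}\ge\frac{K(1-2\omega)}{2(1+2\omega)}$ clean blocks to fail, not $(1-2\omega)K/2$. Hoeffding at this threshold cannot deliver the stated $\Gamma(\omega)$ over the whole range $\omega<1/2$. The largest admissible Chebyshev level shrinks like $(1-2\omega)^2$ as $\omega\to1/2$, which gives $(1-2\omega)^{-2}$ instead of $(1-2\omega)^{-3/2}$. Your sample choice $p=\frac{1-2\omega}{2(1+2\omega)}$ does not yield the claimed exponent for any $\omega<1/2$; at $\omega=0$ it equals the threshold $1/2$, so the bound is vacuous.

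The stated constants come out exactly from a binomial-coefficient tail bound instead:
\begin{itemize}
\item Substituting $t$, $B\ge N/(2K)$ and $K\le\frac{4(1+2\omega)}{(1-2\omega)^2}\bigl(1+\log(1/\delta)\bigr)$ into Chebyshev gives a per-clean-block failure probability $p\le\frac{1-2\omega}{4e}$.
\item The number of failing clean blocks is dominated by $\mathrm{Bin}(K,p)$. Since $eKp/r\le\frac{1+2\omega}{2}<1$, we get $\Pr(\mathrm{Bin}(K,p)\ge r)\le\binom{K}{\lceil r\rceil}p^{\lceil r\rceil}\le(eKp/r)^{r}\le\bigl(\tfrac{1+2\omega}{2}\bigr)^{K(1-2\omega)/(2(1+2\omega))}$.
\item Finally, $\log\frac{2}{1+2\omega}\ge\frac{1-2\omega}{2}$ turns this into $\exp\bigl(-\frac{K(1-2\omega)^2}{4(1+2\omega)}\bigr)\le\delta$ by the choice of $K$.
\end{itemize}
So the factor $\sqrt e$ comes from the $e$ in $\binom{K}{r}\le(eK/r)^r$, not from the ceiling or from rounding.
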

We note that the covering argument in Proposition \ref{prop:distribution_coverage} carries over under this replacement because it only requires a pointwise concentration bound and a Lipschitz reference functional. The latter property holds for $\widehat\mu_{\mathrm{MoM}}$: the median satisfies $|\mathrm{median}(\bm{a})-\mathrm{median}(\bm{b})|\leq\max_k|a_k-b_k|$ for any two vectors $\bm a,\bm b\in\RR^K$, and each block mean satisfies $|f_k(\z)-f_k(\z')|\leq L\|\z-\z'\|$ by the pathwise Lipschitz bound $|\ell(\z,\bxi)-\ell(\z',\bxi)|\leq L\|\z-\z'\|$ (see proof of Lemma~\ref{lem:Lipschitz_condition}).

This replacement does not require interpreting $\widehat\mu_{\mathrm{MoM}}$ as an expectation under a new empirical distribution. When we later derive the exact dual reformulation, the same generalized moment problem argument applies to the MoM-based ambiguity set as long as this set satisfies the usual Slater condition for the chosen radius $\epsilon$. Under this condition, strong duality and dual attainment hold with $\widehat\mu_{\mathrm{MoM}}(\z)$ replacing $\EE_{\PPhat}[\ell(\z,\txi)]$ in the dual objective.

\begin{remark}[{Outlier robustness through robust estimators}]
Existing DRO approaches to outlier-corrupted data use problem-specific constructions tailored to particular outlier structures: mixed Wasserstein-TV ambiguity sets \citep{nietert2023outlier,nietert2024robust}, concave-cost optimal transport \citep{blanchet2024automatic}, and distributionally favorable paradigms \citep{jiang2024distributionally}. Our TIPM framework offers a unified alternative: since the coverage guarantee depends only on a scalar concentration inequality, any statistically robust estimator (such as MoM) can be substituted directly, preserving tractability and bypassing the curse of dimensionality.
\end{remark}

\subsection*{Incomplete Data}
Our framework is applicable to settings where a subset of the random parameters is not always observable. This is relevant to many selection problems, such as company hiring, college admissions, and loan approval, where outcomes (e.g., high-performing candidates, qualified students, or applicants who could repay the loan) are only observed for selected individuals. Formally, we define the random parameters as $\txi=(\tchi,\tomega)\in\mathscr X\times \Omega_{IPW}$, where $\tchi$ are entries that are always observable, while $\tomega$ are those that will be observed only if the candidate is selected. 

Let $\tilde S\in\{0,1\}$ denote the historical selection outcome with $\tilde S = 1$ if the candidate is selected to advance and $\tilde S = 0$ otherwise.  Define $\overline\PP$ to be the joint distribution of $(\tilde S,\tchi,\tomega)$.  This binary random variable is governed by a logging policy
\begin{equation*}
\pi(\bchi)\coloneqq \overline\PP(\tilde S=1|\tchi=\bchi),
\end{equation*}
which represents the probability that a candidate with covariate $\bchi$ is selected. Based on the logging policy, the available dataset is given by
\begin{equation*}
\{(\bchi_n,\bomega_n)\}_{n\in\mathcal I}\cup\{\bchi_n\}_{n\in[N]\setminus\mathcal I},
\end{equation*}
where $\mathcal I$ is the set of candidates who were selected and have the outcome observed. Under the mild conditional exchangeability and positivity assumptions~\citep[Assumptions 1 and 2]{jia2024learning}, the inverse probability weighting estimator of the expected loss is given by 
\begin{equation}
\label{eq:IPW_estimator}
\EE_{\PPhat}[\ell(\z,\txi)]\coloneqq \frac{1}{N} \sum_{n\in[N]}\frac{\I[S_n=1]}{\pi(\bchi_n)} \ell(\z,\bxi_n), 
\end{equation}
where each full sample point is weighted by the reciprocal of its propensity score $\pi(\bchi_n)$. We use this IPW estimator as the reference functional in the loss constraints and obtain the following concentration bound.

\begin{proposition}\label{prop:incomplete}
Suppose conditional exchangeability and positivity hold, with $\pi(\bchi)\ge \underline\pi>0$, and suppose the losses are uniformly sub-Gaussian, i.e., $\|\ell(\z,\txi)\|_{\psi_2}\le\sigma$ for all $\z\in\X$. The pointwise concentration \eqref{eq:pointwise_conc} holds with $\varepsilon(y)=C_{\mathrm{IPW}}\frac{\sigma}{\underline\pi}\sqrt{y}$ for a universal constant $C_{\mathrm{IPW}}>0$, i.e., we have
\begin{equation*}
\EE_{\PP^\star}[\ell(\z,\txi)] - \EE_\PPhat[\ell(\z,\txi)]\leq C_{\mathrm{IPW}}\frac{\sigma}{\underline\pi}\sqrt{\frac{\log(1/\delta)}{N}}
\end{equation*}
with probability at least $1-\delta$. 
\end{proposition}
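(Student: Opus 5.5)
The plan is to write the IPW estimator as an average of i.i.d.\ terms and apply a sub-Gaussian Hoeffding bound. For a fixed $\z\in\X$, set
\[
Y_n(\z)\coloneqq \frac{\I[S_n=1]}{\pi(\bchi_n)}\,\ell(\z,\bxi_n),\qquad n\in[N].
\]
These are i.i.d.\ draws from $\overline\PP$, since the triples $(S_n,\bchi_n,\bomega_n)$ are i.i.d.\ and $Y_n$ depends only on $\bomega_n$ when $S_n=1$. Hence $Y_n$ is observable and the estimator \eqref{eq:IPW_estimator} is computable.

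\textbf{Unbiasedness.} Condition on $\tchi$ and use conditional exchangeability, meaning $\tilde S$ is independent of $\tomega$ given $\tchi$. Then
\[
\EE_{\overline\PP}\bigl[\I[\tilde S=1]\,\ell(\z,\txi)/\pi(\tchi)\,\big|\,\tchi\bigr]=\pi(\tchi)\,\EE[\ell(\z,\txi)\mid\tchi]/\pi(\tchi).
\]
Positivity, $\pi\ge\underline\pi>0$, makes the division legitimate. Taking expectations gives $\EE[Y_n(\z)]=\EE_{\PP^\star}[\ell(\z,\txi)]$.

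\textbf{Tail control.} Since $0\le \I[\tilde S=1]/\pi(\tchi)\le 1/\underline\pi$ pointwise, we have $|Y_n(\z)|\le |\ell(\z,\bxi_n)|/\underline\pi$. Monotonicity of the Orlicz norm then gives $\|Y_n(\z)\|_{\psi_2}\le \sigma/\underline\pi$. Centering costs at most a universal constant factor, by the standard fact $\|X-\EE X\|_{\psi_2}\le C\|X\|_{\psi_2}$ (e.g., \citealp[Lemma 2.6.8]{vershynin2025high}). So the centered variables $Y_n(\z)-\EE_{\PP^\star}[\ell(\z,\txi)]$ are independent, mean-zero, and sub-Gaussian with parameter $C\sigma/\underline\pi$.

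\textbf{Concentration.} The general Hoeffding inequality for sums of independent sub-Gaussian variables gives, for every $t\ge0$,
\[
\Prob\Bigl(\EE_{\PP^\star}[\ell(\z,\txi)]-\tfrac1N\textstyle\sum_n Y_n(\z)\ge t\Bigr)\le \exp\bigl(-cN t^2\underline\pi^2/\sigma^2\bigr).
\]
Setting the right-hand side equal to $\delta$ and solving for $t$ yields $t=C_{\mathrm{IPW}}\frac{\sigma}{\underline\pi}\sqrt{\log(1/\delta)/N}$. Because the deviation is one-sided, no factor $2$ appears, and all universal constants are absorbed into $C_{\mathrm{IPW}}$. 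The resulting function $\varepsilon(y)=C_{\mathrm{IPW}}\frac{\sigma}{\underline\pi}\sqrt{y}$ is nondecreasing with polynomial growth, so it also satisfies the template required by Proposition~\ref{prop:distribution_coverage}.

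\textbf{Main obstacle.} The delicate step is the unbiasedness computation. It must use exactly the conditional exchangeability and positivity assumptions of \citet{jia2024learning}, and it must make clear that $\ell(\z,\bxi_n)$ enters only through observed samples. The tail bound itself is routine once the weight bound $1/\underline\pi$ is in place.
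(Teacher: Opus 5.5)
Your proposal is correct and follows the same route as the paper. You bound the IPW weight by $1/\underline\pi$ to get $\|Y\|_{\psi_2}\le\sigma/\underline\pi$, center at a constant-factor cost, identify the mean with $\EE_{\PP^\star}[\ell(\z,\txi)]$ via conditional exchangeability, and apply sub-Gaussian Hoeffding; the paper conditions on $(\tchi,\tomega)$ in Lemma~\ref{lem:IPW_equivalence} rather than on $\tchi$, which is an immaterial difference, and your explicit one-sided Chernoff bound without the factor $2$ is slightly more careful than citing the two-sided inequality, since a $\log(2/\delta)$ term could not be absorbed into $C_{\mathrm{IPW}}\sqrt{\log(1/\delta)}$ uniformly for $\delta$ near $1$.
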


\subsection{Dual Formulations}


In this subsection, we derive an exact dual reformulation of the DRO problem \eqref{eq:DRO} and establish its convergence to empirical risk minimization as the ambiguity parameter $\epsilon$ approaches $0$. We start with the reformulation. 
We denote by $\C(\X)$ the space of continuous functions on $\X$, by $\C_+(\X)\coloneqq\{f\in\C(\X):f(\x)\geq 0\;\forall \x\in\X\}$ the cone of nonnegative continuous functions, and by $\M_+(\X)$ the cone of finite nonnegative Borel measures on $\X$.
\begin{theorem}\label{thm:dual_exact}
Fix $\epsilon>0$. The DRO problem \eqref{eq:DRO} is equivalent to the infinite linear program 
\begin{equation} 
\label{eq:dual_exact}
\begin{aligned}
\min \;&  \alpha  + \beta\Omega  + \int_\X\left(\EE_\PPhat[\ell(\z,\txi)]+ \epsilon\right)\nu(\mathrm d\z) \\
\st & \x\in\X, \; \alpha\in\RR,\;\beta\in\RR_+,\;\nu\in\M_+(\X)\\
&\ell(\x,\bxi)\leq\alpha+\beta\|\bxi\|^2+\int_\X\ell(\z,\bxi)\nu(\mathrm d\z)\qquad\forall\bxi\in\Xi.
\end{aligned}
\end{equation}
Moreover, the infimum in \eqref{eq:dual_exact} is attained by some $(\x,\alpha,\beta,\nu)\in\X\times\RR\times\RR_+\times\M_+(\X)$. 
\end{theorem}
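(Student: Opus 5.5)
By Proposition~\ref{prop:semiinf_ambiguity_set}, I would replace the hinge-constrained set by the semi-infinite set \eqref{eq:semiinf_ambiguity_set}. For fixed $\x\in\X$ the inner problem is then a generalized moment problem over $\QQ\in\mathscr P(\Xi)$:
\[
\sup_{\QQ}\ \EE_\QQ[\ell(\x,\txi)]
\quad\text{s.t.}\quad
\EE_\QQ[\ell(\z,\txi)]\le \EE_\PPhat[\ell(\z,\txi)]+\epsilon\ \ \forall \z\in\X,
\qquad
\EE_\QQ[\|\txi\|^2]\le\Omega .
\]
I would write it as a conic linear program in the cone of nonnegative measures on $\Xi$. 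There are three groups of constraints: the normalization $\EE_\QQ[1]=1$ with multiplier $\alpha\in\RR$; the moment bound with multiplier $\beta\ge0$; and the continuum of loss constraints indexed by $\z\in\X$. Since $\z\mapsto\EE_\QQ[\ell(\z,\txi)]$ is continuous (Lemma~\ref{lem:Lipschitz_condition}, with finite first moments guaranteed by the second-moment bound and Assumption~\ref{as:linear_growth}), the loss constraints form a single conic constraint in $\C(\X)$ with respect to $\C_+(\X)$. Its dual cone is $\M_+(\X)$ by Riesz representation, which gives the multiplier $\nu$.

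The Lagrangian dual then reads
\[
\min_{\alpha,\ \beta\ge0,\ \nu\in\M_+(\X)}\ \alpha+\beta\Omega+\int_\X\bigl(\EE_\PPhat[\ell(\z,\txi)]+\epsilon\bigr)\nu(\mathrm d\z)
\]
subject to the pointwise semi-infinite constraint in \eqref{eq:dual_exact}. Exchanging $\int_\X$ and $\EE_\QQ$ is justified by Fubini, since $|\ell|\le c_1+c_2\|\bxi\|$. Weak duality is immediate from integrating the constraint against any feasible $\QQ$.

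For strong duality and dual attainment I would use a Slater-type interior-point argument for conic linear programs (Shapiro's theorem on moment problems). The witness is $\QQ=\PPhat$. It gives slack exactly $\epsilon>0$ in every loss constraint, which is uniform over $\X$ and hence an interior point of $\C_+(\X)$ in the sup norm. It also gives strict slack in the moment constraint by Assumption~\ref{as:second_moment}. The value is finite because
\[
\EE_\QQ[\ell(\x,\txi)]\le c_1+c_2\sqrt\Omega .
\]
The theorem then yields zero duality gap with an attained dual minimizer $(\alpha,\beta,\nu)$ for each fixed $\x$. Merging the outer $\min_{\x}$ with the inner dual minimization gives \eqref{eq:dual_exact}.

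The main obstacle is verifying the topological hypotheses of the duality theorem. The primal lives in measures with finite second moment paired with functions of quadratic growth, and the constraint map into $\C(\X)$ must be well-defined and continuous. The Slater point must be interior to the product cone in a topology in which $\C(\X)^*=\M(\X)$. I would choose the sup-norm on $\C(\X)$ and the weighted space $\{f:|f|\le C(1+\|\bxi\|^2)\}$ for the test functions.

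Joint attainment over $\x$ needs a further argument. The optimal value function $\x\mapsto\sup_{\QQ\in\mP_\epsilon}\EE_\QQ[\ell(\x,\txi)]$ is lower semicontinuous, as already noted in the paper, so it has a minimizer $\hat\x$ on the compact set $\X$. Per-$\x$ dual attainment at $\hat\x$ then supplies $(\alpha,\beta,\nu)$, and this tuple attains the infimum in \eqref{eq:dual_exact}. No compactness in $\nu$ is needed.
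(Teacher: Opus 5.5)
Your proposal is correct and follows essentially the same route as the paper. You pass to the semi-infinite set via Proposition~\ref{prop:semiinf_ambiguity_set} and dualize the generalized moment problem with multipliers $\alpha\in\RR$, $\beta\in\RR_+$, $\nu\in\M_+(\X)$; weak duality comes from the linear-growth/Fubini bound, zero gap and per-$\x$ dual attainment from Shapiro's moment-duality result with $\PPhat$ as the Slater point, and joint attainment from pairing a minimizer of the lower semicontinuous worst-case objective with its attained dual multipliers, exactly as in the paper. Your explicit treatment of the loss constraints as one conic constraint in $\C(\X)$ under the sup norm, where the uniform slack $\epsilon$ gives an interior point and Riesz representation produces $\nu$, is a useful sharpening of the paper's brief citation rather than a different argument.
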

The exact formulation should be viewed as a characterization of the target DRO model rather than the computational endpoint. It exposes the dual structure induced by the ambiguity set. The implementable model is the sampled approximation developed in Section~\ref{sec:mc_sampling}: it replaces the continuum of test decisions by finitely many sampled decisions, retains the conservative outer-approximation property, converges to the exact DRO model, and enables finite conic reformulations.

We close the section by showing that the optimal value and solutions of the DRO problem converge to those of the (non-robust) empirical risk minimization problem as $\epsilon$ approaches $0$. 
\begin{proposition}
\label{prop:convergence_eps_to_0}
As $\epsilon\rightarrow 0$, the optimal value of \eqref{eq:dual_exact} converges to that of the empirical risk minimization problem
\begin{equation}
\label{eq:ERM}
\min_{\x\in\X}   \EE_{\PPhat}[\ell(\x,\txi)].
\end{equation}
In addition, every cluster point $\hat \x^\star$ of a sequence $\{\hat\x_\epsilon\}_{\epsilon\downarrow 0}$ of minimizers for \eqref{eq:dual_exact} is a minimizer for \eqref{eq:ERM}. 
\end{proposition}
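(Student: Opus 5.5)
We prove both claims by sandwiching the optimal value of \eqref{eq:dual_exact} between the ERM value and a quantity that tends to it. By Theorem~\ref{thm:dual_exact}, \eqref{eq:dual_exact} has the same optimal value as the DRO problem \eqref{eq:DRO}. Write $J(\epsilon)\coloneqq\min_{\x\in\X}\sup_{\QQ\in\mP_\epsilon}\EE_\QQ[\ell(\x,\txi)]$ and $J_0\coloneqq\min_{\x\in\X}\EE_\PPhat[\ell(\x,\txi)]$.

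\emph{Lower bound.} By Assumption~\ref{as:second_moment}, $\EE_\PPhat[\|\txi\|^2]<\Omega$. Moreover, $\PPhat$ trivially satisfies the loss-discrepancy constraints in \eqref{eq:semiinf_ambiguity_set} for every $\epsilon\ge0$. Hence $\PPhat\in\mP_\epsilon$ by Proposition~\ref{prop:semiinf_ambiguity_set}, and therefore $J(\epsilon)\ge J_0$ and $F_\epsilon(\x)\coloneqq\sup_{\QQ\in\mP_\epsilon}\EE_\QQ[\ell(\x,\txi)]\ge\EE_\PPhat[\ell(\x,\txi)]$ for every $\x$.

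\emph{Upper bound.} This is the key observation. Take any $\x\in\X$ and $\QQ\in\mP_\epsilon$. The test decision $\z=\x$ is admissible in \eqref{eq:semiinf_ambiguity_set}, so
\[
\EE_\QQ[\ell(\x,\txi)]\le\EE_\PPhat[\ell(\x,\txi)]+\epsilon .
\]
Hence $F_\epsilon(\x)\le\EE_\PPhat[\ell(\x,\txi)]+\epsilon$, and minimizing over $\x$ gives $J_0\le J(\epsilon)\le J_0+\epsilon$. This yields the value convergence directly. In the dual picture, the same bound corresponds to choosing $\nu=\delta_{\x}$, $\alpha=\beta=0$ in \eqref{eq:dual_exact}, which gives objective $\EE_\PPhat[\ell(\x,\txi)]+\epsilon$. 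Feasibility of this choice is immediate because the constraint reads $\ell(\x,\bxi)\le\ell(\x,\bxi)$. We can present the argument this way to stay within \eqref{eq:dual_exact}.

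\emph{Cluster points.} Let $\hat\x_\epsilon$ be the $\x$-component of a minimizer of \eqref{eq:dual_exact}; it is then a minimizer of \eqref{eq:DRO}. Combining both bounds,
\[
\EE_\PPhat[\ell(\hat\x_\epsilon,\txi)]\le F_\epsilon(\hat\x_\epsilon)=J(\epsilon)\le J_0+\epsilon .
\]
If $\hat\x_{\epsilon_k}\to\hat\x^\star$ along $\epsilon_k\downarrow0$, then $\hat\x^\star\in\X$ by compactness. The map $\x\mapsto\EE_\PPhat[\ell(\x,\txi)]$ is continuous: it is a finite or integrable average of piecewise affine functions that are continuous in $\x$, and by Lemma~\ref{lem:Lipschitz_condition} it is in fact Lipschitz. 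Passing to the limit gives $\EE_\PPhat[\ell(\hat\x^\star,\txi)]\le J_0$, so $\hat\x^\star$ minimizes \eqref{eq:ERM}.

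The only delicate point is justifying that minimizers of \eqref{eq:dual_exact} project to minimizers of \eqref{eq:DRO}. This relies on the exact equivalence and attainment in Theorem~\ref{thm:dual_exact}, which is stated for $\epsilon>0$ and so is fine along $\epsilon_k\downarrow0$. Alternatively, we can bound directly inside the dual. For a dual-feasible $(\x,\alpha,\beta,\nu)$, integrate the constraint against $\PPhat$ and use $\beta\,\EE_\PPhat\|\txi\|^2\le\beta\Omega$ together with $\nu\ge0$. This shows the dual objective is at least $\EE_\PPhat[\ell(\x,\txi)]+\epsilon\,\nu(\X)\ge\EE_\PPhat[\ell(\x,\txi)]$, which bypasses the question entirely.
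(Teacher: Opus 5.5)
Your proof is correct, but your route to value convergence is different from the paper's and more elementary. The paper uses the test decision $\z=\x$ only at $\epsilon=0$, to identify $f_0(\x)=\EE_\PPhat[\ell(\x,\txi)]$. It then proves $\limsup_{\epsilon\downarrow0}f_\epsilon(\x)\le f_0(\x)$ by compactness: near-optimal $\QQ_k\in\mP_{\epsilon_k}$ are tight by the second-moment bound, Prohorov gives a weak limit, uniform integrability passes the loss constraints and the objective to the limit, and the limit lies in $\mP_0$. Value convergence then follows from pointwise convergence and monotonicity in $\epsilon$. You instead apply the $\z=\x$ constraint at level $\epsilon$ itself. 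This gives the uniform sandwich $\EE_\PPhat[\ell(\x,\txi)]\le f_\epsilon(\x)\le\EE_\PPhat[\ell(\x,\txi)]+\epsilon$ on all of $\X$, with no limiting argument, and hence the explicit rate $J_0\le J(\epsilon)\le J_0+\epsilon$. Your dual version (the feasible point $\nu=\delta_{\x}$, $\alpha=\beta=0$, plus weak duality obtained by integrating the constraint against $\PPhat$) even avoids strong duality; it uses Theorem~\ref{thm:dual_exact} only for attainment of minimizers. Your cluster-point step is the same as the paper's. What the paper's heavier argument buys is a template that still works when no pointwise constraint at $\z=\x$ is available. It is reused essentially verbatim in Lemma~\ref{lem:convergence_relaxed_optimal_value} and Lemma~\ref{lem:moving_threshold_objective_convergence}, where the expected hinge is only bounded by $\eta>0$ and your shortcut does not apply directly. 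For the present proposition, however, your argument is shorter and strictly stronger, since it gives a quantitative and uniform bound.
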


\section{A Monte Carlo Sampling Approach}\label{sec:mc_sampling}

The exact formulation remains difficult to solve: the ambiguity set contains an expectation over the continuous auxiliary distribution $\ZZ$, while the dual reformulation \eqref{eq:dual_exact} contains a measure-valued multiplier $\nu$. By Proposition~\ref{prop:semiinf_ambiguity_set}, the DRO problem \eqref{eq:DRO} can also be viewed as a problem with an infinitely constrained ambiguity set. Such problems have been studied by \citet{chen2019distributionally}, who develop a convergent cutting-plane algorithm for compact-support settings. We instead develop a Monte Carlo sampling approximation that replaces the continuum of test decisions with randomly sampled ones, yielding a conservative finite approximation with explicit convergence and suboptimality guarantees. As shown in Section~\ref{sec:unbounded_support}, the same sampling scheme can also be extended to unbounded-support settings.

We draw $M$ samples from $\ZZ$ and obtain the approximate ambiguity set:
\begin{equation}
\label{eq:sample_ambiguity_set}
\mP^M_\epsilon\coloneqq \left\{\QQ\in\mathscr P(\Xi):
\begin{array}{l}\displaystyle\frac{1}{M}\sum_{m\in[M]}\left[\EE_\QQ[\ell(\z_m,\txi)]-\EE_\PPhat[\ell(\z_m,\txi)]-\eps\right]_+ \leq 0\\
\EE_\QQ[\|\txi\|^2]\leq \Omega
\end{array}
\right\}.
\end{equation}
When direct sampling from $\X$ is inconvenient, we use the outer test-decision set $\bar\X$ discussed in Section~\ref{sec:IPMDRO}; the samples $\z_m$ below are then drawn from the corresponding full-support distribution on $\bar\X$. The containment, convergence, and suboptimality guarantees in this section continue to hold under this convention, with constants evaluated on $\bar\X$.
By construction, the constraint in \eqref{eq:sample_ambiguity_set} enforces the loss-discrepancy bound only at the sampled test decisions \(\{\z_m\}_{m=1}^M\). Hence \(\mP_\epsilon^M\supseteq\mP_\epsilon\), and the sampled DRO problem is a conservative approximation of the exact DRO problem. In particular, the out-of-sample guarantee in Corollary~\ref{cor:OOS_guarantee} remains valid. Unlike classical sample-average approximations for stochastic programming, which often introduce optimistic bias, this approximation is pessimistically biased because it relaxes the ambiguity-set constraints and therefore enlarges the adversary's feasible set. This conservative structure is also central to the convergence and suboptimality guarantees developed next.

Using the ambiguity set \eqref{eq:sample_ambiguity_set}, we obtain the following dual reformulation: 
\begin{equation} 
\label{eq:dual_SAA}
\begin{aligned}
\min \;&  \alpha  + \beta\Omega + \frac{1}{M}\sum_{m\in[M]}\nu_m(\EE_\PPhat[\ell(\z_m,\txi)]+ \epsilon)  \\
\st & \x\in\X, \; \alpha\in\RR,\;\beta\in\RR_+,\;\bm\nu\in\RR_+^M\\
&\ell(\x,\bxi)\leq\alpha+\beta\|\bxi\|^2+\frac{1}{M}\sum_{m\in[M]}\nu_m\ell(\z_m,\bxi) \qquad\forall\bxi\in\Xi.
\end{aligned}
\end{equation}
This formulation constitutes the sample-average counterpart of Theorem~\ref{thm:dual_exact}. 
By replacing the expectation under $\ZZ$ with an empirical average over $\{\z_m\}_{m=1}^M$, the functional multiplier $\nu(\cdot)$ reduces to a finite-dimensional vector $\bm\nu\in\RR_+^M$. 

In the following, we develop the theoretical properties and tractable conic reformulations of this approximation scheme. 
 We first consider the case where the support set $\Xi$ is compact. 

\subsection{Theoretical Guarantees}

Our first theoretical result establishes a high-confidence guarantee for the containment of the sample-average-based ambiguity set $\mP_\epsilon^M$ within a relaxed exact ambiguity set 
\begin{equation}
\label{eq:eta_ambiguity_set}
\mP_\epsilon(\eta)\coloneqq \left\{\QQ\in\mathscr P(\Xi):
\begin{array}{l}\EE_\ZZ\left[\left[\EE_\QQ[\ell(\tz,\txi)]-\EE_\PPhat[\ell(\tz,\txi)]-\eps\right]_+\right] \leq \eta\\\;\EE_\QQ[\|\txi\|^2]\leq \Omega
\end{array}
\right\}
\end{equation}
parameterized by $\eta>0$.
\begin{theorem}
\label{coro:set_containment}
Assume $\sup_{\x\in \X}\|\x\|\leq R_\x$ and $\sup_{\bxi\in \Xi}\|\bxi\|\leq R_\bxi$, and let 
\begin{equation*}
R\coloneqq \max_{j\in[J]}\left\|\begin{bmatrix}&\bm A_j^\top &\bm b_j & \\ &\overline {\bm a}_j^\top & \overline b_j\end{bmatrix}\right\|_{\textup{op}}\sqrt{R_\bxi^2+1}\sqrt{R_\x^2+1}. 
\end{equation*}
 Then, setting 
\begin{equation}\label{eq:eta}
\eta\coloneqq \mathcal O\left( R\sqrt{\frac{J\log J(\log M)^3}{M}}\right) +2(c_1 + c_2\sqrt{\Omega})  \sqrt{\frac{1}{2M}\log\left(\frac{1}{\tau}\right)}, 
\end{equation}
we can ensure with probability at least $1-\tau$:
\begin{equation*}
 \mP_\epsilon^M \subseteq \mP_\epsilon(\eta). 
\end{equation*}
\end{theorem}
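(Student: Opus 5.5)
The plan is a uniform law of large numbers over the class of hinge functions indexed by $\QQ$. For $\QQ$ with $\EE_\QQ[\|\txi\|^2]\le\Omega$, define
\[
g_\QQ(\z)\coloneqq\left[\EE_\QQ[\ell(\z,\txi)]-\EE_\PPhat[\ell(\z,\txi)]-\epsilon\right]_+ .
\]
If $\QQ\in\mP_\epsilon^M$, then $\frac1M\sum_m g_\QQ(\z_m)\le 0$. It therefore suffices to show that, with probability at least $1-\tau$,
\[
\sup_{\QQ}\Bigl(\EE_\ZZ[g_\QQ(\tz)]-\frac1M\sum_{m\in[M]} g_\QQ(\z_m)\Bigr)\le\eta ,
\]
with the supremum taken over all $\QQ$ satisfying the moment constraint. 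Then $\EE_\ZZ[g_\QQ]\le\eta$, which is exactly $\QQ\in\mP_\epsilon(\eta)$. Note that $\PPhat$ is fixed here, since the randomness lies only in the $\z_m$.

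\textbf{Boundedness and McDiarmid.} By Assumption~\ref{as:linear_growth} and Jensen's inequality, $|\EE_\QQ[\ell(\z,\txi)]|\le c_1+c_2\sqrt\Omega$. The same bound holds for $\PPhat$ by Assumption~\ref{as:second_moment}. Hence each $g_\QQ$ takes values in $[0,2(c_1+c_2\sqrt\Omega)]$. McDiarmid's inequality applied to the supremum of the deviation gives, with probability at least $1-\tau$,
\[
\sup_{\QQ}\bigl(\cdots\bigr)\le 2\,\mathfrak R_M+2(c_1+c_2\sqrt\Omega)\sqrt{\tfrac{1}{2M}\log(1/\tau)},
\]
where $\mathfrak R_M$ is the Rademacher complexity of $\{g_\QQ\}$ under $\ZZ$. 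This produces the second term in \eqref{eq:eta}. I would keep the factor 2 from symmetrization absorbed in $\mathcal O$, and treat the tail term carefully, perhaps by using the one-sided bounded-difference constant to match the stated coefficient.

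\textbf{Rademacher complexity (the main obstacle).} The hinge is $1$-Lipschitz, and the constant shift $\EE_\PPhat[\ell]+\epsilon$ is fixed in $\QQ$. So by Talagrand's contraction, together with handling the shift via the contraction with offsets, it suffices to bound the complexity of
\[
\mathcal F=\{\z\mapsto \EE_\QQ[\ell(\z,\txi)]\}.
\]
Each such function is an average over $\bxi\sim\QQ$ of $\max_{j\in[J]}(\bm a_j(\z)^\top\bxi+b_j(\z))$. Writing $\bm a_j(\z)^\top\bxi+b_j(\z)=(\bxi,1)^\top \bm G_j(\z,1)$, where $\bm G_j$ is the block matrix in the definition of $R$, each piece is a bilinear function bounded by $R$. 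Since a supremum of expectations is controlled by the supremum over Dirac measures, $\mathcal F$ lies in the closed convex hull of the class of $J$-fold maxima of linear functions of $(\z,1)$ with parameter norm at most $\|\bm G_j\|_{\mathrm{op}}\sqrt{R_\bxi^2+1}$. Rademacher complexity is invariant under convex hulls. So the key task is to bound the complexity of maxima of $J$ linear classes.

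Here I would \emph{not} use the naive bound $J$ times the linear-class complexity. Instead I would use the vector-contraction/Dudley-entropy bound for max-of-$J$ linear functions, e.g.\ a Srebro--Sridharan--Tewari style smooth-loss argument or a fat-shattering bound. That argument gives
\[
\mathfrak R_M\lesssim R\sqrt{J\log J\,(\log M)^3/M},
\]
with the $(\log M)^{3}$ arising from chaining through $\ell_\infty$ covering numbers of linear classes. This is the first term of \eqref{eq:eta}. Verifying that this bound applies uniformly, without dependence on $D_\bxi$ through the convex-hull reduction, is the delicate step. If it fails, I would fall back on an $L_\infty$ covering of $\mathcal F$ via the Lipschitz property of Lemma~\ref{lem:Lipschitz_condition}, at the cost of dimension factors. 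Combining the two bounds completes the argument.
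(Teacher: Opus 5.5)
Your proposal is correct and follows essentially the same route as the paper. The paper's chain is McDiarmid with bounded differences $2(c_1+c_2\sqrt{\Omega})/M$, which already gives the stated tail coefficient exactly, then symmetrization, then contraction through the $1$-Lipschitz hinge, dropping the $\QQ$-independent shift because it has zero mean under the Rademacher signs. It then reduces $\sup_{\QQ}$ to $\sup_{\bxi\in\Xi}$ via Dirac measures, relaxes to independent $\bxi_1,\ldots,\bxi_J$, and closes with the dimension-free Rademacher bound for $J$-fold maxima of hyperplanes in \cite[Corollary~5]{attias2024fat}, so your fallback covering argument is not needed.
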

This result shows that using the sample-average-based ambiguity set is akin to relaxing the expected hinge constraint by $\eta\coloneqq \widetilde{\mathcal O}(1/\sqrt{M})$.  
The result relies on a generalization bound for the sample-average approximation errors, stated as Proposition~\ref{prop:hinge_SAA_guarantee} in Appendix~\ref{appendix:proofs_sec3}. In the proof of that proposition, we bound the Rademacher complexity of the function class
\begin{equation*}
\HH\coloneqq \left\{\z\mapsto[\EE_\QQ[\ell(\z,\txi)]-\EE_\PPhat[\ell(\z,\txi)]-\eps]_+:\QQ\in\mathscr P(\Xi),\;\EE_\QQ[\|\txi\|^2]\leq \Omega\right\},
\end{equation*}
which contains functions parametrized by distributions $\QQ\in\mathscr P(\Xi)$ satisfying the second-moment constraint $\EE_\QQ[\|\txi\|^2]\leq\Omega$. Unlike in traditional settings, the parameter space is infinite-dimensional, rendering classical techniques inapplicable. We employ the contraction principle to reduce the analysis to the Rademacher complexity of the functions $\z\mapsto\EE_\QQ[\ell(\z,\txi)]$. Using our loss function structure, we further reduce it to the complexity of $J$-fold maxima of hyperplanes \cite[Corollary 5]{attias2024fat}. This function-class argument is the key ingredient behind Theorem~\ref{coro:set_containment}.  


\begin{remark}[Role of function-class structure]
The dimension-free $\widetilde{\mathcal O}(1/\sqrt{M})$ sampling rate in Proposition~\ref{prop:hinge_SAA_guarantee} is a key benefit of exploiting the piecewise-affine structure of the loss class, rather than a generic consequence of Lipschitz continuity. Indeed, for a generic class of bounded $L$-Lipschitz functions on a metric space with dimension $D_\x$ of the decision $\x$, the empirical Rademacher complexity can scale as $\mathcal O(L/M^{1/(D_{\x}+1)})$; see \cite[Theorem 4.3]{gottlieb2016adaptive}. In contrast, the piecewise-affine loss class considered here reduces to the class of $J$-fold maxima of hyperplanes, for which the sharper Rademacher bound in Proposition~\ref{prop:hinge_SAA_guarantee} is available.
\end{remark}

We next establish the convergence of the optimal value and optimal solution of the approximate DRO problem. Let $\hat v$ and $\hat v_M$ be the optimal values of the exact problem and the approximate problem, respectively, i.e., 
\begin{equation*}
\hat v\coloneqq\min_{\x\in\X} \sup_{\QQ\in\mP_\epsilon}  \EE_{\QQ}[\ell(\x,\txi)]\quad\textup{and}\quad \hat v_M\coloneqq\min_{\x\in\X} \sup_{\QQ\in\mP^M_\epsilon}  \EE_{\QQ}[\ell(\x,\txi)].
\end{equation*}
We define $\mathcal S$ as the set of minimizers of the exact problem. The minimizers $\hat\x\in\mathcal S$ and $\hat\x_M$ of the sampled problem are attained because the corresponding objectives are lower semicontinuous on the compact set $\X$.


\begin{theorem}
\label{thm:approximation_convergence}
As $M\rightarrow \infty$, the following convergences in probability hold: 
\begin{equation*}
\hat v_M\overset{p}{\to} \hat v \quad\textup{ and } \quad\textup{dist}(\hat\x_M,\mathcal S)\overset{p}{\to} 0.
\end{equation*}
Furthermore, assume there exists an optimal dual measure $\nu$ in Theorem \ref{thm:dual_exact} such that $\gamma\ZZ-\nu\in\M_+(\X)$ for some $\gamma\in\RR_+$. Let $\gamma^\star\coloneqq\inf\{\gamma\in\RR_+:\gamma\ZZ-\nu\in\M_+(\X)\}$.  
Then, the following suboptimality bound holds with probability at least $1-\tau$:
\begin{equation*}
\begin{aligned}
\sup_{\QQ\in\mP_\epsilon}  \EE_{\QQ}[\ell(\hat \x_M,\txi)] &\leq \min_{\x\in\X} \sup_{\QQ\in\mP_\epsilon}  \EE_{\QQ}[\ell(\x,\txi)] \\
&\qquad+ \gamma^\star\left(\mathcal O\left( R\sqrt{\frac{J\log J(\log M)^3}{M}}\right)+ 2(c_1 + c_2\sqrt{\Omega}) \sqrt{\frac{1}{2M}\log\left(\frac{1}{\tau}\right)}\right).
\end{aligned}
\end{equation*}
\end{theorem}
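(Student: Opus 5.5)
Two parts: convergence in probability, and the suboptimality bound.

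\textbf{Convergence.} Write $F(\x)\coloneqq\sup_{\QQ\in\mP_\epsilon}\EE_\QQ[\ell(\x,\txi)]$, $F_M$ for the sampled analogue, and $F^\eta$ for the objective over $\mP_\epsilon(\eta)$. Since $\mP_\epsilon\subseteq\mP^M_\epsilon$, we have $F\le F_M$ pointwise, so $\hat v\le\hat v_M$. By Theorem~\ref{coro:set_containment}, with probability at least $1-\tau$ we have $\mP_\epsilon^M\subseteq\mP_\epsilon(\eta_M)$ with $\eta_M=\widetilde{\mathcal O}(M^{-1/2})$. Hence on this event $F\le F_M\le F^{\eta_M}$ and $\hat v\le\hat v_M\le \min_\x F^{\eta_M}(\x)$.

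It therefore suffices to show $\min_\x F^{\eta}(\x)\to\hat v$ as $\eta\downarrow0$. I would prove this by repeating the duality of Theorem~\ref{thm:dual_exact} for $\mP_\epsilon(\eta)$. The hinge constraint with right-hand side $\eta$ is a convex functional constraint, so the Lagrangian bound reads
\[
F^\eta(\x)\le \inf_{\lambda\ge0}\{\lambda\eta+\dots\}.
\]
Equivalently, use a perturbation/value-function argument: the value $\phi(\eta)\coloneqq\min_\x F^\eta(\x)$ is concave and nondecreasing in $\eta$ (convexity of the constraint in $\QQ$), and it is finite near $0$ thanks to the second moment bound and linear growth. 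A finite concave function is continuous at the interior point, and at the boundary point $\eta=0$ it is right-continuous from above by upper semicontinuity. I would check this through Slater: $\PPhat$ satisfies the hinge constraint strictly since $\epsilon>0$.

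Solution convergence then follows by a standard argument. Suppose a subsequence of $\hat\x_M$ stays $\kappa$ away from $\mathcal S$. Compactness gives a cluster point $\bar\x\notin\mathcal S$, and lower semicontinuity of $F$ together with $F(\hat\x_M)\le F_M(\hat\x_M)=\hat v_M\to\hat v$ gives $F(\bar\x)\le\hat v$, contradicting $\bar\x\notin\mathcal S$. This is done on events of probability at least $1-\tau$ for every $\tau$, which yields convergence in probability.

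\textbf{Suboptimality bound.} Take $\hat\x$ and the optimal dual $(\alpha,\beta,\nu)$ from Theorem~\ref{thm:dual_exact}, with $\nu\le\gamma^\star\ZZ$. The dual certificate gives, for any $\QQ$ with $\EE_\QQ\|\txi\|^2\le\Omega$,
\[
\EE_\QQ[\ell(\hat\x,\txi)]\le\alpha+\beta\Omega+\int\EE_\QQ[\ell(\z,\txi)]\,\nu(\mathrm d\z).
\]
Write $g_\QQ(\z)\coloneqq\EE_\QQ[\ell(\z,\txi)]-\EE_\PPhat[\ell(\z,\txi)]-\epsilon$. Then
\[
\int g_\QQ\,\mathrm d\nu\le\int[g_\QQ]_+\,\mathrm d\nu\le\gamma^\star\EE_\ZZ[[g_\QQ]_+].
\]
For $\QQ\in\mP_\epsilon^M\subseteq\mP_\epsilon(\eta)$ this is at most $\gamma^\star\eta$. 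Combining,
\[
\hat v_M\le F_M(\hat\x)\le\hat v+\gamma^\star\eta .
\]
Finally,
\[
\sup_{\QQ\in\mP_\epsilon}\EE_\QQ[\ell(\hat\x_M,\txi)]=F(\hat\x_M)\le F_M(\hat\x_M)=\hat v_M\le \hat v+\gamma^\star\eta,
\]
which is the claimed bound once $\eta$ from \eqref{eq:eta} is substituted.

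\textbf{Main obstacle.} The delicate step is the continuity $\phi(\eta)\to\phi(0)$ in the first part without assuming a bounded dual measure. I would first try the concavity argument in $\eta$, combined with a direct compactness argument on $\QQ$: weak compactness follows from the moment bound and compact $\Xi$, and I would pass to the limit in the hinge constraint using continuity of $\QQ\mapsto\EE_\QQ\ell$.
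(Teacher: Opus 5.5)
Your suboptimality argument is correct and matches the paper's. The paper inserts the exact dual optimum $(\hat\x,\alpha,\beta,\nu)$ together with $\gamma^\star$ into a relaxed dual that carries the term $\gamma\eta$ and the constraint $\gamma\ZZ-\nu\in\M_+(\X)$. Your chain $\int g_\QQ\,\mathrm d\nu\le\int[g_\QQ]_+\,\mathrm d\nu\le\gamma^\star\EE_\ZZ[[g_\QQ(\tz)]_+]\le\gamma^\star\eta$ is exactly the weak-duality inequality behind that step. The reduction of value convergence to the sandwich $\hat v\le\hat v_M\le\phi(\eta_M)$ on the containment event is also the paper's.

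The gap is in your main argument for $\phi(\eta)\to\phi(0)$. The hinge functional $H(\QQ)\coloneqq\EE_\ZZ[[g_\QQ(\tz)]_+]$ is nonnegative for every $\QQ$, so $\PPhat$ does \emph{not} satisfy the hinge constraint strictly: $H(\PPhat)=\EE_\ZZ[[-\epsilon]_+]=0$. The strict feasibility of $\PPhat$ used in Theorem~\ref{thm:dual_exact} holds for the semi-infinite constraints, and aggregating them through the hinge destroys it. Hence $\mP_\epsilon(\eta)=\emptyset$ for $\eta<0$, so $\eta=0$ is a boundary point of the domain of your concave value function, and concavity gives nothing there. A nondecreasing concave function on $[0,\infty)$ may jump up immediately to the right of $0$, e.g.\ value $0$ at $\eta=0$ and $1$ for $\eta>0$. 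Appealing to upper semicontinuity at $0$ is circular, since that is exactly the claim. The Lagrangian route $\lambda\eta+\dots$ hits the same missing constraint qualification: a finite hinge multiplier at $\eta=0$ amounts to a dual measure with $\nu\le\lambda\ZZ$, i.e.\ the domination hypothesis reserved for the second part.

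The compactness argument you list as a fallback is the right tool and suffices on its own; it is the paper's Lemma~\ref{lem:convergence_relaxed_optimal_value}. Fix $\x$ and pick $\QQ_k\in\mP_\epsilon(\eta_k)$ with $\EE_{\QQ_k}[\ell(\x,\txi)]\ge F^{\eta_k}(\x)-1/k$, then extract $\QQ_k\Rightarrow\QQ^\star$. Pass to the limit in $\EE_{\QQ_k}[\ell(\z,\txi)]$ for each $\z$, and then in $H(\QQ_k)\le\eta_k$ by dominated convergence over $\z$, using that the hinge terms are bounded by $2(c_1+c_2\sqrt{\Omega})$. The second-moment bound survives by lower semicontinuity, so $\QQ^\star\in\mP_\epsilon$ and $\limsup_k F^{\eta_k}(\x)\le F(\x)$. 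Then $\hat v\le\phi(\eta)\le F^\eta(\hat\x)\to F(\hat\x)=\hat v$, with no concavity needed. If you want Slater to do the work, it must enter through the semi-infinite form. $g_\QQ$ is $2L$-Lipschitz by Lemma~\ref{lem:Lipschitz_condition}, so full support of $\ZZ$ on the compact $\X$ turns $H(\QQ)\le\eta$ into $\max_{\z\in\X}g_\QQ(\z)\le s(\eta)$ with $s(\eta)\to0$. Mixing $\QQ$ with $\PPhat$ at weight $s(\eta)/(s(\eta)+\epsilon)$ then lands in $\mP_\epsilon$, giving $F^\eta\le F+2(c_1+c_2\sqrt{\Omega})\,s(\eta)/\epsilon$ uniformly in $\x$.

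Finally, your contradiction argument for the minimizers treats $\hat v_M\to\hat v$ as if it held along a deterministic sequence, whereas it holds only in probability. Phrase it as the paper does: your compactness and lower-semicontinuity reasoning shows $\rho_\kappa\coloneqq\inf\{F(\x):\x\in\X,\ \textup{dist}(\x,\mathcal S)\ge\kappa\}-\hat v>0$. Hence $\Prob(\textup{dist}(\hat\x_M,\mathcal S)\ge\kappa)\le\Prob(\hat v_M-\hat v\ge\rho_\kappa)\to0$.
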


\begin{remark}[Existence of the dominance constant]\label{rem:dominance_constant}
The domination condition used in the suboptimality bound of Theorem~\ref{thm:approximation_convergence} is a condition on the auxiliary sampling distribution, not on the underlying data-generating model. Since $\ZZ$ is chosen by the modeler, it can be enriched, if needed, to dominate the relevant optimal dual measure, as we now show. Specifically, if $\nu$ is any optimal dual measure in Theorem~\ref{thm:dual_exact}, then for any $\lambda\in(0,1)$, the full-support mixture $\ZZ_\lambda\coloneqq(1-\lambda)\ZZ+\lambda\nu/\nu(\X)$ satisfies $(\nu(\X)/\lambda)\ZZ_\lambda-\nu\in\M_+(\X)$, and hence $\gamma_\lambda^\star\leq\nu(\X)/\lambda<\infty$. Thus the required dominance constant can always be made finite through the modeling choice of $\ZZ$. By Proposition~\ref{prop:semiinf_ambiguity_set}, all full-support choices of the auxiliary distribution induce the same semi-infinite ambiguity set, so replacing $\ZZ$ with $\ZZ_\lambda$ does not alter the DRO problem itself.
\end{remark}

\begin{figure}[t]
    \centering
    \includegraphics[width=0.5\textwidth]{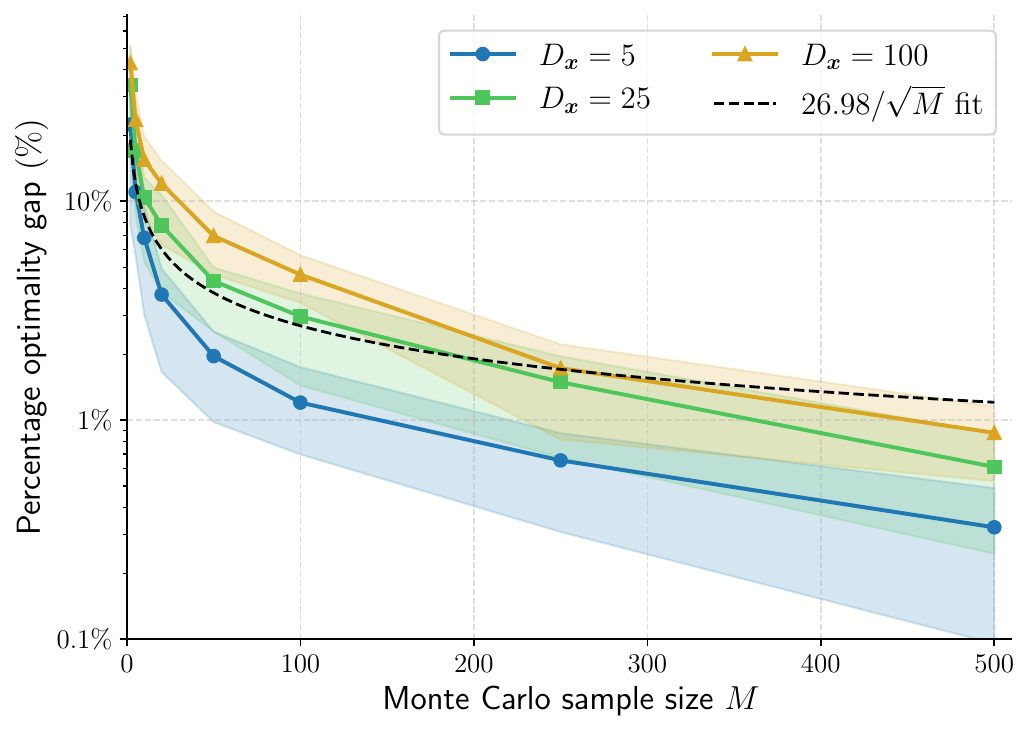}
    \caption{Percentage optimality gap of the sampled-dual solution as a function of the Monte Carlo sample size $M$.  The dashed curve is a fitted $C/\sqrt{M}$ reference line.}
    \label{fig:mc-suboptimality-gap}
\end{figure}

\begin{remark}[Numerical illustration of the Monte Carlo approximation]\label{rem:mc_numerical_illustration}
We provide a simple experiment to illustrate the sampling behavior in Theorem~\ref{thm:approximation_convergence}. The detailed experimental design is reported in Appendix~\ref{appendix:mc_numerical_illustration}. We use unit-ball decision and uncertainty sets, together with a shifted piecewise-affine loss and a symmetric empirical distribution, so that the exact optimizer $\x_\star$ is known and nonzero. This allows us to evaluate the sampled-dual solution $\widehat \x_M$  directly, without computing a large-reference Monte Carlo solution. We report the relative optimality gap
\[
    \frac{
    \EE_\PPhat[\ell(\widehat \x_M,\tilde\bxi)]
    -
    \EE_\PPhat[\ell(\x_\star,\tilde\bxi)]
    }{
    \EE_\PPhat[\ell(\x_\star,\tilde\bxi)]
    }
    \times 100\%
\]
across different Monte Carlo sample sizes $M$. 
To quantify the observed decay, we fit a single \(C/\sqrt{M}\) reference curve to the empirical curves across the three dimensions, following the \(\widetilde{\mathcal O}(1/\sqrt{M})\) sampling rate established in Theorem~\ref{thm:approximation_convergence}. The fitted constant is \(C = 26.98\).  As Figure~\ref{fig:mc-suboptimality-gap} shows, the empirical gaps decay at least as fast as this reference curve; for larger Monte Carlo sample sizes, the observed decay is even faster than the theoretical rate benchmark. 
At the largest tested Monte Carlo sizes, the gaps are reduced to the one-percent level across all dimensions. 
This provides numerical evidence that the Monte Carlo approximation improves as \(M\) increases, and that our method does not suffer from the curse of dimensionality.
\end{remark}

\subsection{Conic Programming Reformulations}

In this subsection, we show that the approximate problem admits a tractable conic programming reformulation. Tractable approximations for two-stage problems are provided in Appendix \ref{appendix:conic_two_stage}.

\begin{theorem}\label{theorem:Conic_reformulation}
Suppose that $\X$ and $\Xi$ are convex. Then problem \eqref{eq:dual_SAA} is equivalent to the following finite convex program:
\begin{equation} 
\begin{aligned}
\min \;&  \alpha  + \beta\Omega + \frac{1}{M}\sum_{m\in[M]}\nu_m(\EE_\PPhat[\ell(\z_m,\txi)]+ \epsilon)  \\
\st & \x\in\X, \; \alpha\in\RR,\;\beta\in\RR_+,\;\bm\nu\in\RR_+^M\\
& \bm\lambda_{jm}\in\RR_+^J\;\;\forall m\in[M],\quad \bm\theta_j\in\RR^{D_\bxi},\zeta_j\in\RR_+\;\;\forall j\in[J]\\
&\mathbf e^\top\bm\lambda_{jm}=\nu_m \;\;\forall m\in[M]\;\;\forall j\in[J]\\
&  \zeta_j+\sigma_\Xi(\bm\theta_j)+b_j(\x)-\frac{1}{M}\sum_{m\in[M],k\in[J]} \lambda_{jm}^k b_k(\z_m)\leq \alpha \quad \forall j\in[J]\\
& \left\|\begin{bmatrix}\bm a_j(\x)-\frac{1}{M}\sum_{m\in[M],k\in[J]} \lambda_{jm}^k \bm a_k(\z_m)-\bm\theta_j\\\zeta_j-\beta\end{bmatrix}\right\|\leq\zeta_j+\beta\quad\forall j\in[J]. 
\end{aligned}
\end{equation}
\end{theorem}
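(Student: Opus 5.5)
The plan is to reformulate the semi-infinite constraint of \eqref{eq:dual_SAA} robust-constraint by robust-constraint, using the piecewise-affine structure \eqref{eq:pw_loss}. Because $\ell(\x,\bxi)=\max_{j}(\bm a_j(\x)^\top\bxi+b_j(\x))$, the single constraint over $\bxi\in\Xi$ splits into $J$ constraints, one for each $j\in[J]$:
\[
\bm a_j(\x)^\top\bxi+b_j(\x)-\beta\|\bxi\|^2-\frac{1}{M}\sum_{m\in[M]}\nu_m\ell(\z_m,\bxi)\leq\alpha\qquad\forall\bxi\in\Xi .
\]
The term $-\nu_m\ell(\z_m,\bxi)$ is a negative multiple of a maximum, which makes the left-hand side nonconvex in $\bxi$. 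I would handle it by writing
\[
\nu_m\ell(\z_m,\bxi)=\max\Bigl\{\textstyle\sum_k\lambda^k\bigl(\bm a_k(\z_m)^\top\bxi+b_k(\z_m)\bigr):\lambda\in\RR_+^J,\ \mathbf e^\top\lambda=\nu_m\Bigr\},
\]
so that $-\nu_m\ell(\z_m,\bxi)=\min_{\bm\lambda_{jm}}(\cdots)$. Each constraint $j$ then becomes
\[
\sup_{\bxi\in\Xi}\ \min_{\{\bm\lambda_{jm}\}_m} f_j\bigl(\bxi,\{\bm\lambda_{jm}\}_m\bigr)\leq\alpha ,
\]
where $f_j$ is concave in $\bxi$ (because $\beta\ge0$) and linear in the $\bm\lambda$'s. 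The $\bm\lambda$'s range over the compact convex sets $\{\bm\lambda\ge0,\ \mathbf e^\top\bm\lambda=\nu_m\}$, and $\Xi$ is convex. Sion's minimax theorem therefore lets me swap sup and min. I can then replace the min by an existential quantifier over $\bm\lambda_{jm}$, with a separate copy for each $j$, as in the statement. Compactness of the simplices means the minimum is attained, so no closure issue arises.

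For fixed $\bm\lambda$ the remaining problem is $\sup_{\bxi\in\Xi}\{\bm v_j^\top\bxi-\beta\|\bxi\|^2\}+c_j\le\alpha$, with
\[
\bm v_j=\bm a_j(\x)-\frac1M\sum_{m,k}\lambda_{jm}^k\bm a_k(\z_m),\qquad c_j=b_j(\x)-\frac1M\sum_{m,k}\lambda^k_{jm}b_k(\z_m).
\]
I would compute this supremum by conjugate duality: it equals the infimal convolution of the support function of $\Xi$ and the conjugate of $\beta\|\cdot\|^2$,
\[
\inf_{\bm\theta_j}\Bigl\{\sigma_\Xi(\bm\theta_j)+\frac{\|\bm v_j-\bm\theta_j\|^2}{4\beta}\Bigr\}.
\]
This holds by Fenchel duality, which applies because the quadratic is finite everywhere (continuous), so no constraint qualification issue arises. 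I then introduce the epigraph variable $\zeta_j\ge\|\bm v_j-\bm\theta_j\|^2/(4\beta)$. This is the rotated-cone inequality $\|\bm v_j-\bm\theta_j\|^2\le4\beta\zeta_j$, which is equivalent to
\[
\left\|\begin{bmatrix}\bm v_j-\bm\theta_j\\ \zeta_j-\beta\end{bmatrix}\right\|\le\zeta_j+\beta .
\]
This matches the stated SOC constraint. The infimum over $\bm\theta_j,\zeta_j$ can be absorbed as an existential quantifier, and the inequality becomes $\zeta_j+\sigma_\Xi(\bm\theta_j)+c_j\le\alpha$.

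The main obstacle I expect is the boundary case $\beta=0$ together with attainment of the infimum over $\bm\theta_j$. When $\beta=0$ the cone constraint forces $\bm\theta_j=\bm v_j$, giving exactly $\sup_\Xi\bm v_j^\top\bxi=\sigma_\Xi(\bm v_j)$. This is consistent, but it needs to be checked separately rather than via the $1/(4\beta)$ formula. For $\beta>0$, the case where the infimal convolution is not attained only affects the equivalence up to closure. I would argue that the optimal value of the program is unchanged, because the constraint can be satisfied with any slack $\alpha+\delta$. If needed I would also use that the inf-convolution of a proper closed sublinear function with a coercive quadratic is attained. Finally, I would verify both directions of the equivalence. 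Feasible $(\x,\alpha,\beta,\bm\nu)$ of \eqref{eq:dual_SAA} extend to feasible auxiliary variables, and conversely the auxiliary variables certify the semi-infinite constraint by weak duality.
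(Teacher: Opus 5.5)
Your proposal is correct and follows the paper's proof essentially step for step: the split into $J$ robust constraints, the simplex representation of $\nu_m\ell(\z_m,\cdot)$, Sion's theorem, the infimal-convolution identity with $\sigma_\Xi$ (Rockafellar, Thm.~16.4), the rotated-cone rewriting, and the separate treatment of $\beta=0$. A few small remarks. Your observation that Sion only needs compactness of the simplices is cleaner than the paper's appeal to compact $\Xi$, since the statement assumes only convexity. Theorem~16.4 already gives attainment of the infimal convolution because the quadratic has full domain, so your closure/slack hedge is unnecessary. The convexity of the final program, which the paper checks explicitly and you omit, is an immediate verification.
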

In particular, if $\X$ and $\Xi$ are second-order cone representable, then the reformulation above is a second-order cone program. 

\section{Extensions to Unbounded Support}\label{sec:unbounded_support} 

We now extend the previous results to the setting where the distributional support is \(\Xi\coloneqq\RR^{D_\bxi}\). We assume that there exist constants \(K_{\mathrm{sw}},\vartheta>0\) such that \[ \PP^\star(\|\txi\|\le t)\ge 1-2\exp\!\left(-\left(t/K_{\mathrm{sw}}\right)^\vartheta\right) \qquad \forall t>0. \] This is a standard sub-Weibull-type tail bound \cite[Definition 2.2]{kuchibhotla2022moving}. For a given threshold \(t>0\), we incorporate this tail information into the ambiguity set: \begin{equation} \label{eq:ambiguity_set_subweibull} \mP'_\epsilon\coloneqq \left\{\QQ\in\mathscr P(\Xi): \begin{array}{l} \EE_\ZZ\left[\left[\EE_\QQ[\ell(\tz,\txi)]-\EE_\PPhat[\ell(\tz,\txi)]-\eps\right]_+\right] \leq 0\\ \EE_\QQ[\|\txi\|^2]\leq \Omega,\;\; \QQ(\|\txi\|\le t)\ge 1-2\exp\!\left(-\left(t/K_{\mathrm{sw}}\right)^\vartheta\right) \end{array} \right\}. \end{equation}
The tail constraint in \eqref{eq:ambiguity_set_subweibull} is satisfied by \(\PP^\star\) by assumption, and the second-moment constraint is satisfied by Assumption~\ref{as:second_moment}. Hence, calibrating \(\epsilon\) as in Proposition~\ref{prop:distribution_coverage} guarantees the high-confidence containment \(\PP^\star\in\mP'_\epsilon\).


We next show that the proposed TIPM-DRO framework extends beyond compact support. Under the single-threshold tail bound implied by sub-Weibull tails above, we derive an exact dual reformulation, establish that the Monte Carlo approximation remains conservative, and show that the same \(\widetilde{\mathcal O}(1/\sqrt{M})\) sampling principle continues to hold up to explicit tail-dependent terms. We also obtain a tractable conic reformulation.

The following proposition provides the dual formulation of the DRO problem under this ambiguity set. 
\begin{proposition}\label{thm:dual_exact_subweibull}
The DRO problem \eqref{eq:DRO} with the ambiguity set \eqref{eq:ambiguity_set_subweibull} is equivalent to the infinite linear program
\begin{equation} 
\label{eq:dual_exact_subweibull}
\begin{aligned}
\min \;&  \alpha  + \beta\Omega  -\kappa \left(1-2\exp\left(-\left(t/K_{\mathrm{sw}}\right)^\vartheta\right)\right) + \int_\X\left(\EE_\PPhat[\ell(\z,\txi)]+ \epsilon\right)\nu(\mathrm d\z)  \\
\st & \x\in\X, \; \alpha\in\RR,\;\beta,\kappa\in\RR_+,\;\nu\in\M_+(\X)\\
&\ell(\x,\bxi)+\kappa\I_{\{\|\bxi\|\leq t\}}\leq\alpha+\beta\|\bxi\|^2+\int_\X\ell(\z,\bxi)\nu(\mathrm d\z)\qquad\forall\bxi\in\Xi.
\end{aligned}
\end{equation}
Moreover, the infimum in \eqref{eq:dual_exact_subweibull} is attained by some $(\x,\alpha,\beta,\kappa,\nu)\in\X\times\RR\times\RR_+^2\times\M_+(\X)$. 
\end{proposition}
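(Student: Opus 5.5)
The plan is to follow the route that leads to Theorem~\ref{thm:dual_exact}, adding one extra scalar constraint to the inner moment problem. First, Proposition~\ref{prop:semiinf_ambiguity_set} applies verbatim to the hinge constraint in \eqref{eq:ambiguity_set_subweibull}, since the tail constraint does not interact with it. So for fixed $\x\in\X$, writing $p_t\coloneqq 1-2\exp(-(t/K_{\mathrm{sw}})^\vartheta)\in(0,1)$, the inner problem becomes the generalized moment problem
\[
\sup_{\QQ\in\mathscr P(\Xi)}\Bigl\{\EE_\QQ[\ell(\x,\txi)]:\ \EE_\QQ[\ell(\z,\txi)]\le \EE_\PPhat[\ell(\z,\txi)]+\epsilon\ \forall\z\in\X,\ \EE_\QQ[\|\txi\|^2]\le\Omega,\ -\EE_\QQ[\I_{\{\|\txi\|\le t\}}]\le -p_t\Bigr\}.
\]
This problem is linear in the nonnegative measure $\QQ\in\M_+(\Xi)$. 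It has a normalization equality with a free multiplier $\alpha$, a continuum of inequalities indexed by $\z\in\X$ with multiplier $\nu\in\M_+(\X)$, the moment inequality with multiplier $\beta\ge 0$, and the tail inequality with multiplier $\kappa\ge0$.

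Next, I form the Lagrangian and take the supremum over $\QQ\in\M_+(\Xi)$. The supremum is finite exactly when the pointwise constraint
\[
\ell(\x,\bxi)+\kappa\I_{\{\|\bxi\|\le t\}}\le\alpha+\beta\|\bxi\|^2+\int_\X\ell(\z,\bxi)\nu(\mathrm d\z)\qquad\forall\bxi\in\Xi
\]
holds, and in that case the dual objective is $\alpha+\beta\Omega-\kappa p_t+\int_\X(\EE_\PPhat[\ell(\z,\txi)]+\epsilon)\nu(\mathrm d\z)$. Weak duality is immediate by integrating this constraint against any feasible $\QQ$.

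For strong duality and dual attainment I will invoke the conic duality theorem for moment problems (Shapiro's result, as used for Theorem~\ref{thm:dual_exact}) in the pairing $\C(\X)\times\M(\X)$ for the $\z$-indexed constraints. The integrands are admissible: they have linear growth by Assumption~\ref{as:linear_growth}, are dominated by $1+\|\bxi\|^2$, and the indicator is bounded and Borel. The key requirement is a Slater point. I would try $\QQ_\lambda\coloneqq(1-\lambda)\PPhat+\lambda\delta_{\bm 0}$ for small $\lambda>0$.
\begin{itemize}
\item The loss constraints hold with slack, since the discrepancy $\lambda(\ell(\z,\bm0)-\EE_\PPhat[\ell(\z,\txi)])$ is bounded uniformly in $\z$ by compactness and linear growth, hence is below $\epsilon$ for small $\lambda$.
\item The moment constraint is strict by Assumption~\ref{as:second_moment}.
\item The tail probability equals $(1-\lambda)\PPhat(\|\txi\|\le t)+\lambda$.
\end{itemize}

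This last item is the main obstacle: strictness of the tail constraint requires $\PPhat(\|\txi\|\le t)>p_t$, or at least $\ge p_t$ so that adding $\lambda$ gives strictness, and this need not hold for an arbitrary empirical reference. I would handle it by restricting to thresholds $t$ with $\PPhat(\|\txi\|\le t)\ge p_t$. This holds automatically for $t\ge\max_i\|\hat\bxi_i\|$, and with high probability otherwise by the same sub-Weibull tail. Alternatively, I would note that strong duality only needs the constraint vector to lie in the interior of the moment cone, and verify this directly from $\QQ_\lambda$. If neither works cleanly, I would state the mild Slater condition explicitly as the proof's hypothesis, mirroring the treatment in the MoM discussion.

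Finally, I combine the outer $\min_{\x\in\X}$ with the inner dual $\min$ to obtain \eqref{eq:dual_exact_subweibull}. For attainment, the inner dual minimum is attained for each $\x$ by the Slater-based duality theorem. The optimal-value function of the inner problem is lower semicontinuous in $\x$, being a supremum over $\QQ$ of continuous functions, so it attains its minimum on the compact set $\X$ at some $\hat\x$. The dual optimizer at $\hat\x$ then yields the claimed minimizer $(\x,\alpha,\beta,\kappa,\nu)$.
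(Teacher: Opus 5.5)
Your route is the paper's own. You use the same multipliers $(\alpha,\beta,\kappa,\nu)$, the same finiteness argument for the Lagrangian that yields the pointwise constraint, Shapiro's moment duality under a Slater condition, and lower semicontinuity plus compactness for the outer minimization. The paper handles the Slater step in one line, asserting that the strict-feasibility argument of Theorem~\ref{thm:dual_exact} ``still applies,'' i.e., it reuses $\PPhat$ as the Slater point. With your $p_t$, that is valid only when $\PPhat(\|\txi\|\le t)>p_t$. So the obstacle you isolated is genuine: it is passed over in the paper, not an artifact of your approach.

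Your proposed remedies need sorting, however.
\begin{itemize}
\item Restricting to $t\ge\max_i\|\hat{\bxi}_i\|$ is sound. Then $\PPhat$ itself is a Slater point, exactly as in Theorem~\ref{thm:dual_exact}, and your mixture $\QQ_\lambda$ even covers the boundary case $\PPhat(\|\txi\|\le t)=p_t$. This is also the regime relevant downstream, since $t_M\to\infty$.
\item Stating a Slater hypothesis explicitly is likewise sound.
\item The ``with high probability'' remedy fails. The tail bound on $\PP^\star$ does not give $\PPhat(\|\txi\|\le t)\ge p_t$ with high probability at a fixed $t$. If the population bound is tight there, the empirical frequency falls below $p_t$ with probability roughly $1/2$ for large $N$.
\item The moment-cone remedy also fails. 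No verification via $\QQ_\lambda$ or the moment cone can succeed unconditionally, because the ambiguity set can be empty.
\end{itemize}

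Here is an instance where the set is empty. Take $D_\bxi=1$, $\ell(\x,\xi)=-\xi$ (so $J=1$), $\PPhat=\delta_{10}$, $\Omega=101$, $\epsilon<1$, and $t=1$, with $(K_{\mathrm{sw}},\vartheta)$ chosen so that $p_1=0.99$. Feasibility requires $\EE_\QQ[\txi]\ge 10-\epsilon>9$. Cauchy--Schwarz gives $\EE_\QQ[\txi]\le 1+\sqrt{\Omega\,\QQ(|\txi|>1)}\le 1+\sqrt{1.01}<3$. The DRO value is then $-\infty$, while every dual-feasible tuple has a finite objective, so the claimed equivalence with attainment cannot hold.

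Thus the proposition needs an extra hypothesis, such as $\PPhat(\|\txi\|\le t)>p_t$. Under it, your argument, like the paper's, goes through.
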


Drawing $M$ samples from $\ZZ$, we obtain an outer approximation of the ambiguity set:
\begin{equation}
\label{eq:sample_ambiguity_set_subweibull}
\mP'^M_\epsilon\coloneqq \left\{\QQ\in\mathscr P(\Xi):
\begin{array}{l}\displaystyle\frac{1}{M}\sum_{m\in[M]}\left[\EE_\QQ[\ell(\z_m,\txi)]-\EE_\PPhat[\ell(\z_m,\txi)]-\eps\right]_+ \leq 0\\
\EE_\QQ[\|\txi\|^2]\leq \Omega, \;\; \QQ(\|\txi\| \leq t ) \geq 1-2\exp\left(-\left(t/K_{\mathrm{sw}}\right)^\vartheta\right)
\end{array}
\right\}.
\end{equation}
The corresponding dual formulation is given by
\begin{equation} 
\label{eq:dual_SAA_subweibull}
\begin{aligned}
\min \;&  \alpha  + \beta\Omega  -\kappa \left(1-2\exp\left(-\left(t/K_{\mathrm{sw}}\right)^\vartheta\right)\right) + \frac{1}{M}\sum_{m\in[M]}\nu_m(\EE_\PPhat[\ell(\z_m,\txi)]+ \epsilon)  \\
\st & \x\in\X, \; \alpha\in\RR,\;\beta,\kappa\in\RR_+,\;\bm\nu\in\RR_+^M\\
&\ell(\x,\bxi)+\kappa\I_{\{\|\bxi\|\leq t\}}\leq\alpha+\beta\|\bxi\|^2+\frac{1}{M}\sum_{m\in[M]}\nu_m\ell(\z_m,\bxi) \qquad\forall\bxi\in\Xi.
\end{aligned}
\end{equation}



\subsection{Theoretical Guarantees}
We now establish the theoretical guarantees for the Monte Carlo approximation under the sub-Weibull tail condition. 
For notational clarity, given a threshold sequence $t_M>0$, we denote by $\mP'_{\epsilon,M}$ the exact ambiguity set \eqref{eq:ambiguity_set_subweibull} with $t=t_M$. The corresponding relaxed moving-threshold set is
\begin{equation}
\label{eq:eta_ambiguity_set_subweibull}
\mP'_{\epsilon,M}(\eta)\coloneqq \left\{\QQ\in\mathscr P(\Xi):
\begin{array}{l}\EE_\ZZ\left[\left[\EE_\QQ[\ell(\tz,\txi)]-\EE_\PPhat[\ell(\tz,\txi)]-\eps\right]_+\right] \leq \eta\\
\EE_\QQ[\|\txi\|^2]\leq \Omega, \;\; \QQ(\|\txi\| \leq t_M ) \geq 1-2\exp\left(-\left(t_M/K_{\mathrm{sw}}\right)^\vartheta\right)
\end{array}
\right\},
\end{equation}
parameterized by $\eta>0$. 
In this section, we set $t=t_M\coloneqq K_{\mathrm{sw}}(\log M)^{1/\vartheta}$.
The first theoretical result establishes a high-confidence guarantee for the containment of the sample-average-based ambiguity set $\mP'^M_\epsilon$ within this relaxed exact ambiguity set. 
\begin{theorem} \label{thm:containment_unbounded_Xi}  Assume $\sup_{\x\in \X}\|\x\|\leq R_\x$, and let 
\begin{equation*}
R(t)\coloneqq \left(\max_{j\in[J]}\left\|\begin{bmatrix}\bm A_j^\top & \bm b_j\\ \overline {\bm a}_j^\top & \overline b_j\end{bmatrix}\right\|_{\textup{op}}\sqrt{ t^2+1}\right)\sqrt{R_\x^2+1},\qquad t>0. 
\end{equation*}
Then, setting
\begin{equation*}
\begin{aligned}
\eta_M\coloneqq\mathcal O\left( R(t_M)\sqrt{\frac{J\log J(\log M)^3}{M}}\right)+ 2(c_1 + c_2\sqrt{\Omega})\sqrt{\frac{1}{2M}\log\left(\frac{1}{\tau}\right)} + \frac{4c_1}{M} + 2c_2 \sqrt{ \frac{2\Omega}{M}},
\end{aligned}
\end{equation*}
we can ensure with probability at least $1-\tau$:
\begin{equation*}
 \mP'^M_\epsilon \subseteq \mP'_{\epsilon,M}(\eta_M). 
\end{equation*}
\end{theorem}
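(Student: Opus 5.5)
We reduce the statement to the compact-support argument behind Theorem~\ref{coro:set_containment} by truncating the uncertainty at radius $t_M$. Fix $\QQ\in\mP'^M_\epsilon$, and write $g_\QQ(\z)\coloneqq[\EE_\QQ[\ell(\z,\txi)]-\EE_\PPhat[\ell(\z,\txi)]-\eps]_+$. Since $\QQ\in\mP'^M_\epsilon$ means $\frac1M\sum_m g_\QQ(\z_m)\le 0$, it suffices to bound
\[
\sup_{\QQ}\Bigl(\EE_\ZZ[g_\QQ(\tz)]-\tfrac1M\textstyle\sum_{m}g_\QQ(\z_m)\Bigr)
\]
uniformly over all $\QQ$ satisfying the second-moment and tail constraints. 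Decompose $\QQ=\QQ(B_t)\QQ_{\mathrm{in}}+\QQ(B_t^c)\QQ_{\mathrm{out}}$, where $B_t=\{\|\bxi\|\le t_M\}$. Define the surrogate functional $\tilde g_\QQ(\z)$ by replacing $\EE_\QQ[\ell(\z,\txi)]$ with $\EE_\QQ[\ell(\z,\txi)\I_{B_t}]+\QQ(B_t^c)\,\ell(\z,\mathbf 0)$; equivalently, $\QQ$ is replaced by its pushforward under the radial truncation that sends points outside $B_t$ to the origin. Any map $T$ into $B_t$ with $\|T(\bxi)\|\le\|\bxi\|$ would also work. The surrogate is the hinge of an expected loss under a distribution supported on the ball of radius $t_M$, with second moment at most $\Omega$.

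\emph{Step 1 (truncated class).} Apply the Rademacher argument of Proposition~\ref{prop:hinge_SAA_guarantee} to the class $\{\tilde g_\QQ\}$, with $R_\bxi$ replaced by $t_M$. Contraction reduces the problem to $J$-fold maxima of hyperplanes \citep[Corollary 5]{attias2024fat}, which gives the $\mathcal O(R(t_M)\sqrt{J\log J(\log M)^3/M})$ term. McDiarmid's inequality with the bounded-differences constant $2(c_1+c_2\sqrt\Omega)/M$ gives the $\sqrt{\log(1/\tau)/(2M)}$ term. This bounded-differences constant follows from Assumption~\ref{as:linear_growth} together with Jensen's inequality, $\EE\|\txi\|\le\sqrt\Omega$, so only the moment bound is needed here, not boundedness of the support.

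\emph{Step 2 (truncation error).} Since $[\cdot]_+$ is $1$-Lipschitz, for each $\z$ we have
\[
|g_\QQ(\z)-\tilde g_\QQ(\z)|\le \EE_\QQ\bigl[|\ell(\z,\txi)-\ell(\z,\mathbf 0)|\I_{B_t^c}\bigr]\le 2c_1\QQ(B_t^c)+c_2\EE_\QQ[\|\txi\|\I_{B_t^c}].
\]
The tail constraint with $t_M=K_{\mathrm{sw}}(\log M)^{1/\vartheta}$ gives $\QQ(B_t^c)\le 2e^{-\log M}=2/M$. Cauchy--Schwarz then gives $\EE_\QQ[\|\txi\|\I_{B_t^c}]\le\sqrt{\Omega\cdot 2/M}$. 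Hence the truncation error is at most $4c_1/M+c_2\sqrt{2\Omega/M}$, uniformly in $\z$. This error is incurred once under $\ZZ$ and once under the empirical average, which explains the terms $4c_1/M+2c_2\sqrt{2\Omega/M}$ in $\eta_M$, up to the factor arrangement. It then remains to chain the bounds:
\[
\EE_\ZZ g_\QQ\le\EE_\ZZ\tilde g_\QQ+\text{err}\le \tfrac1M\textstyle\sum_m\tilde g_\QQ(\z_m)+\text{gen}+\text{err}\le \tfrac1M\sum_m g_\QQ(\z_m)+\text{gen}+2\,\text{err}\le\eta_M .
\]
Together with the unchanged moment and tail constraints, this yields $\QQ\in\mP'_{\epsilon,M}(\eta_M)$.

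The main obstacle is Step~1. The generalization bound must be uniform over an infinite-dimensional family of $\QQ$ while the support is unbounded, so the Lipschitz and hyperplane-norm parameters of $\z\mapsto\EE[\ell(\z,\txi)]$ must be controlled only through the truncated measure. I expect this to work because the pushforward measure lies in $B_{t_M}$, so Proposition~\ref{prop:hinge_SAA_guarantee} applies verbatim with $R_\bxi=t_M$. One point needs care: the pushforward measure need not satisfy the constraint that defines $\HH$. Our choice of mapping to the origin keeps the second moment below $\Omega$, so it does.
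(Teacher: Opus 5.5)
Your proof is correct in structure but takes a genuinely different route from the paper. The paper keeps the untruncated class $\{h_\QQ\}$ over all $\QQ$ that satisfy the moment and tail constraints, and symmetrizes first. Only afterwards does it split $\EE_\QQ[F_M(\txi)]$, where $F_M$ is the symmetrized process. The in-ball part is bounded by $\sup_{\|\bxi\|\le t}|F_M(\bxi)|$, and then by the hyperplane-maxima bound after a symmetry argument removes the absolute value. The out-of-ball part is bounded by $c_1\QQ(\|\txi\|>t)+c_2\sqrt{\Omega\,\QQ(\|\txi\|>t)}$, and the symmetrization factor $2$ turns this into $4c_1/M+2c_2\sqrt{2\Omega/M}$.

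You instead truncate deterministically before any probabilistic step. Proposition~\ref{prop:hinge_SAA_guarantee} then applies verbatim on $B_{t_M}$, because the pushforward is a probability measure on the ball with second moment at most $\Omega$. No absolute value or extra symmetry step is needed. The tail constraint enters only through a deterministic perturbation bound, uniform in $\z$, paid once on each side. This is a cleaner modular reduction, and your high-probability event does not depend on the tail constraint at all.

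There is one concrete slip in the constants. You use $|\ell(\z,\bxi)-\ell(\z,\mathbf 0)|\le 2c_1+c_2\|\bxi\|$, so your per-side error is $4c_1/M+c_2\sqrt{2\Omega/M}$. Your chain therefore gives $8c_1/M+2c_2\sqrt{2\Omega/M}$, which exceeds the stated $\eta_M$. The phrase ``up to the factor arrangement'' does not cover this: these terms are explicit, and $c_1$ is not controlled by the $\mathcal O(R(t_M)\cdots)$ term.

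The fix uses the piecewise-affine structure.
\begin{itemize}
\item First, $|\ell(\z,\bxi)-\ell(\z,\mathbf 0)|\le\max_{j\in[J]}|\bm a_j(\z)^\top\bxi|$.
\item Second, Assumption~\ref{as:linear_growth} on all of $\Xi=\RR^{D_\bxi}$ forces $\|\bm a_j(\z)\|\le c_2$. To see this, evaluate $\ell$ at $\bxi=s\,\bm a_j(\z)/\|\bm a_j(\z)\|$, which gives $s\|\bm a_j(\z)\|+b_j(\z)\le c_1+c_2 s$, and let $s\to\infty$.
\item The per-side error is then at most $c_2\sqrt{2\Omega/M}$, and the total is $2c_2\sqrt{2\Omega/M}$.
\item Since $\mP'_{\epsilon,M}(\eta)$ is monotone in $\eta$, this gives the inclusion with the stated $\eta_M$.
\end{itemize}

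Relatedly, your remark that any $T$ with $\|T(\bxi)\|\le\|\bxi\|$ works holds only up to constants. For a general such $T$ you only have $\|\bxi-T(\bxi)\|\le 2\|\bxi\|$, which doubles the error again. Keep the origin map, or use the radial projection, for which $\|\bxi-\Pi(\bxi)\|\le\|\bxi\|$.
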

The argument mirrors the bounded-support case. The only additional ingredient is the technical generalization bound in Proposition~\ref{prop:hinge_SAA_guarantee_subweibull} in Appendix~\ref{appendix:proofs_sec4}, which controls the contribution of the tail event after truncation at \(t_M\). With this tail correction, the sampled ambiguity set retains the same conservative outer-approximation interpretation. Since \(R(t_M)=\mathcal O((\log M)^{1/\vartheta})\), the relaxation level \(\eta_M\) vanishes at the order \(\widetilde{\mathcal O}(M^{-1/2})\).

We next establish the convergence of the optimal value and optimal solution for the unbounded-support setting. Recall that $\hat v$ and $\mathcal S$ denote the optimal value and the set of minimizers of the exact problem with ambiguity set $\mP_\epsilon$.
\begin{theorem}\label{thm:unbounded_value_solution_convergence}
Fix $\epsilon>0$, and let $\hat v'_M$ and $\hat\x'_M$ be the optimal value and an optimal solution of the sampled problem \eqref{eq:dual_SAA_subweibull} with $t=t_M$. Under the assumptions of Theorem~\ref{thm:containment_unbounded_Xi}, as $M\to\infty$,
\begin{equation*}
\hat v'_M\overset{p}{\to}\hat v\quad\textup{and}\quad
\textup{dist}(\hat\x'_M,\mathcal S)\overset{p}{\to}0.
\end{equation*}
Furthermore, let $\bar \x_M$ be an exact optimizer of the threshold-$t_M$ problem, and assume there exists an optimal dual measure $\nu_M^\star$ in Proposition~\ref{thm:dual_exact_subweibull} such that $\gamma\ZZ-\nu_M^\star\in\M_+(\X)$ for some $\gamma\in\RR_+$. Define $\gamma^\star_M\coloneqq\inf\{\gamma\in\RR_+:\gamma\ZZ-\nu_M^\star\in\M_+(\X)\}$. Then, the following suboptimality bound holds with probability at least $1-\tau$:
\begin{equation*}
\begin{aligned}
\sup_{\QQ\in\mP'_{\epsilon,M}}  \EE_{\QQ}[\ell(\hat\x'_M,\txi)] &\leq \min_{\x\in\X} \sup_{\QQ\in\mP'_{\epsilon,M}}  \EE_{\QQ}[\ell(\x,\txi)]\\
&\qquad+ \gamma^\star_M\left(\mathcal O\left( R(t_M)\sqrt{\frac{J\log J(\log M)^3}{M}}\right)+ 2(c_1 + c_2\sqrt{\Omega}) \sqrt{\frac{1}{2M}\log\left(\frac{1}{\tau}\right)}\right.\\
&\qquad\qquad\left. + \frac{4c_1}{M} + 2c_2 \sqrt{ \frac{2\Omega}{M}}\right).
\end{aligned}
\end{equation*}
\end{theorem}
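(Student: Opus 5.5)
I will mirror the bounded-support argument behind Theorem~\ref{thm:approximation_convergence}, with the moving-threshold set $\mP'_{\epsilon,M}$ in place of $\mP_\epsilon$. The tail-dependent relaxation $\eta_M$ of Theorem~\ref{thm:containment_unbounded_Xi} replaces $\eta$. The plan has three steps: the suboptimality bound, two-sided value control, and value and solution convergence.

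\textbf{Step 1 (suboptimality bound).} Work on the event of probability at least $1-\tau$ where Theorem~\ref{thm:containment_unbounded_Xi} gives $\mP'^M_\epsilon\subseteq\mP'_{\epsilon,M}(\eta_M)$. Since $\mP'_{\epsilon,M}\subseteq\mP'^M_\epsilon$ (the sampled hinge constraint is implied by the exact one, by Proposition~\ref{prop:semiinf_ambiguity_set}), we get
\[
\sup_{\QQ\in\mP'_{\epsilon,M}}\EE_\QQ[\ell(\hat\x'_M,\txi)]\le \hat v'_M\le \min_{\x\in\X}\sup_{\QQ\in\mP'_{\epsilon,M}(\eta_M)}\EE_\QQ[\ell(\x,\txi)].
\]
So it suffices to bound the relaxed value by the exact value plus $\gamma_M^\star\eta_M$.

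To do this, take the optimal dual solution $(\bar\x_M,\alpha,\beta,\kappa,\nu^\star_M)$ of Proposition~\ref{thm:dual_exact_subweibull}. For any $\QQ\in\mP'_{\epsilon,M}(\eta_M)$, integrate the semi-infinite constraint against $\QQ$, using $\beta$ and $\kappa$ with the moment and tail constraints. This gives
\[
\EE_\QQ[\ell(\bar\x_M,\txi)]\le \text{(dual objective)}+\int\big(\EE_\QQ[\ell(\z,\txi)]-\EE_\PPhat[\ell(\z,\txi)]-\epsilon\big)\,\nu^\star_M(\mathrm d\z).
\]
The integrand is at most its positive part. Since $\nu^\star_M\le\gamma\ZZ$, the last term is at most $\gamma\,\EE_\ZZ[[\cdot]_+]\le\gamma\eta_M$. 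Taking the infimum over admissible $\gamma$ yields the stated bound with $\gamma^\star_M$, after absorbing the tail terms $4c_1/M+2c_2\sqrt{2\Omega/M}$ that already appear in $\eta_M$.

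\textbf{Step 2 (two-sided value control).} Here the target value is $\hat v$ for the untruncated set $\mP_\epsilon$, so I need both a lower and an upper bound on $\hat v'_M$.
\begin{itemize}
\item \emph{Lower bound.} The tail constraint only shrinks sets, so $\hat v'_M$ is the value over $\mP'^M_\epsilon\supseteq\mP'_{\epsilon,M}$. I compare this with $\hat v$ by noting that the tail level $1-2\exp(-(t_M/K_{\mathrm{sw}})^\vartheta)=1-2/M$ is nearly vacuous. Any $\QQ\in\mP_\epsilon$ can be perturbed to satisfy it: mix with a point mass inside the ball $\|\bxi\|\le t_M$ with weight $O(1/M)$, or use Slater slack from Assumption~\ref{as:second_moment} and $\epsilon>0$, for example by mixing with $\PPhat$. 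The change in expected loss is $O(1/M)$ by the linear-growth Assumption~\ref{as:linear_growth} and the moment bound, giving $\hat v'_M\ge\hat v-o(1)$.
\item \emph{Upper bound.} On the containment event, $\hat v'_M\le\min_\x\sup_{\mP_\epsilon(\eta_M)}\EE_\QQ[\ell(\x,\txi)]$, with the tail constraint simply dropped. I then need continuity of $\eta\mapsto\min_\x\sup_{\mP_\epsilon(\eta)}$ at $\eta=0$, uniformly in $\x$.
\end{itemize}

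\textbf{Step 3 (continuity, then solution convergence).} I expect the continuity in Step~2 to be the main obstacle, because no dominance constant is assumed for the convergence part. I would argue as in the proof of Theorem~\ref{thm:approximation_convergence}. The sets $\mP_\epsilon(\eta)$ are uniformly tight by the second-moment bound and weakly closed. As $\eta\downarrow0$ they decrease to $\mP_\epsilon(0)=\mP_\epsilon$, where the hinge functional is lower semicontinuous and Proposition~\ref{prop:semiinf_ambiguity_set} applies. Uniform integrability from the second moment, together with the linear growth of $\ell$, makes $\QQ\mapsto\EE_\QQ[\ell(\x,\txi)]$ weakly continuous and equi-Lipschitz in $\x$. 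A Berge- or Dini-type argument then gives $\sup_{\mP_\epsilon(\eta)}\to\sup_{\mP_\epsilon}$ uniformly on compact $\X$.

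Since $\eta_M\to0$ deterministically, this gives $\hat v'_M\overset{p}{\to}\hat v$. For the solutions, the worst-case objectives converge uniformly to the exact objective $F$ on $\X$, and $F$ is lower semicontinuous. Hence any subsequence of $\hat\x'_M$ on the high-probability events has cluster points in $\mathcal S$. A standard argument by contradiction, using compactness of $\X$, then gives $\textup{dist}(\hat\x'_M,\mathcal S)\overset{p}{\to}0$.
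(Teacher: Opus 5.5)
Your Step~1 is sound and is the paper's argument in a slightly more direct form. Integrating the exact threshold-$t_M$ dual constraint against $\QQ\in\mP'_{\epsilon,M}(\eta_M)$ and using $\nu^\star_M\le\gamma\ZZ$ is exactly the weak-duality content of the paper's step, which plugs the exact dual optimizer into the relaxed dual carrying the extra term $\gamma\eta_M$. Your upper bound $\hat v'_M\le\min_{\x\in\X}\sup_{\QQ\in\mP_\epsilon(\eta_M)}\EE_\QQ[\ell(\x,\txi)]$, together with the tightness/Prokhorov continuity in $\eta$, also matches the paper.

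The real obstacle is not that continuity but the lower bound in Step~2, and your solution-convergence argument depends on it as well. Unlike the bounded case, $\mP_\epsilon\not\subseteq\mP'^M_\epsilon$, so the shortcut $\sup_{\QQ\in\mP_\epsilon}\EE_\QQ[\ell(\hat\x'_M,\txi)]\le\hat v'_M$ is unavailable. You genuinely need $f_0\le f_M+o(1)$ uniformly on $\X$, where $f_0(\x)\coloneqq\sup_{\QQ\in\mP_\epsilon}\EE_\QQ[\ell(\x,\txi)]$ and $f_M(\x)\coloneqq\sup_{\QQ\in\mP'_{\epsilon,M}}\EE_\QQ[\ell(\x,\txi)]$.

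The perturbation you propose does not deliver this, because the constraint $\QQ(\|\txi\|\le t_M)\ge 1-2/M$ is far from vacuous. It forces all but $2/M$ of the mass into a ball of radius only $K_{\mathrm{sw}}(\log M)^{1/\vartheta}$. For $\QQ\in\mP_\epsilon$, however, the second moment only gives $\QQ(\|\txi\|>t_M)\le\Omega/t_M^2$, which is of order $(\log M)^{-2/\vartheta}$. For instance, $(1-p)\PPhat+p\mu$ with small $p$ and $\mu$ polynomially tailed (finite second moment) lies in $\mP_\epsilon$ and has outside mass $\gg 2/M$. A mixture $(1-w)\QQ+w\mu'$ with $\mu'$ supported in the ball (a point mass or $\PPhat$) still has outside mass $(1-w)\QQ(\|\txi\|>t_M)$. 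Feasibility therefore forces $w\to 1$, not $w=\mathcal O(1/M)$, and the mixture's expected loss is driven toward $\EE_{\mu'}[\ell(\x,\txi)]$ instead of staying near $\EE_\QQ[\ell(\x,\txi)]$. So neither perturbation gives a feasible measure, and the claimed $\mathcal O(1/M)$ loss change is unfounded.

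The missing idea is to \emph{move} the tail mass rather than dilute it. Push $\QQ$ forward under the radial projection $\Pi_M$ onto $\{\|\bxi\|\le t_M\}$. This measure satisfies the tail constraint with probability one and preserves the second-moment bound, since $\|\Pi_M(\bxi)\|\le\|\bxi\|$. By Assumption~\ref{as:linear_growth} it changes every $\EE[\ell(\z,\cdot)]$ by at most
$\Delta_M\le\EE_\QQ[(2c_1+2c_2\|\txi\|)\I_{\{\|\txi\|>t_M\}}]\le 2c_1\Omega/t_M^2+2c_2\Omega/t_M$, uniformly in $\z$ and $\QQ$.

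Only then does your Slater ingredient enter: mix with $\PPhat$ (supported in the ball for large $M$) at weight $\Delta_M/(\epsilon+\Delta_M)$. This restores the loss-discrepancy constraints because $\epsilon>0$. The result is $f_0-\mathcal O((\log M)^{-1/\vartheta})\le f_M\le f_0$ uniformly on $\X$, a logarithmic rate rather than $\mathcal O(1/M)$. Combined with your upper bound, sandwiching the sampled objective between $f_M$ and $\sup_{\QQ\in\mP_\epsilon(\eta_M)}\EE_\QQ[\ell(\cdot,\txi)]$ then yields value and solution convergence as you intended.
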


While the suboptimality bound above is measured against $\mP'_{\epsilon,M}$, the limiting convergence is formulated relative to the original ambiguity set $\mP_\epsilon$, as the moving tail constraint vanishes asymptotically. 
We defer a finite-sample suboptimality bound measured directly against $\mP_\epsilon$ to Theorem~\ref{thm:tuned_unbounded_original} in Appendix~\ref{appendix:proofs_sec4}. The appendix result shows that, when the corresponding dual sensitivity factors remain bounded, the error is $\widetilde{\mathcal O}(M^{-1/4})$. Thus, the threshold is used only to control tail behavior in the analysis and is sent to infinity as $M$ increases; it does not change the target DRO problem. Consequently, in implementations, we use the sampled ambiguity set from Section~\ref{sec:mc_sampling} even when the support is unbounded. Appendix~\ref{appendix:mc_numerical_illustration_student_t} provides a numerical illustration with a Student-$t_5$ heavy-tail distribution showing that the sampled-dual solution improves rapidly consistently with $M$ in different dimensions and does not suffer from the curse of dimensionality.

\subsection{Conic Programming Reformulations}
We now show that the approximate problem admits a tractable conic programming reformulation.
\begin{theorem}\label{theorem:Conic_reformulation_subweibull}
Suppose that $\X$ is convex.
The problem \eqref{eq:dual_SAA_subweibull} is equivalent to the following finite convex conic program:
\begin{equation} 
\begin{aligned}
\min \;&  \alpha  + \beta\Omega -\kappa \left(1-2\exp\left(-\left(t/K_{\mathrm{sw}}\right)^\vartheta\right)\right)+ \frac{1}{M}\sum_{m\in[M]}\nu_m(\EE_\PPhat[\ell(\z_m,\txi)]+ \epsilon)  \\
\st & \x\in\X, \; \alpha\in\RR,\;\beta,\kappa\in\RR_+,\;\bm\nu\in\RR_+^M\\
& \bm\lambda_{jm},\bm\lambda'_{jm}\in\RR_+^J\;\;\forall j\in[J]\;\forall m\in[M]\\
& \bm\theta_j\in\RR^{D_\bxi},\; \zeta_j,\zeta'_j\in\RR_+\;\;\forall j\in[J]\\
&\mathbf e^\top\bm\lambda_{jm}=\nu_m\;\;\forall m\in[M]\;\forall j\in[J]\\
&\mathbf e^\top\bm\lambda'_{jm}=\nu_m \;\;\forall m\in[M]\;\forall j\in[J]\\
&  \zeta_j+b_j(\x)-\frac{1}{M}\sum_{m\in[M],k\in[J]} \lambda_{jm}^k b_k(\z_m)\leq \alpha \quad \forall j\in[J]\\
& \left\|\begin{bmatrix}\bm a_j(\x)-\frac{1}{M}\sum_{m\in[M],k\in[J]} \lambda_{jm}^k \bm a_k(\z_m)\\\zeta_j-\beta\end{bmatrix}\right\|\leq\zeta_j+\beta\quad\forall j\in[J]\\
&  \zeta'_j+t \|\bm\theta_j\|+b_j(\x)+\kappa-\frac{1}{M}\sum_{m\in[M],k\in[J]} \lambda_{jm}^{'k} b_k(\z_m)\leq \alpha \quad \forall j\in[J]\\
& \left\|\begin{bmatrix}\bm a_j(\x)-\frac{1}{M}\sum_{m\in[M],k\in[J]} \lambda_{jm}^{'k} \bm a_k(\z_m)-\bm\theta_j\\\zeta'_j-\beta\end{bmatrix}\right\|\leq\zeta'_j+\beta\quad\forall j\in[J].
\end{aligned}
\end{equation}
\end{theorem}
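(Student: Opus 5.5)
We reformulate the semi-infinite constraint of \eqref{eq:dual_SAA_subweibull} piece by piece, following the template of Theorem~\ref{theorem:Conic_reformulation}, but splitting $\Xi=\RR^{D_\bxi}$ into the ball $\{\|\bxi\|\le t\}$ and its complement. On the complement the indicator vanishes, so the constraint reads $\ell(\x,\bxi)\le\alpha+\beta\|\bxi\|^2+\frac1M\sum_m\nu_m\ell(\z_m,\bxi)$ for $\|\bxi\|>t$. On the ball it reads the same inequality with $\alpha$ replaced by $\alpha-\kappa$. We first argue that the constraint over the open complement can be replaced by the constraint over all of $\RR^{D_\bxi}$. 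Both sides are continuous in $\bxi$, so the constraint also holds on the closure $\{\|\bxi\|\ge t\}$. For $\|\bxi\|<t$, we argue as follows. Let $g(\bxi)\coloneqq\alpha+\beta\|\bxi\|^2+\frac1M\sum_m\nu_m\ell(\z_m,\bxi)-\ell(\x,\bxi)$; the right-hand terms are convex while $-\ell(\x,\cdot)$ is concave, so convexity does not immediately extend the inequality inward. We therefore instead keep the two regions separately and dualize each.

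For each region we handle the max in $\ell(\x,\bxi)=\max_j(\bm a_j(\x)^\top\bxi+b_j(\x))$ by requiring the inequality for every $j$. The term $\frac1M\sum_m\nu_m\max_k(\cdot)$ is a sum of convex maxima; we write it as $\max_{\bm\lambda_{jm}\in\RR_+^J,\,\mathbf e^\top\bm\lambda_{jm}=\nu_m}\frac1M\sum_{m,k}\lambda^k_{jm}(\bm a_k(\z_m)^\top\bxi+b_k(\z_m))$. For fixed $j$ the constraint becomes $\sup_{\bxi\in S}\min_{\bm\lambda}[\cdot]\le\alpha$ (or $\le \alpha-\kappa$), and we exchange sup and min by Sion's theorem. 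This is justified because the objective is concave quadratic in $\bxi$ (since $\beta\ge0$) and linear in $\bm\lambda$ over a compact simplex. When $\beta=0$ the inner supremum may be $+\infty$, which is consistent with the conic form obtained by a limiting argument.

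For the unbounded region we use $S=\RR^{D_\bxi}$ (the enlarged set), which gives $\sup_{\bxi}\{\bm v_j^\top\bxi-\beta\|\bxi\|^2\}=\|\bm v_j\|^2/(4\beta)$ with $\bm v_j=\bm a_j(\x)-\frac1M\sum\lambda^k_{jm}\bm a_k(\z_m)$. Introducing the epigraph variable $\zeta_j\ge\|\bm v_j\|^2/(4\beta)$ and using the rotated-cone identity $\|(\bm v,\zeta-\beta)\|\le\zeta+\beta\iff\|\bm v\|^2\le4\zeta\beta$ yields the first pair of constraints, with no $\bm\theta_j$ and no support-function term.

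For the ball $S=\{\|\bxi\|\le t\}$ we write $-\beta\|\bxi\|^2+\bm v_j'^\top\bxi$ and apply the conjugate/inf-convolution duality for a sum of the concave quadratic and the indicator of the ball. This gives $\sup_{\|\bxi\|\le t}\{\bm v'^\top\bxi-\beta\|\bxi\|^2\}=\min_{\bm\theta}\{\|\bm v'-\bm\theta\|^2/(4\beta)+t\|\bm\theta\|\}$, where $t\|\bm\theta\|=\sigma_{\{\|\bxi\|\le t\}}(\bm\theta)$. Moving $\kappa$ to the left and using the same rotated-cone step gives the primed constraints with separate multipliers $\bm\lambda'_{jm}$.

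The main obstacle is the regional split. If we enlarge the complement region to all of $\RR^{D_\bxi}$, the resulting constraint is stronger than the one in \eqref{eq:dual_SAA_subweibull} on the interior of the ball, so we must show this enlargement does not restrict the problem. The intended argument is that, on the ball, the constraint with $\alpha-\kappa$ is tighter than the constraint with $\alpha$ because $\kappa\ge0$, so the ball constraint already implies the enlarged unbounded one there. Hence requiring the $\alpha$-version on all of $\RR^{D_\bxi}$ and the $(\alpha-\kappa)$-version on the ball is equivalent to the original indicator constraint. With this equivalence, each dualization is exact, the infima over $\bm\lambda,\bm\theta,\zeta$ merge into the outer minimization, and the claimed finite program follows.
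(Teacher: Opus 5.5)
Your proposal is correct and follows essentially the same route as the paper. You split the indicator constraint into the ball and its complement, use $\kappa\ge0$ to enlarge the complement constraint to all of $\RR^{D_\bxi}$, and then dualize each piece via the simplex representation, Sion's theorem, the unconstrained quadratic conjugate, and inf-convolution with the ball's support function $t\|\cdot\|$. One loose end: for $\beta=0$ you need no limiting argument, since the rotated-cone constraint $\|(\bm v,\zeta-\beta)\|\le\zeta+\beta$ with $\beta=0$ forces $\bm v=\mathbf 0$ (respectively $\bm\theta_j=\bm v'_j$). That is exactly the finiteness condition on the supremum, with $\zeta=0$ as the least restrictive choice, which is how the paper treats this case.
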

In particular, if $\X$ is second-order cone representable, then the reformulation above is a second-order cone program.

\section{Numerical Experiments}\label{sec:experiments}

In this section, we conduct numerical experiments to validate the practical performance of our TIPM-DRO framework across two problem classes: a multi-item newsvendor problem and high-dimensional linear regression under adversarial outlier corruption. These experiments examine two complementary challenges: heavy-tailed operational uncertainty in inventory management, and high-dimensional geometry under high-leverage outlier contamination. In each experiment, we describe the problem setting, the competing methods, and the out-of-sample (OOS) evaluation results.
\subsection{Multi-Item Newsvendor Problem}

We consider a multi-item newsvendor problem with order vector $\x\in\R_+^K$ and random demand vector $\txi\in\R_+^K$. The loss is
\[
\ell(\x,\bxi)=\sum_{j=1}^K\left[h_j(x_j-\xi_j)_+ + b_j(\xi_j-x_j)_+\right].
\]
We optimize the $\PP$-CVaR$_\rho$ objective of this loss. The experiment pursues two objectives. First, we fix $K=3$ and compare four coordinatewise demand families: truncated Gaussian, rescaled $\chi^2$, Lognormal, and Pareto, to evaluate performance across lighter- and heavier-tailed demand regimes, since heavy tails can materially change stocking decisions~\citep{bimpikis2016inventory}. Second, we fix Lognormal demand and repeat the experiment for $K\in\{1,3,5\}$ products to assess whether TIPM-DRO remains effective as the number of products grows.


We compare TIPM-DRO against the strongest relevant benchmarks under the same CVaR objective. SAA provides the standard non-robust benchmark. For transport-based robustness, we use 2-Wasserstein DRO (2-WDRO), which has recently been shown to deliver strong out-of-sample performance in comparative studies~\citep{byeon2025comparative} and serves as our state-of-the-art Wasserstein baseline. We do not separately report 1-WDRO because, for newsvendor losses of this form, it shares the same optimal minimizers as SAA~\citep[Remark~6.7]{mohajerin2018data}. In each trial, the in-sample data are split into training and validation subsets; the ambiguity radius of each robust method is selected from the same grid by validation. To ensure a fair comparison, every method is evaluated on the same OOS cost using an independent test sample. Additional implementation details are reported in Appendix~\ref{appendix:newsvendor}.

\begin{figure}[t]
\centering
\begin{subfigure}[t]{0.48\textwidth}
    \centering
    \includegraphics[width=\linewidth]{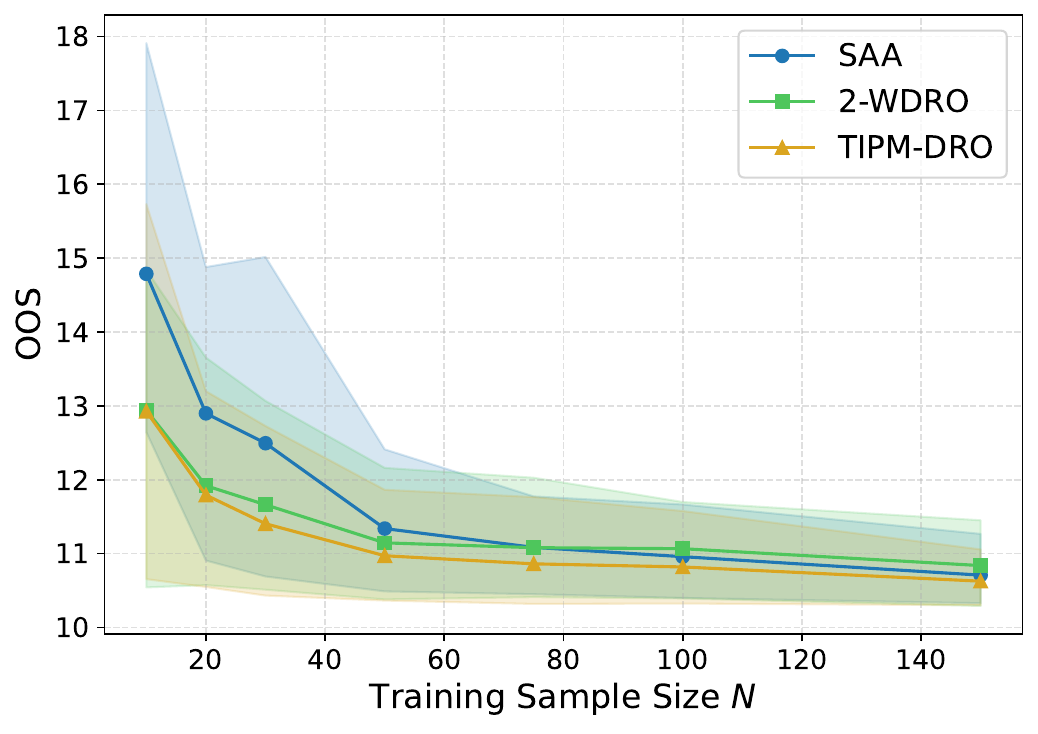}
    \caption{Truncated Gaussian}
\end{subfigure}
\hfill
\begin{subfigure}[t]{0.48\textwidth}
    \centering
    \includegraphics[width=\linewidth]{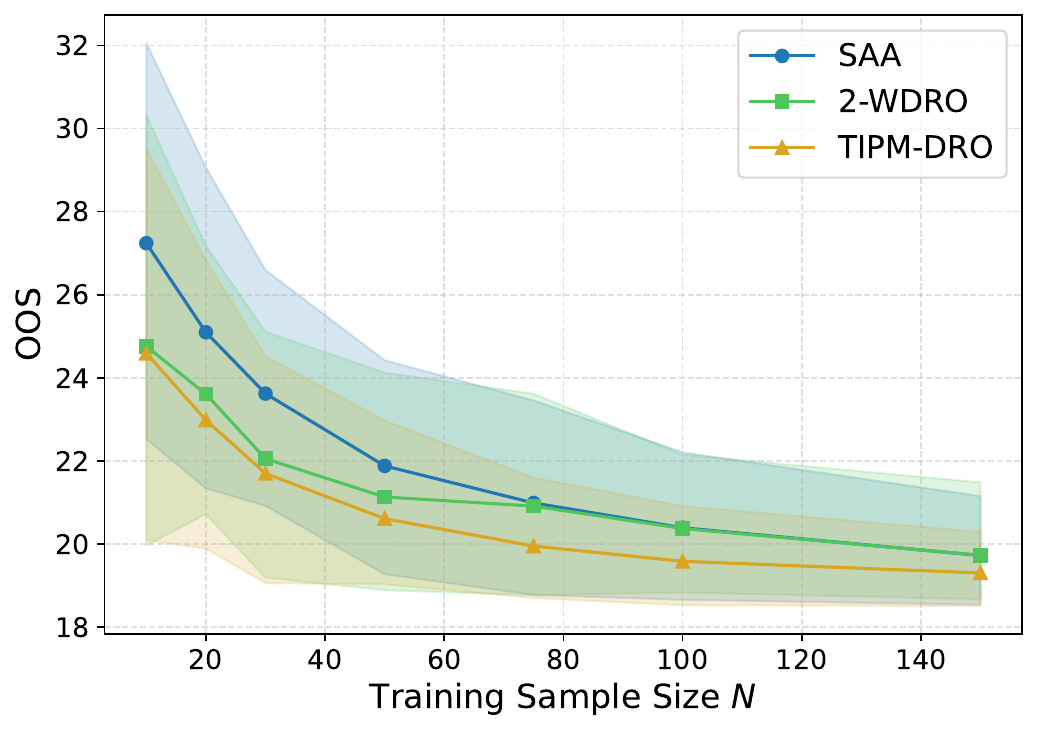}
    \caption{Rescaled $\chi^2$}
\end{subfigure}

\vspace{0.5em}

\begin{subfigure}[t]{0.48\textwidth}
    \centering
    \includegraphics[width=\linewidth]{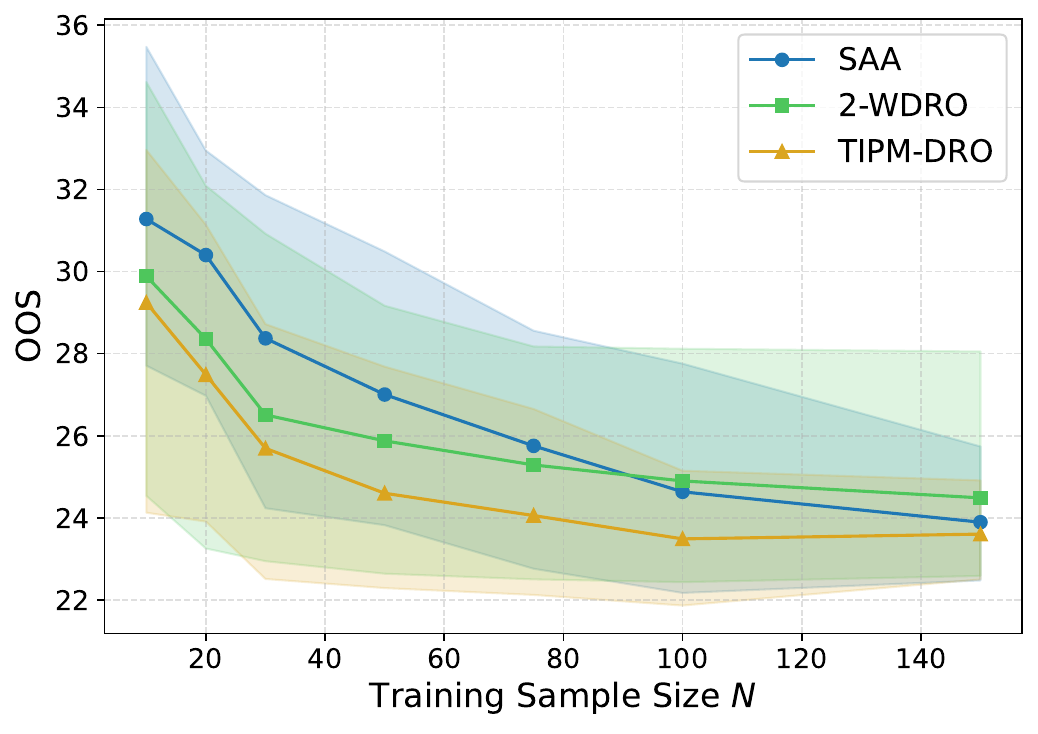}
    \caption{Lognormal}
\end{subfigure}
\hfill
\begin{subfigure}[t]{0.48\textwidth}
    \centering
    \includegraphics[width=\linewidth]{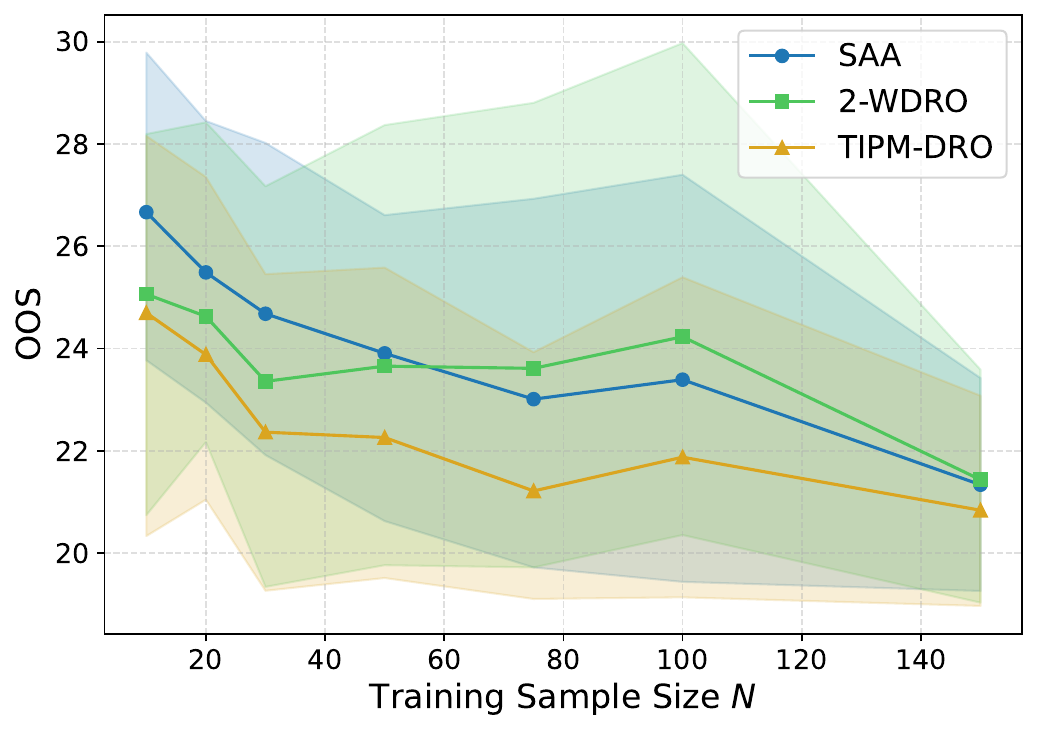}
    \caption{Pareto}
\end{subfigure}

\caption{Mean OOS cost versus the training sample size $N$ with 10th--90th percentile bands in the multi-item newsvendor experiment with $K=3$.}
\label{fig:nv_oos_by_n}
\end{figure}
Figure~\ref{fig:nv_oos_by_n} reports mean OOS cost with 10th--90th percentile bands over 50 trials, where lower values are better. TIPM-DRO achieves the best mean OOS performance in all four demand settings. Under truncated Gaussian demand, both robust methods improve on SAA and TIPM-DRO still performs slightly better than 2-WDRO, although the gap narrows as $N$ increases. Under rescaled $\chi^2$, Lognormal, and Pareto demand, the advantage of TIPM-DRO is more persistent, with the clearest separation appearing in the heavier-tailed settings. The truncated Gaussian panel shows that TIPM-DRO is already competitive in a lighter-tailed regime, while the heavier-tailed panels show that its advantage over both SAA and 2-WDRO becomes more pronounced.

\begin{figure}[t]
\centering
\begin{subfigure}[t]{0.328\textwidth}
    \centering
    \includegraphics[width=\linewidth]{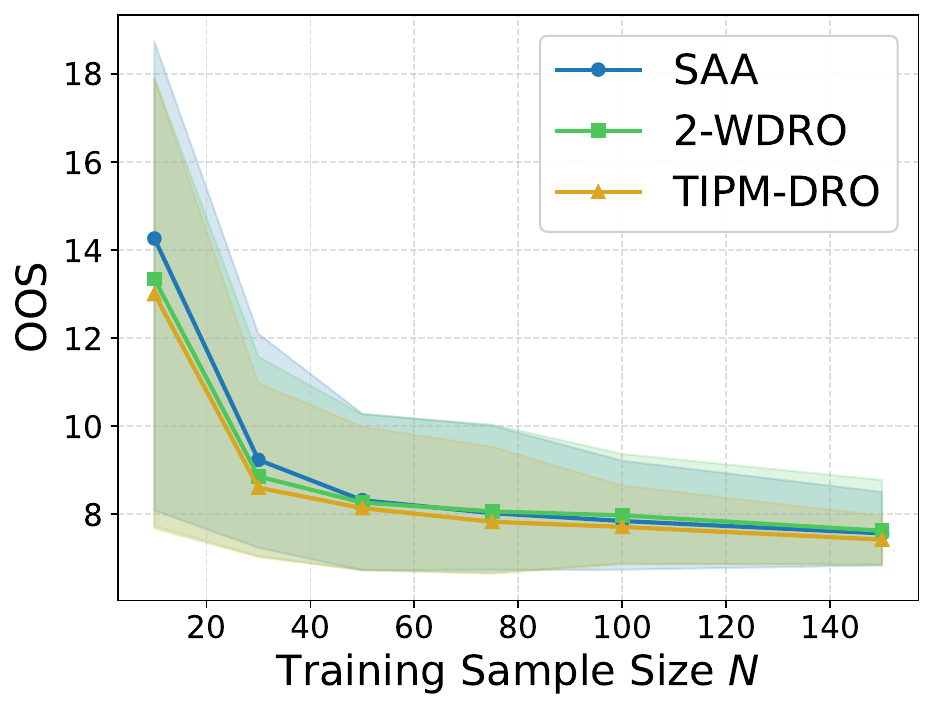}
    \caption{$K=1$}
\end{subfigure}
\hfill
\begin{subfigure}[t]{0.328\textwidth}
    \centering
    \includegraphics[width=\linewidth]{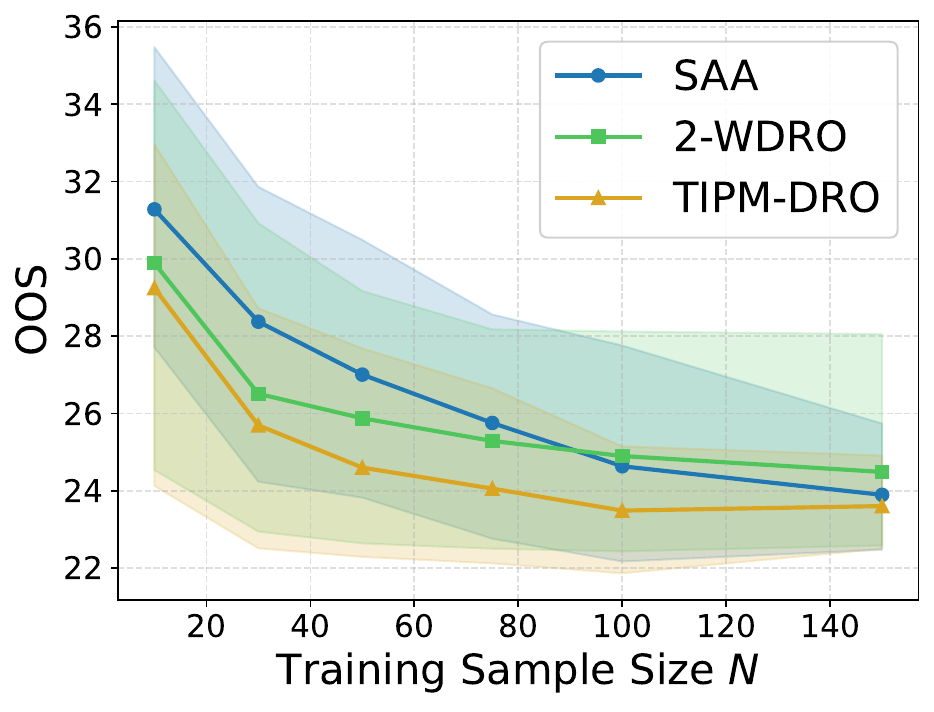}
    \caption{$K=3$}
\end{subfigure}
\hfill
\begin{subfigure}[t]{0.328\textwidth}
    \centering
    \includegraphics[width=\linewidth]{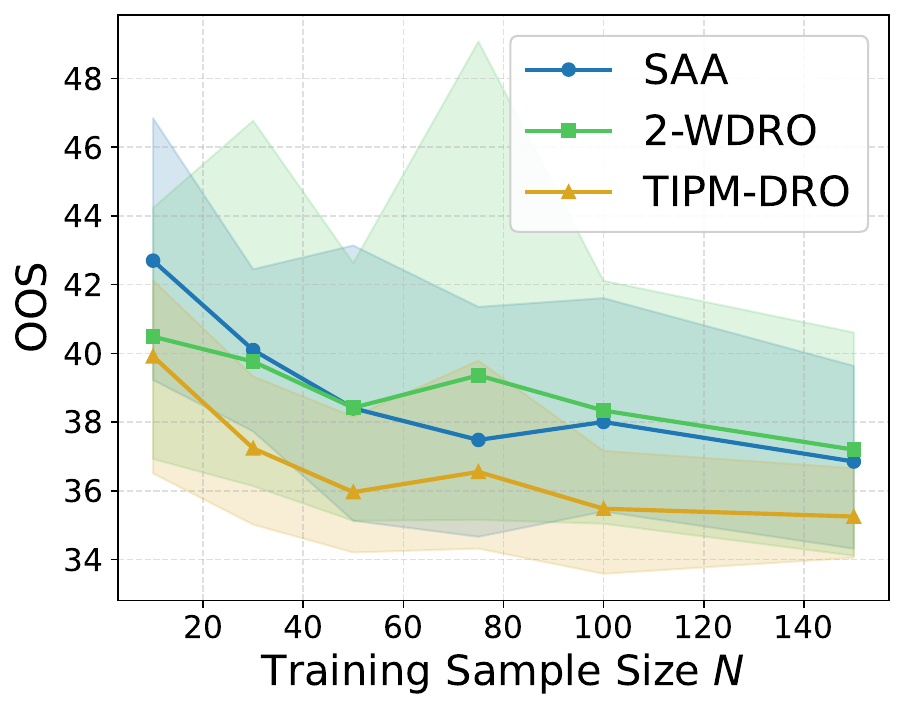}
    \caption{$K=5$}
\end{subfigure}
\caption{Mean OOS cost versus training sample size $N$ with 10th--90th percentile bands under Lognormal demand for $K=1$, $3$, and $5$ products, based on 50 independent trials.}
\label{fig:nv_K_sweep}
\end{figure}
Figure~\ref{fig:nv_K_sweep} reports the Lognormal results across $K\in\{1,3,5\}$ products over 50 independent trials. At $K=1$, all three methods perform similarly, with TIPM-DRO offering a modest improvement at small $N$. As $K$ increases to $3$ and $5$, the OOS advantage of TIPM-DRO over SAA and 2-WDRO grows steadily. At $K=5$, TIPM-DRO maintains a consistent gap below both baselines across all sample sizes, while 2-WDRO exhibits substantially wider variance bands, consistent with the difficulty of calibrating a high-dimensional Wasserstein ball under heavy-tailed demand. These results confirm that the advantage of TIPM-DRO scales with the number of products, in line with the dimension-free guarantees in Section~\ref{sec:IPMDRO}.

\subsection{Outlier-Corrupted Regression}\label{sec:exp_outlier}
\begin{figure}[t]
\centering
\begin{subfigure}[t]{0.48\textwidth}
    \centering
    \includegraphics[width=\linewidth]{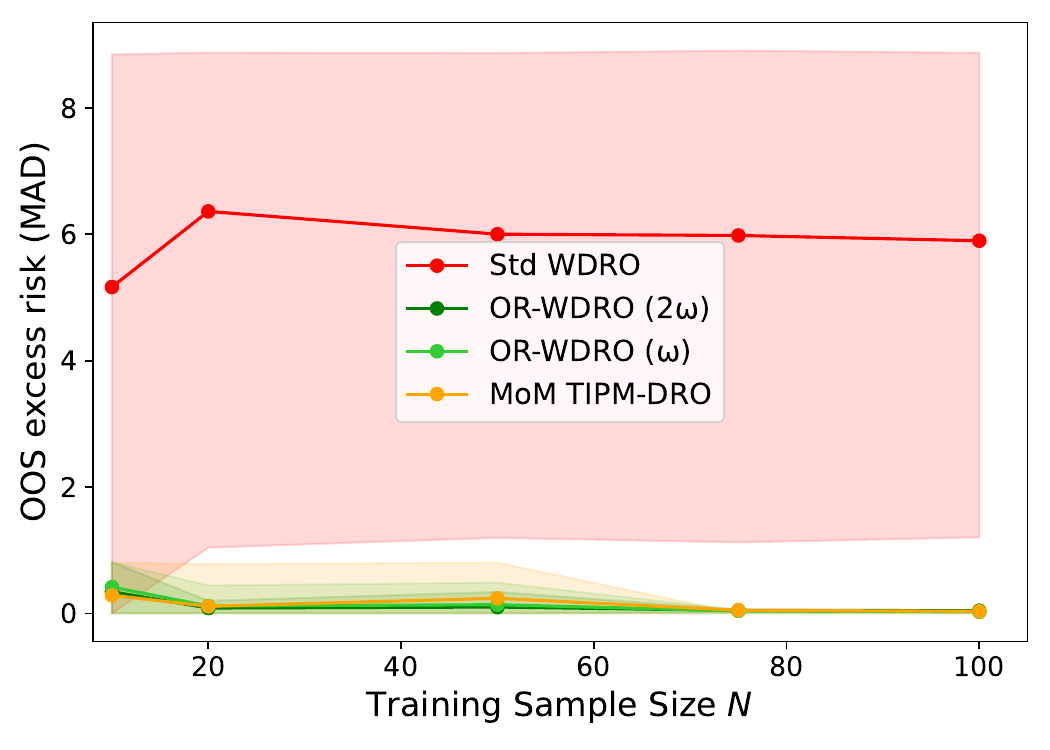}
    \caption{$D_\bxi=1$}
\end{subfigure}
\hfill
\begin{subfigure}[t]{0.48\textwidth}
    \centering
    \includegraphics[width=\linewidth]{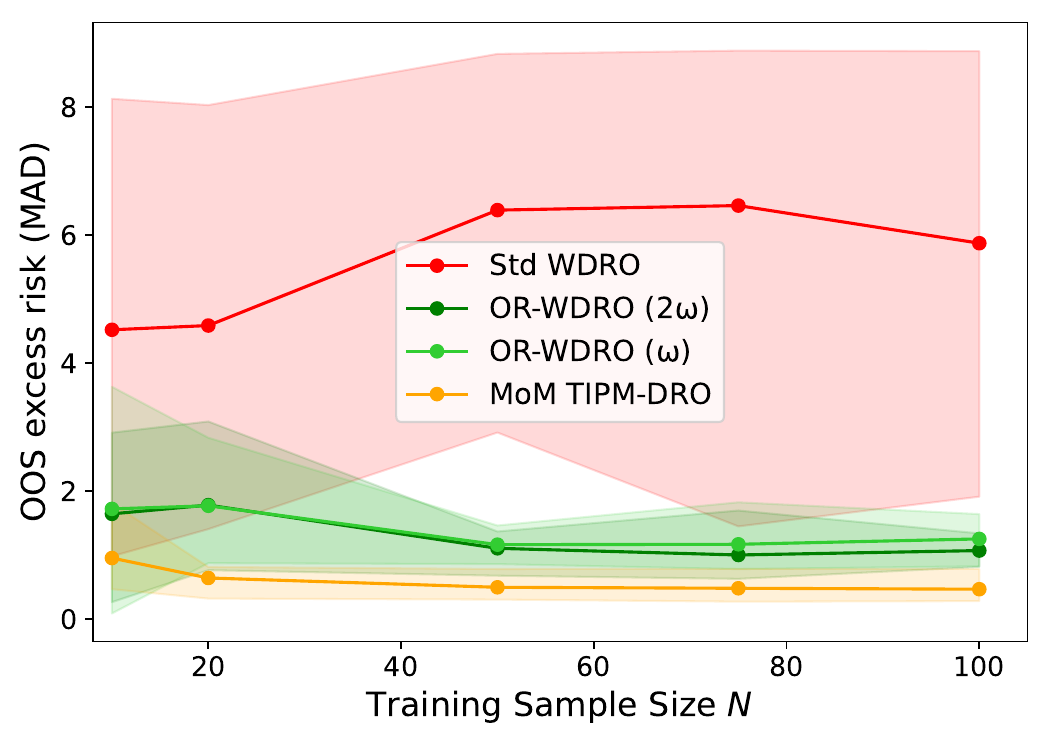}
    \caption{$D_\bxi=5$}
\end{subfigure}

\vspace{0.5em}

\begin{subfigure}[t]{0.48\textwidth}
    \centering
    \includegraphics[width=\linewidth]{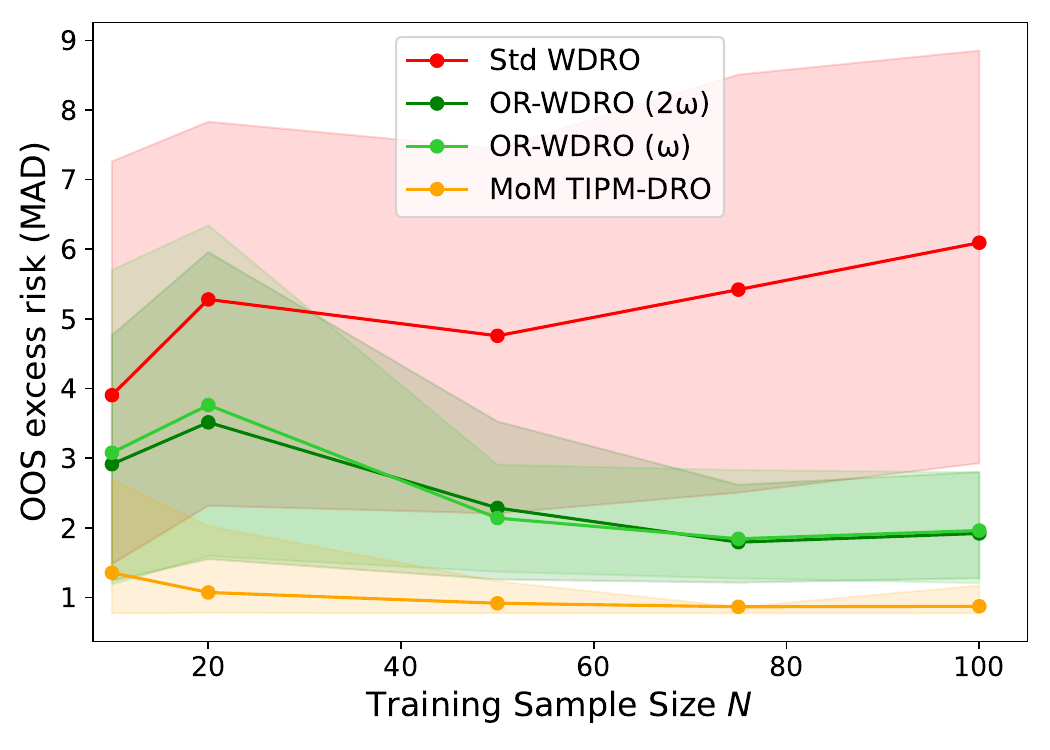}
    \caption{$D_\bxi=10$}
\end{subfigure}
\hfill
\begin{subfigure}[t]{0.48\textwidth}
    \centering
    \includegraphics[width=\linewidth]{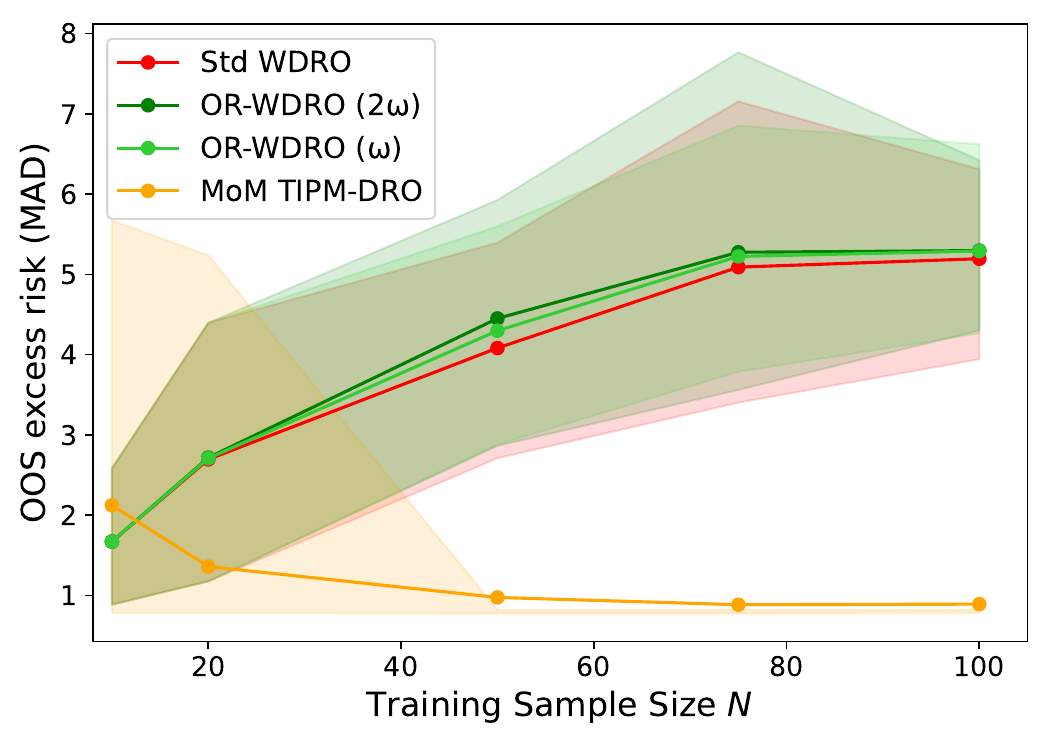}
    \caption{$D_\bxi=50$}
\end{subfigure}

\caption{OOS excess risk under the MAD loss versus the training sample size $N$, with panels ordered by increasing feature dimension $D_\bxi$.}
\label{fig:dim_sweep}
\end{figure}

We consider linear regression with the mean absolute deviation (MAD) loss $\ell(\x, \bxi) = |\x^\top \bm{u} - v|$, where $\bxi = (\bm{u}, v) \in \RR^{D_{\bxi}+1}$ denotes a data point with feature vector $\bm{u} \in \RR^{D_{\bxi}}$ and response $v \in \RR$, following the setup of~\citet{nietert2023outlier}. The true regression coefficient $\x^\star$ is drawn uniformly from the unit sphere in $\RR^{D_{\bxi}}$. Clean features satisfy $\bm{u}_i \sim \mathcal{N}(\bm{0}, \I_{D_{\bxi}})$, and clean responses are $v_i = \x^{\star\top}\bm{u}_i$. An adversary corrupts a fraction $\omega = 0.2$ of the training samples by replacing each corrupted pair $(\bm{u}_i,v_i)$ with $(C\bm{u}_i,-C^2v_i+\rho')$, where $\rho' = 0.1$. This creates high-leverage outliers whose severity is controlled by the scale parameter $C$. The experiment is designed to probe two questions. First, what happens as the ambient dimension $D_\bxi$ grows and Wasserstein geometry becomes less informative? Second, does robust performance depend on the outliers being geometrically well separated from the clean sample?


We benchmark TIPM-DRO against state-of-the-art transport-based competitors tailored to this outlier setting. Standard Wasserstein DRO~\citep{mohajerin2018data} (Std WDRO) provides the canonical baseline centered at the empirical distribution. We also compare against the outlier-robust Wasserstein DRO (OR-WDRO) methods of~\citet{nietert2023outlier}, which are designed for the same adversarial contamination model and represent the strongest direct transport-based alternative; we report both OR-WDRO$(\omega)$ and OR-WDRO$(2\omega)$ so that the baseline is not disadvantaged by a single conservative contamination radius. All robust models are tuned by a trimmed validation loss on a held-out split. For the three Wasserstein-based models, we cross-validate the Wasserstein radius over a common grid. For MoM TIPM-DRO, we cross-validate the number of blocks $K'$ and the TIPM radius $\epsilon$.  To ensure a fair comparison, all methods are evaluated on the same MAD-based OOS excess risk under the same test dataset. Additional implementation details are reported in Appendix~\ref{appendix:outlier_regression}.

Figure~\ref{fig:dim_sweep} reports the OOS excess risk under the MAD loss as a function of the training sample size $N$
for four representative dimensions $D_\bxi$, based on 50 trials and a clean test set of size $1000$. Standard WDRO barely improves with $N$, because its ambiguity set remains centered at the contaminated empirical distribution and therefore continues to fit the outliers as more corrupted data are observed. OR-WDRO works reasonably well in low dimension, where transport costs can still separate inliers from outliers, but this advantage fades as $D_\bxi$ grows and Wasserstein distances concentrate. MoM TIPM-DRO, in contrast, stays stable across dimensions and becomes clearly best at $D_\bxi=50$. This suggests that transport-based outlier handling depends on informative geometry, while MoM TIPM-DRO remains stable across dimensions, in line with our theory, which requires only $\omega<1/2$ and does not worsen with ambient dimension.

\begin{figure}[t]
  \centering
  \includegraphics[width=0.5\textwidth]{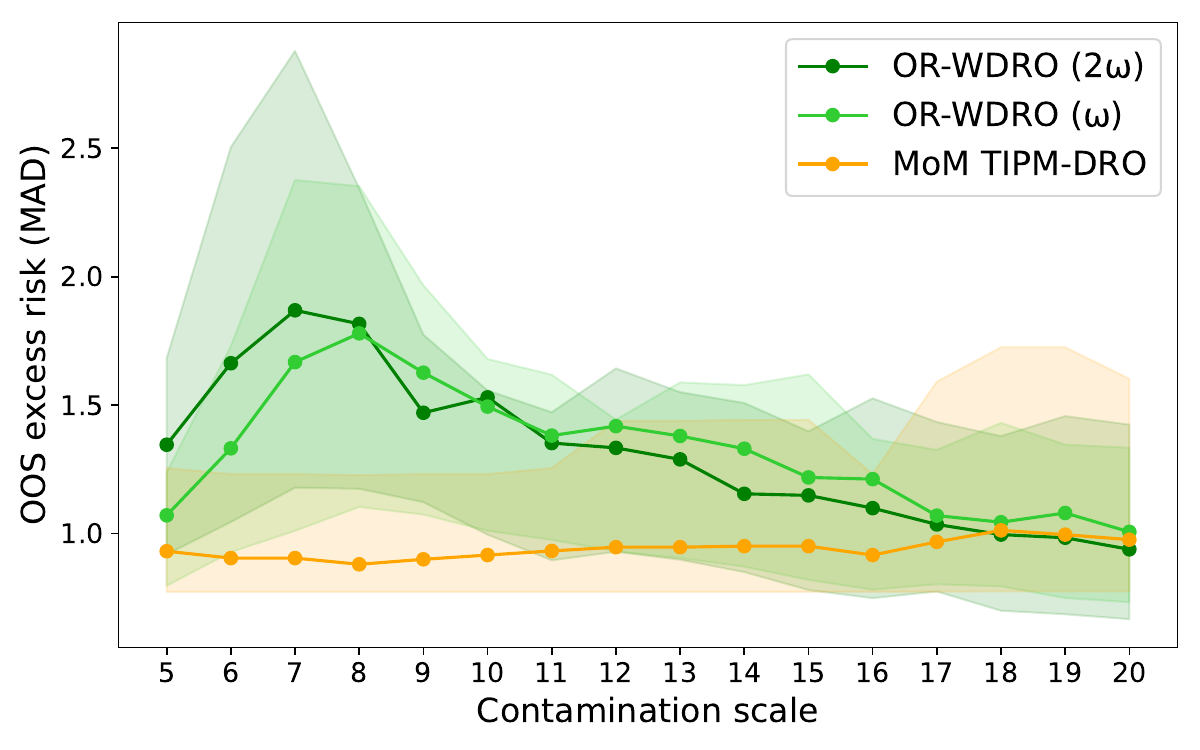}
  \caption{OOS excess risk under the MAD loss versus the contamination scale $C$, with 10th--90th percentile bands over the trials.}
  \label{fig:cont_scale}
\end{figure}

To isolate the effect of outlier severity, Figure~\ref{fig:cont_scale} fixes $N=50$ and $D_\bxi=10$ and sweeps $C$ from $5$ to $20$. For visual clarity, this figure displays only OR-WDRO$(2\omega)$, OR-WDRO$(\omega)$, and MoM TIPM-DRO. OR-WDRO shows a clear hump at moderate contamination scales: the outliers are already harmful enough to distort the empirical reference, but not yet far enough away in transport cost to be trimmed reliably. Only when $C$ becomes very large does OR-WDRO start to recover. MoM TIPM-DRO stays nearly flat throughout, showing that its performance is much less sensitive to how severe the leverage contamination is. Taken together, the two figures show that MoM TIPM-DRO does not just improve over standard WDRO. It is also more stable than outlier-aware WDRO baselines exactly when transport geometry is least reliable: at higher ambient dimension and at moderate contamination scales.

\section{Concluding Remarks}

We introduced a distributionally robust optimization framework based on targeted integral probability metrics. By defining distributional discrepancy directly through the decision-induced loss class, the proposed expected hinge formulation yields a task-aware ambiguity set that is equivalent to a semi-infinite family of loss-discrepancy constraints. This construction enables finite-sample guarantees that bypass the ambient curse of dimensionality whenever a scalar pointwise concentration inequality is available, thereby extending dimension-free rate guarantees for DRO to a broad range of nonstandard data regimes without requiring geometry-driven ambiguity-set design. The framework is modular: once a suitable pointwise concentration inequality is established for a new data process or reference estimator, the same TIPM ambiguity-set structure and calibration principle apply.

Our numerical results highlight the practical value of aligning distributional robustness with the downstream loss. In both inventory management and outlier-corrupted regression, the proposed framework delivers favorable out-of-sample performance relative to transport-based baselines. The experiments also suggest that the method may remain effective for Pareto-type heavy-tailed regimes beyond the current sub-Weibull theory, motivating further investigation. Several directions remain for future research, including extending the theoretical guarantees to Pareto-type heavy-tailed models, developing adaptive sampling schemes to improve the Monte Carlo approximation rate, and designing scalable algorithms for richer multistage decision problems.

\bibliographystyle{informs2014}
\bibliography{bibliography}

\clearpage
\begin{APPENDICES}
\renewcommand{\theHsection}{appendix.\Alph{section}}
\renewcommand{\theHsubsection}{appendix.\Alph{section}.\arabic{subsection}}
\linespread{1.34}\selectfont
\setlength{\parskip}{1em}

\section{Proofs of Section~\ref{sec:IPMDRO}}
\label{appendix:proofs_sec2}

\subsection{Ambiguity Set Characterization}

\begin{proof}{Proof of Proposition~\ref{prop:semiinf_ambiguity_set}.}
It suffices to show that the semi-infinite constraint
\begin{equation}
\label{eq:semiinf_constraint}
\EE_\QQ[\ell(\z,\txi)] - \EE_\PPhat[\ell(\z,\txi)]\leq \epsilon \quad\forall\z\in\X
\end{equation}
is equivalent to the expectation constraint 
\begin{equation}
\label{eq:hinge_constraint}
\EE_\ZZ\left[\left[\EE_\QQ[\ell(\tz,\txi)]-\EE_\PPhat[\ell(\tz,\txi)]-\eps\right]_+\right] \leq 0.
\end{equation}

If $\QQ$ is feasible to \eqref{eq:semiinf_constraint}, then $[\EE_\QQ[\ell(\z,\txi)] - \EE_\PPhat[\ell(\z,\txi)]- \epsilon]_+\leq 0$ for all $\z\in\X$. Taking expectation with respect to $\ZZ$ yields \eqref{eq:hinge_constraint}.

Conversely, suppose $\QQ$ is infeasible to \eqref{eq:semiinf_constraint}; that is, there exists $\z'\in\X$ such that $[\EE_\QQ[\ell(\z',\txi)] - \EE_\PPhat[\ell(\z',\txi)]- \epsilon]_+> 0$. By Assumption~\ref{as:linear_growth}, for any $\QQ$ feasible to \eqref{eq:ambiguity_set},
\begin{equation*}
\EE_\QQ[\ell(\z,\txi)]\leq c_1 + c_2\EE_\QQ[\|\txi\|]\leq c_1 + c_2\sqrt{\EE_\QQ[\|\txi\|^2]}\leq c_1 + c_2\sqrt{\Omega}<+\infty\qquad\forall\z\in\X,
\end{equation*}
where the second inequality follows from Jensen's inequality. By Assumption~\ref{as:linear_growth}, $|\ell(\z,\bxi)|\leq c_1+c_2\|\bxi\|$ provides a dominator integrable under $\QQ$ (via the second-moment constraint in~\eqref{eq:ambiguity_set}), so the dominated convergence theorem implies that $\EE_\QQ[\ell(\z,\txi)]$ is continuous in $\z$. Since $\EE_\PPhat[\ell(\z,\txi)]=\frac{1}{N}\sum_{i=1}^N\ell(\z,\hat{\bxi}_i)$ is a finite sum of functions continuous in $\z$, it is likewise continuous. Consequently, $[\EE_\QQ[\ell(\z,\txi)] - \EE_\PPhat[\ell(\z,\txi)]- \epsilon]_+$ is continuous in $\z$. Hence there exists a neighborhood $\mathcal Z_\tau=\{\z\in\X:\|\z-\z'\|<\tau\}$ in which $[\EE_\QQ[\ell(\z,\txi)] - \EE_\PPhat[\ell(\z,\txi)]- \epsilon]_+> 0$ for all $\z\in\mathcal Z_\tau$. We therefore have
\begin{equation*}
\begin{aligned}
\EE_\ZZ\left[\left[\EE_\QQ[\ell(\tz,\txi)]-\EE_\PPhat[\ell(\tz,\txi)]-\eps\right]_+\right]
\geq \ZZ(\z\in\mathcal Z_\tau)\EE_\ZZ\left[\left[\EE_\QQ[\ell(\tz,\txi)]-\EE_\PPhat[\ell(\tz,\txi)]-\eps\right]_+\Bigm|\z\in\mathcal Z_\tau\right]>0,
\end{aligned}
\end{equation*}
because $\ZZ$ has full support on $\X$. Hence $\QQ$ is infeasible to \eqref{eq:hinge_constraint}. Replacing the semi-infinite constraint in \eqref{eq:ambiguity_set} with the equivalent expectation constraint \eqref{eq:hinge_constraint} yields the claim.\qed
\end{proof}

\subsection{Performance Guarantees}

\begin{lemma}\label{lem:Lipschitz_condition}
For any $\QQ$ such that $\EE_\QQ[\|\bxi\|^2]\leq\Omega$, the following Lipschitz condition holds:
\begin{align*}
\EE_\QQ\left[|\ell(\z,\txi)-\ell(\z',\txi)|\right]
\leq L\|\z-\z'\|\quad\forall\z,\z'\in\X,
\end{align*}
where $L\coloneqq \max_{j\in[J]}\|\bm A_j\|_{\textup{op}}\sqrt{\Omega}+\max_{j\in[J]} \|\bm b_j\|$.
\end{lemma}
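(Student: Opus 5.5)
The plan is to prove a pathwise Lipschitz bound first and then take expectations. Fix $\bxi\in\Xi$ and $\z,\z'\in\X$. The loss is a finite maximum of the affine-in-$\z$ pieces $g_j(\z,\bxi)\coloneqq \bm a_j(\z)^\top\bxi+b_j(\z)$. For finitely many functions we have the elementary inequality
\[
\Bigl|\max_{j\in[J]} u_j-\max_{j\in[J]} v_j\Bigr|\leq \max_{j\in[J]}|u_j-v_j|,
\]
which follows by comparing the maximizing index on each side. This gives
\[
|\ell(\z,\bxi)-\ell(\z',\bxi)|\leq \max_{j\in[J]}\bigl|(\bm A_j(\z-\z'))^\top\bxi+\bm b_j^\top(\z-\z')\bigr|,
\]
since the constant terms $\overline{\bm a}_j$ and $\overline b_j$ cancel.

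Next, bound each piece by Cauchy--Schwarz and the definition of the operator norm:
\[
|(\bm A_j(\z-\z'))^\top\bxi|\leq\|\bm A_j\|_{\textup{op}}\|\z-\z'\|\|\bxi\|,\qquad |\bm b_j^\top(\z-\z')|\leq\|\bm b_j\|\|\z-\z'\|.
\]
Taking the maximum over $j$ gives the pathwise bound
\[
|\ell(\z,\bxi)-\ell(\z',\bxi)|\leq\Bigl(\max_{j}\|\bm A_j\|_{\textup{op}}\|\bxi\|+\max_j\|\bm b_j\|\Bigr)\|\z-\z'\|.
\]
This is also the pathwise statement used later for the MoM block means.

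Finally, take $\EE_\QQ$ of both sides. Jensen's inequality gives $\EE_\QQ[\|\txi\|]\leq\sqrt{\EE_\QQ[\|\txi\|^2]}\leq\sqrt{\Omega}$, which yields the constant $L=\max_j\|\bm A_j\|_{\textup{op}}\sqrt{\Omega}+\max_j\|\bm b_j\|$. Measurability and integrability are immediate, because the right-hand side is integrable under the second-moment bound.

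There is no real obstacle here. The only point needing care is the max-difference inequality and keeping track of which terms cancel.
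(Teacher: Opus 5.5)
Your proposal is correct and follows the paper's proof essentially step for step. Both use the max-difference inequality to cancel the constant terms, then Cauchy--Schwarz with the operator norm for the pathwise bound, then Jensen's inequality $\EE_\QQ[\|\txi\|]\leq\sqrt{\Omega}$ after taking expectations.
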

\begin{proof}{Proof of Lemma~\ref{lem:Lipschitz_condition}.}
For any $\z,\z'\in\X$, since $\bm a_j(\z)-\bm a_j(\z')=\bm A_j(\z-\z')$ and $b_j(\z)-b_j(\z')=\bm b_j^\top(\z-\z')$, we have
\begin{align*}
|\ell(\z,\bxi)-\ell(\z',\bxi)|
&\leq \max_{j\in[J]}\bigl|(\bm A_j(\z-\z'))^\top\bxi + \bm b_j^\top(\z-\z')\bigr| \\
&\leq \max_{j\in[J]}\bigl(\|\bm A_j\|_{\textup{op}}\|\bxi\| + \|\bm b_j\|\bigr)\cdot\|\z-\z'\|\\
&\leq \left(\max_{j\in[J]}\|\bm A_j\|_{\textup{op}}\|\bxi\|+\max_{j\in[J]} \|\bm b_j\| \right)\cdot\|\z-\z'\|.
\end{align*}
Taking expectation under $\QQ$ yields
\begin{align*}
\EE_\QQ\left[|\ell(\z,\txi)-\ell(\z',\txi)|\right]
&\leq \left( \max_{j\in[J]}\|\bm A_j\|_{\textup{op}}\EE_\QQ[\|\txi\|]+\max_{j\in[J]} \|\bm b_j\| \right)\|\z-\z'\|\\
&\leq \left( \max_{j\in[J]}\|\bm A_j\|_{\textup{op}}\sqrt{\Omega}+\max_{j\in[J]} \|\bm b_j\| \right)\|\z-\z'\|.
\end{align*}
This is the desired bound.\qed
\end{proof}

\begin{proof}{Proof of Proposition~\ref{prop:distribution_coverage}.}
Let $\eta_N\coloneqq \frac{1}{L\sqrt{N}}$,
and let $\X_{\eta_N}$ be an $\eta_N$-net of $\X$ under $\|\cdot\|$. Since $\X\subseteq \mathbb B(0,R_\x)$ is compact, the standard volumetric bound gives
\[
|\X_{\eta_N}|\leq \left(1+\frac{2R_\x}{\eta_N}\right)^{D_\x}
=\left(1+2R_\x L\sqrt{N}\right)^{D_\x}.
\]
For each $\z_0\in\X_{\eta_N}$, applying \eqref{eq:pointwise_conc} with confidence level $\delta' \coloneqq \delta/|\X_{\eta_N}|$ 
and then taking a union bound over $\X_{\eta_N}$, we obtain with probability at least $1-\delta$,
\[
\EE_{\PP^\star}[\ell(\z_0,\txi)]-\EE_\PPhat[\ell(\z_0,\txi)]
\leq \frac{1}{\sqrt N}\varepsilon\!\left(\log\frac{1}{\delta'}\right)
\qquad\forall \z_0\in\X_{\eta_N}.
\]
Because $\varepsilon$ is nondecreasing and
\[
\log\frac{1}{\delta'}
=\log|\X_{\eta_N}|+\log\frac{1}{\delta}
\leq D_\x\log\left(1+2R_\x L\sqrt N\right)+\log\frac{1}{\delta},
\]
the same event implies
\begin{equation}
\label{eq:net_bound_appendix}
\EE_{\PP^\star}[\ell(\z_0,\txi)]-\EE_\PPhat[\ell(\z_0,\txi)]
\leq
\frac{1}{\sqrt N}\varepsilon\!\left(D_\x\log\left(1+2R_\x L\sqrt N\right)+\log\frac{1}{\delta}\right)
\qquad\forall \z_0\in\X_{\eta_N}.
\end{equation}

Fix any $\z\in\X$ and choose $\z_0\in\X_{\eta_N}$ with $\|\z-\z_0\|\leq \eta_N$. By Lemma~\ref{lem:Lipschitz_condition}, applied once under $\PP^\star$ and once under $\PPhat$, we have
\begin{equation*}
\begin{aligned}
\EE_{\PP^\star}[\ell(\z,\txi)]-\EE_\PPhat[\ell(\z,\txi)]
&\leq \EE_{\PP^\star}[\ell(\z_0,\txi)]-\EE_\PPhat[\ell(\z_0,\txi)]\\
&\qquad+\EE_{\PP^\star}[|\ell(\z,\txi)-\ell(\z_0,\txi)|]+\EE_\PPhat[|\ell(\z,\txi)-\ell(\z_0,\txi)|]\\
&\leq \EE_{\PP^\star}[\ell(\z_0,\txi)]-\EE_\PPhat[\ell(\z_0,\txi)] + 2L\eta_N.
\end{aligned}
\end{equation*}
Combining this bound with \eqref{eq:net_bound_appendix} and using $2L\eta_N=2/\sqrt N$ yields
\[
\EE_{\PP^\star}[\ell(\z,\txi)]-\EE_\PPhat[\ell(\z,\txi)]
\leq \epsilon
\qquad\forall \z\in\X
\]
with probability at least $1-\delta$, where $\epsilon$ is defined in \eqref{eq:radius}. Proposition~\ref{prop:semiinf_ambiguity_set} then implies $\PP^\star\in\mP_\epsilon$.\qed
\end{proof}

\begin{proof}{Proof of Corollary~\ref{cor:OOS_guarantee}.}
On the event $\{\PP^\star\in\mP_\epsilon\}$, which occurs with probability at least $1-\delta$ by Proposition~\ref{prop:distribution_coverage}, we have
\[
\EE_{\PP^\star}[\ell(\hat\x,\txi)]
\leq \sup_{\QQ\in\mP_\epsilon}\EE_\QQ[\ell(\hat\x,\txi)]
= \hat J.
\]
This is precisely the claimed out-of-sample guarantee.\qed
\end{proof}

\begin{proof}{Proof of Theorem~\ref{thm:excess_risk}.}
Let
\[
\mathcal E_1\coloneqq\{\PP^\star\in\mP_\epsilon\}
\qquad\text{and}\qquad
\mathcal E_2\coloneqq\left\{\left|\EE_{\PP^\star}[\ell(\x^\star,\txi)]-\EE_\PPhat[\ell(\x^\star,\txi)]\right|\leq \frac{1}{\sqrt N}\varepsilon\!\left(\log\frac{1}{\delta}\right)\right\}.
\]
By Proposition~\ref{prop:distribution_coverage}, $\Prob(\mathcal E_1)\ge 1-\delta$, and by \eqref{eq:twosided_pointwise_conc} applied at the fixed decision $\x^\star$, $\Prob(\mathcal E_2)\ge 1-\delta$. Hence $\Prob(\mathcal E_1\cap \mathcal E_2)\ge 1-2\delta$. On $\mathcal E_1\cap \mathcal E_2$, we have
\begin{equation*}
\begin{aligned}
\EE_{\PP^\star}[\ell(\hat\x,\txi)]
&\leq \sup_{\QQ\in\mP_\epsilon}\EE_\QQ[\ell(\hat\x,\txi)]\\
&\leq \sup_{\QQ\in\mP_\epsilon}\EE_\QQ[\ell(\x^\star,\txi)]\\
&\leq \EE_\PPhat[\ell(\x^\star,\txi)]+\epsilon\\
&\leq \EE_{\PP^\star}[\ell(\x^\star,\txi)] + \frac{1}{\sqrt N}\varepsilon\!\left(\log\frac{1}{\delta}\right)+\epsilon.
\end{aligned}
\end{equation*}
This completes the proof.\qed
\end{proof}

\subsection{Applications}

\begin{proof}{Proof of Proposition~\ref{prop:subweibull_coverage}.}
Fix any $\z\in\X$ and define the centered random variables
\[
X_i\coloneqq \EE_{\PP^\star}[\ell(\z,\txi)]-\ell(\z,\hat{\bxi}_i),\qquad i\in[N].
\]
Then $X_1,\ldots,X_N$ are independent, mean-zero, and satisfy $\|X_i\|_{\psi_\vartheta}\le K$. Applying \citet[Theorem~3.1]{kuchibhotla2022moving} with weights $a_i=1/N$, we obtain
\[
\Prob\!\left(\left|\EE_{\PP^\star}[\ell(\z,\txi)]-\EE_\PPhat[\ell(\z,\txi)]\right|\geq 2eC(\vartheta)\|b\|\sqrt{t}+2eL_N^*(\vartheta)\|b\|_{\beta(\vartheta)}t^{1/\vartheta}\right)\leq 2e^{-t}\qquad\forall t\geq 0,
\]
where
\[
b\coloneqq \left(\frac{\|X_1\|_{\psi_\vartheta}}{N},\ldots,\frac{\|X_N\|_{\psi_\vartheta}}{N}\right),
\qquad
\beta(\vartheta)\coloneqq
\begin{cases}
\infty,& \vartheta\leq 1,\\
\frac{\vartheta}{\vartheta-1},& \vartheta>1.
\end{cases}
\]
Since $\|X_i\|_{\psi_\vartheta}\le K$, we have $\|b\|\leq K/\sqrt{N}$.
Moreover,
\[
\|b\|_{\beta(\vartheta)}\leq
\begin{cases}
\|b\|_\infty\le \frac{K}{N},& \vartheta\leq 1,\\[2mm]
\left(N\left(\frac{K}{N}\right)^{\beta(\vartheta)}\right)^{1/\beta(\vartheta)}=K\,N^{-1/\vartheta},& \vartheta>1,
\end{cases}
\]
and therefore $\|b\|_{\beta(\vartheta)}\leq K/N^{\min\{1,1/\vartheta\}}$.
Finally, Theorem~3.1 gives
\[
L_N^*(\vartheta)=L_N(\vartheta)\frac{C(\vartheta)\|b\|}{\|b\|_{\beta(\vartheta)}}=
\begin{cases}
\frac{4^{1/\vartheta}}{\sqrt 2}C(\vartheta),& \vartheta<1,\\[2mm]
\frac{4^{1/\vartheta}}{\sqrt 2}\,4e,& \vartheta\geq 1,
\end{cases}
\]
which depends only on $\vartheta$. Hence there exist constants $c_{1,\vartheta},c_{2,\vartheta}>0$, depending only on $\vartheta$, such that
\[
\Prob\!\left(\EE_{\PP^\star}[\ell(\z,\txi)]-\EE_\PPhat[\ell(\z,\txi)]\geq c_{1,\vartheta}K\sqrt{\frac{t}{N}}+c_{2,\vartheta}K\frac{t^{1/\vartheta}}{N^{\min\{1,1/\vartheta\}}}\right)\leq 2e^{-t}\qquad\forall t\geq 0.
\]
We now absorb the heavy-tail correction term into the leading root-$N$ term.

If $\vartheta=2$, both terms are already of the same order, so there exists $C_\vartheta>0$ such that
\[
c_{1,\vartheta}K\sqrt{\frac{t}{N}}+c_{2,\vartheta}K\frac{t^{1/\vartheta}}{N^{1/\vartheta}}
\leq
C_\vartheta K\sqrt{\frac{t}{N}}
\qquad \forall t\geq 0.
\]

If $\vartheta\in[1,2)$, then $1/\vartheta-1/2>0$. Therefore, there exists $c_\vartheta>0$, depending only on $\vartheta$, such that whenever $t\le c_\vartheta N$,
\[
c_{2,\vartheta}K\frac{t^{1/\vartheta}}{N^{1/\vartheta}}
\leq
c_{1,\vartheta}K\sqrt{\frac{t}{N}}.
\]
Hence, for all $t\le c_\vartheta N$,
\[
c_{1,\vartheta}K\sqrt{\frac{t}{N}}+c_{2,\vartheta}K\frac{t^{1/\vartheta}}{N^{1/\vartheta}}
\leq
2c_{1,\vartheta}K\sqrt{\frac{t}{N}}.
\]

If $\vartheta\in(0,1)$, then there exists $c_\vartheta>0$, depending only on $\vartheta$, such that whenever $t\le c_\vartheta N^{\vartheta/(2-\vartheta)}$,
\[
c_{2,\vartheta}K\frac{t^{1/\vartheta}}{N}
\leq
c_{1,\vartheta}K\sqrt{\frac{t}{N}}.
\]
Hence, for all $t\le c_\vartheta N^{\vartheta/(2-\vartheta)}$,
\[
c_{1,\vartheta}K\sqrt{\frac{t}{N}}+c_{2,\vartheta}K\frac{t^{1/\vartheta}}{N}
\leq
2c_{1,\vartheta}K\sqrt{\frac{t}{N}}.
\]

Setting $t=\log(2/\delta)$, the preceding three cases imply that there exist constants $C_\vartheta,c_\vartheta>0$, depending only on $\vartheta$, such that
\[
\EE_{\PP^\star}[\ell(\z,\txi)]-\EE_\PPhat[\ell(\z,\txi)]
\leq
C_\vartheta K\sqrt{\frac{\log(2/\delta)}{N}}
\]
with probability at least $1-\delta$, provided that
\[
\delta\in
\begin{cases}
\left[2\exp\!\left(-c_\vartheta N^{\vartheta/(2-\vartheta)}\right),\,1\right), & \vartheta\in(0,1),\\[2mm]
\left[2\exp\!\left(-c_\vartheta N\right),\,1\right), & \vartheta\in[1,2),\\[2mm]
(0,1), & \vartheta=2.
\end{cases}
\]
Since $\log(2/\delta)\le 1+\log(1/\delta)$ for all $\delta\in(0,1)$, enlarging $C_\vartheta$ if necessary yields \eqref{eq:eps_subweibull}. \qed
\end{proof}

\begin{proof}{Proof of Proposition~\ref{prop:markov_coverage}.}
By \citet[Theorem~1]{fan2021hoeffding}, for any $s>0$,
\[
\Prob\!\left(\sum_{i=1}^N \ell(\z,\hat{\bxi}_i) - N\EE_{\PP^\star}[\ell(\z,\txi)] < -s\right)
\leq \exp\!\left(-\frac{1-\lambda}{1+\lambda}\cdot\frac{s^2}{N{(\lbar-\lu)}^2/2}\right).
\]
Dividing by $N$ and substituting $s=Nt$ gives, for any $t\geq 0$,
\[
\Prob\!\left(\EE_{\PP^\star}[\ell(\z,\txi)] - \EE_\PPhat[\ell(\z,\txi)] \geq t\right)
\leq \exp\!\left(-\frac{2(1-\lambda)Nt^2}{(1+\lambda){(\lbar-\lu)}^2}\right).
\]
Equating the right-hand side to $\delta$ and solving for $t$ yields \eqref{eq:eps_markov}. The extension to the time-inhomogeneous setting follows by applying Theorem~5 in \citet{fan2021hoeffding}. Applying the same argument to $-\ell(\z,\cdot)\in[-\lbar,-\lu]$ yields $\EE_\PPhat[\ell(\z,\txi)]-\EE_{\PP^\star}[\ell(\z,\txi)]\leq\frac{1}{\sqrt{N}}\varepsilon\!\left(\log\frac{1}{\delta}\right)$ with probability at least $1-\delta$, so Theorem~\ref{thm:excess_risk} also applies in this setting. \qed
\end{proof}

Following~\cite{jia2024learning}, we make the following standard assumptions regarding the logging policy:
\begin{enumerate}[(i)]
\item\label{as:conditional_exchangeability} \emph{Conditional exchangeability:} Conditional on observed covariates $\bchi$, the selection decision is independent of the outcomes $\bomega$:
\begin{equation*}
\tomega\indep\tilde S ~|~\bchi.
\end{equation*}
\item\label{as:positivity} \emph{Positivity:} The probability of a candidate being selected is strictly positive for any given covariate values:
\begin{equation*}
\pi(\bchi)\geq\underline\pi \qquad\forall\bchi\in\mathscr X,
\end{equation*}
for some positive constant $\underline\pi>0$.
\end{enumerate}

\begin{lemma}\label{lem:IPW_equivalence}
Suppose Assumptions~\ref{as:conditional_exchangeability} and~\ref{as:positivity} hold. Then the inverse probability weighting estimator is equivalent to the true expectation:
\begin{equation*}
\EE_{\PP^\star}[\ell(\z,\txi)]=\EE_{\overline\PP}\left[\ell(\z,\txi)\frac{\I[\tilde S=1]}{\overline\PP(\tilde S=1|\tchi=\bchi)}\right].
\end{equation*}
\end{lemma}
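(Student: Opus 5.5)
The plan is the standard tower-property argument. We condition on the covariates $\tchi$ and then use conditional exchangeability to factor the indicator out of the loss. Throughout, $\PP^\star$ is the marginal law of $\txi=(\tchi,\tomega)$ under $\overline\PP$.

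\textbf{Step 1 (well-definedness).} By positivity, $\pi(\bchi)=\overline\PP(\tilde S=1\mid\tchi=\bchi)\ge\underline\pi>0$, so the weight $\I[\tilde S=1]/\pi(\tchi)$ is bounded by $1/\underline\pi$. The integrand is therefore bounded in absolute value by $|\ell(\z,\txi)|/\underline\pi$. This is integrable whenever $\EE_{\PP^\star}[|\ell(\z,\txi)|]<\infty$, which holds, for instance, under the linear growth assumption~\ref{as:linear_growth} together with the second-moment bound~\ref{as:second_moment}. Integrability lets us apply the tower property freely.

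\textbf{Step 2 (conditioning on $\tchi$).} By the tower property,
\[
\EE_{\overline\PP}\!\left[\ell(\z,\txi)\frac{\I[\tilde S=1]}{\pi(\tchi)}\right]
=\EE_{\overline\PP}\!\left[\frac{1}{\pi(\tchi)}\,\EE_{\overline\PP}\!\left[\ell(\z,(\tchi,\tomega))\,\I[\tilde S=1]\mid\tchi\right]\right].
\]
Given $\tchi=\bchi$, the loss $\ell(\z,(\bchi,\tomega))$ is a measurable function of $\tomega$ alone. Conditional exchangeability, $\tomega\indep\tilde S\mid\tchi$, then lets the inner conditional expectation factor as
\[
\EE_{\overline\PP}[\ell(\z,\txi)\mid\tchi]\cdot\overline\PP(\tilde S=1\mid\tchi)=\EE_{\overline\PP}[\ell(\z,\txi)\mid\tchi]\cdot\pi(\tchi).
\]

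\textbf{Step 3 (cancel and integrate).} The factor $\pi(\tchi)$ cancels against $1/\pi(\tchi)$, which is legitimate because $\pi\ge\underline\pi>0$. What remains is $\EE_{\overline\PP}\bigl[\EE_{\overline\PP}[\ell(\z,\txi)\mid\tchi]\bigr]=\EE_{\overline\PP}[\ell(\z,\txi)]=\EE_{\PP^\star}[\ell(\z,\txi)]$, where the last equality holds because $\PP^\star$ is the $\txi$-marginal of $\overline\PP$.

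\textbf{Main obstacle.} The only delicate step is the factorization in Step 2. It needs regular conditional distributions, which exist because we work in Euclidean spaces. It also needs the independence statement to apply to the product of a function of $(\tchi,\tomega)$ with a function of $(\tchi,\tilde S)$. We handle this by writing the conditional expectation as an integral against the conditional law of $(\tomega,\tilde S)$ given $\tchi$. Conditional independence says this law is a product measure, and Fubini then gives the factorization. The integrability established in Step 1 justifies the use of Fubini.
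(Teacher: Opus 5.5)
Your argument is correct, but it conditions on a different $\sigma$-algebra than the paper does. The paper conditions on the full vector $(\tchi,\tomega)$. Then $\ell(\z,\txi)/\pi(\tchi)$ is measurable with respect to the conditioning variables and comes straight out, leaving $\EE_{\overline\PP}[\I[\tilde S=1]\mid\tchi,\tomega]=\overline\PP(\tilde S=1\mid\tchi,\tomega)$. Conditional exchangeability is then used only in the form $\overline\PP(\tilde S=1\mid\tchi,\tomega)=\overline\PP(\tilde S=1\mid\tchi)=\pi(\tchi)$. After that the weight cancels to one, and $\EE_{\overline\PP}[\ell(\z,\txi)]=\EE_{\PP^\star}[\ell(\z,\txi)]$ by marginalization.

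You condition on $\tchi$ alone. This forces you to factor $\EE_{\overline\PP}[\ell(\z,\txi)\I[\tilde S=1]\mid\tchi]$ into $\EE_{\overline\PP}[\ell(\z,\txi)\mid\tchi]\,\pi(\tchi)$, a product of a function of $(\tchi,\tomega)$ and a function of $\tilde S$. That is exactly the step you flag as the main obstacle and handle with regular conditional distributions and Fubini. The paper's choice of conditioning makes that obstacle disappear, so its proof needs no disintegration machinery.

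What your route buys in exchange is a slightly stronger intermediate identity: the IPW weight is unbiased conditionally on the covariates, $\EE_{\overline\PP}[\ell(\z,\txi)\I[\tilde S=1]/\pi(\tchi)\mid\tchi]=\EE_{\overline\PP}[\ell(\z,\txi)\mid\tchi]$. Your Step 1 also makes explicit the integrability of $\ell(\z,\txi)$ (via linear growth and the second-moment bound), which the paper's tower-property manipulations use without comment.
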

\begin{proof}{Proof of Lemma~\ref{lem:IPW_equivalence}.}
We have
\begin{equation*}
\begin{aligned}
\EE_{\overline\PP}\left[\ell(\z,\txi)\frac{\I[\tilde S=1]}{\pi(\tchi)}\right]
&=\EE_{\overline\PP}\left[\EE_{\overline\PP}\left[\ell(\z,\txi)\frac{\I[\tilde S=1]}{\pi(\tchi)}\Bigm|\tchi,\tomega\right]\right]\\
&=\EE_{\overline\PP}\left[\ell(\z,\txi)\frac{\overline\PP(\tilde S=1\mid\tchi,\tomega)}{\pi(\tchi)}\right]\\
&=\EE_{\overline\PP}\left[\ell(\z,\txi)\frac{\overline\PP(\tilde S=1\mid\tchi)}{\pi(\tchi)}\right]\\
&=\EE_{\overline\PP}\left[\ell(\z,\txi)\right]\\
&=\EE_{\PP^\star}\left[\ell(\z,\txi)\right].
\end{aligned}
\end{equation*}
The first equality is the tower property. The third equality follows from Assumption~\ref{as:conditional_exchangeability}, the fourth uses the definition of $\pi(\bchi)$, and the last holds because $\PP^\star$ is the marginal law of $(\tchi,\tomega)$ under $\overline\PP$. Thus, the claim follows. \qed
\end{proof}

The notation $\EE_{\PPhat}[\ell(\z,\txi)]$ in \eqref{eq:IPW_estimator} denotes an IPW reference functional rather than an expectation under a normalized empirical probability measure. This distinction does not affect the dual reformulations: the generalized moment problem only uses these reference values on the right-hand side of the loss constraints, and the same duality argument applies under the corresponding Slater condition.

\begin{proof}{Proof of Proposition~\ref{prop:incomplete}.}
By positivity,
\[
\left|\frac{\I[\tilde S=1]}{\overline\PP(\tilde S=1|\tchi=\bchi)}\right|\le \frac{1}{\underline\pi}.
\]
Hence, under the assumed bound $\|\ell(\z,\txi)\|_{\psi_2}\le\sigma$,
\[
\left\|\ell(\z,\txi)\frac{\I[\tilde S=1]}{\overline\PP(\tilde S=1|\tchi=\bchi)}\right\|_{\psi_2}
\leq \frac{\sigma}{\underline\pi}.
\]
Centering changes the sub-Gaussian norm by at most a factor of two, so
\[
\left\|\ell(\z,\txi)\frac{\I[\tilde S=1]}{\overline\PP(\tilde S=1|\tchi=\bchi)}
-\EE_{\overline\PP}\left[\ell(\z,\txi)\frac{\I[\tilde S=1]}{\overline\PP(\tilde S=1|\tchi=\bchi)}\right]\right\|_{\psi_2}
\leq \frac{2\sigma}{\underline\pi}.
\]
Let
\[
Y_z\coloneqq \ell(\z,\txi)\frac{\I[\tilde S=1]}{\overline\PP(\tilde S=1|\tchi=\bchi)},\qquad
Y_{z,n}\coloneqq \ell(\z,\bxi_n)\frac{\I[S_n=1]}{\pi(\bchi_n)}.
\]
Then \eqref{eq:IPW_estimator} equals $N^{-1}\sum_{n=1}^N Y_{z,n}$, and the preceding bound applies to the centered variable $Y_z-\EE_{\overline\PP}[Y_z]$. By Lemma~\ref{lem:IPW_equivalence}, $\EE_{\overline\PP}[Y_z]=\EE_{\PP^\star}[\ell(\z,\txi)]$. Therefore, the estimation error can be written as
\[
\EE_{\PP^\star}[\ell(\z,\txi)]-\EE_{\PPhat}[\ell(\z,\txi)]
=-\frac1N\sum_{n=1}^N\left(Y_{z,n}-\EE_{\overline\PP}[Y_z]\right).
\]
The summands in this display are independent, centered, and sub-Gaussian with $\psi_2$ norm bounded by $2\sigma/\underline\pi$. The negative average satisfies the same one-sided concentration bound, so taking the deviation level proportional to $\sqrt{\log(1/\delta)/N}$ and applying the standard concentration inequality for averages of independent centered sub-Gaussian variables \citep[Theorem~2.6.3]{vershynin2018high} yields the stated bound
\[
\EE_{\PP^\star}[\ell(\z,\txi)]-\EE_{\PPhat}[\ell(\z,\txi)]
\leq C_{\mathrm{IPW}}\frac{\sigma}{\underline\pi}\sqrt{\frac{\log(1/\delta)}{N}}
\]
 with the numerical constants absorbed into the universal constant $C_{\mathrm{IPW}}$.\qed
\end{proof}

\subsection{Contextual Optimization}\label{sec:contextual_appendix}

Our framework can also be applied to stochastic optimization with side information. Suppose that a realization $\bc\in\RR^{D_\bc}$ of some contextual covariates is observed, which changes the conditional distribution of $\txi$. To exploit the side information, the decision-maker replaces the expectation in the objective with the conditional expectation
\begin{equation*}
\EE_{\PP^\star}[\ell(\z,\txi)|\tc=\bc],
\end{equation*}
where $\PP^\star$ denotes the joint distribution of $(\tc,\txi)$.
In a data-driven setting, we observe historical data $\{(\bc_n,\bxi_n)\}_{n\in[N]}$ and approximate the conditional expectation using the weighted sum
\begin{equation*}
\EE_{\PPhat}[\ell(\z,\txi)|\tc=\bc]\coloneqq\sum_{n\in[N]}w(\bc,\bc_n)\ell(\z,\bxi_n),
\end{equation*}
where the weights $(w(\bc,\bc_n))_{n\in[N]}$ are designed such that they give higher values to data points that are close to the current realization $\bc$. A popular scheme is the Nadaraya-Watson kernel regression, in which the weights are defined through a kernel function $\mathcal K_h$ with bandwidth parameter $h>0$:
\begin{equation*}
w(\bc,\bc_n)\coloneqq \frac{\mathcal K_h(\bc-\bc_n)}{\sum_{m\in[N]} \mathcal K_h(\bc-\bc_{m})}.
\end{equation*}

The resulting data-driven approximation is guaranteed to provide asymptotic consistency of the optimal solutions. The following proposition establishes a pointwise concentration bound for this data-driven conditional estimator.

\begin{proposition}\label{prop:contextual_bound}
Assume that the loss function takes values in the interval $[0,1]$, the conditional expectation $\EE_{\PP^\star}[\ell(\z,\txi)|\tc=\bc]$ is $L_\bc$-Lipschitz continuous in $\bc$, and the marginal density function $f(\bc)$ satisfies $f(\bc)\ge \underline f>0$ for all $\bc$ in its support. Then, the following pointwise concentration inequality holds:
\begin{equation*}
\EE_{\PP^\star}[\ell(\z,\txi)|\tc=\bc] - \EE_\PPhat[\ell(\z,\txi)|\tc=\bc]\leq  \left(\frac{2\log(1/\delta)}{Nc\underline{f} }\right)^{\frac{1}{D_\bc+2}}L_\bc^{\frac{D_\bc}{D_\bc+2}}\frac{D_\bc+2}{2(D_\bc/2)^{\frac{D_\bc}{D_\bc+2}}}
\end{equation*}
with probability at least $1-\delta$.
\end{proposition}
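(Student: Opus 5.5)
The plan is a bias--variance decomposition of the Nadaraya--Watson error at a fixed $\z$ and fixed $\bc$, followed by an optimization over the bandwidth $h$. The target expression has exactly the form of $\min_{h>0}\{L_\bc h + A h^{-D_\bc/2}\}$ with $A\coloneqq\sqrt{2\log(1/\delta)/(Nc\underline f)}$. Setting the derivative to zero gives
\[
h^\star=\Bigl(\tfrac{D_\bc A}{2L_\bc}\Bigr)^{2/(D_\bc+2)},
\]
and substituting back yields
\[
A^{2/(D_\bc+2)}L_\bc^{D_\bc/(D_\bc+2)}\,\tfrac{D_\bc+2}{D_\bc}\bigl(\tfrac{D_\bc}{2}\bigr)^{2/(D_\bc+2)}.
\]
This equals the stated right-hand side. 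So the proof reduces to showing that, for every fixed $h$,
\[
\EE_{\PP^\star}[\ell(\z,\txi)\mid\tc=\bc]-\EE_\PPhat[\ell(\z,\txi)\mid\tc=\bc]\le L_\bc h+\sqrt{\frac{2\log(1/\delta)}{Nc\underline f h^{D_\bc}}}
\]
with probability at least $1-\delta$, and then choosing $h=h^\star$. The constant $c$ will be the volume factor of the kernel's support. I will work with a compactly supported (boxcar-type) kernel, so that $\mathcal K_h(\bc-\bc_n)>0$ only if $\|\bc-\bc_n\|\le h$, and so that the ball of radius $h$ has $\PP^\star$-mass at least $c\underline f h^{D_\bc}$.

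First, the bias step. Write $\mu(\bc')\coloneqq\EE_{\PP^\star}[\ell(\z,\txi)\mid\tc=\bc']$. Conditionally on the covariates $\{\bc_n\}$, the quantity $\sum_n w(\bc,\bc_n)\mu(\bc_n)$ is a convex combination of values at points within distance $h$ of $\bc$. The $L_\bc$-Lipschitz assumption then gives
\[
\Bigl|\mu(\bc)-\sum_n w(\bc,\bc_n)\mu(\bc_n)\Bigr|\le L_\bc h.
\]

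Second, the fluctuation step. Conditionally on $\{\bc_n\}$, the responses $\ell(\z,\bxi_n)\in[0,1]$ are independent with means $\mu(\bc_n)$. Hoeffding's inequality for the weighted sum gives a one-sided deviation
\[
\sqrt{\tfrac12\log(1/\delta)\sum_n w(\bc,\bc_n)^2}.
\]
For the boxcar kernel, $\sum_n w^2 = 1/N_h$, where $N_h$ is the number of covariates in the ball of radius $h$ around $\bc$.

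The main obstacle is that $N_h$ is random. I need a deterministic lower bound of the form $N_h\ge Nc\underline f h^{D_\bc}/4$, which would turn the Hoeffding term into exactly $\sqrt{2\log(1/\delta)/(Nc\underline f h^{D_\bc})}$. I would first try to obtain it by noting that $N_h\sim\mathrm{Bin}(N,p_h)$ with $p_h\ge c\underline f h^{D_\bc}$, and applying a multiplicative Chernoff bound $N_h\ge Np_h/2$. I would then either absorb this bound into the $\delta$ budget or, more simply, absorb the slack into the constant $c$, since the statement leaves $c$ as an unspecified kernel-dependent constant. Conditioning on $\{\bc_n\}$ and then integrating out makes the Hoeffding step rigorous. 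Combining the bias and fluctuation bounds and plugging in $h^\star$ completes the argument.
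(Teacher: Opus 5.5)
Your top-level argument matches the paper's. The paper takes the fixed-bandwidth inequality $\EE_{\PP^\star}[\ell(\z,\txi)\mid\tc=\bc]-\EE_\PPhat[\ell(\z,\txi)\mid\tc=\bc]\le L_\bc h+t$, holding with probability at least $1-\exp(-Nc\underline f h^{D_\bc}t^2/2)$, directly from Proposition~1 of \citet{wang2024generalization}. It then inverts for $t$ and plugs in the same $h^\star$ you found, and your calculus for $h^\star$ and the final constant is correct.

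You go beyond the paper by trying to prove that fixed-$h$ inequality, and the step you flagged is a genuine hole. The multiplicative Chernoff bound gives $N_h\ge Np_h/2$ only with probability $1-e^{-Np_h/8}$. Your argument therefore yields confidence $1-\delta-e^{-Np_h/8}$, and on $\{N_h=0\}$ the weights are not even defined. Rescaling $c$ changes the deviation, not the confidence level, so it does not by itself remove this additive term. Putting the term into the $\delta$ budget needs $Np_{h^\star}$ to be at least a constant multiple of $\log(1/\delta)$. The first-order condition gives $Nc\underline f(h^\star)^{D_\bc}=D_\bc^2\log(1/\delta)/(2L_\bc^2(h^\star)^2)$, so this is an extra requirement that $L_\bc h^\star$ be at most a constant multiple of $D_\bc$. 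That is a lower bound on $N$ relative to $\log(1/\delta)$, which the statement does not contain. You could close the complementary regime by shrinking $c$ by an absolute factor, so that the right-hand side then exceeds $1$; the claim is trivial there because the loss lies in $[0,1]$. But that case analysis is missing from the proposal.

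A cleaner repair avoids conditioning on $N_h$. Let $I_n\coloneqq\I[\|\bc_n-\bc\|\le h]$, keep your $\mu$, and set the estimator to $0$ on an empty window. The failure event $\{\mu(\bc)-\EE_\PPhat[\ell(\z,\txi)\mid\tc=\bc]\ge L_\bc h+t\}$ is then empty for $t>1$. For $t\le1$ it is contained in $\{\sum_n Y_n\ge0\}$, where $Y_n\coloneqq I_n(\mu(\bc)-L_\bc h-t-\ell(\z,\bxi_n))$; this inclusion also covers the empty window. Given $I_n=1$ and $\bc_n$, the bracket lies in an interval of length $1$ and has mean at most $-t$ by the Lipschitz assumption. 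Hoeffding's lemma with $\lambda=4t$ then gives $\EE[e^{\lambda Y_n}]\le1-p_h(1-e^{-2t^2})$. Markov's inequality bounds the failure probability by $\exp(-Np_h(1-e^{-2t^2}))\le\exp(-Np_ht^2/2)$, using $1-e^{-2t^2}\ge t^2/2$ for $t\le1$. With $p_h\ge c\underline f h^{D_\bc}$ this is exactly the inequality the paper cites, and the rest of your proof goes through unchanged.

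One caveat applies equally to the paper, where it is hidden in the undefined constant $c$. The mass bound $p_h\ge c\underline f h^{D_\bc}$ requires $h$ to be bounded and the window around $\bc$ to lie essentially inside the support. An implicit restriction of the form $h^\star\le h_0$ is therefore needed. Without it the right-hand side would tend to $0$ as $L_\bc\downarrow0$, which no estimator based on $N$ samples can achieve.
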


\begin{remark}[{Contextual rates and dimensionality}]
Note that the bound suffers from the curse of dimensionality in $D_\bc$. This dependence in the dimension is unavoidable since learning the conditional expectation in $D_\bc$-dimensional nonparametric regression has the \emph{minimax} squared-error rate of $O(N^{-2/(D_\bc+2)})$ \citep{gyorfi2002distribution}, corresponding to the absolute-error rate $O(N^{-1/(D_\bc+2)})$ in Proposition~\ref{prop:contextual_bound}. Nevertheless, our DRO framework is applicable to provide a robustification with coverage guarantee $\PP^\star(\cdot\mid\tc=\bc)\in\mP_{\epsilon_N}$ where the radius can be set to
\begin{equation*}
\epsilon_N\coloneqq
\frac{D_\bc+2}{2(D_\bc/2)^{\frac{D_\bc}{D_\bc+2}}}
L_\bc^{\frac{D_\bc}{D_\bc+2}}
\left(
\frac{2\left(D_\x\log\left(1+2R_\x L\sqrt N\right)+\log(1/\delta)\right)}
{Nc\underline f}
\right)^{\frac{1}{D_\bc+2}}
+\frac{2}{\sqrt N}.
\end{equation*}
\end{remark}

\begin{proof}{Proof of Proposition~\ref{prop:contextual_bound}.}
Based on \cite[Proposition~1]{wang2024generalization}, for any $h>0$ and $t>0$ we have
\[
\EE_{\PP^\star}[\ell(\z,\txi)|\tc=\bc] - \EE_\PPhat[\ell(\z,\txi)|\tc=\bc]\leq L_\bc h + t
\]
with probability at least $1-\exp(-Nc\underline f h^{D_\bc}t^2/2)$. Setting $\delta\coloneqq\exp(-Nc\underline f h^{D_\bc}t^2/2)$ gives
\[
\EE_{\PP^\star}[\ell(\z,\txi)|\tc=\bc] - \EE_\PPhat[\ell(\z,\txi)|\tc=\bc]\leq L_\bc h + \sqrt{\frac{2}{Nc\underline{f}h^{D_\bc}}\log\left(\frac{1}{\delta}\right)}
\]
with probability at least $1-\delta$. The choice
\[
h\coloneqq \left(\frac{2\log(1/\delta)}{Nc\underline{f}}\right)^{\frac{1}{D_\bc+2}}\left(\frac{D_\bc}{2L_\bc}\right)^{\frac{2}{D_\bc+2}}
\]
minimizes the right-hand side. At this bandwidth, the two terms satisfy
\[
\sqrt{\frac{2}{Nc\underline{f}h^{D_\bc}}\log\left(\frac{1}{\delta}\right)}
=\frac{2L_\bc h}{D_\bc},
\]
and hence the right-hand side equals
\[
\frac{D_\bc+2}{2(D_\bc/2)^{\frac{D_\bc}{D_\bc+2}}}
L_\bc^{\frac{D_\bc}{D_\bc+2}}
\left(\frac{2\log(1/\delta)}{Nc\underline f}\right)^{\frac{1}{D_\bc+2}},
\]
which is the stated bound.\qed
\end{proof}
\subsection{Dual Formulations}

\begin{proof}{Proof of Theorem~\ref{thm:dual_exact}.}
Fix $\x\in\X$. By Proposition~\ref{prop:semiinf_ambiguity_set}, the inner maximization problem in \eqref{eq:DRO} can be written in terms of the nonnegative measure $\mu$ associated with $\QQ$ as
\begin{equation*}
\begin{aligned}
\sup_{\mu\in\M_+(\Xi)}\;\;&\int_\Xi \ell(\x,\bxi)\,\mu(\mathrm d\bxi)\\
\st\;\;&\int_\Xi \ell(\z,\bxi)\,\mu(\mathrm d\bxi)\leq \EE_\PPhat[\ell(\z,\txi)]+\epsilon\qquad\forall \z\in\X,\\
&\int_\Xi \|\bxi\|^2\,\mu(\mathrm d\bxi)\leq \Omega,\\
&\int_\Xi 1\,\mu(\mathrm d\bxi)=1.
\end{aligned}
\end{equation*}
This is a generalized moment problem over nonnegative measures. To make the dual construction explicit, introduce a nonnegative measure $\nu\in\M_+(\X)$ for the family of loss constraints, a scalar multiplier $\beta\in\RR_+$ for the second-moment constraint, and a free scalar $\alpha\in\RR$ for the normalization constraint. For any $\mu\in\M_+(\Xi)$, the Lagrangian upper bound is
\begin{equation*}
\begin{aligned}
\mathcal L(\mu,\alpha,\beta,\nu)
&\coloneqq \int_\Xi \ell(\x,\bxi)\,\mu(\mathrm d\bxi)
 + \alpha\left(1-\int_\Xi \mu(\mathrm d\bxi)\right)
 + \beta\left(\Omega-\int_\Xi \|\bxi\|^2\,\mu(\mathrm d\bxi)\right)\\
&\qquad
 + \int_\X\left(\EE_\PPhat[\ell(\z,\txi)]+\epsilon
 - \int_\Xi \ell(\z,\bxi)\,\mu(\mathrm d\bxi)\right)\nu(\mathrm d\z)\\
&= \alpha+\beta\Omega
 +\int_\X\left(\EE_\PPhat[\ell(\z,\txi)]+\epsilon\right)\nu(\mathrm d\z)
 +\int_\Xi\left(\ell(\x,\bxi)-\alpha-\beta\|\bxi\|^2
 -\int_\X\ell(\z,\bxi)\nu(\mathrm d\z)\right)\mu(\mathrm d\bxi).
\end{aligned}
\end{equation*}
The interchange of the $\X$- and $\Xi$-integrals is justified for every primal feasible $\mu$ by Assumptions~\ref{as:linear_growth} and~\ref{as:second_moment}:
\[
\int_\X\int_\Xi |\ell(\z,\bxi)|\,\mu(\mathrm d\bxi)\nu(\mathrm d\z)
\leq \nu(\X)\int_\Xi (c_1+c_2\|\bxi\|)\,\mu(\mathrm d\bxi)
\leq \nu(\X)(c_1+c_2\sqrt{\Omega})<+\infty.
\]
Thus weak duality gives
\begin{equation*}
\begin{aligned}
\sup_{\QQ\in\mP_\epsilon}\EE_\QQ[\ell(\x,\txi)]
\leq \inf_{\alpha\in\RR,\;\beta\in\RR_+,\;\nu\in\M_+(\X)}
\sup_{\mu\in\M_+(\Xi)} \mathcal L(\mu,\alpha,\beta,\nu).
\end{aligned}
\end{equation*}
For fixed $(\alpha,\beta,\nu)$, the inner supremum over $\mu\in\M_+(\Xi)$ is finite if and only if
\[
\ell(\x,\bxi)-\alpha-\beta\|\bxi\|^2-\int_\X\ell(\z,\bxi)\nu(\mathrm d\z)\leq 0
\qquad\forall \bxi\in\Xi.
\]
Indeed, if this inequality fails at some $\bxi\in\Xi$, assigning arbitrarily large mass to the Dirac measure at $\bxi$ makes the supremum equal to $+\infty$; if the inequality holds everywhere, the supremum equals $0$. Hence the dual upper bound is
\begin{equation*}
\begin{aligned}
\inf\;\;&\alpha+\beta\Omega+\int_\X\left(\EE_\PPhat[\ell(\z,\txi)]+\epsilon\right)\nu(\mathrm d\z)\\
\st\;\;&\ell(\x,\bxi)\leq \alpha+\beta\|\bxi\|^2+\int_\X \ell(\z,\bxi)\nu(\mathrm d\z)\qquad\forall \bxi\in\Xi.
\end{aligned}
\end{equation*}
Because $\epsilon>0$ and Assumption~\ref{as:second_moment} gives $\EE_\PPhat[\|\txi\|^2]<\Omega$, the measure $\PPhat$ is strictly feasible for the primal inner problem:
\[
\int_\Xi \ell(\z,\bxi)\,\PPhat(\mathrm d\bxi)
=\EE_\PPhat[\ell(\z,\txi)]
<\EE_\PPhat[\ell(\z,\txi)]+\epsilon\qquad\forall\z\in\X,
\]
and the moment constraint is also satisfied with strict slack. Standard strong duality for generalized moment problems therefore applies, and dual attainment holds for each fixed $\x$; see \citet[Proposition~5.2]{Shapiro01}.

Now let $F(\x)\coloneqq \sup_{\QQ\in\mP_\epsilon}\EE_\QQ[\ell(\x,\txi)]$. The mapping $\x\mapsto F(\x)$ is lower semicontinuous as a pointwise supremum of continuous functions, and $\X$ is compact, so the outer minimization attains its optimum. Combining this minimizer with the attained inner dual solution yields the stated infinite linear programming reformulation and the existence of an optimal tuple $(\x,\alpha,\beta,\nu)$.\qed
\end{proof}

\begin{proof}{Proof of Proposition~\ref{prop:convergence_eps_to_0}.}
Fix $\x\in\X$, and define $f_\epsilon(\x)\coloneqq \sup_{\QQ\in\mP_\epsilon}\EE_\QQ[\ell(\x,\txi)]$. Because $\mP_0\subseteq\mP_\epsilon$ for every $\epsilon\ge 0$, we have $f_0(\x)\le f_\epsilon(\x)$. Moreover, by Proposition~\ref{prop:semiinf_ambiguity_set},
\[
f_0(\x)=\sup_{\QQ\in\mP_0}\EE_\QQ[\ell(\x,\txi)] = \EE_\PPhat[\ell(\x,\txi)],
\]
since $\PPhat\in\mP_0$ and every $\QQ\in\mP_0$ satisfies $\EE_\QQ[\ell(\x,\txi)]\le \EE_\PPhat[\ell(\x,\txi)]$.

It remains to prove $\limsup_{\epsilon\downarrow 0}f_\epsilon(\x)\le f_0(\x)$. Choose a sequence $\epsilon_k\downarrow0$ such that $f_{\epsilon_k}(\x)\to\limsup_{\epsilon\downarrow0}f_\epsilon(\x)$, and choose $\QQ_k\in\mP_{\epsilon_k}$ such that
\[
f_{\epsilon_k}(\x)-\frac{1}{k}\le \EE_{\QQ_k}[\ell(\x,\txi)].
\]
Because every $\QQ_k$ satisfies the common second-moment constraint, Markov's inequality implies that the family $\{\QQ_k\}_{k\in\N}$ is tight; see \cite[Section~5]{billingsley2013convergence}. After passing to a subsequence if necessary, Prohorov's theorem \cite[Theorem~5.1]{billingsley2013convergence} yields $\QQ_k\Rightarrow\QQ^\star$ for some probability measure $\QQ^\star$. Assumption~\ref{as:linear_growth} and the uniform second-moment bound imply
\[
\sup_{k\in\N}\EE_{\QQ_k}\!\left[|\ell(\z,\txi)|^2\right]
\leq 2c_1^2+2c_2^2\Omega<+\infty\qquad\forall \z\in\X,
\]
so $\{\ell(\z,\cdot)\}$ is uniformly integrable under $\{\QQ_k\}_{k\in\N}$ for every fixed $\z\in\X$. Since $\ell(\x,\cdot)$ is continuous, the mapping theorem \cite[Theorem~2.7]{billingsley2013convergence} implies weak convergence of the pushforward laws, and \cite[Theorem~3.5]{billingsley2013convergence} therefore gives
\[
\lim_{k\to\infty}\EE_{\QQ_k}[\ell(\x,\txi)] = \EE_{\QQ^\star}[\ell(\x,\txi)].
\]
Applying the same weak-convergence-plus-uniform-integrability argument to any fixed $\z\in\X$, and using $\QQ_k\in\mP_{\epsilon_k}$, we have
\[
\EE_{\QQ_k}[\ell(\z,\txi)]\le \EE_\PPhat[\ell(\z,\txi)] + \epsilon_k
\qquad\forall \z\in\X.
\]
Letting $k\to\infty$ yields
\[
\EE_{\QQ^\star}[\ell(\z,\txi)]\le \EE_\PPhat[\ell(\z,\txi)]
\qquad\forall \z\in\X,
\]
and the second-moment constraint also passes to the limit: since $\|\bxi\|^2\geq 0$ is lower semicontinuous, the Portmanteau theorem gives $\EE_{\QQ^\star}[\|\txi\|^2]\leq\liminf_{k\to\infty}\EE_{\QQ_k}[\|\txi\|^2]\leq\Omega$. Hence $\QQ^\star\in\mP_0$, so
\[
\limsup_{\epsilon\downarrow0} f_\epsilon(\x)
\leq \lim_{k\to\infty} f_{\epsilon_k}(\x)
\le \lim_{k\to\infty}\EE_{\QQ_k}[\ell(\x,\txi)]
=\EE_{\QQ^\star}[\ell(\x,\txi)]
\le f_0(\x).
\]
Therefore $f_\epsilon(\x)\downarrow f_0(\x)$ pointwise on $\X$.

Let $v_\epsilon\coloneqq \min_{\x\in\X} f_\epsilon(\x)$ and $v_0\coloneqq \min_{\x\in\X} f_0(\x)$. Since $f_\epsilon\ge f_0$ pointwise, we have $v_\epsilon\ge v_0$. On the other hand, for any minimizer $\x^\star\in\arg\min_{\x\in\X}f_0(\x)$,
\[
\limsup_{\epsilon\downarrow 0} v_\epsilon \le \limsup_{\epsilon\downarrow 0} f_\epsilon(\x^\star)=f_0(\x^\star)=v_0.
\]
Hence $v_\epsilon\to v_0$.

Finally, let $\hat\x_\epsilon$ be a minimizer of \eqref{eq:dual_exact} and let $\hat\x^\star$ be any cluster point of a sequence $\{\hat\x_{\epsilon_k}\}_{k\in\N}$ with $\epsilon_k\downarrow 0$. By compactness of $\X$, after passing to a subsequence we may assume $\hat\x_{\epsilon_k}\to\hat\x^\star$. Since $f_0(\x)=\EE_\PPhat[\ell(\x,\txi)]$ is continuous in $\x$,
\[
f_0(\hat\x^\star)=\lim_{k\to\infty}f_0(\hat\x_{\epsilon_k})
\le \lim_{k\to\infty}f_{\epsilon_k}(\hat\x_{\epsilon_k})
= \lim_{k\to\infty}v_{\epsilon_k}
= v_0.
\]
Thus $\hat\x^\star$ is a minimizer of the empirical risk minimization problem \eqref{eq:ERM}.\qed
\end{proof}

\section{Proofs of Section~\ref{sec:mc_sampling}}
\label{appendix:proofs_sec3}

\subsection{Theoretical Guarantees}

\begin{proposition}\label{prop:hinge_SAA_guarantee}
With probability at least $1-\tau$, we have: 
\begin{equation*}
\begin{aligned}
&\EE_\ZZ\left[\left[\EE_\QQ[\ell(\tz,\txi)]-\EE_\PPhat[\ell(\tz,\txi)]-\eps\right]_+\right]\\
&\leq  \frac{1}{M}\sum_{m\in[M]}\left[\EE_\QQ[\ell(\z_m,\txi)]-\EE_\PPhat[\ell(\z_m,\txi)]-\eps\right]_++\mathcal O\left( R\sqrt{\frac{J\log J(\log M)^3}{M}}\right) \\
&\qquad+ 2(c_1 + c_2\sqrt{\Omega}) \sqrt{\frac{1}{2M}\log\left(\frac{1}{\tau}\right)}\qquad\qquad\forall \QQ\in\mathscr P(\Xi): \EE_\QQ[\|\txi\|^2]\leq \Omega.
\end{aligned}
\end{equation*}
\end{proposition}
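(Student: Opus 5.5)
The proof is a uniform law of large numbers over the class $\HH$ defined in the main text. For $\QQ$ with $\EE_\QQ[\|\txi\|^2]\le\Omega$, write $h_\QQ(\z)\coloneqq[\EE_\QQ[\ell(\z,\txi)]-\EE_\PPhat[\ell(\z,\txi)]-\eps]_+$. The claim is that the deviation
\[
\Delta\coloneqq\sup_{h\in\HH}\Bigl(\EE_\ZZ[h(\tz)]-\tfrac1M\textstyle\sum_{m}h(\z_m)\Bigr)
\]
is bounded with probability at least $1-\tau$ by the two stated terms.

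\textbf{Step 1: range of $\HH$.} By Assumption~\ref{as:linear_growth} and Jensen's inequality (as in the proof of Proposition~\ref{prop:semiinf_ambiguity_set}), $|\EE_\QQ[\ell(\z,\txi)]|\le c_1+c_2\sqrt{\Omega}$. By Assumption~\ref{as:second_moment}, the same bound holds for $\PPhat$. Hence each $h_\QQ$ takes values in $[0,2(c_1+c_2\sqrt\Omega)]$. McDiarmid's bounded-differences inequality then gives, with probability at least $1-\tau$,
\[
\Delta\le\EE[\Delta]+2(c_1+c_2\sqrt\Omega)\sqrt{\tfrac{1}{2M}\log(1/\tau)},
\]
which is exactly the second term. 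Symmetrization gives $\EE[\Delta]\le 2\,\mathfrak R_M(\HH)$, the Rademacher complexity of $\HH$ under $\ZZ$.

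\textbf{Step 2: peel off the hinge and the reference term.} The map $u\mapsto[u-\eps]_+$ is $1$-Lipschitz, so Talagrand's contraction lemma gives $\mathfrak R_M(\HH)\le\mathfrak R_M(\mathcal G)$, where
\[
\mathcal G=\{\z\mapsto\EE_\QQ[\ell(\z,\txi)]-\EE_\PPhat[\ell(\z,\txi)]\}.
\]
The term $-\EE_\PPhat[\ell(\z,\txi)]$ is one fixed function, and a fixed shift does not change the Rademacher average in expectation. So it suffices to bound the complexity of $\mathcal F_\QQ\coloneqq\{\z\mapsto\EE_\QQ[\ell(\z,\txi)]\}$, indexed by the infinite-dimensional parameter $\QQ$.

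\textbf{Step 3: reduce to maxima of hyperplanes (main obstacle).} Lift $\z$ to $(\z,1)$ and $\bxi$ to $(\bxi,1)$. Each affine piece can then be written as
\[
\bm a_j(\z)^\top\bxi+b_j(\z)=(\z,1)^\top \bm G_j(\bxi,1),
\]
where $\bm G_j$ is the block matrix in the definition of $R$. Thus $\ell(\cdot,\bxi)$ is a $J$-fold maximum of linear functions of $(\z,1)$, with coefficient vectors $\bm w_j(\bxi)=\bm G_j(\bxi,1)$ of norm at most $\|\bm G_j\|_{\rm op}\sqrt{R_\bxi^2+1}$ on the compact $\Xi$. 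Inputs have norm at most $\sqrt{R_\x^2+1}$.

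Since $\QQ$ is a probability measure, $\EE_\QQ[\ell(\cdot,\txi)]$ lies in the closed convex hull of the class $\mathcal M_J$ of such $J$-fold maxima of bounded hyperplanes. Rademacher complexity is invariant under taking convex hulls, so $\mathfrak R_M(\mathcal F_\QQ)\le\mathfrak R_M(\mathcal M_J)$. This step is what removes the dependence on $\QQ$. It should be checked carefully: the supremum over $\QQ$ of a linear functional of $\QQ$ is a supremum over atoms, and measurability is not an issue because the class is pointwise bounded.

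Then invoke \cite[Corollary 5]{attias2024fat} for $J$-fold maxima of hyperplanes. It gives $\mathfrak R_M(\mathcal M_J)=\mathcal O\bigl(R\sqrt{J\log J(\log M)^3/M}\bigr)$, with $R$ as in Theorem~\ref{coro:set_containment}. I expect the main work to be matching the normalization of that corollary, including the scale of the input norm and the weight norm and the conversion from fat-shattering to Rademacher bounds, to the product $R$.

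\textbf{Step 4: combine.} Combining Steps 1--3 yields the stated inequality, simultaneously for all feasible $\QQ$, on a single event of probability at least $1-\tau$.
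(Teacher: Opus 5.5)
Your proposal is correct and follows the paper's proof essentially step for step. The sequence is the same: McDiarmid with range $2(c_1+c_2\sqrt{\Omega})$, symmetrization, and contraction through the $1$-Lipschitz hinge; then dropping the $\QQ$-independent reference term, which has zero Rademacher mean; then reducing the supremum over $\QQ$ to a supremum over Dirac measures at $\bxi\in\Xi$ (your convex-hull step); and finally applying \cite[Corollary~5]{attias2024fat} after lifting to $(\z,1)$ and $(\bxi,1)$ with the same normalization $R$. The only difference is cosmetic: before invoking the corollary, the paper explicitly decouples the $J$ weight vectors by replacing the single $\bxi$ with independent $\bxi_1,\ldots,\bxi_J$, whereas your class $\mathcal M_J$ assumes this decoupling implicitly.
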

\begin{proof}{Proof of Proposition~\ref{prop:hinge_SAA_guarantee}.}
We define 
$h_\QQ(\z)\coloneqq\left[\EE_\QQ[\ell(\z,\txi)]-\EE_\PPhat[\ell(\z,\txi)]-\eps\right]_+$ and let
\begin{equation*}\HH\coloneqq \{h_\QQ:\QQ\in\mathscr P(\Xi),\;\EE_\QQ[\|\txi\|^2]\leq \Omega\}.
\end{equation*}
Our Assumption \ref{as:linear_growth} implies that for any $h\in\HH$ and $\z\in\X$, we have $\bar h \coloneqq 2(c_1 + c_2\sqrt{\Omega})  \geq h(\z) \geq 0$.

For a given sample set $\{\tz_m\}_{m\in[M]}$, we define the uniform approximation error as:
\begin{equation*}
e(\tz_1,\ldots,\tz_M)\coloneqq \sup_{h\in\HH}\left(\EE_\ZZ[h(\tz)]- \frac{1}{M}\sum_{m\in[M]} h(\tz_m)\right).
\end{equation*}
Note that changing one data point $\tz_m$ changes $e$ by at most $\bar h/M$. McDiarmid's inequality~\citep[Appendix D]{mohri2018foundations} therefore yields
\begin{equation*}
e(\tz_1,\ldots,\tz_M)\leq \EE[e(\tz_1,\ldots,\tz_M)] + \bar h \sqrt{\frac{1}{2M}\log\left(\frac{1}{\tau}\right)}
\end{equation*}
with probability at least $1-\tau$. 

We next upper bound the expectation using Rademacher complexity. Introduce $M$ i.i.d. ghost samples $\tz_1',\ldots,\tz_M'$ drawn from $\ZZ$ and let $\ts_1,\ldots,\ts_M\in\{-1,1\}$ be i.i.d. Rademacher random variables. A standard symmetrization inequality \cite[Theorem~3.3]{mohri2018foundations} yields
\begin{equation*}
\begin{aligned}
\EE\left[\sup_{h\in\HH}\left(\EE_\ZZ[h(\tz)]- \frac{1}{M}\sum_{m\in[M]} h(\tz_m)\right)\right]
&\leq \EE\left[\sup_{h\in\HH}\left(\frac{1}{M}\sum_{m\in[M]} h(\tz_m')- \frac{1}{M}\sum_{m\in[M]} h(\tz_m)\right)\right]\\
&= \EE\left[\sup_{h\in\HH}\left(\frac{1}{M}\sum_{m\in[M]} \ts_m(h(\tz_m')-h(\tz_m))\right)\right]\\
&\leq 2 \EE\left[\sup_{h\in\HH}\left(\frac{1}{M}\sum_{m\in[M]} \ts_mh(\tz_m)\right)\right]
= 2\mathcal R_M(\HH).
\end{aligned}
\end{equation*}
Since the hinge map $u\mapsto[u]_+$ is $1$-Lipschitz, the contraction principle \cite[Lemma~26.9]{shalev2014understanding} gives
\begin{equation}
\label{eq:rademacher_bound}
\begin{aligned}
\mathcal R_M(\HH) &\leq \EE\left[\sup_{\substack{\QQ\in\mathscr P(\Xi)\\\EE_\QQ[\|\txi\|^2]\leq \Omega}}\left(\frac{1}{M}\sum_{m\in[M]} \ts_m\left(\EE_\QQ[\ell(\tz_m,\txi)]-\EE_\PPhat[\ell(\tz_m,\txi)]-\eps\right)\right)\right]\\
&=\EE\left[\sup_{\substack{\QQ\in\mathscr P(\Xi)\\\EE_\QQ[\|\txi\|^2]\leq \Omega}}\EE_\QQ\left[\frac{1}{M}\sum_{m\in[M]} \ts_m\ell(\tz_m,\txi)\right]\right]- \EE\left[\frac{1}{M}\sum_{m\in[M]}\ts_m\left(\EE_\PPhat[\ell(\tz_m,\txi)]+\eps\right)\right]\\
&\leq\EE\left[\sup_{\QQ\in\mathscr P(\Xi)}\EE_\QQ\left[\frac{1}{M}\sum_{m\in[M]} \ts_m\ell(\tz_m,\txi)\right]\right]\\
&= \EE\left[\sup_{\bxi\in\Xi}\frac{1}{M}\sum_{m\in[M]} \ts_m\ell(\tz_m,\bxi)\right].
\end{aligned}
\end{equation}
In the second line, we moved the term $\frac{1}{M}\sum_{m\in[M]}\ts_m\left(\EE_\PPhat[\ell(\tz_m,\txi)]+\eps\right)$ outside the supremum as it is independent of $\QQ$. Since each $\ts_m$ is centered and independent of $\tz_m$, this term has expectation zero.

Each affine piece in the loss function can be rewritten as
\begin{equation*}
\bm a_j(\bm x)^\top\bxi + b_j(\bm x)
=\left(\begin{bmatrix}\bm A_j^\top &\bm b_j\\ \overline {\bm a}_j^\top & \overline b_j\end{bmatrix}\begin{bmatrix}\bxi\\ 1\end{bmatrix}\right)^\top \begin{bmatrix}\x\\ 1\end{bmatrix}.
\end{equation*}
Let $\mathcal B^{D_\x+1}$ be the unit ball in $\RR^{D_\x+1}$. For each $\bxi\in\Xi$ and $j\in[J]$, define
\[
\bm w_j(\bxi)\coloneqq \left(\max_{i\in[J]}\left\|\begin{bmatrix}\bm A_i^\top & \bm b_i\\ \overline{\bm a}_i^\top & \overline b_i\end{bmatrix}\right\|_{\mathrm{op}}\sqrt{R_\bxi^2+1}\right)^{-1}
\begin{bmatrix}\bm A_j^\top & \bm b_j\\ \overline{\bm a}_j^\top & \overline b_j\end{bmatrix}
\begin{bmatrix}\bxi\\ 1\end{bmatrix},
\]
so that $\bm w_j(\bxi)\in\mathcal B^{D_\x+1}$ for all $j\in[J]$, and also $\bigl(\sqrt{R_\x^2+1}\,\bigr)^{-1}\bigl[\begin{smallmatrix}\x\\ 1\end{smallmatrix}\bigr]\in\mathcal B^{D_\x+1}$.
Hence
\begin{equation}
\label{eq:expected_wc_cost}
\begin{aligned}
\mathcal R_M(\HH)
&\leq \EE\left[\sup_{\bxi\in\Xi}\frac{1}{M}\sum_{m\in[M]} \ts_m\ell(\tz_m,\bxi)\right]\\
&= \EE\left[\sup_{\bxi\in\Xi}\frac{1}{M}\sum_{m\in[M]} \ts_m\max_{j\in[J]}\bigl(\bm a_j(\tz_m)^\top\bxi + b_j(\tz_m)\bigr) \right]\\
&\leq \EE\left[\sup_{\bxi_1,\ldots,\bxi_J\in\Xi}\frac{1}{M}\sum_{m\in[M]} \ts_m\max_{j\in[J]}\bigl(\bm a_j(\tz_m)^\top\bxi_j + b_j(\tz_m)\bigr) \right]\\
&\leq \max_{j\in[J]}\left\|\begin{bmatrix}\bm A_j^\top &\bm b_j\\ \overline {\bm a}_j^\top & \overline b_j\end{bmatrix}\right\|_{\textup{op}}\sqrt{R_\bxi^2+1}\sqrt{R_\x^2+1}\cdot \EE\left[\sup_{\bm w_1,\ldots,\bm w_J\in\mathcal B^{D_{\x}+1}}\frac{1}{M}\sum_{m\in[M]} \ts_m\max_{j\in[J]}\bm w_j^\top \left(\sqrt{R_\x^2+1}\right)^{-1}\begin{bmatrix}\tz_m\\ 1\end{bmatrix} \right]\\
&\leq \mathcal O \left(\max_{j\in[J]}\left\|\begin{bmatrix}\bm A_j^\top &\bm b_j\\ \overline {\bm a}_j^\top & \overline b_j\end{bmatrix}\right\|_{\textup{op}}\sqrt{R_\bxi^2+1}\sqrt{R_\x^2+1}\sqrt{\frac{J\log J(\log M)^3}{M}}\right),
\end{aligned}
\end{equation}
where the last step uses the Rademacher complexity of $J$-fold maxima of hyperplanes \cite[Corollary~5]{attias2024fat}. Combining the McDiarmid and Rademacher bounds yields the claim.\qed
\end{proof}

\begin{proof}{Proof of Theorem~\ref{coro:set_containment}.}
Consider the high-probability event of Proposition~\ref{prop:hinge_SAA_guarantee}, which occurs with probability at least $1-\tau$. Fix any $\QQ\in\mP_\epsilon^M$. By definition of $\mP_\epsilon^M$,
\[
\frac{1}{M}\sum_{m\in[M]}\left[\EE_\QQ[\ell(\z_m,\txi)]-\EE_\PPhat[\ell(\z_m,\txi)]-\eps\right]_+ \le 0
\quad\text{and}\quad
\EE_\QQ[\|\txi\|^2]\le \Omega.
\]
Applying Proposition~\ref{prop:hinge_SAA_guarantee} to this $\QQ$ shows that
\[
\EE_\ZZ\left[\left[\EE_\QQ[\ell(\tz,\txi)]-\EE_\PPhat[\ell(\tz,\txi)]-\eps\right]_+\right]\le \eta,
\]
where $\eta$ is the bound in \eqref{eq:eta}. Together with the shared second-moment constraint, this means $\QQ\in\mP_\epsilon(\eta)$. Since $\QQ\in\mP_\epsilon^M$ was arbitrary, we obtain $\mP_\epsilon^M\subseteq\mP_\epsilon(\eta)$.\qed
\end{proof}

\begin{lemma}\label{lem:convergence_relaxed_optimal_value}
As $\eta\downarrow 0$, the optimal value of the relaxed DRO problem
\begin{equation*}
\min_{\x\in\X} \sup_{\QQ\in\mP_\epsilon(\eta)}  \EE_{\QQ}[\ell(\x,\txi)]
\end{equation*}
converges to that of the exact DRO problem.
\end{lemma}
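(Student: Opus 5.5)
The plan mirrors the proof of Proposition~\ref{prop:convergence_eps_to_0}, with $\eta$ playing the role of $\epsilon$. Define $g_\eta(\x)\coloneqq\sup_{\QQ\in\mP_\epsilon(\eta)}\EE_\QQ[\ell(\x,\txi)]$ and $g_0(\x)\coloneqq\sup_{\QQ\in\mP_\epsilon}\EE_\QQ[\ell(\x,\txi)]$. Since $\mP_\epsilon(0)=\mP_\epsilon$ and the sets increase in $\eta$, we have $g_\eta\ge g_0$. Hence $v_\eta\coloneqq\min_{\x}g_\eta(\x)\ge v_0$, and $v_\eta$ is nonincreasing as $\eta\downarrow 0$. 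It remains to show $\limsup_{\eta\downarrow0}v_\eta\le v_0$. Exactly as before, it suffices to fix a minimizer $\x^\star$ of $g_0$ (attained by lower semicontinuity and compactness) and prove the pointwise statement $\limsup_{\eta\downarrow0}g_\eta(\x^\star)\le g_0(\x^\star)$.

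For the pointwise limit, take $\eta_k\downarrow0$ and near-maximizers $\QQ_k\in\mP_\epsilon(\eta_k)$ with $\EE_{\QQ_k}[\ell(\x^\star,\txi)]\ge g_{\eta_k}(\x^\star)-1/k$. The uniform second-moment bound $\EE_{\QQ_k}[\|\txi\|^2]\le\Omega$ gives the following:
\begin{itemize}
\item tightness, so Prohorov yields a weakly convergent subsequence $\QQ_k\Rightarrow\QQ^\star$;
\item uniform integrability of $\ell(\z,\cdot)$ via Assumption~\ref{as:linear_growth}, so $\EE_{\QQ_k}[\ell(\z,\txi)]\to\EE_{\QQ^\star}[\ell(\z,\txi)]$ for every $\z\in\X$;
\item by Portmanteau, $\EE_{\QQ^\star}[\|\txi\|^2]\le\Omega$.
\end{itemize}
If $\Xi$ is closed, the limit is supported on $\Xi$. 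In the compact-support setting of this section, this is automatic, and Prohorov is trivial there.

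The step that differs from Proposition~\ref{prop:convergence_eps_to_0} is passing the hinge constraint to the limit. Define $h_k(\z)\coloneqq[\EE_{\QQ_k}[\ell(\z,\txi)]-\EE_\PPhat[\ell(\z,\txi)]-\epsilon]_+$. This converges pointwise to the corresponding $h^\star(\z)$ for $\QQ^\star$. Moreover $0\le h_k\le 2(c_1+c_2\sqrt{\Omega})$, as in the proof of Proposition~\ref{prop:hinge_SAA_guarantee}. Bounded convergence under $\ZZ$ then gives
\[
\EE_\ZZ[h^\star(\tz)]=\lim_k\EE_\ZZ[h_k(\tz)]\le\lim_k\eta_k=0 .
\]
Thus $\QQ^\star\in\mP_\epsilon$, and therefore $\limsup_k g_{\eta_k}(\x^\star)\le\EE_{\QQ^\star}[\ell(\x^\star,\txi)]\le g_0(\x^\star)$. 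Equicontinuity in $\z$ from Lemma~\ref{lem:Lipschitz_condition} would even give uniform convergence of $h_k$, but pointwise convergence suffices.

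I expect the main obstacle to be justifying the weak-convergence step for expectations of the unbounded continuous function $\ell(\z,\cdot)$. This uses the uniform integrability supplied by the bound $\sup_k\EE_{\QQ_k}[|\ell(\z,\txi)|^2]\le 2c_1^2+2c_2^2\Omega$ together with the mapping theorem, as in Proposition~\ref{prop:convergence_eps_to_0}. The subsequence argument is applied along the sequence realizing the $\limsup$. Combining the two inequalities then yields $v_\eta\to v_0$.
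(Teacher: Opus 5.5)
Your proposal is correct and follows essentially the same route as the paper's proof. Both take near-maximizers $\QQ_k\in\mP_\epsilon(\eta_k)$, use the second-moment bound for tightness and Prohorov, pass the loss expectations and the moment bound to the limit via uniform integrability and Portmanteau, pass the hinge constraint to the limit by bounded convergence under $\ZZ$, and conclude with the sandwich $v_0\le v_\eta\le g_\eta(\x^\star)\to g_0(\x^\star)=v_0$. Your explicit final value-convergence step and your remark on the support of the weak limit clarify points the paper leaves implicit.
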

\begin{proof}{Proof of Lemma~\ref{lem:convergence_relaxed_optimal_value}.}
We follow the development of the proof of Proposition~\ref{prop:convergence_eps_to_0}. Fix $\x\in\X$, and define $f_\eta(\x)\coloneqq \sup_{\QQ\in\mP_\epsilon(\eta)}\EE_\QQ[\ell(\x,\txi)]$. Since $\mP_\epsilon=\mP_\epsilon(0)\subseteq\mP_\epsilon(\eta)$ for every $\eta\geq 0$, we have $f_0(\x)\leq f_\eta(\x)$. Thus, it remains to prove
\begin{equation}
\label{eq:limsup_convergence_2}
\lim_{\eta\downarrow 0} f_\eta(\x)\leq f_0(\x).
\end{equation}
Let $\eta_k\downarrow 0$ be any decreasing sequence converging to zero. For each $k\in\N$, choose $\QQ_k\in\mP_\epsilon(\eta_k)$ such that
\begin{equation}
\label{eq:near_optimal_Q_k}
f_{\eta_k}(\x)-\frac{1}{k}\leq  \EE_{\QQ_k}[\ell(\x,\txi)].
\end{equation}
Because each $\QQ_k$ satisfies the second-moment constraint in $\mP_\epsilon(\eta_k)$, the family $\{\QQ_k\}_{k\in\N}$ is tight. Hence, after passing to a subsequence if necessary, $\QQ_k\Rightarrow \QQ^\star$ for some probability measure $\QQ^\star$.

By the same argument as in Proposition~\ref{prop:convergence_eps_to_0}, weak convergence together with the uniform second-moment constraint implies $\lim_{k\rightarrow\infty}\EE_{\QQ_k}[\ell(\x,\txi)]= \EE_{\QQ^\star}[\ell(\x,\txi)]$, and $\QQ^\star$ satisfies the second-moment constraint in $\mP_\epsilon$. It remains to be verified that the expected hinge constraint holds. For each fixed $\z\in\X$, the convergence of expectations implies
\begin{equation*}
\lim_{k\rightarrow\infty}\left[\EE_{\QQ_k}[\ell(\z,\txi)]-\EE_\PPhat[\ell(\z,\txi)]-\eps\right]_+= \left[\EE_{\QQ^\star}[\ell(\z,\txi)]-\EE_\PPhat[\ell(\z,\txi)]-\eps\right]_+.
\end{equation*}
By Assumptions~\ref{as:linear_growth} and~\ref{as:second_moment}, the hinge term is bounded uniformly in $\z\in\X$ and $k\in\N$ by a finite constant, so dominated convergence yields
\begin{equation*}
\lim_{k\rightarrow\infty}\EE_\ZZ\left[\left[\EE_{\QQ_k}[\ell(\tz,\txi)]-\EE_\PPhat[\ell(\tz,\txi)]-\eps\right]_+\right] = \EE_\ZZ\left[\left[\EE_{\QQ^\star}[\ell(\tz,\txi)]-\EE_\PPhat[\ell(\tz,\txi)]-\eps\right]_+\right].
\end{equation*}
Since $\QQ_k\in\mP_\epsilon(\eta_k)$, we have
\[
\EE_\ZZ\left[\left[\EE_{\QQ_k}[\ell(\tz,\txi)]-\EE_\PPhat[\ell(\tz,\txi)]-\eps\right]_+\right]\leq \eta_k.
\]
Letting $k\rightarrow\infty$, we obtain $\QQ^\star\in\mP_\epsilon$. Taking the limit in \eqref{eq:near_optimal_Q_k} gives
\[
\lim_{k\rightarrow\infty}f_{\eta_k}(\x)\leq \lim_{k\rightarrow\infty} \EE_{\QQ_k}[\ell(\x,\txi)]=\EE_{\QQ^\star}[\ell(\x,\txi)]\leq f_0(\x).
\]
Since $\eta_k\downarrow 0$ was arbitrary, \eqref{eq:limsup_convergence_2} holds. Thus, for any $\x\in\X$,
\begin{equation}
\label{eq:pointwise_convergence_eta}
\lim_{\eta\downarrow 0} \sup_{\QQ\in\mP_\epsilon(\eta)} \EE_{\QQ}[\ell(\x,\txi)]= \sup_{\QQ\in\mP_\epsilon}\EE_{\QQ}[\ell(\x,\txi)].
\end{equation}
The optimal value convergence then follows from the monotonicity of $f_\eta(\x)$ in $\eta$.\qed
\end{proof}

\begin{proof}{Proof of Theorem~\ref{thm:approximation_convergence}.}
For $\eta\ge 0$, define
\[
\hat v(\eta)\coloneqq \min_{\x\in\X}\sup_{\QQ\in\mP_\epsilon(\eta)}\EE_\QQ[\ell(\x,\txi)].
\]
By Theorem~\ref{coro:set_containment}, setting $\eta_M$ to \eqref{eq:eta}, we have for any $\tau>0$,
\[
\Prob\!\left(
\inf_{\x\in\X}\sup_{\QQ\in\mP_\epsilon^M}\EE_\QQ[\ell(\x,\txi)]
\le
\inf_{\x\in\X}\sup_{\QQ\in\mP_\epsilon(\eta_M)}\EE_\QQ[\ell(\x,\txi)]
\right)\ge 1-\tau.
\]
Equivalently,
\[
\Prob(\hat v_M\le \hat v(\eta_M))\ge 1-\tau.
\]
Since $\mP_\epsilon\subseteq\mP_\epsilon^M$, we also have $\hat v\le \hat v_M$. Therefore,
\[
\Prob(\hat v\le \hat v_M\le \hat v(\eta_M))\ge 1-\tau.
\]
By Lemma~\ref{lem:convergence_relaxed_optimal_value}, $\hat v(\eta_M)\to \hat v$ as $M\to\infty$. Hence, for any $\rho>0$, there exists $M'\in\N$ such that
\[
\hat v(\eta_M)-\hat v\le \rho \qquad \forall M\ge M'.
\]
Combining this bound with the preceding high-probability event yields
\[
\Prob(\hat v_M-\hat v\le \rho)\ge 1-\tau \qquad \forall M\ge M'.
\]
Since also $\hat v_M-\hat v\ge 0$, we conclude that $\hat v_M\overset{p}{\to}\hat v$.

We next establish convergence of optimal solutions. For $\delta>0$, define$\mathcal A_\delta\coloneqq \{\x\in\X:\textup{dist}(\x,\mathcal S)\ge \delta\}$.
This set is closed because the distance function is continuous. Since $\X$ is compact, $\mathcal A_\delta$ is compact. Moreover, by definition of $\mathcal S$, no point in $\mathcal A_\delta$ is optimal. Hence,
\[
\rho_\delta\coloneqq \inf_{\x\in\mathcal A_\delta}\sup_{\QQ\in\mP_\epsilon}\EE_\QQ[\ell(\x,\txi)]-\inf_{\x\in\X}\sup_{\QQ\in\mP_\epsilon}\EE_\QQ[\ell(\x,\txi)]>0.
\]
Now, if $\textup{dist}(\hat\x_M,\mathcal S)\ge \delta$, then $\hat\x_M\in\mathcal A_\delta$, and therefore
\[
\sup_{\QQ\in\mP_\epsilon}\EE_\QQ[\ell(\hat\x_M,\txi)]-\inf_{\x\in\X}\sup_{\QQ\in\mP_\epsilon}\EE_\QQ[\ell(\x,\txi)]\ge \rho_\delta.
\]
Since $\mP_\epsilon\subseteq\mP_\epsilon^M$, we also have
\[
\sup_{\QQ\in\mP_\epsilon}\EE_\QQ[\ell(\hat\x_M,\txi)]\le \sup_{\QQ\in\mP_\epsilon^M}\EE_\QQ[\ell(\hat\x_M,\txi)]=\hat v_M.
\]
Thus, $\{\textup{dist}(\hat\x_M,\mathcal S)\ge \delta\}\subseteq\{\hat v_M-\hat v\ge \rho_\delta\}$, and consequently
\[
\Prob(\textup{dist}(\hat\x_M,\mathcal S)\ge \delta)\le \Prob(\hat v_M-\hat v\ge \rho_\delta).
\]
Since $\hat v_M\overset{p}{\to}\hat v$, the right-hand side converges to zero. Therefore, $\textup{dist}(\hat\x_M,\mathcal S)\overset{p}{\to}0$.

Lastly, we prove the suboptimality bound. We have
\[
\begin{aligned}
\sup_{\QQ\in\mP_\epsilon}\EE_\QQ[\ell(\hat\x_M,\txi)]-\sup_{\QQ\in\mP_\epsilon}\EE_\QQ[\ell(\hat\x,\txi)]
=\;&\sup_{\QQ\in\mP_\epsilon}\EE_\QQ[\ell(\hat\x_M,\txi)]-\sup_{\QQ\in\mP_\epsilon^M}\EE_\QQ[\ell(\hat\x_M,\txi)]\\
&+\sup_{\QQ\in\mP_\epsilon^M}\EE_\QQ[\ell(\hat\x_M,\txi)]-\sup_{\QQ\in\mP_\epsilon}\EE_\QQ[\ell(\hat\x,\txi)]\\
\le\;&\sup_{\QQ\in\mP_\epsilon^M}\EE_\QQ[\ell(\hat\x,\txi)]-\sup_{\QQ\in\mP_\epsilon}\EE_\QQ[\ell(\hat\x,\txi)].
\end{aligned}
\]
Here, the first difference is nonpositive because $\mP_\epsilon\subseteq\mP_\epsilon^M$, and the second difference is bounded above by replacing $\hat\x_M$ with $\hat\x$, since $\hat\x$ is suboptimal for the approximate problem.

Next, by Theorem~\ref{coro:set_containment}, setting $\eta$ to \eqref{eq:eta}, we obtain the high-probability bound
\[
\sup_{\QQ\in\mP_\epsilon^M}\EE_\QQ[\ell(\hat\x,\txi)]-\sup_{\QQ\in\mP_\epsilon}\EE_\QQ[\ell(\hat\x,\txi)]
\le
\sup_{\QQ\in\mP_\epsilon(\eta)}\EE_\QQ[\ell(\hat\x,\txi)]-\sup_{\QQ\in\mP_\epsilon}\EE_\QQ[\ell(\hat\x,\txi)]
\]
with probability at least $1-\tau$. For any fixed $\x$ and $\eta\ge 0$, the relaxed problem \eqref{eq:eta_ambiguity_set} has the dual reformulation
\begin{equation*}
\begin{aligned}
\min \;&  \alpha+\beta\Omega+\gamma\eta+\int_\X\left(\EE_\PPhat[\ell(\z,\txi)]+\epsilon\right)\nu(\mathrm d\z)\\
\st & \x\in\X,\;\alpha\in\RR,\;\beta,\gamma\in\RR_+,\;\nu\in\M_+(\X),\\
& \ell(\x,\bxi)\le \alpha+\beta\|\bxi\|^2+\int_\X \ell(\z,\bxi)\nu(\mathrm d\z)\qquad\forall \bxi\in\Xi,\\
& \gamma\ZZ-\nu\in\M_+(\X),
\end{aligned}
\end{equation*}
which follows from the same generalized-moment duality argument as in Theorem~\ref{thm:dual_exact}. Let $(\hat\x,\alpha^\star,\beta^\star,\nu^\star)$ be an optimal solution to the exact dual problem, and let $\gamma^\star$ be as defined in the theorem statement. Then $(\hat\x,\alpha^\star,\beta^\star,\gamma^\star,\nu^\star)$ is feasible, though generally suboptimal, for the relaxed dual problem. We can therefore further upper bound the preceding display by
\[
\begin{aligned}
&\sup_{\QQ\in\mP_\epsilon(\eta)}\EE_\QQ[\ell(\hat\x,\txi)]-\sup_{\QQ\in\mP_\epsilon}\EE_\QQ[\ell(\hat\x,\txi)]\\
\le\;& \alpha^\star+\beta^\star\Omega+\int_\X \left(\EE_\PPhat[\ell(\z,\txi)]+\epsilon\right)\nu^\star(\mathrm d\z)+\gamma^\star\eta\\
&\qquad-\left(\alpha^\star+\beta^\star\Omega+\int_\X \left(\EE_\PPhat[\ell(\z,\txi)]+\epsilon\right)\nu^\star(\mathrm d\z)\right)\\
=\;&\gamma^\star\eta.
\end{aligned}
\]
Substituting the expression for $\eta$ in \eqref{eq:eta} completes the proof.\qed
\end{proof}

\subsection{Conic Programming Reformulations}

\begin{proof}{Proof of Theorem~\ref{theorem:Conic_reformulation}.}
For each sampled point $\z_m$, define
\[
g_m(\bxi)\coloneqq \max_{k\in[J]}\left(\bm a_k(\z_m)^\top\bxi+b_k(\z_m)\right).
\]
The semi-infinite constraint in \eqref{eq:dual_SAA} is equivalent to
\[
\bm a_j(\x)^\top\bxi+b_j(\x)\le \alpha+\beta\|\bxi\|^2+\frac{1}{M}\sum_{m\in[M]}\nu_m g_m(\bxi)
\qquad \forall \bxi\in\Xi,\;\forall j\in[J].
\]
For each pair $(j,m)$, the term $\nu_m g_m(\bxi)$ admits the simplex representation
\[
\nu_m g_m(\bxi)=\max_{\substack{\bm\lambda_{jm}\in\RR_+^J\\ \mathbf e^\top\bm\lambda_{jm}=\nu_m}}\sum_{k\in[J]}\lambda_{jm}^k\bigl(\bm a_k(\z_m)^\top\bxi+b_k(\z_m)\bigr).
\]
For fixed $j$, let $\Lambda_j\coloneqq\prod_{m\in[M]}\{\bm\lambda_{jm}\in\RR_+^J:\mathbf e^\top\bm\lambda_{jm}=\nu_m\}$.

The constraint for this $j$ can then be written as
\[
\sup_{\bxi\in\Xi}\min_{\{\bm\lambda_{jm}\}_{m\in[M]}\in\Lambda_j}\Phi_j(\bxi,\{\bm\lambda_{jm}\}_{m\in[M]})\leq \alpha,
\]
where
\[
\begin{aligned}
\Phi_j(\bxi,\{\bm\lambda_{jm}\}_{m\in[M]})
\coloneqq\;&
\bm a_j(\x)^\top\bxi+b_j(\x)-\beta\|\bxi\|^2-\frac{1}{M}\sum_{m\in[M],k\in[J]}\lambda_{jm}^k\bigl(\bm a_k(\z_m)^\top\bxi+b_k(\z_m)\bigr).
\end{aligned}
\]
The set $\Lambda_j$ is compact and convex, and $\Xi$ is compact and convex in the present compact-support setting. Moreover, $\Phi_j$ is affine in $\{\bm\lambda_{jm}\}_{m\in[M]}$ and concave upper semicontinuous in $\bxi$ because $\beta\ge0$. Sion's minimax theorem therefore yields
\[
\sup_{\bxi\in\Xi}\min_{\{\bm\lambda_{jm}\}_{m\in[M]}\in\Lambda_j}\Phi_j(\bxi,\{\bm\lambda_{jm}\}_{m\in[M]})
=
\min_{\{\bm\lambda_{jm}\}_{m\in[M]}\in\Lambda_j}\sup_{\bxi\in\Xi}\Phi_j(\bxi,\{\bm\lambda_{jm}\}_{m\in[M]}).
\]
Hence the semi-infinite constraint is equivalent to the existence of multipliers $\bm\lambda_{jm}\in\RR_+^J$ with $\mathbf e^\top\bm\lambda_{jm}=\nu_m$ such that, for every $j\in[J]$,
\[
\sup_{\bxi\in\Xi}\left\{\left(\bm a_j(\x)-\frac{1}{M}\sum_{m\in[M],k\in[J]}\lambda_{jm}^k\bm a_k(\z_m)\right)^\top\bxi-\beta\|\bxi\|^2\right\}
+b_j(\x)-\frac{1}{M}\sum_{m\in[M],k\in[J]}\lambda_{jm}^k b_k(\z_m)\le \alpha.
\]

Let $\bm c_j\coloneqq \bm a_j(\x)-\frac{1}{M}\sum_{m\in[M],k\in[J]}\lambda_{jm}^k\bm a_k(\z_m)$. Using Fenchel duality with the support function of $\Xi$ and the infimal convolution identity for Fenchel conjugates \citep[Theorem~16.4]{Rockafellar70},
\[
\sup_{\bxi\in\Xi}\bigl\{\bm c_j^\top\bxi-\beta\|\bxi\|^2\bigr\}
=\inf_{\bm\theta_j\in\RR^{D_\bxi}}\left\{\sigma_\Xi(\bm\theta_j)+\sup_{\bxi\in\RR^{D_\bxi}}\bigl[(\bm c_j-\bm\theta_j)^\top\bxi-\beta\|\bxi\|^2\bigr]\right\}.
\]
The remaining quadratic supremum equals $\|\bm c_j-\bm\theta_j\|^2/(4\beta)$ when $\beta>0$, and its epigraph admits the standard second-order-cone representation
\[
\frac{\|\bm c_j-\bm\theta_j\|^2}{4\beta}\le \zeta_j
\iff
\left\|\begin{bmatrix}\bm c_j-\bm\theta_j\\ \zeta_j-\beta\end{bmatrix}\right\|\le \zeta_j+\beta,\qquad \zeta_j\ge 0.
\]
When $\beta=0$, the semi-infinite constraint is satisfied if and only if
\[
\sigma_\Xi(\bm c_j)+b_j(\x)-\frac{1}{M}\sum_{m\in[M],k\in[J]}\lambda_{jm}^k b_k(\z_m)\le \alpha
\qquad \forall j\in[J].
\]
Moreover, when $\beta=0$, the second-order conic constraint above reduces to
\[
\left\|\begin{bmatrix}\bm c_j-\bm\theta_j\\ \zeta_j\end{bmatrix}\right\|\le \zeta_j,
\]
which implies $\bm\theta_j=\bm c_j$. Substituting this identity into the scalar inequality of the conic reformulation gives
\[
\zeta_j+\sigma_\Xi(\bm c_j)+b_j(\x)-\frac{1}{M}\sum_{m\in[M],k\in[J]}\lambda_{jm}^k b_k(\z_m)\le \alpha.
\]
In this scalar inequality, smaller values of $\zeta_j$ weakly enlarge the feasible region. Since the reduced conic constraint imposes no positive lower bound on $\zeta_j$ beyond $\zeta_j\ge 0$, the least restrictive feasible choice is $\zeta_j=0$, which recovers exactly the support-function constraint above. Hence, the conic reformulation in the theorem also remains valid in the case $\beta=0$. Substituting $\bm c_j$ back into the inequality therefore produces exactly the conic reformulation stated in the theorem.

It remains to verify convexity of the displayed reformulation. The objective is affine in the decision variables, and the sign, simplex, and equality constraints are affine. Since $\bm a_j(\x)$ and $b_j(\x)$ are affine in $\x$, the scalar inequality
\[
\zeta_j+\sigma_\Xi(\bm\theta_j)+b_j(\x)-\frac{1}{M}\sum_{m\in[M],k\in[J]}\lambda_{jm}^k b_k(\z_m)\leq \alpha
\]
has a convex left-hand side, because $\sigma_\Xi$ is convex as the support function of a closed convex set. The remaining nonlinear constraint is the second-order-cone membership
\[
\left(\bm a_j(\x)-\frac{1}{M}\sum_{m\in[M],k\in[J]}\lambda_{jm}^k\bm a_k(\z_m)-\bm\theta_j,\;\zeta_j-\beta,\;\zeta_j+\beta\right)\in\mathcal Q,
\]
where $\mathcal Q\coloneqq\{(\bm u,t):\|\bm u\|\leq t\}$ denotes the standard second-order cone. Its argument is affine in the decision variables, so its inverse image is convex. Together with the convexity of the constraint $\x\in\X$, this shows that the finite reformulation is a convex conic program. The final claims follow because the support function of a second-order-cone representable set is second-order-cone representable.\qed
\end{proof}

\section{Proofs of Section~\ref{sec:unbounded_support}}
\label{appendix:proofs_sec4}

\subsection{Theoretical Guarantees}

\begin{proof}{Proof of Proposition~\ref{thm:dual_exact_subweibull}.}
Fix $\x\in\X$. The inner maximization problem associated with \eqref{eq:ambiguity_set_subweibull} is again a generalized moment problem, now with the additional constraint
\[
\QQ(\|\txi\|\le t)\ge 1-2\exp\left(-\left(t/K_{\mathrm{sw}}\right)^\vartheta\right).
\]
Introducing a multiplier $\kappa\in\RR_+$ for this constraint, together with the multipliers $\nu\in\M_+(\X)$, $\beta\in\RR_+$, and $\alpha\in\RR$ used in the proof of Theorem~\ref{thm:dual_exact}, yields the dual problem
\[
\begin{aligned}
\min\;\;&\alpha+\beta\Omega-\kappa\left(1-2\exp\left(-\left(t/K_{\mathrm{sw}}\right)^\vartheta\right)\right)+\int_\X\left(\EE_\PPhat[\ell(\z,\txi)]+\epsilon\right)\nu(\mathrm d\z)\\
\st\;\;&\ell(\x,\bxi)+\kappa\I_{\{\|\bxi\|\le t\}}\le \alpha+\beta\|\bxi\|^2+\int_\X \ell(\z,\bxi)\nu(\mathrm d\z)\qquad\forall \bxi\in\Xi.
\end{aligned}
\]
The strict feasibility argument from Theorem~\ref{thm:dual_exact} still applies, so strong duality and dual attainment hold. Minimizing over $\x\in\X$ gives \eqref{eq:dual_exact_subweibull} and yields the existence of an optimal tuple.\qed
\end{proof}

\begin{proposition}\label{prop:hinge_SAA_guarantee_subweibull}
With probability at least $1-\tau$, the following bound holds for all $\QQ\in\mathscr P(\Xi)$ satisfying $\EE_\QQ[\|\txi\|^2]\leq \Omega$ and $\QQ(\|\txi\| \leq t ) \geq 1-2\exp\left(-\left(t/K_{\mathrm{sw}}\right)^\vartheta\right)$:
\begin{equation*}
\begin{aligned}
&\EE_\ZZ\left[\left[\EE_\QQ[\ell(\tz,\txi)]-\EE_\PPhat[\ell(\tz,\txi)]-\eps\right]_+\right]\\
&\leq  \frac{1}{M}\sum_{m\in[M]}\left[\EE_\QQ[\ell(\z_m,\txi)]-\EE_\PPhat[\ell(\z_m,\txi)]-\eps\right]_++\mathcal O\left( R(t)\sqrt{\frac{J\log J(\log M)^3}{M}}\right)\\
&\qquad + 2(c_1 + c_2\sqrt{\Omega})\sqrt{\frac{1}{2M}\log\left(\frac{1}{\tau}\right)} + 4c_1\exp\left(-\left(t/K_{\mathrm{sw}}\right)^\vartheta\right) + 2c_2 \sqrt{ 2\Omega \exp\left(-\left(t/K_{\mathrm{sw}}\right)^\vartheta\right)}.
\end{aligned}
\end{equation*}
Here $R(t)$ is defined as in Theorem~\ref{thm:containment_unbounded_Xi}.
\end{proposition}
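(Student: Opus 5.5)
The plan is to reduce to the bounded-support argument of Proposition~\ref{prop:hinge_SAA_guarantee} by truncating $\QQ$ at radius $t$. Set $p_t\coloneqq 2\exp(-(t/K_{\mathrm{sw}})^\vartheta)$. For any admissible $\QQ$, write $\QQ=\QQ(B_t)\,\QQ_t+\QQ(B_t^c)\,\QQ_t^c$, where $B_t\coloneqq\{\|\bxi\|\le t\}$, $\QQ_t$ is $\QQ$ conditioned on $B_t$, and $\QQ_t^c$ is $\QQ$ conditioned on $B_t^c$. Define the \emph{truncated} loss functional
\[
g_\QQ(\z)\coloneqq \EE_\QQ\bigl[\ell(\z,\txi)\I_{B_t}(\txi)\bigr].
\]
This is a nonnegative multiple, of mass $\QQ(B_t)\le1$, of an expectation over distributions supported on the ball of radius $t$. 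We then split
\[
\EE_\QQ[\ell(\z,\txi)]=g_\QQ(\z)+r_\QQ(\z),
\qquad
r_\QQ(\z)\coloneqq\EE_\QQ\bigl[\ell(\z,\txi)\I_{B_t^c}(\txi)\bigr].
\]
By Assumption~\ref{as:linear_growth}, the tail constraint $\QQ(B_t^c)\le p_t$, and Cauchy--Schwarz,
\[
|r_\QQ(\z)|\le c_1 p_t+c_2\sqrt{\Omega\,p_t}
\]
uniformly in $\z$. This uniform bound on $r_\QQ$ is exactly the source of the two tail terms in the statement.

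\textbf{Step 1 (handling the hinge).} Let $\tilde h_\QQ(\z)\coloneqq[g_\QQ(\z)-\EE_\PPhat[\ell(\z,\txi)]-\epsilon]_+$. The hinge is $1$-Lipschitz, so $|h_\QQ(\z)-\tilde h_\QQ(\z)|\le|r_\QQ(\z)|$. Write
\[
\EE_\ZZ[h_\QQ]-\tfrac1M\textstyle\sum_m h_\QQ(\z_m)
\le
\Bigl(\EE_\ZZ[\tilde h_\QQ]-\tfrac1M\textstyle\sum_m\tilde h_\QQ(\z_m)\Bigr)+2\sup_\z|r_\QQ(\z)|.
\]
Plugging in the bound on $r_\QQ$ gives $2c_1p_t+2c_2\sqrt{\Omega p_t}$. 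With $p_t=2e^{-(t/K)^\vartheta}$, this is $4c_1e^{-(t/K)^\vartheta}+2c_2\sqrt{2\Omega e^{-(t/K)^\vartheta}}$, which matches the statement exactly.

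\textbf{Step 2 (uniform deviation of $\tilde h$).} We need a uniform deviation bound for the class $\tilde\HH\coloneqq\{\tilde h_\QQ\}$. The values $\tilde h_\QQ(\z)$ are bounded by $2(c_1+c_2\sqrt\Omega)$, since $|g_\QQ|\le c_1+c_2\sqrt\Omega$ and the empirical term obeys the same bound. McDiarmid's inequality therefore gives the same $\sqrt{\log(1/\tau)/(2M)}$ term as before.

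For the expectation, we repeat the symmetrization and contraction steps of \eqref{eq:rademacher_bound}. This reduces the problem to
\[
\EE\,\sup_\QQ\,\EE_\QQ\Bigl[\I_{B_t}\tfrac1M\textstyle\sum_m s_m\ell(\z_m,\txi)\Bigr].
\]
Because $\QQ(B_t)\in[0,1]$ and the integrand is a mixture over points of $B_t$, this is at most
\[
\EE\Bigl[\max\Bigl\{0,\;\sup_{\|\bxi\|\le t}\tfrac1M\textstyle\sum_m s_m\ell(\z_m,\bxi)\Bigr\}\Bigr].
\]
We bound this by the $J$-fold maxima-of-hyperplanes Rademacher argument of \eqref{eq:expected_wc_cost}, with $R_\bxi$ replaced by $t$. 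The max with $0$ is harmless: either the class contains a zero-like element, or one adds a constant hyperplane, which changes only constants. This yields the term $\mathcal O(R(t)\sqrt{J\log J(\log M)^3/M})$.

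\textbf{Main obstacle.} The delicate point is Step 2's use of the truncated functional. The hyperplane complexity bound needs the supremum over a set of $\bxi$ of norm at most $t$. The truncated measure $\QQ\I_{B_t}$ is sub-probability, not probability. I would handle this with the $\max\{0,\cdot\}$ device, or by adjoining a point on which the loss is controlled, and verify the resulting constants.

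Combining the McDiarmid bound, the Rademacher bound, and the tail correction from Step 1 on one event of probability at least $1-\tau$ gives the claim uniformly over admissible $\QQ$.
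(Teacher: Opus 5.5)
Your proof is correct and gives exactly the stated constants, but you truncate at a different point than the paper does.

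The paper keeps the original class $\HH=\{h_\QQ\}$ over the moment- and tail-constrained set. It runs McDiarmid, symmetrization and contraction verbatim from the bounded-support case. Only then does it split the Rademacher average $\EE_\QQ[F_M(\txi)]$ into a ball part and a tail part. The tail part is bounded by $c_1p_t+c_2\sqrt{\Omega p_t}$, and the factor $2$ is inherited from the symmetrization constant.

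You truncate first. The deterministic, uniform-in-$\z$ comparison $|h_\QQ-\tilde h_\QQ|\le|r_\QQ|$ costs $2\sup_\z|r_\QQ(\z)|$: once for $\EE_\ZZ$ and once for the empirical average. The random part of the argument then involves only the truncated class, whose range bound $2(c_1+c_2\sqrt{\Omega})$ is unchanged.

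Your route makes the Rademacher step slightly cleaner. You need only the one-sided quantity $\EE[\max\{0,\sup_{\|\bxi\|\le t}F_M(\bxi)\}]$. Your zero-element remark settles it with no loss in constants. The relaxed class of $J$-fold maxima $\max_j\bm w_j^\top(\cdot)$ with $\bm w_j$ in the unit ball contains $\bm w_1=\cdots=\bm w_J=\bm 0$. Hence $R(t)$ times its Rademacher supremum is nonnegative and dominates $\max\{0,\sup_{\|\bxi\|\le t}F_M\}$ for every realization of the signs.

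The paper instead bounds the ball part by $\sup_{\|\bxi\|\le t}|F_M(\bxi)|$ and writes $\EE\sup|F_M|\le 2\,\EE\sup F_M$. That inequality fails for general classes: a constant loss gives $\EE\sup F_M=0<\EE\sup|F_M|$. It is justified only after relaxing to the hyperplane class, via the same zero-element observation.

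The paper's route, in exchange, changes only one display of the bounded-support proof.
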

\begin{proof}{Proof of Proposition~\ref{prop:hinge_SAA_guarantee_subweibull}.}
We define the ambiguity set
\begin{equation*}
\mP_{\rm sw}\coloneqq \left\{\QQ\in\mathscr P(\Xi):\begin{array}{l}
\EE_\QQ[\|\txi\|^2]\leq \Omega\\
\QQ(\|\txi\| \leq t ) \geq   1-2\exp\left(-\left(t/K_{\mathrm{sw}}\right)^\vartheta\right)
\end{array}\right\},
\end{equation*}
which contains both $\mP'_\epsilon$ and $\mP'^M_\epsilon$. Let $h_\QQ(\z)\coloneqq\left[\EE_\QQ[\ell(\z,\txi)]-\EE_\PPhat[\ell(\z,\txi)]-\eps\right]_+$ and $\HH\coloneqq \{h_\QQ:\QQ\in\mP_{\rm sw}\}$.
We now repeat the steps of the proof of Proposition~\ref{prop:hinge_SAA_guarantee}. Assumption~\ref{as:linear_growth} implies that $\bar h\coloneqq 2(c_1+c_2\sqrt{\Omega})$ satisfies $0\leq h(\z)\leq \bar h$ for all $h\in\HH$ and $\z\in\X$. 

By McDiarmid's inequality~\citep[Appendix D]{mohri2018foundations}, we have
\begin{equation*}
\sup_{h\in\HH}\left(\EE_\ZZ[h(\tz)]- \frac{1}{M}\sum_{m\in[M]} h(\tz_m)\right)\leq \EE\left[\sup_{h\in\HH}\left(\EE_\ZZ[h(\tz)]- \frac{1}{M}\sum_{m\in[M]} h(\tz_m)\right)\right] + \bar h \sqrt{\frac{1}{2M}\log\left(\frac{1}{\tau}\right)}
\end{equation*}
with probability at least $1-\tau$. The expectation term can be bounded by twice the Rademacher complexity of $\HH$. Let $F_M(\bxi)\coloneqq M^{-1}\sum_{m\in[M]}\ts_m\ell(\tz_m,\bxi)$. Then
\begin{equation*}
\begin{aligned}
\mathcal R_M(\HH)
&\leq\EE\left[\sup_{\QQ\in\mP_{\rm sw}}\EE_\QQ[F_M(\txi)]\right]\\
&=\EE\left[\sup_{\QQ\in\mP_{\rm sw}}\left\{\EE_\QQ\left[F_M(\txi)\I_{\{\|\txi\|\leq t\}}\right]+\EE_\QQ\left[F_M(\txi)\I_{\{\|\txi\|> t\}}\right]\right\}\right]\\
&\leq \EE\left[\sup_{\QQ\in\mP_{\rm sw}}\EE_\QQ\left[F_M(\txi)\I_{\{\|\txi\|\leq t\}}\right]\right] + \EE\left[\sup_{\QQ\in\mP_{\rm sw}}\EE_\QQ\left[F_M(\txi)\I_{\{\|\txi\|> t\}}\right]\right].
\end{aligned}
\end{equation*}
For the first expectation, since the indicator restricts the integrand to the ball $\{\|\bxi\|\le t\}$,
\[
\sup_{\QQ\in\mP_{\rm sw}}\EE_\QQ\left[F_M(\txi)\I_{\{\|\txi\|\le t\}}\right]
\le \sup_{\|\bxi\|\le t}|F_M(\bxi)|.
\]
By symmetry of the Rademacher signs, the expectation of the right-hand side is bounded, up to an absolute constant, by the localized version of the complexity bound in \eqref{eq:expected_wc_cost}:
\begin{equation*}
\begin{aligned}
 \EE\left[\sup_{\QQ\in\mP_{\rm sw}}\EE_\QQ\left[F_M(\txi)\I_{\{\|\txi\|\leq {t}\}}\right]\right]
&\leq \EE\left[\sup_{\bxi\in\RR^{ D_\bxi}:\|\bxi\|\leq t}\left|\frac{1}{M}\sum_{m\in[M]} \ts_m\ell(\tz_m,\bxi)\right|\right]\\
&\leq 2\EE\left[\sup_{\bxi\in\RR^{ D_\bxi}:\|\bxi\|\leq t}\frac{1}{M}\sum_{m\in[M]} \ts_m\ell(\tz_m,\bxi)\right]\\
&\leq \mathcal O\left( R(t)\sqrt{\frac{J\log J(\log M)^3}{M}}\right).
\end{aligned}
\end{equation*}
For the second expectation, Assumption~\ref{as:linear_growth}, Cauchy--Schwarz, and the defining constraints of $\mP_{\rm sw}$ yield
\begin{equation*}
\begin{aligned}
& \sup_{\QQ\in\mP_{\rm sw}}\EE_\QQ\left[\left|\frac{1}{M}\sum_{m\in[M]} \ts_m\ell(\tz_m,\txi)\right|\I_{\{\|\txi\|> t\}}\right]\\
&\le c_1\,\QQ(\|\txi\|>t)+c_2\sqrt{\EE_\QQ[\|\txi\|^2]\;\QQ(\|\txi\|>t)}\\
&\le 2c_1\exp\left(-\left(t/K_{\mathrm{sw}}\right)^\vartheta\right) + c_2 \sqrt{2\Omega\exp\left(-\left(t/K_{\mathrm{sw}}\right)^\vartheta\right)},
\end{aligned}
\end{equation*}
where the last step uses the tail-mass constraint $\QQ(\|\txi\|>t)\le 2\exp\left(-\left(t/K_{\mathrm{sw}}\right)^\vartheta\right)$. The factor $2$ in the tail correction of the statement comes from the symmetrization step. Combining these bounds with the same symmetrization argument as in Proposition~\ref{prop:hinge_SAA_guarantee} yields the claim.\qed
\end{proof}

\begin{proof}{Proof of Theorem~\ref{thm:containment_unbounded_Xi}.}
On the high-probability event of Proposition~\ref{prop:hinge_SAA_guarantee_subweibull} applied with $t=t_M$, fix any $\QQ\in\mP'^M_\epsilon$. By definition of \eqref{eq:sample_ambiguity_set_subweibull}, the sample-average hinge term is nonpositive, $\EE_\QQ[\|\txi\|^2]\le \Omega$, and $\QQ(\|\txi\|\le t_M)\ge 1-2/M$. Proposition~\ref{prop:hinge_SAA_guarantee_subweibull} therefore implies
\[
\EE_\ZZ\left[\left[\EE_\QQ[\ell(\tz,\txi)]-\EE_\PPhat[\ell(\tz,\txi)]-\eps\right]_+\right]\le \eta,
\]
where $\eta=\eta_M$ is the bound stated in Theorem~\ref{thm:containment_unbounded_Xi}. Thus $\QQ\in\mP'_{\epsilon,M}(\eta_M)$. Since $\QQ$ was arbitrary, we obtain $\mP'^M_\epsilon\subseteq\mP'_{\epsilon,M}(\eta_M)$. The inclusion $\mP'_{\epsilon,M}\subseteq\mP'^M_\epsilon$ is immediate from \eqref{eq:sample_ambiguity_set_subweibull}, because the empirical average of nonnegative terms is zero whenever the expected hinge constraint is satisfied exactly. Finally, $R(t_M)=\mathcal O((\log M)^{1/\vartheta})$, and hence $\eta_M\to0$.\qed
\end{proof}

\begin{lemma}\label{lem:moving_threshold_objective_convergence}
Let $\mP'_{\epsilon,M}$ denote \eqref{eq:ambiguity_set_subweibull} with $t=t_M$, and let $\mP'_{\epsilon,M}(\eta)$ denote \eqref{eq:eta_ambiguity_set_subweibull}. Fix $\epsilon>0$. For any sequence $\eta_M\ge0$ with $\eta_M\to0$ as $M\to\infty$, define
\begin{equation*}
\begin{aligned}
f_0(\x)\coloneqq \sup_{\QQ\in\mP_\epsilon}\EE_\QQ[\ell(\x,\txi)],\qquad f_M(\x)\coloneqq \sup_{\QQ\in\mP'_{\epsilon,M}}\EE_\QQ[\ell(\x,\txi)],\quad g_M(\x)&\coloneqq \sup_{\QQ\in\mP'_{\epsilon,M}(\eta_M)}\EE_\QQ[\ell(\x,\txi)].
\end{aligned}
\end{equation*}
Then
\begin{equation*}
\sup_{\x\in\X}|f_M(\x)-f_0(\x)|\to0,\qquad
\sup_{\x\in\X}|g_M(\x)-f_0(\x)|\to0\qquad\textup{as }M\to\infty.
\end{equation*}
\end{lemma}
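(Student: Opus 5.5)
The plan is to sandwich both $f_M$ and $g_M$ between $f_0-o(1)$ and $f_0+o(1)$, with error terms that do not depend on $\x$. Two trivial inclusions give the middle of the sandwich. Since $\mP'_{\epsilon,M}$ is $\mP_\epsilon$ intersected with the tail constraint, we have $f_M\le f_0$. Since $\mP'_{\epsilon,M}\subseteq\mP'_{\epsilon,M}(\eta_M)$, we have $f_M\le g_M$. It therefore suffices to prove two estimates:
\[
\inf_{\x\in\X}\bigl(f_M(\x)-f_0(\x)\bigr)\ge -o(1)
\qquad\text{and}\qquad
\sup_{\x\in\X}\bigl(g_M(\x)-f_0(\x)\bigr)\le o(1).
\]
Both estimates rest on the same repair device: mix a nearly feasible measure with $\PPhat$, using the strict slack $\epsilon>0$ that $\PPhat$ enjoys in the loss constraints. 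Suppose $\QQ$ satisfies $\EE_\QQ[\ell(\z,\txi)]\le\EE_\PPhat[\ell(\z,\txi)]+\epsilon+v$ for all $\z$ and $\EE_\QQ[\|\txi\|^2]\le\Omega$. Set $\lambda=v/(\epsilon+v)$. Then $(1-\lambda)\QQ+\lambda\PPhat\in\mP_\epsilon$. For every $\x\in\X$, its objective differs from $\EE_\QQ[\ell(\x,\txi)]$ by at most $\lambda\cdot 2(c_1+c_2\sqrt\Omega)$, by Assumption~\ref{as:linear_growth} and Jensen's inequality.

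\emph{Lower bound on $f_M$.} Fix $\QQ\in\mP_\epsilon$ and let $\Pi_t$ be the radial projection onto the closed ball $\{\|\bxi\|\le t_M\}$. The pushforward $\Pi_t\#\QQ$ has all its mass in the ball, so it satisfies the tail constraint trivially, and its second moment does not increase. The loss $\ell$ is piecewise affine in $\bxi$ with slopes bounded by $G\coloneqq\max_{j}\sup_{\z\in\X}\|\bm a_j(\z)\|<\infty$. Hence, uniformly in $\z\in\X$,
\[
\bigl|\EE_{\Pi_t\#\QQ}[\ell(\z,\txi)]-\EE_\QQ[\ell(\z,\txi)]\bigr|
\le G\,\EE_\QQ\bigl[\|\txi\|\I_{\{\|\txi\|>t_M\}}\bigr]
\le \frac{G\Omega}{t_M}\eqqcolon\delta_M\to0 .
\]
Applying the repair with $v=\delta_M$ produces a measure in $\mP_\epsilon$. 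For $M$ large, $t_M$ exceeds $\max_i\|\hat\bxi_i\|$, so $\PPhat$ also satisfies the tail constraint, and since the constraint is linear in the measure, so does the mixture. The mixture therefore lies in $\mP'_{\epsilon,M}$, and its objective is at least $\EE_\QQ[\ell(\x,\txi)]-\delta_M-O(\delta_M/\epsilon)$. Taking the supremum over $\QQ\in\mP_\epsilon$ gives $f_M\ge f_0-O(\delta_M)$ uniformly in $\x$.

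\emph{Upper bound on $g_M$.} Fix $\QQ\in\mP'_{\epsilon,M}(\eta_M)$ and let $\phi(\z)\coloneqq\EE_\QQ[\ell(\z,\txi)]-\EE_\PPhat[\ell(\z,\txi)]-\epsilon$. By Lemma~\ref{lem:Lipschitz_condition}, $\phi$ is $2L$-Lipschitz on $\X$. If $v\coloneqq\max_{\X}\phi>0$ is attained at $\z'$, then $[\phi]_+\ge v/2$ on $B(\z',v/(4L))\cap\X$. Consequently
\[
\eta_M\ge \tfrac v2\,\varphi\!\left(\tfrac{v}{4L}\right),
\qquad
\varphi(r)\coloneqq\inf_{\z\in\X}\ZZ\bigl(B(\z,r)\cap\X\bigr).
\]
Full support gives $\ZZ(B(\z,r))>0$ for every $\z$. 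The map $\z\mapsto\ZZ(B^\circ(\z,r))$ is lower semicontinuous, so on the compact set $\X$ it attains a positive minimum, and $\varphi(r)>0$ for every $r>0$. It follows that $v\le\psi(\eta_M)$ for a deterministic function $\psi$ with $\psi(\eta)\to0$ as $\eta\downarrow0$, independent of $\QQ$ and $\x$. Applying the repair with this $v$ yields a member of $\mP_\epsilon$; the tail constraint is not needed here. Hence $g_M\le f_0+O(\psi(\eta_M)/\epsilon)$ uniformly in $\x$.

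Combining the two estimates with the inclusions gives $f_0-o(1)\le f_M\le g_M\le f_0+o(1)$, uniformly on $\X$. I expect the main obstacle to be the uniformity in the upper bound: turning the averaged hinge violation $\eta_M$ into a uniform sup-norm violation $v$. That step is exactly the positive lower bound $\varphi(r)>0$ on $\ZZ$-masses of balls, obtained from lower semicontinuity and compactness, combined with the equi-Lipschitz property supplied by Lemma~\ref{lem:Lipschitz_condition}.
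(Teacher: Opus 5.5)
Your proof is correct, but it takes a different route from the paper's.

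\textbf{How the paper argues.} It works pointwise in $\x$ and then upgrades to uniform convergence.
\begin{itemize}
\item For the lower bound on $f_M$, it fixes a $\delta$-optimal $\QQ^\delta\in\mP_\epsilon$ and controls the projection error $\Delta_M$ by dominated convergence, so the error is not uniform over $\QQ$.
\item For the upper bound on $g_M$, it takes near-maximizers $\QQ_M\in\mP'_{\epsilon,M}(\eta_M)$ and extracts a weak limit $\QQ^\star$ via tightness and Prohorov. Uniform integrability then gives $H(\QQ^\star)=0$, hence $\QQ^\star\in\mP_\epsilon$.
\item Uniformity over $\X$ comes only at the end, from the equi-Lipschitz property of $f_0,f_M,g_M$ (Lemma~\ref{lem:Lipschitz_condition}) and a finite-net argument.
\end{itemize}

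\textbf{How you argue.} You get uniformity directly by making both error terms independent of $\QQ$ and $\x$.
\begin{itemize}
\item The Markov-type bound $\EE_\QQ[\|\txi\|\I_{\{\|\txi\|>t_M\}}]\le\Omega/t_M$ replaces dominated convergence.
\item The positive small-ball bound $\varphi(r)>0$, together with the $2L$-Lipschitz property of $\phi$, converts the averaged hinge violation $\eta_M$ into a sup-norm violation of at most $\psi(\eta_M)$.
\item The same mixing-with-$\PPhat$ repair then handles both directions, so no weak compactness or net argument is needed.
\end{itemize}

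\textbf{What each route buys.} Yours gives explicit rates: $O(1/t_M)$ for $f_M$, and a modulus $\psi(\eta_M)$ for $g_M$ governed by how $\ZZ$ charges small balls. This is in the spirit of the quantitative comparison in Theorem~\ref{thm:tuned_unbounded_original}. The paper's compactness route never has to quantify the small-ball masses of $\ZZ$. It only uses that $H(\QQ^\star)=0$ places the limit in $\mP_\epsilon$, at the price of giving no rate.

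\textbf{Minor points to tidy.}
\begin{itemize}
\item Define $\psi(\eta)\coloneqq\sup\{v\ge 0:\tfrac v2\,\varphi(v/(4L))\le\eta\}$ explicitly. Note that $\psi(\eta)\to0$ because $v\mapsto \tfrac v2\varphi(v/(4L))$ is nondecreasing and positive for $v>0$.
\item Replace $L$ by $\max\{L,1\}$ to avoid dividing by zero in the degenerate case.
\item Keep the observation that $\PPhat$ must have finite support so it lies in the ball for large $M$. This is genuinely needed: the mixing weight $\lambda$ is of order $1/t_M$, far larger than the allowed tail mass $2/M$.
\end{itemize}
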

\begin{proof}{Proof of Lemma~\ref{lem:moving_threshold_objective_convergence}.}
For a probability measure $\QQ$, define
\begin{equation*}
    H(\QQ)\coloneqq \EE_\ZZ\left[\left[\EE_\QQ[\ell(\tz,\txi)]-\EE_\PPhat[\ell(\tz,\txi)]-\eps\right]_+\right].
\end{equation*}
We first show that $f_M(\x)\rightarrow f_0(\x)$ and $g_M(\x)\rightarrow f_0(\x)$ for each fixed $\x\in \X$.

Since $\mP'_{\epsilon,M}\subseteq\mP_\epsilon$, we have $f_M(\x)\le f_0(\x)$. To prove the reverse limit inequality, fix $\delta>0$ and choose $\QQ^\delta\in\mP_\epsilon$ such that
\begin{equation*}
    \EE_{\QQ^\delta}[\ell(\x,\txi)]\ge f_0(\x)-\delta.
\end{equation*}
Let $\Pi_M(\bxi)=\bxi$ if $\|\bxi\|\le t_M$ and $\Pi_M(\bxi)=t_M\bxi/\|\bxi\|$ otherwise, and let $\widetilde\QQ_M=\QQ^\delta\circ\Pi_M^{-1}$ be the pushforward of $\QQ^\delta$ under $\Pi_M$; equivalently, if $\txi\sim\QQ^\delta$, then $\Pi_M(\txi)\sim\widetilde\QQ_M$. Then $\widetilde\QQ_M$ is supported on $\{\bxi\in\Xi:\|\bxi\|\le t_M\}$ and satisfies the second-moment constraint. Moreover, by Assumption~\ref{as:linear_growth}, as $M\to\infty$,
\begin{equation*}
\begin{aligned}
\Delta_M
&\coloneqq \sup_{\z\in\X}\left|\EE_{\widetilde\QQ_M}[\ell(\z,\txi)]-\EE_{\QQ^\delta}[\ell(\z,\txi)]\right|\\
&= \sup_{\z\in\X}\left|\EE_{\QQ^\delta}\left[\ell(\z,\Pi_M(\txi))-\ell(\z,\txi)\right]\right|\\
&\le \EE_{\QQ^\delta}\left[(2c_1+2c_2\|\txi\|)\I_{\{\|\txi\|>t_M\}}\right]\to0.
\end{aligned}
\end{equation*}
Since $\epsilon>0$, define
\begin{equation}
\label{eq:lambda_QM_def}
\lambda_M\coloneqq \frac{\Delta_M}{\epsilon+\Delta_M},
\qquad
\QQ_M\coloneqq (1-\lambda_M)\widetilde\QQ_M+\lambda_M\PPhat .
\end{equation}
Because $\PPhat$ is the fixed empirical reference distribution, it has finite support. Hence, for all sufficiently large $M$, $\PPhat$ is supported on $\{\bxi\in\Xi:\|\bxi\|\le t_M\}$, and therefore $\QQ_M(\{\bxi\in\Xi:\|\bxi\|\le t_M\})=1$. The second-moment constraint also holds, since
\[
\EE_{\QQ_M}[\|\txi\|^2]
=(1-\lambda_M)\EE_{\widetilde\QQ_M}[\|\txi\|^2]+\lambda_M\EE_{\PPhat}[\|\txi\|^2]
\le \Omega,
\]
where $\widetilde\QQ_M$ satisfies the second-moment constraint by construction and $\PPhat$ satisfies it by Assumption~\ref{as:second_moment}. Since $\QQ^\delta\in\mP_\epsilon$, feasibility gives $H(\QQ^\delta)\le0$. Since $H(\QQ^\delta)$ is the expectation of a nonnegative hinge term, $H(\QQ^\delta)\ge0$, and hence $H(\QQ^\delta)=0$. Therefore, for $\ZZ$-almost every realization $\z\in\X$,
\[
\begin{aligned}
\EE_{\QQ_M}[\ell(\z,\txi)]-\EE_\PPhat[\ell(\z,\txi)]-\epsilon
&=(1-\lambda_M)\left(\EE_{\widetilde\QQ_M}[\ell(\z,\txi)]-\EE_\PPhat[\ell(\z,\txi)]\right)-\epsilon\\
&\le (1-\lambda_M)\left(\EE_{\QQ^\delta}[\ell(\z,\txi)]-\EE_\PPhat[\ell(\z,\txi)]+\Delta_M\right)-\epsilon\\
&\le (1-\lambda_M)(\epsilon+\Delta_M)-\epsilon\\
&=(1-\lambda_M)\Delta_M-\lambda_M\epsilon=0.
\end{aligned}
\]
Here, the first equality uses the definition of $\QQ_M$ in \eqref{eq:lambda_QM_def}; the first inequality uses the definition of $\Delta_M$; the second inequality uses $H(\QQ^\delta)=0$; and the last equality follows from the definition of $\lambda_M$ in \eqref{eq:lambda_QM_def}.

Thus $\QQ_M\in\mP'_{\epsilon,M}$ for all sufficiently large $M$. Because $\lambda_M\to0$ and $\Delta_M\to0$,
\[
\liminf_{M\to\infty} f_M(\x)\ge \liminf_{M\to\infty}\EE_{\QQ_M}[\ell(\x,\txi)]
=\EE_{\QQ^\delta}[\ell(\x,\txi)]\ge f_0(\x)-\delta.
\]
Letting $\delta\downarrow0$ gives $f_M(\x)\to f_0(\x)$.

Next, consider $g_M$. The proof follows the same compactness argument as Lemma~\ref{lem:convergence_relaxed_optimal_value}. The only difference is that the sets $\mP'_{\epsilon,M}(\eta_M)$ also impose the moving tail constraint, but this constraint does not need to be inherited by the limit because the limiting set $\mP_\epsilon$ has no tail constraint.

The lower bound $\liminf_M g_M(\x)\ge f_0(\x)$ follows from $g_M(\x)\ge f_M(\x)$ and the convergence of $f_M(\x)$. For the upper bound, take any subsequence and choose $\QQ_M\in\mP'_{\epsilon,M}(\eta_M)$ such that $g_M(\x)-\frac1M\le \EE_{\QQ_M}[\ell(\x,\txi)]$. The common second-moment constraint gives tightness, so along a further subsequence $\QQ_M\Rightarrow\QQ^\star$. By uniform integrability and Assumption~\ref{as:linear_growth}, the loss expectations converge for every fixed $\z\in\X$:
$\EE_{\QQ_M}[\ell(\z,\txi)]\to \EE_{\QQ^\star}[\ell(\z,\txi)]$. The hinge terms are uniformly bounded by Assumptions~\ref{as:linear_growth} and~\ref{as:second_moment}; hence dominated convergence gives $H(\QQ_M)\to H(\QQ^\star)$. Since $H(\QQ_M)\le\eta_M$ and $\eta_M\to0$, we have $H(\QQ^\star)=0$. Lower semicontinuity preserves the second-moment constraint, and hence $\QQ^\star\in\mP_\epsilon$. Therefore, $\limsup_{M\to\infty}g_M(\x)\le \EE_{\QQ^\star}[\ell(\x,\txi)]\le f_0(\x)$. Together with the lower bound, this proves $g_M(\x)\to f_0(\x)$.

The convergence is uniform over $\X$. Indeed, the Lipschitz estimate in Lemma~\ref{lem:Lipschitz_condition} holds uniformly over all measures satisfying the second-moment constraint, and taking suprema over such measures preserves the same Lipschitz constant for $f_0$, $f_M$, and $g_M$. Since $\X$ is compact, a finite-net argument upgrades the pointwise convergence above to
\[
\sup_{\x\in\X}|f_M(\x)-f_0(\x)|\to0,\qquad
\sup_{\x\in\X}|g_M(\x)-f_0(\x)|\to0.
\]
Thus, the claim follows. 
\qed
\end{proof}

\begin{proof}{Proof of Theorem~\ref{thm:unbounded_value_solution_convergence}.}
Let $f_0$, $f_M$, and $g_M$ be as in Lemma~\ref{lem:moving_threshold_objective_convergence}, with $\eta_M$ chosen as in Theorem~\ref{thm:containment_unbounded_Xi}. Let
\[
h_M(\x)\coloneqq \sup_{\QQ\in\mP'^M_\epsilon}\EE_\QQ[\ell(\x,\txi)]
\]
be the sampled objective. The value convergence follows from the same sandwich argument used in Theorem~\ref{thm:approximation_convergence}, with Lemma~\ref{lem:moving_threshold_objective_convergence} replacing Lemma~\ref{lem:convergence_relaxed_optimal_value}; the solution convergence then follows from the same separation argument. Indeed, on the high-probability event of Theorem~\ref{thm:containment_unbounded_Xi}, we have
\[
f_M(\x)\le h_M(\x)\le g_M(\x)\qquad\forall \x\in\X.
\]
To obtain convergence in probability, fix $\rho>0$ and $\upsilon>0$, and apply Theorem~\ref{thm:containment_unbounded_Xi} with confidence parameter $\upsilon$. The corresponding sequence $\eta_M$ still satisfies $\eta_M\to0$. Hence Lemma~\ref{lem:moving_threshold_objective_convergence} implies that, for all sufficiently large $M$,
\[
\sup_{\x\in\X}|f_M(\x)-f_0(\x)|\le \rho
\quad\textup{and}\quad
\sup_{\x\in\X}|g_M(\x)-f_0(\x)|\le \rho.
\]
On the containment event, the sandwich then gives $\sup_{\x\in\X}|h_M(\x)-f_0(\x)|\le\rho$. Therefore,
\[
\Prob\left(\sup_{\x\in\X}|h_M(\x)-f_0(\x)|>\rho\right)\le \upsilon
\]
for all sufficiently large $M$. Since $\upsilon>0$ is arbitrary,
\[
\sup_{\x\in\X}|h_M(\x)-f_0(\x)|\overset{p}{\to}0.
\]
Consequently,
\[
|\hat v'_M-\hat v|
\le \sup_{\x\in\X}|h_M(\x)-f_0(\x)|\overset{p}{\to}0.
\]

It remains to prove convergence of minimizers. For $\delta>0$, define $\mathcal A_\delta\coloneqq\{\x\in\X:\textup{dist}(\x,\mathcal S)\ge\delta\}$.
If $\mathcal A_\delta$ is empty, the claim is trivial. Otherwise, the function $f_0$ is continuous and $\X$ is compact, so $\rho_\delta\coloneqq \inf_{\x\in\mathcal A_\delta} f_0(\x)-\hat v>0$.
Whenever $\sup_{\x\in\X}|h_M(\x)-f_0(\x)|<\rho_\delta/3$, no minimizer of $h_M$ can belong to $\mathcal A_\delta$. Therefore
\[
\Prob\bigl(\textup{dist}(\hat\x'_M,\mathcal S)\ge\delta\bigr)
\le \Prob\left(\sup_{\x\in\X}|h_M(\x)-f_0(\x)|\ge \rho_\delta/3\right)\to0.
\]
This proves $\textup{dist}(\hat\x'_M,\mathcal S)\overset{p}{\to}0$.

Lastly, we prove the suboptimality bound. Let $\bar \x_M$ be the exact optimizer from the statement. Since $\hat\x'_M$ minimizes the sampled objective $h_M$ and $\mP'_{\epsilon,M}\subseteq\mP'^M_\epsilon$, we have
\[
\begin{aligned}
\sup_{\QQ\in\mP'_{\epsilon,M}}\EE_\QQ[\ell(\hat\x'_M,\txi)]-\sup_{\QQ\in\mP'_{\epsilon,M}}\EE_\QQ[\ell(\bar \x_M,\txi)]
&\le \sup_{\QQ\in\mP'^M_\epsilon}\EE_\QQ[\ell(\hat\x'_M,\txi)]-\sup_{\QQ\in\mP'_{\epsilon,M}}\EE_\QQ[\ell(\bar \x_M,\txi)]\\
&\le \sup_{\QQ\in\mP'^M_\epsilon}\EE_\QQ[\ell(\bar \x_M,\txi)]-\sup_{\QQ\in\mP'_{\epsilon,M}}\EE_\QQ[\ell(\bar \x_M,\txi)].
\end{aligned}
\]
Next, by Theorem~\ref{thm:containment_unbounded_Xi}, setting $\eta=\eta_M$, we obtain the high-probability bound
\[
\sup_{\QQ\in\mP'^M_\epsilon}\EE_\QQ[\ell(\bar \x_M,\txi)]-\sup_{\QQ\in\mP'_{\epsilon,M}}\EE_\QQ[\ell(\bar \x_M,\txi)]
\le
\sup_{\QQ\in\mP'_{\epsilon,M}(\eta_M)}\EE_\QQ[\ell(\bar \x_M,\txi)]-\sup_{\QQ\in\mP'_{\epsilon,M}}\EE_\QQ[\ell(\bar \x_M,\txi)]
\]
with probability at least $1-\tau$. By the same moment-duality argument as in Proposition~\ref{thm:dual_exact_subweibull}, the relaxed unbounded-support problem admits the dual obtained by augmenting \eqref{eq:dual_exact_subweibull} with the slack multiplier $\gamma\eta_M$ and the domination constraint $\gamma\ZZ-\nu\in\M_+(\X)$. Thus, the associated exact dual optimizer $(\bar \x_M,\alpha_M^\star,\beta_M^\star,\kappa_M^\star,\nu_M^\star)$ from the statement yields a feasible, though generally suboptimal, relaxed-dual solution $(\bar \x_M,\alpha_M^\star,\beta_M^\star,\kappa_M^\star,\gamma^\star_M,\nu_M^\star)$. We can therefore further upper bound the preceding display by
\[
\begin{aligned}
&\sup_{\QQ\in\mP'_{\epsilon,M}(\eta_M)}\EE_\QQ[\ell(\bar \x_M,\txi)]-\sup_{\QQ\in\mP'_{\epsilon,M}}\EE_\QQ[\ell(\bar \x_M,\txi)]\\
\le\;& \alpha_M^\star+\beta_M^\star\Omega-\kappa_M^\star(1-2/M)+\int_\X \left(\EE_\PPhat[\ell(\z,\txi)]+\epsilon\right)\nu_M^\star(\mathrm d\z)+\gamma^\star_M\eta_M\\
&\qquad-\left(\alpha_M^\star+\beta_M^\star\Omega-\kappa_M^\star(1-2/M)+\int_\X \left(\EE_\PPhat[\ell(\z,\txi)]+\epsilon\right)\nu_M^\star(\mathrm d\z)\right)\\
=\;&\gamma^\star_M\eta_M.
\end{aligned}
\]
Substituting the explicit expression for $\eta_M$ proves the theorem.
\qed
\end{proof}

\begin{theorem}\label{thm:tuned_unbounded_original}
Fix $\epsilon>0$ and set $t_M^\dagger\coloneqq M^{1/4}/(\log M)^{3/4}$. Let $\hat\x_M''$  be an optimal solution of the sampled problem \eqref{eq:dual_SAA_subweibull} with $t=t_M^\dagger$. Let $\x_M^\dagger$ be an exact optimizer of the threshold-$t_M^\dagger$ problem, and assume there exists an optimal dual measure $\nu_M^\dagger$ in Proposition~\ref{thm:dual_exact_subweibull} such that $\gamma\ZZ-\nu_M^\dagger\in\M_+(\X)$ for some $\gamma\in\RR_+$. Define $\gamma_M^\dagger\coloneqq\inf\{\gamma\in\RR_+:\gamma\ZZ-\nu_M^\dagger\in\M_+(\X)\}$. Then, the following suboptimality bound holds with probability at least $1-\tau$:
\[
\begin{aligned}
\sup_{\QQ\in\mP_\epsilon}\EE_\QQ[\ell(\hat\x_M'',\txi)]
&\le
\min_{\x\in\X}\sup_{\QQ\in\mP_\epsilon}\EE_\QQ[\ell(\x,\txi)]\\
&\quad+\gamma_M^\dagger\Bigg[
\mathcal O\left(\sqrt{J\log J}\frac{(\log M)^{3/4}}{M^{1/4}}\right)\\
&\qquad\qquad
+2(c_1+c_2\sqrt{2\Omega})\left(\sqrt{\frac{1}{2M}\log\left(\frac1\tau\right)}+\exp\left\{-\frac{M^{\vartheta/4}}{2K_{\mathrm{sw}}^\vartheta(\log M)^{3\vartheta/4}}\right\}\right)\Bigg]\\
&\quad+\mathcal O\left(\frac{(\log M)^{3/4}}{M^{1/4}}\right).
\end{aligned}
\]
\end{theorem}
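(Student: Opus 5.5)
The bound splits into three terms, and I will handle them separately. Write $f_0(\x)\coloneqq\sup_{\QQ\in\mP_\epsilon}\EE_\QQ[\ell(\x,\txi)]$. Let $f^\dagger_M$, $h^\dagger_M$ and $g^\dagger_M$ denote the exact threshold-$t_M^\dagger$ objective, the sampled objective, and the relaxed objective over $\mP'_{\epsilon,M}(\eta^\dagger_M)$, respectively. Here $\eta^\dagger_M$ is the bound of Proposition~\ref{prop:hinge_SAA_guarantee_subweibull} with $t=t_M^\dagger$. The decomposition is
\[
f_0(\hat\x''_M)-\min_{\x\in\X}f_0(\x)\le \bigl[f_0(\hat\x''_M)-f^\dagger_M(\hat\x''_M)\bigr]+\bigl[f^\dagger_M(\hat\x''_M)-f^\dagger_M(\x^\dagger_M)\bigr]+\bigl[\min_{\x\in\X} f^\dagger_M(\x)-\min_{\x\in\X} f_0(\x)\bigr].
\]
The third bracket is nonpositive, since $\mP'_{\epsilon,M}\subseteq\mP_\epsilon$ gives $f^\dagger_M\le f_0$ pointwise. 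The second bracket is handled exactly as in the proof of Theorem~\ref{thm:unbounded_value_solution_convergence}. First, use $\mP'_{\epsilon,M}\subseteq\mP'^M_\epsilon$ and the optimality of $\hat\x''_M$ for $h^\dagger_M$. Next, on an event of probability at least $1-\tau$, apply the containment from Proposition~\ref{prop:hinge_SAA_guarantee_subweibull} at $t=t_M^\dagger$. Finally, take the exact dual optimizer, augment it with $\gamma^\dagger_M$, and use it as a feasible point of the relaxed dual. This bounds the second bracket by $\gamma^\dagger_M\eta^\dagger_M$.

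Next I insert $t=t^\dagger_M$ into $\eta^\dagger_M$. Since $R(t)=\mathcal O(t)$ for $t\ge1$, the Rademacher term is $\mathcal O\bigl(t^\dagger_M\sqrt{J\log J(\log M)^3/M}\bigr)=\mathcal O\bigl(\sqrt{J\log J}\,(\log M)^{3/4}M^{-1/4}\bigr)$. The tail terms are $4c_1e^{-s}+2c_2\sqrt{2\Omega e^{-s}}$ with $s=(t^\dagger_M/K_{\mathrm{sw}})^\vartheta$. Using $e^{-s}\le e^{-s/2}$ for the first and noting that the square root yields exactly $e^{-s/2}$, both are bounded by $2(c_1+c_2\sqrt{2\Omega})\exp\{-M^{\vartheta/4}/(2K_{\mathrm{sw}}^\vartheta(\log M)^{3\vartheta/4})\}$. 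The McDiarmid term $2(c_1+c_2\sqrt\Omega)\sqrt{\log(1/\tau)/(2M)}$ is dominated by the displayed coefficient. Together these give the bracketed $\gamma^\dagger_M[\cdots]$ term.

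The main obstacle is the first bracket. I need the truncation bias $\sup_{\x}(f_0-f^\dagger_M)(\x)$ quantitatively, not just the limit statement of Lemma~\ref{lem:moving_threshold_objective_convergence}. I will make that lemma's construction quantitative. Take a near-optimal $\QQ\in\mP_\epsilon$ for $f_0(\x)$. Push it forward under the radial projection $\Pi_M$ onto the ball of radius $t_M^\dagger$, then mix with $\PPhat$ using weight $\lambda_M=\Delta_M/(\epsilon+\Delta_M)$. This mixture lies in $\mP'_{\epsilon,M}$ once $t^\dagger_M$ exceeds the support radius of $\PPhat$. Its loss at $\x$ is at least $\EE_\QQ[\ell(\x,\txi)]-\Delta_M-\lambda_M(2c_1+2c_2\sqrt\Omega)$. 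Hence $f_0-f^\dagger_M=\mathcal O(\Delta_M)$, with constant depending on $\epsilon$, $c_1$, $c_2$ and $\Omega$.

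The bias is $\Delta_M\le\EE_\QQ[(2c_1+2c_2\|\txi\|)\I_{\{\|\txi\|>t\}}]$. Using only the second-moment constraint, Markov's inequality and $\EE[\|\txi\|\I_{\{\|\txi\|>t\}}]\le\Omega/t$ give $\Delta_M\le 2c_1\Omega/t^2+2c_2\Omega/t=\mathcal O(1/t^\dagger_M)=\mathcal O((\log M)^{3/4}M^{-1/4})$, uniformly over feasible $\QQ$. This produces the final additive term. It also explains the choice of $t^\dagger_M$: it balances this $\mathcal O(1/t)$ bias against the $\mathcal O\bigl(t\sqrt{(\log M)^3/M}\bigr)$ complexity term. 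Since finitely many small $M$ can be absorbed into the $\mathcal O$ constants, summing the three brackets on the probability-$(1-\tau)$ event yields the stated bound.
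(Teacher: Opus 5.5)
Your proposal is correct and follows the paper's proof essentially step for step. You first bound the threshold-level suboptimality using the containment at $t=t_M^\dagger$ and the $\gamma_M^\dagger$-augmented dual, then apply the quantitative radial-projection-and-mixing comparison, which gives $f_0-f_M^\dagger=\mathcal O(1/t_M^\dagger)$ from the second-moment constraint alone. The differences are minor. The paper bounds the mixing penalty by $\lambda_t\epsilon\le\rho(t)$, using the loss-discrepancy constraint at $\z=\x$; this gives the $\epsilon$-free bound $2\rho(t)$ in place of your cruder $\lambda_M(2c_1+2c_2\sqrt{\Omega})$. Also, $e^{-s}\le e^{-s/2}$ alone does not give the coefficient $2c_1$ on the tail term; you need $e^{-s}\le\tfrac12 e^{-s/2}$, which holds once $s\ge 2\log 2$ (i.e.\ for large $M$, as the paper assumes).
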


\begin{proof}{Proof of Theorem~\ref{thm:tuned_unbounded_original}.}
Set
\[
\begin{aligned}
\Delta_M^\dagger\coloneqq\;&
\mathcal O\left(\sqrt{J\log J}\frac{(\log M)^{3/4}}{M^{1/4}}\right)
+2(c_1+c_2\sqrt{2\Omega})\left(\sqrt{\frac{1}{2M}\log\left(\frac1\tau\right)}
+\exp\left\{-\frac{M^{\vartheta/4}}{2K_{\mathrm{sw}}^\vartheta(\log M)^{3\vartheta/4}}\right\}\right).
\end{aligned}
\]
Applying Proposition~\ref{prop:hinge_SAA_guarantee_subweibull} with $t = t_M^\dagger$, we observe that, for all sufficiently large $M$, the two exponential tail terms are bounded above by
\[
2(c_1+c_2\sqrt{2\Omega})
\exp\left\{-\frac{M^{\vartheta/4}}{2K_{\mathrm{sw}}^\vartheta(\log M)^{3\vartheta/4}}\right\}.
\]
Repeating the argument in Theorem~\ref{thm:containment_unbounded_Xi}, we obtain, with probability at least $1-\tau$,
\[
\mP_{\epsilon,t_M^\dagger}^{\prime M}\subseteq \mP'_{\epsilon,t_M^\dagger}(\Delta_M^\dagger).
\]
The suboptimality argument in the proof of Theorem~\ref{thm:unbounded_value_solution_convergence}, applied with $t_M^\dagger$ in place of $t_M$, gives
\[
\sup_{\QQ\in\mP'_{\epsilon,t_M^\dagger}}\EE_\QQ[\ell(\hat\x_M'',\txi)]
\le
\min_{\x\in\X}\sup_{\QQ\in\mP'_{\epsilon,t_M^\dagger}}\EE_\QQ[\ell(\x,\txi)]
+\gamma_M^\dagger\Delta_M^\dagger.
\]
We now compare the threshold exact objective with the original objective. Fix any $\QQ\in\mP_\epsilon$ and any $t>0$. Let $\Pi_t(\bxi)=\bxi$ if $\|\bxi\|\le t$ and $\Pi_t(\bxi)=t\bxi/\|\bxi\|$ otherwise, and define $\widetilde\QQ_t\coloneqq \QQ\circ\Pi_t^{-1}$. By Assumption~\ref{as:linear_growth} and the second-moment constraint,
\[
\begin{aligned}
\rho(t)&\coloneqq
\sup_{\z\in\X}
\left|
\EE_{\widetilde\QQ_t}[\ell(\z,\txi)]-\EE_\QQ[\ell(\z,\txi)]
\right|\\
&\le
\EE_\QQ\!\left[(2c_1+2c_2\|\txi\|)\I_{\{\|\txi\|>t\}}\right]
\le \frac{2c_1\Omega}{t^2}+\frac{2c_2\Omega}{t}.
\end{aligned}
\]
Set $\lambda_t\coloneqq\rho(t)/(\epsilon+\rho(t))$ and $\QQ_t\coloneqq(1-\lambda_t)\widetilde\QQ_t+\lambda_t\PPhat$. For all sufficiently large $t$, the fixed empirical reference $\PPhat$ is supported on $\{\|\bxi\|\le t\}$, and hence $\QQ_t$ satisfies the threshold probability constraint. The same calculation as in Lemma~\ref{lem:moving_threshold_objective_convergence} shows that $\QQ_t$ also satisfies the loss-discrepancy constraints, so $\QQ_t\in\mP'_{\epsilon,t}$. Moreover, for every $\x\in\X$,
\[
\EE_\QQ[\ell(\x,\txi)]-\EE_{\QQ_t}[\ell(\x,\txi)]\le 2\rho(t)\le \frac{4c_1\Omega}{t^2}+\frac{4c_2\Omega}{t}.
\]
Taking the supremum over $\QQ\in\mP_\epsilon$ gives
\[
\sup_{\QQ\in\mP_\epsilon}\EE_\QQ[\ell(\x,\txi)]
\le
\sup_{\QQ\in\mP'_{\epsilon,t}}\EE_\QQ[\ell(\x,\txi)]
+\frac{4c_1\Omega}{t^2}+\frac{4c_2\Omega}{t}
\qquad\forall \x\in\X.
\]
Applying this inequality at $t=t_M^\dagger$ and $\x=\hat\x_M''$, using $t_M^\dagger=M^{1/4}/(\log M)^{3/4}$, using the threshold suboptimality bound above, and finally using $\mP'_{\epsilon,t_M^\dagger}\subseteq\mP_\epsilon$, we obtain
\[
\begin{aligned}
\sup_{\QQ\in\mP_\epsilon}\EE_\QQ[\ell(\hat\x_M'',\txi)]
&\le
\sup_{\QQ\in\mP'_{\epsilon,t_M^\dagger}}\EE_\QQ[\ell(\hat\x_M'',\txi)]
+\mathcal O\left(\frac{(\log M)^{3/2}}{M^{1/2}}\right)
+\mathcal O\left(\frac{(\log M)^{3/4}}{M^{1/4}}\right)\\
&\le
\min_{\x\in\X}\sup_{\QQ\in\mP'_{\epsilon,t_M^\dagger}}\EE_\QQ[\ell(\x,\txi)]
+\gamma_M^\dagger\Delta_M^\dagger
+\mathcal O\left(\frac{(\log M)^{3/2}}{M^{1/2}}\right)
+\mathcal O\left(\frac{(\log M)^{3/4}}{M^{1/4}}\right)\\
&\le
\min_{\x\in\X}\sup_{\QQ\in\mP_\epsilon}\EE_\QQ[\ell(\x,\txi)]
+\gamma_M^\dagger\Delta_M^\dagger
+\mathcal O\left(\frac{(\log M)^{3/4}}{M^{1/4}}\right).
\end{aligned}
\]
Substituting the definition of $\Delta_M^\dagger$ proves the displayed bound.\qed
\end{proof}

\subsection{Conic Programming Reformulations}

\begin{proof}{Proof of Theorem~\ref{theorem:Conic_reformulation_subweibull}.}
For each sampled point $\z_m$, define
\[
g_m(\bxi)\coloneqq \max_{k\in[J]}\left(\bm a_k(\z_m)^\top\bxi+b_k(\z_m)\right).
\]
The semi-infinite constraint in \eqref{eq:dual_SAA_subweibull} can be equivalently rewritten as
\begin{align}
\ell(\x,\bxi)+\kappa&\leq\alpha+\beta\|\bxi\|^2+\frac{1}{M}\sum_{m\in[M]}\nu_m\ell(\z_m,\bxi) \qquad\forall\bxi\in\RR^{ D_\bxi}:\|\bxi\|\leq t,\label{eq:semiinf_in_ball_app}\\
\ell(\x,\bxi)&\leq\alpha+\beta\|\bxi\|^2+\frac{1}{M}\sum_{m\in[M]}\nu_m\ell(\z_m,\bxi) \qquad\forall\bxi\in\RR^{ D_\bxi}:\|\bxi\|> t.\label{eq:semiinf_out_ball_app}
\end{align}
Since $\kappa\geq 0$, \eqref{eq:semiinf_in_ball_app} implies
\[
\ell(\x,\bxi)\leq\alpha+\beta\|\bxi\|^2+\frac{1}{M}\sum_{m\in[M]}\nu_m\ell(\z_m,\bxi)\qquad\forall\bxi\in\RR^{ D_\bxi}:\|\bxi\|\leq t.
\]
Hence \eqref{eq:semiinf_out_ball_app} can be simplified to
\[
\ell(\x,\bxi)\leq\alpha+\beta\|\bxi\|^2+\frac{1}{M}\sum_{m\in[M]}\nu_m\ell(\z_m,\bxi)\qquad\forall\bxi\in\RR^{ D_\bxi}.
\]
We now reformulate these two semi-infinite systems separately.
Assume first that $\beta>0$; the case $\beta=0$ will be treated separately after deriving the conic reformulations for the global and ball constraints.

For the global constraint, finiteness of the supremum over $\RR^{D_\bxi}$ requires $\bm c_j=\mathbf 0$, and the conic constraint reduces to $\|(\bm c_j,\zeta_j)\|\le\zeta_j$, which imposes no positive lower bound on $\zeta_j$ beyond $\zeta_j\ge 0$. Since $\zeta_j$ enters the scalar inequality for the global constraint with coefficient one, the least restrictive feasible choice is $\zeta_j=0$. For the ball constraint, the conic constraint reduces to $\|(\bm c'_j-\bm\theta_j,\zeta'_j)\|\le\zeta'_j$, which forces $\bm\theta_j=\bm c'_j$ and imposes no positive lower bound on $\zeta'_j$.
For the ball constraint \eqref{eq:semiinf_in_ball_app}, introduce simplex multipliers $\bm\lambda'_{jm}\in\RR_+^J$ with $\mathbf e^\top\bm\lambda'_{jm}=\nu_m$. Then the constraint is equivalent to
\[
\sup_{\|\bxi\|\le t}\left\{\left(\bm a_j(\x)-\frac{1}{M}\sum_{m\in[M],k\in[J]}\lambda_{jm}^{'k}\bm a_k(\z_m)\right)^\top\bxi-\beta\|\bxi\|^2\right\}
+b_j(\x)+\kappa-\frac{1}{M}\sum_{m\in[M],k\in[J]}\lambda_{jm}^{'k} b_k(\z_m)\le \alpha
\]
for every $j\in[J]$. Let
\[
\bm c'_j\coloneqq \bm a_j(\x)-\frac{1}{M}\sum_{m\in[M],k\in[J]}\lambda_{jm}^{'k}\bm a_k(\z_m).
\]
Using Fenchel duality \citep[Theorem~16.4]{Rockafellar70}, with the support function of the Euclidean ball $\{\bxi:\|\bxi\|\le t\}$, whose support function is $t\|\cdot\|$, we obtain
\[
\sup_{\|\bxi\|\le t}\left\{\bm c_j^{' \top}\bxi-\beta\|\bxi\|^2\right\}
=\inf_{\bm\theta_j\in\RR^{D_\bxi}}\left\{t\|\bm\theta_j\|+\sup_{\bxi\in\RR^{D_\bxi}}\bigl[(\bm c'_j-\bm\theta_j)^\top\bxi-\beta\|\bxi\|^2\bigr]\right\}.
\]
Since $\beta>0$, the remaining unconstrained quadratic supremum equals $\|\bm c'_j-\bm\theta_j\|^2/(4\beta)$. Hence the ball constraint is equivalent to the existence of $(\bm\theta_j,\zeta'_j)\in\RR^{D_\bxi}\times\RR_+$ such that
\[
\zeta'_j+t \|\bm\theta_j\|+b_j(\x)+\kappa-\frac{1}{M}\sum_{m\in[M],k\in[J]} \lambda_{jm}^{'k} b_k(\z_m)\leq \alpha
\quad\text{and}\quad
\left\|(\bm c'_j-\bm\theta_j,\,\zeta'_j-\beta)\right\|\leq\zeta'_j+\beta.
\]
The case $\beta=0$ is handled exactly as in Theorem~\ref{theorem:Conic_reformulation}. For the global constraint, finiteness of the supremum over $\RR^{D_\bxi}$ requires $\bm c_j=\mathbf 0$, and the conic constraint reduces to
\[
\left\|\begin{bmatrix}\bm c_j\\\zeta_j\end{bmatrix}\right\|\le \zeta_j,
\]
so the reduced conic constraint imposes no positive lower bound on $\zeta_j$ beyond $\zeta_j\ge 0$. Since $\zeta_j$ enters the scalar inequality for the global constraint with coefficient one, the least restrictive feasible choice is $\zeta_j=0$. For the ball constraint, the conic constraint reduces to
\[
\left\|\begin{bmatrix}\bm c'_j-\bm\theta_j\\\zeta'_j\end{bmatrix}\right\|\le \zeta'_j,
\]
which forces $\bm\theta_j=\bm c'_j$ and imposes no positive lower bound on $\zeta'_j$. Since $\zeta'_j$ enters the scalar inequality for the ball constraint with coefficient one, the least restrictive feasible choice is $\zeta'_j=0$, and the scalar inequality becomes precisely the support-function condition for the Euclidean ball. Collecting the global and ball constraints yields exactly the finite conic reformulation stated in the theorem.

The displayed reformulation is convex. The objective is affine, while the sign, simplex, and equality constraints are affine. Since $\bm a_j(\x)$ and $b_j(\x)$ are affine in $\x$, the scalar global constraint is affine. The scalar ball constraint has the form
\[
\zeta'_j+t\|\bm\theta_j\|+b_j(\x)+\kappa-\frac{1}{M}\sum_{m\in[M],k\in[J]}\lambda_{jm}^{'k}b_k(\z_m)\leq \alpha,
\]
which is convex because $t\geq0$ and the Euclidean norm is convex. The two norm inequalities are standard second-order-cone constraints with affine arguments in the decision variables, and are therefore convex. Together with the convexity of the constraint $\x\in\X$, this shows that the finite reformulation is a convex conic program. The final tractability claim follows as in Theorem~\ref{theorem:Conic_reformulation}, since the Euclidean norm is directly second-order cone representable.\qed
\end{proof}

\section{Tractable approximations for two-stage linear loss}
In this section, we develop a tractable approximation to the two-stage DRO problem via linear decision rules.  
\label{appendix:conic_two_stage}
\begin{theorem}
\label{thm:two_stage_conservative_approx}
The problem \eqref{eq:dual_SAA} under the two-stage loss function in \eqref{eq:two-stage_primal} can be conservatively approximated by the following conic program: 
\begin{equation*} 
\begin{aligned}
\min \;&  \alpha  + \beta\Omega + \frac{1}{M}\sum_{m\in[M]}\nu_m(\EE_\PPhat[\ell(\z_m,\txi)]+ \epsilon)  \\
\st & \x\in\X, \; \alpha\in\RR,\;\beta\in\RR_+,\;\bm\nu\in\RR_+^M\\
& \bm\lambda_{m}\in\RR_+^L\;\;\forall m\in[M],\; \bm\theta\in\RR^{D_\bxi},\zeta\in\RR_+\\
& \bm Y\in\RR^{D_\y\times D_\bxi},\;\bm y_0\in\RR^{D_\y}\\
&\sigma_\Xi\left([\bm T(\x)-\bm W\bm Y]_{l:}^\top\right)\leq [\bm W\y_0-\bm h(\x)]_l\qquad\forall l\in[L]\\
&  \zeta+\sigma_\Xi(\bm\theta)+ \bc^\top\x + \bm q^\top\y_0 - \frac{1}{M}\sum_{m\in[M]}\left(\nu_m\bc^\top\z_m + \bm h(\z_m)^\top\bm\lambda_m\right) \leq \alpha \\
& \left\|\begin{bmatrix}\bm Y^\top\bm q-\frac{1}{M}\sum_{m\in[M]} \bm T(\z_m)^\top\bm\lambda_{m} -\bm\theta\\\zeta-\beta\end{bmatrix}\right\|\leq\zeta+\beta\\
&\bm W^\top\bm\lambda_m=\nu_m\bm q\qquad\forall m\in[M]. 
\end{aligned}
\end{equation*}
\end{theorem}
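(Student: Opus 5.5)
We handle the two occurrences of the two-stage loss in the semi-infinite constraint of \eqref{eq:dual_SAA} separately. On the left, we bound $\ell(\x,\bxi)$ from above by a linear decision rule. On the right, we bound each $\ell(\z_m,\bxi)$ from below by weak LP duality. Both bounds tighten the constraint, so every feasible point of the resulting program is feasible in \eqref{eq:dual_SAA} with the same objective value. This conservativeness is the whole claim; no exactness is needed.

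\textbf{Left-hand side.} Restrict the recourse to $\y(\bxi)=\y_0+\bm Y\bxi$. Feasibility $\bm T(\x)\bxi+\bm h(\x)\le \bm W(\y_0+\bm Y\bxi)$ for all $\bxi\in\Xi$ reads row by row as
\[
[\bm T(\x)-\bm W\bm Y]_{l:}\bxi\le[\bm W\y_0-\bm h(\x)]_l \quad \forall \bxi\in\Xi .
\]
This is exactly the support-function constraint in the statement. Under it, $\ell(\x,\bxi)\le \bc^\top\x+\bm q^\top\y_0+(\bm Y^\top\bm q)^\top\bxi$ for every $\bxi\in\Xi$, since the linear decision rule is a feasible but possibly suboptimal recourse.

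\textbf{Right-hand side.} Apply \eqref{eq:two-stage_dual} at $\z_m$. For any $\bm p_m\ge 0$ with $\bm W^\top\bm p_m=\bm q$,
\[
\ell(\z_m,\bxi)\ge \bc^\top\z_m+(\bm T(\z_m)\bxi+\bm h(\z_m))^\top\bm p_m .
\]
Set $\bm\lambda_m\coloneqq\nu_m\bm p_m$. This gives $\bm\lambda_m\ge 0$ and $\bm W^\top\bm\lambda_m=\nu_m\bm q$, so
\[
\nu_m\ell(\z_m,\bxi)\ge \nu_m\bc^\top\z_m+\bm h(\z_m)^\top\bm\lambda_m+\bxi^\top\bm T(\z_m)^\top\bm\lambda_m .
\]
The bilinear product $\nu_m\bm p_m$ becomes linear in the new variable $\bm\lambda_m$. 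When $\nu_m=0$, the choice $\bm\lambda_m=0$ is admissible, and the bound holds trivially.

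\textbf{Combining the bounds.} A sufficient condition for the semi-infinite constraint is
\[
\sup_{\bxi\in\Xi}\Big\{\bm c^\top\bxi-\beta\|\bxi\|^2\Big\}+\bc^\top\x+\bm q^\top\y_0-\frac1M\sum_{m}\big(\nu_m\bc^\top\z_m+\bm h(\z_m)^\top\bm\lambda_m\big)\le\alpha,
\]
where $\bm c\coloneqq\bm Y^\top\bm q-\frac1M\sum_m\bm T(\z_m)^\top\bm\lambda_m$. This is a single affine-minus-quadratic supremum, the same object treated in the proof of Theorem~\ref{theorem:Conic_reformulation}. We reuse that argument verbatim. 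Fenchel duality with $\sigma_\Xi$, via infimal convolution, turns the supremum into $\inf_{\bm\theta}\{\sigma_\Xi(\bm\theta)+\|\bm c-\bm\theta\|^2/(4\beta)\}$. The quadratic term is then replaced by the rotated-cone epigraph variable $\zeta$. The case $\beta=0$ is handled identically, with $\bm\theta=\bm c$ and $\zeta=0$.

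\textbf{Main obstacle.} The only delicate point is the order of quantifiers. In the exact problem the maximizing dual vertex depends on $\bxi$, whereas here $\bm\lambda_m$ is fixed independently of $\bxi$. This is why the result is only conservative, but it does not weaken the argument: an inequality valid for all $\bxi$ with fixed $\bm\lambda_m$ implies the original constraint. The existential variables $(\y_0,\bm Y,\bm\lambda_m,\bm\theta,\zeta)$ can be merged into the outer minimization, which yields the stated conic program.
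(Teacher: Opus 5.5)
Your route is essentially the paper's. A linear decision rule bounds $\ell(\x,\bxi)$ from above and produces the support-function constraints. The remaining affine-minus-quadratic supremum over $\Xi$ is then handled by the support-function/rotated-cone step of Theorem~\ref{theorem:Conic_reformulation}, of which you correctly need only the weak-duality direction. The only difference lies in the sampled terms. The paper keeps $\ell(\z_m,\bxi)$ in primal form with recourse variables $\y_m$, and Lagrangian-dualizes the coupling constraints $\bm T(\z_m)\bxi+\bm h(\z_m)\le\bm W\y_m$ inside the joint maximization over $(\bxi,\{\y_m\})$. You instead lower-bound each term by weak LP duality with a $\bxi$-independent dual vector. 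Both produce the same multipliers $\bm\lambda_m$.

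There is a hole in your passage to the variables $\bm\lambda_m$. To conclude that every feasible point of the stated program is feasible in \eqref{eq:dual_SAA}, you need your lower bound for \emph{every} $\bm\lambda_m\in\RR_+^L$ with $\bm W^\top\bm\lambda_m=\nu_m\bm q$. For $\nu_m>0$ this follows by taking $\bm p_m=\bm\lambda_m/\nu_m$. For $\nu_m=0$, however, the admissible set is the cone $\{\bm d\in\RR_+^L:\bm W^\top\bm d=\mathbf 0\}$. This cone is nontrivial whenever the dual polyhedron $\{\bm p\in\RR_+^L:\bm W^\top\bm p=\bm q\}$ is unbounded, for instance when an equality constraint is written as two inequalities.

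Observing that $\bm\lambda_m=\mathbf 0$ is admissible does not cover these points. You need $\bm d^\top(\bm T(\z_m)\bxi+\bm h(\z_m))\le 0$ for all such $\bm d$ and all $\bxi\in\Xi$. By Farkas' lemma, this holds precisely when the recourse problem at $(\z_m,\bxi)$ is feasible. So it requires the relatively complete recourse assumption, which your argument never invokes.

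The repair is to avoid dividing by $\nu_m$. Take any admissible $\bm\lambda_m$ and any $\y$ with $\bm T(\z_m)\bxi+\bm h(\z_m)\le\bm W\y$. Then $\bm\lambda_m^\top(\bm T(\z_m)\bxi+\bm h(\z_m))\le\bm\lambda_m^\top\bm W\y=\nu_m\bm q^\top\y$. Relatively complete recourse guarantees that such a $\y$ exists, and taking the infimum over $\y$ yields your bound for all $\nu_m\ge 0$ at once. This is what the paper's Lagrangian dualization of $\{\y_m\}$ does implicitly.

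Separately, you misattribute where the conservatism comes from. Fixing $\bm\lambda_m$ independently of $\bxi$ costs nothing here. The function being maximized is concave in $\bxi$ and affine in the multipliers. So for compact convex $\Xi$, a minimax argument exchanges the supremum and infimum, as in the proof of Theorem~\ref{theorem:Conic_reformulation}. The paper accordingly calls this dualization step an equivalence, and the conservatism comes solely from the linear decision rule. This does not affect your proof, which only needs sufficiency.
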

\begin{proof}{Proof}
Consider the semi-infinite constraint in \eqref{eq:dual_SAA}:
\begin{equation}
\label{eq:twostage_semiinf}
\begin{aligned}
&\sup_{\bxi\in\Xi}\left[\ell(\x,\bxi)- \beta\|\bxi\|^2-\frac{1}{M}\sum_{m\in[M]}\nu_m\ell(\z_m,\bxi)\right]\leq \alpha.
\end{aligned}
\end{equation}
We approximate the recourse decision in the first term by the linear decision rule 
\begin{equation*}
\bm y(\bxi)=\bm Y\bxi+\bm y_0,
\end{equation*}
where $\bm Y\in\RR^{D_\y\times D_\bxi} $ and $\bm y_0\in\RR^{D_\y}$ satisfy 
\begin{equation*}
\label{eq:decision_rule_feasibility}
\bm T(\x)\bxi + \bm h(\x)\leq \bm W(\bm Y\bxi+\bm y_0)\qquad\forall\bxi\in\Xi.
\end{equation*}
This robust linear constraint is equivalent, row by row, to 
\begin{equation*}
\sup_{\bxi\in\Xi}[\bm T(\x)-\bm W\bm Y]_{l:}\bxi\leq [\bm W\y_0-\bm h(\x)]_l\qquad\forall l\in[L],
\end{equation*}
or, equivalently,
\begin{equation*}
\sigma_\Xi\left([\bm T(\x)-\bm W\bm Y]_{l:}^\top\right)\leq [\bm W\y_0-\bm h(\x)]_l\qquad\forall l\in[L].
\end{equation*}
Then, the two-stage loss function in \eqref{eq:two-stage_primal} can be upper bounded by 
\begin{equation*}
\ell(\x,\bxi)\leq \bc^\top\x+ \bm q^\top(\bm Y\bxi+\bm y_0)\qquad\forall\bxi\in\Xi. 
\end{equation*}

Replacing only $\ell(\x,\bxi)$ by this upper bound, while keeping each sampled term $\ell(\z_m,\bxi)$ exact, yields a conservative approximation to \eqref{eq:twostage_semiinf}:
\begin{equation*}
\left.
\begin{aligned}
\max\;\;& \bc^\top\x + \bm q^\top(\bm Y\bxi+\bm y_0)- \beta\|\bxi\|^2-\frac{1}{M} \sum_{m\in[M]}\nu_m (\bc^\top\z_m + \bm q^\top\bm y_m) \\
\st\;&\bxi\in\Xi,\;\bm y_m\in\RR^{D_\y}\quad\forall m\in[M]\\
&\bm T(\z_m)\bxi + \bm h(\z_m)\leq \bm W\bm y_m\quad\forall m\in[M]
\end{aligned}\right\}\leq \alpha. 
\end{equation*}
Next, dualizing the recourse variables $\{\bm y_m\}_{m\in[M]}$ and applying the support-function reformulation to the remaining maximization over $\bxi$ yields the equivalent conic conditions: there exist $\bm\lambda_{m}\in\RR_+^L\;\forall m\in[M]$, $\bm\theta\in\RR^{D_\bxi}$, $\zeta\in\RR_+$, such that 
\begin{equation*}
\begin{aligned}
\;&  \zeta+\sigma_\Xi(\bm\theta)+ \bc^\top\x + \bm q^\top\y_0 - \frac{1}{M}\sum_{m\in[M]}\left(\nu_m\bc^\top\z_m + \bm h(\z_m)^\top\bm\lambda_m\right) \leq \alpha \\
& \left\|\begin{bmatrix}\bm Y^\top\bm q-\frac{1}{M}\sum_{m\in[M]} \bm T(\z_m)^\top\bm\lambda_{m} -\bm\theta\\\zeta-\beta\end{bmatrix}\right\|\leq\zeta+\beta\\
&\bm W^\top\bm\lambda_m=\nu_m\bm q\qquad\forall m\in[M]. 
\end{aligned}
\end{equation*}
This proves the result. \qed
\end{proof}
\begin{remark}[Two-stage approximation alternatives]
Two-stage (distributionally) robust optimization is generically NP-hard, which motivates the approximation in Theorem \ref{thm:two_stage_conservative_approx}. If desired, tighter approximations via piecewise-linear decision rules \citep{georghiou2015generalized,ben2020tractable} or quadratic decision rules \citep{fan2024decision,xu2025improved} can also be developed. One could also, in principle, formulate a convex copositive program for the two-stage problem via the techniques of \cite{hanasusanto2018conic,xu2018copositive}. The resulting conic program admits tractable conservative semidefinite programming approximations that can be solved with standard off-the-shelf solvers.  
\end{remark}

\section{Details of experiments}

\subsection{Numerical illustrations of the Monte Carlo approximation}\label{appendix:mc_numerical_illustration}

We present two experiments that share an identical base design and differ only in the reference distribution of $\txi$. Both serve to isolate the sampling error in the finite-sample dual approximation: we set $\epsilon=0$ and evaluate the sampled-dual solution against the empirical objective $\EE_\PPhat[\ell(\x,\tilde\bxi)]$. The symmetric construction described below yields a known optimizer of this benchmark, avoiding the need for a large-reference Monte Carlo solution. The first experiment (bounded support) corresponds to Remark~\ref{rem:mc_numerical_illustration}; the second (unbounded support) corresponds to the practical recommendation at the end of Section~\ref{sec:unbounded_support}, verifying that the $\widetilde{\mathcal O}(1/\sqrt{M})$ sampling rate is preserved when the reference distribution has heavy tails.

\paragraph{Shared setup.}
For each dimension $d\in\{5,25,100\}$, we take $D_\x=D_\bxi=d$ and set the decision set to the Euclidean unit ball, $\X=\{\bm u\in\RR^d:\|\bm u\|\leq 1\}$. The known optimizer is $\x_\star=0.1\mathbf e$, which belongs to $\X$ for all three dimensions. The empirical reference distribution contains $N=100$ points and is constructed symmetrically: we draw $50$ independent points from the reference distribution and add their negatives, so that the sample mean is exactly zero and $\x_\star$ is an optimizer of the empirical objective for the shifted loss. The loss is a shifted weighted projection max-affine loss,
\[
\ell(\x,\bxi)=\max_{r=1,\ldots,K} c_r\left|\bm q_r^\top((\x-\x_\star)-\bxi)\right|,
\]
with $K=4$. The directions $\bm q_r\in\RR^d$ are generated as independent dense Gaussian vectors normalized to unit length, and the weights $c_r$ are linearly spaced between $0.5$ and $1.5$ and rescaled to average one. The loss has $J=2K=8$ affine pieces with slopes $\{\pm c_r\bm q_r:r=1,\ldots,K\}$, so the conic reformulation in Section~\ref{sec:mc_sampling} applies directly. For each dimension, the experiment uses $50$ independent replications. In each replication, a pool of $500$ auxiliary decisions is drawn uniformly from $\X$ and nested prefixes of this pool are used for $M\in\{2,5,10,20,50,100,250,500\}$. The reported percentage gap is
\[
\frac{
\EE_\PPhat[\ell(\widehat \x_M,\tilde\bxi)]
-
\EE_\PPhat[\ell(\x_\star,\tilde\bxi)]
}{
\EE_\PPhat[\ell(\x_\star,\tilde\bxi)]
}
\times 100\%.
\]
The plotted curves show the mean over the $50$ replications, shaded interquartile bands, and a pooled $C/\sqrt{M}$ reference line fitted across dimensions.

\paragraph{Bounded support (uniform on unit ball).}
The uncertainty $\txi$ is drawn uniformly from the unit ball, $\Xi=\{\bm u\in\RR^d:\|\bm u\|\leq 1\}$. The second-moment constraint is set to the empirical training second moment, $\Omega=N^{-1}\sum_{i=1}^N\|\hat\bxi_i\|^2$. The sampled dual \eqref{eq:dual_SAA} is solved using the compact-support conic reformulation with $\sigma_\Xi(\bm\theta)=\|\bm\theta\|$. As Figure~\ref{fig:mc-suboptimality-gap} shows, the empirical gaps decay at least as fast as the fitted $C/\sqrt{M}$ reference curve (fitted constant $C=26.98$); for larger $M$, the observed decay is even faster, showing that the theoretical rate in Theorem~\ref{thm:approximation_convergence} is conservative in practice.

\paragraph{Unbounded support (Student-$t_5$).}
\phantomsection\label{appendix:mc_numerical_illustration_student_t}%
The uncertainty $\txi$ now follows a scaled Student-$t$ distribution with $\nu=5$ degrees of freedom, giving $\Xi=\RR^d$. Each coordinate is drawn independently from a mean-zero Student-$t_5$ and scaled to coordinate variance $1/(d+2)$, matching the per-coordinate variance of the uniform distribution on the unit ball; under this scaling $\EE\|\txi\|^2=d/(d+2)$, so the second-moment constraint is set to $\Omega=d/(d+2)$. The sampled dual is solved with $\Xi=\RR^d$: the semi-infinite constraint is enforced via the second-moment term $\beta\|\bxi\|^2$, and the support-function term is set to zero in the second-order-cone reformulation. No tail constraint from Section~\ref{sec:unbounded_support} is included, consistent with the practical recommendation there. Figure~\ref{fig:mc-suboptimality-gap-student-t} reports the mean percentage gap and interquartile bands together with a pooled $C/\sqrt{M}$ reference line. The gaps decrease consistently with $M$ and reach roughly the one-to-two percent level at $M=500$ across all dimensions, confirming that the sampled-dual approximation remains effective under heavy-tailed unbounded uncertainty.

\begin{figure}[t]
    \centering
    \includegraphics[width=0.5\textwidth]{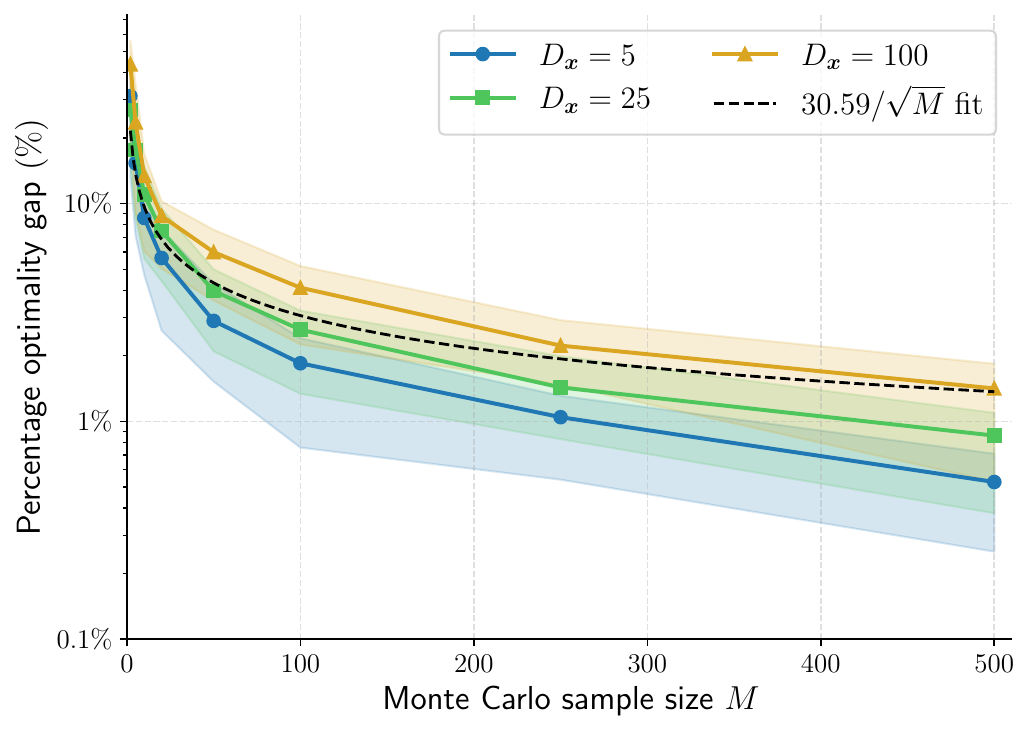}
    \caption{Percentage optimality gap of the sampled-dual solution as a function of the Monte Carlo sample size $M$, with Student-$t_5$ uncertainty ($\nu=5$, unbounded support, variance-matched to the unit-ball setting). The dashed curve is a fitted $C/\sqrt{M}$ reference line. Shaded bands show interquartile ranges over $50$ independent replications.}
    \label{fig:mc-suboptimality-gap-student-t}
\end{figure}

\subsection{Newsvendor}\label{appendix:newsvendor}

The newsvendor figures in Section~5.1 are generated by the final selected-trial plotting blocks in the four three-dimensional notebooks. Earlier exploratory cells use different parameter values, but those preliminary settings are not used for the figure files included in the manuscript.

Figure~\ref{fig:nv_oos_by_n} experiment studies a three-product newsvendor with order vector $\x\in\R_+^3$, demand vector $\txi\in\R_+^3$, and loss
\[
\ell(\x,\bxi)=\sum_{j=1}^3\left[h_j(x_j-\xi_j)_+ + b_j(\xi_j-x_j)_+\right].
\]
Across all four final plotting blocks, the common decision-side parameters are $h=(0.1,0.2,0.3), b=(1,1,1),\quad\rho=0.05,\quad U=(50,50,50)$. The demand coordinates are generated independently with mean vector $\mu=(5,6,6)$ and standard-deviation vector $\sigma=(5,6,8)$. The in-sample data are split into training and validation subsets in an 80/20 ratio. We use $N\in\{10,20,30,50,75,100,150\}$ with $T=50$ independent trials for each sample size, and each trial is evaluated on an independent OOS sample of size $N_{\mathrm{OOS}}=10000$.

For both 2-WDRO and TIPM-DRO, the ambiguity radius is tuned over the common grid $\epsilon \in \{a\times 10^b : a\in\{1,3,5,7,9\},\; b\in\{-3,-2,-1,0,1,2\}\}$. For TIPM-DRO, the expectation over the auxiliary distribution is approximated by Monte Carlo sampling. We use $M=800$ sampled decision points drawn uniformly from $[0,50]^3$, and the sampled set is augmented with the SAA solution from the same trial. The second-moment constraint is set to the empirical training second moment,
$
\Omega=\frac{1}{N_{\mathrm{tr}}}\sum_{i=1}^{N_{\mathrm{tr}}}\|\hat\bxi_i\|^2
$.

To state the dual reformulation, let $\mathcal B_{\mathrm{nv}}\coloneqq \{0,1\}^3$ denote the eight affine pieces of the separable newsvendor loss, and let $\mathcal R_{\mathrm{nv}}\coloneqq \mathcal B_{\mathrm{nv}}\cup\{T\}$ include the additional CVaR epigraph piece. For each $s\in\mathcal B_{\mathrm{nv}}$, define
\[
p^s_j=\begin{cases}
h_j,& s_j=0,\\
-b_j,& s_j=1,
\end{cases}
\qquad
g^s_j=\begin{cases}
-h_j,& s_j=0,\\
b_j,& s_j=1,
\end{cases}
\qquad j=1,2,3.
\]
For the epigraph piece, set $\bm p^T=\bm 0$, $\bm g^T=\bm 0$, and $q_T=1$, while $q_r=0$ for $r\in\mathcal B_{\mathrm{nv}}$. Then
\[
\ell(\x,\bxi)=\max_{s\in\mathcal B_{\mathrm{nv}}}\Bigl\{(\bm p^s)^\top \x+(\bm g^s)^\top \bxi\Bigr\}.
\]
If $\z_1,\ldots,\z_M\in[0,50]^3$ are the sampled decision points and
\[
\widehat L_m\coloneqq \frac{1}{N_{\mathrm{tr}}}\sum_{i=1}^{N_{\mathrm{tr}}}\ell(\z_m,\hat\bxi_i),
\qquad m\in[M],
\]
then the sampled second-moment-constrained TIPM-DRO problem solved by the implementation is
\begin{equation}
\label{eq:newsvendor_ipm_dual_appendix}
\begin{aligned}
\min \;& \left(1-\frac{1}{\rho}\right)t+\frac{1}{\rho}\left(\alpha+\frac{1}{M}\sum_{m\in[M]}\beta_m(\widehat L_m+\epsilon)+\gamma\Omega\right)\\
\st\;& \x\in\R_+^3,\;\x\le U,\; t,\alpha\in\R,\;\bm\beta\in\R_+^M,\;\gamma\in\R_+,\\
& \eta_r\in\R_+,\;\bm u_r\in\R^3,\;\bm u_r\le \mathbf 0,\;\lambda_{rms}\in\R_+ \qquad \forall r\in\mathcal R_{\mathrm{nv}},\forall m\in[M],\forall s\in\mathcal B_{\mathrm{nv}},\\
& \eta_r+(\bm p^r)^\top \x+q_r t-\frac{1}{M}\sum_{m\in[M]}\sum_{s\in\mathcal B_{\mathrm{nv}}}\lambda_{rms}(\bm p^s)^\top \z_m\le \alpha \qquad \forall r\in\mathcal R_{\mathrm{nv}},\\
& \sum_{s\in\mathcal B_{\mathrm{nv}}}\lambda_{rms}=\beta_m \qquad \forall r\in\mathcal R_{\mathrm{nv}},\forall m\in[M],\\
& \left\|\begin{bmatrix}
\bm g^r-\frac{1}{M}\sum_{m\in[M]}\sum_{s\in\mathcal B_{\mathrm{nv}}}\lambda_{rms}\bm g^s-\bm u_r\\
\eta_r-\gamma
\end{bmatrix}\right\|\le \eta_r+\gamma \qquad \forall r\in\mathcal R_{\mathrm{nv}}.
\end{aligned}
\end{equation}
This is exactly the finite conic reformulation implemented in the experiment and used to generate the TIPM-DRO curves in Section~5.1.

For the four demand models, the Gaussian benchmark samples each coordinate from a Normal distribution with the target mean and variance and rejects negative draws. The rescaled $\chi^2$ case uses $\xi_j=s_jY_j$, where $Y_j\sim\chi^2_{\nu_j}$, $\nu_j=2(\mu_j/\sigma_j)^2$, and $s_j=\sigma_j^2/(2\mu_j)$. The Lognormal case uses $\xi_j\sim\mathrm{LogNormal}(\mu_{\log,j},\sigma_{\log,j})$, where $\sigma_{\log,j}^2=\log(1+\sigma_j^2/\mu_j^2)$ and $\mu_{\log,j}=\log(\mu_j)-\tfrac{1}{2}\sigma_{\log,j}^2$. The Pareto case uses $\xi_j=x_{m,j}(1+Y_j)$, where $Y_j\sim\mathrm{Pareto}(\alpha_j)$, $\alpha_j=1+\sqrt{1+(\mu_j/\sigma_j)^2}$, and $x_{m,j}=\mu_j(\alpha_j-1)/\alpha_j$.

\label{appendix:newsvendor_Ksweep}
Figure~\ref{fig:nv_K_sweep} uses an independent set of random seeds from the four-distribution study above.
The demand distribution is Lognormal throughout, with marginal parameters matching those of the
Lognormal panel above wherever applicable.
The parameter vectors for each product count are:

\begin{center}
\begin{tabular}{clllll}
\toprule
$K$ & $\bm{h}$ & $\bm{b}$ & $\bm{\mu}$ & $\bm{\sigma}$ & $\bm{U}$ \\
\midrule
$1$ & $(0.2)$ & $(1.0)$ & $(6.0)$ & $(6.0)$ & $(50)$ \\
$3$ & $(0.1,\,0.2,\,0.3)$ & $(1,\,1,\,1)$ & $(5,\,6,\,6)$ & $(5,\,6,\,8)$ & $(50,\,50,\,50)$ \\
$5$ & $(0.1,\,0.15,\,0.2,\,0.25,\,0.3)$ & $(1,\,1,\,1,\,1,\,1)$ & $(5,\,5,\,6,\,6,\,6)$ & $(4,\,5,\,6,\,7,\,8)$ & $(50,\ldots,50)$ \\
\bottomrule
\end{tabular}
\end{center}

All three settings share $\rho=0.05$, training sizes $N\in\{10,30,50,75,100,150\}$, an 80/20
train--validation split, $N_{\mathrm{OOS}}=10{,}000$, and 50 independent trials. The ambiguity
radius is selected by validation from the grid
$\epsilon\in\{a\times 10^b : a\in\{1,5\},\; b\in\{-4,-3,-2,-1,0,1,2\}\}$.
For TIPM-DRO, the auxiliary distribution is approximated by $M=300$ decision points drawn
uniformly from $[0,U_j]^K$, augmented with the SAA solution from the same trial. The
second-moment parameter is the empirical training second moment,
$\Omega=N_{\mathrm{tr}}^{-1}\sum_{i=1}^{N_{\mathrm{tr}}}\|\hat\bxi_i\|^2$.
The conic reformulation follows the same structure as the $K=3$ case above, with $2^K$
business pieces indexed by $\{0,1\}^K$ and one CVaR epigraph piece.

\subsection{Outlier-Corrupted Regression}\label{appendix:outlier_regression}

The figures in Section~5.2 are based on the following common experimental procedure. Each trial draws $\x^\star$ uniformly from the unit sphere in $\RR^{D_\bxi}$, generates clean covariates $\bm{u}_i\sim\mathcal{N}(\bm{0},\bm{I}_{D_\bxi})$ and responses $v_i=\x^{\star\top}\bm{u}_i$, and evaluates the fitted estimator on an independent clean test sample of size $N_{\mathrm{OOS}}$. It then corrupts $\lfloor \omega N\rfloor$ training samples via $\bm{u}_i\leftarrow C\bm{u}_i$ and $v_i\leftarrow -C^2v_i+\rho$, randomly shuffles the sample, and splits it into training and validation subsets with ratio $0.8/0.2$. Hyperparameters are selected by a trimmed validation loss that removes the largest $\lceil \omega_{\mathrm{tv}}N_{\mathrm{val}}\rceil$ absolute residuals.

For the dimension sweep in Figure~\ref{fig:dim_sweep}, we use $N\in\{10,20,50,75,100\}$ and $D_{\bxi}\in\{1,5,10,50\}$ with $T=50$, $N_{\mathrm{OOS}}=1000$, $\omega=\omega_{\mathrm{tv}}=0.2$, $C=10$, $\rho'=0.1$, and $\delta=0.1$. The OR-WDRO scale parameter is set to $\omega_{\mathrm{tv}}=0.2$, and Std WDRO and OR-WDRO tune the Wasserstein radius over $r_{\mathrm{W}}\in\{0.01,0.1,0.2,0.5,1.0\}$. Each trial also computes the plain LAD estimator, but we report only Std WDRO, OR-WDRO$(2\omega)$, OR-WDRO$(\omega)$, and MoM TIPM-DRO.

For MoM TIPM-DRO, we use $M=800$ sampled decision points. The decision set radius is $R=\sqrt{D_{\bxi}}$, and the constraint $\|\bm{\theta}\|\leq R$ is enforced with the same value of $R$. Let $N_{\mathrm{tr}}=\lfloor 0.8N\rfloor$. The block count is chosen from $K'\in \{K_{\mathrm{theory}},\min(\lfloor N_{\mathrm{tr}}/3\rfloor,2K_{\mathrm{theory}}),\lfloor N_{\mathrm{tr}}/2\rfloor\}\cap\{1,\ldots,N_{\mathrm{tr}}\}$, where $K_{\mathrm{theory}}=\left\lceil \frac{4(1+2\omega)}{(1-2\omega)^2}\log\frac{1}{\delta}\right\rceil$. The TIPM radius is tuned over $\epsilon_{\mathrm{IPM}}\in \{\epsilon_{\mathrm{theory}} : s\in\{0.01,0.05,0.1,0.2,0.5,1,5,10\}\}$ with $\epsilon_{\mathrm{theory}}=\frac{4\sqrt{e}\,\sigma_{\mathrm{loss}}\Gamma_\omega}{\sqrt{N_{\mathrm{tr}}}}$ and $\Gamma_\omega=(1-2\omega_{\mathrm{tv}})^{-1}$. The second-moment constraint is set to $\Omega = 2\,\Omega_{\mathrm{tr}}$, where $\Omega_{\mathrm{tr}}=N_{\mathrm{tr}}^{-1}\sum_{i=1}^{N_{\mathrm{tr}}}\|(\bm{u}_i,v_i)\|^2$. For each candidate $R$, the same set of sampled decision points is reused across all $(K,\epsilon_{\mathrm{IPM}})$ candidates in that trial to stabilize validation comparisons.

For scalar-response MAD regression, with decision variable $\bm\theta\in\R^{D_{\bxi}}$, the loss admits the piecewise-affine representation
\[
\ell(\bm\theta,\bxi)=\max\left\{\begin{bmatrix}\bm\theta\\ -1\end{bmatrix}^{\!\top}\bxi,\begin{bmatrix}-\bm\theta\\ 1\end{bmatrix}^{\!\top}\bxi\right\},
\qquad \bxi=(\bm u,v)\in\R^{D_{\bxi}+1}.
\]
Let $\{\bm z_m\}_{m=1}^M\subset\R^{D_{\bxi}}$ be the sampled decision points and let $\bm Z\in\R^{M\times D_{\bxi}}$ collect the row vectors $\bm z_m^\top$. Writing $\widehat\mu_{\mathrm{MoM}}(\bm z_m)$ for the MoM reference from Section~\ref{sec:outlier}, the implementation solves the conic program
\begin{equation}
\label{eq:regression_ipm_sec_appendix}
\begin{aligned}
\min \;& \alpha + \beta\Omega + \frac{1}{M}\sum_{m\in[M]}\nu_m\bigl(\widehat\mu_{\mathrm{MoM}}(\bm z_m)+\epsilon\bigr)\\
\st\;& \bm\theta\in\R^{D_{\bxi}},\;\|\bm\theta\|\le R,\;\alpha\in\R,\;\beta\in\R_+,\;\bm\nu\in\R_+^M,\\
& \bm\lambda_1^+,\bm\lambda_1^-,\bm\lambda_2^+,\bm\lambda_2^-\in\R_+^M,\;\zeta_1,\zeta_2\in\R_+,\\
& \bm\lambda_1^+ + \bm\lambda_1^- = \bm\nu,\qquad
  \bm\lambda_2^+ + \bm\lambda_2^- = \bm\nu,\\
& \zeta_1\le \alpha,\qquad \zeta_2\le \alpha,\\
& \left\|\begin{bmatrix}
\bm\theta-\frac{1}{M}\bm Z^\top(\bm\lambda_1^+-\bm\lambda_1^-)\\
-1+\frac{1}{M}\mathbf e^\top(\bm\lambda_1^+-\bm\lambda_1^-)\\
\zeta_1-\beta
\end{bmatrix}\right\| \le \zeta_1+\beta,\\
& \left\|\begin{bmatrix}
-\bm\theta-\frac{1}{M}\bm Z^\top(\bm\lambda_2^+-\bm\lambda_2^-)\\
1+\frac{1}{M}\mathbf e^\top(\bm\lambda_2^+-\bm\lambda_2^-)\\
\zeta_2-\beta
\end{bmatrix}\right\| \le \zeta_2+\beta.
\end{aligned}
\end{equation}
This is exactly the formulation implemented by the MoM TIPM-DRO solver used in Section~5.2, written in the notation of Section~2.

For Figure~\ref{fig:cont_scale}, the final plotting block fixes $N=50$, $D_{\bxi}=10$, and $C\in\{5,6,\ldots,20\}$ while keeping $T=50$, $N_{\mathrm{OOS}}=1000$, $\omega=\omega_{\mathrm{tv}}=0.2$, $\rho=0.1$, $\delta=0.1$, and $M=800$. In this block, the OR-WDRO scale parameter is fixed at $\omega_{\mathrm{tv}}=0.2$, and the Wasserstein radius grid is expanded to $r_{\mathrm{W}}\in\{0.01,0.05,0.1,0.5,1.0,2.0\}$. The plotting cell for Figure~\ref{fig:cont_scale} reports only OR-WDRO$(2\omega)$, OR-WDRO$(\omega)$, and MoM TIPM-DRO. Apart from the fixed dimension and the sweep over $C$, these are the only substantive differences relative to the dimension-sweep experiments.

\end{APPENDICES}

\end{document}